\newcommand\reallywidehat[1]{\arraycolsep=0pt\relax%
\begin{array}{c}
\stretchto{
  \scaleto{
    \scalerel*[\widthof{\ensuremath{#1}}]{\kern-.5pt\bigwedge\kern-.5pt}
    {\rule[-\textheight/2]{1ex}{\textheight}} 
  }{\textheight} %
}{0.5ex}\\           
#1\\                 
\rule{-1ex}{0ex}
\end{array}}
\tikzset{math3d/.style=
    {x= {(-0.353cm,-0.353cm)}, z={(0cm,1cm)},y={(1cm,0cm)}}}
\definecolor{Chocolat}{rgb}{0.36, 0.2, 0.09}
\definecolor{BleuTresFonce}{rgb}{0.215, 0.215, 0.36}
\DeclareMathAlphabet{\mathbbold}{U}{bbold}{m}{n}
\newcommand{\BCH}{\mathrm{BCH}}
\def\k{\mathbbold{k}}
\def\dd{\mathrm{d}}
\def\Ai{\calA_\infty}
\def\Li{\calL_\infty}
\def\sAi{\calS\calA_\infty}
\def\sLi{\calS\calL_\infty}
\def\dsLi{\calS^{-1}\calL_\infty}
\def\Tw{\mathrm{Tw}}
\def\at{\alpha}
\def\F{\mathrm{F}}
\def\G{\mathrm{G}}
\def\NN{\mathbb{N}}
\def\RR{\mathbb{R}}
\def\wA{\widehat{A}}
\def\wo{\widehat{\otimes}}
\newcommand{\cc}{\circledcirc}
\DeclareMathAlphabet{\pazocal}{OMS}{zplm}{m}{n}
\def\a{\mathfrak{a}}
\newcommand{\Sy}{\mathbb{S}}
\def\calA{\pazocal{A}}
\def\calC{\pazocal{C}}
\def\calF{\pazocal{F}}
\def\calG{\pazocal{G}}
\def\calL{\pazocal{L}}
\def\calP{\pazocal{P}}
\def\calQ{\pazocal{Q}}
\def\calS{\pazocal{S}}
\def\S1{\pazocal{S}^{-1}}
\def\calT{\pazocal{T}}
\def\calU{\pazocal{U}}
\DeclareMathOperator{\Hom}{Hom}
\DeclareMathOperator{\eend}{end}
\DeclareMathOperator{\End}{End}
\DeclareMathOperator{\id}{id}
\def\colim{\mathop{\mathrm{colim}}}
\def\wcolim{\mathop{\widehat{\mathrm{colim}}}}
\DeclareMathOperator{\ad}{ad}
\DeclareMathOperator{\Def}{Def}
\DeclareMathOperator{\Der}{Der}
\DeclareMathOperator{\As}{As}
\DeclareMathOperator{\Ass}{Ass}
\DeclareMathOperator{\uAs}{uAs}
\DeclareMathOperator{\BV}{BV}
\DeclareMathOperator{\ncBV}{ncBV}
\DeclareMathOperator{\Gerst}{Gerst}
\DeclareMathOperator{\ncGerst}{ncGerst}
\DeclareMathOperator{\Gra}{Gra}
\DeclareMathOperator{\HyperCom}{HyperCom}
\DeclareMathOperator{\PreLie}{PreLie}
\DeclareMathOperator{\uPreLie}{uPreLie}
\DeclareMathOperator{\Perm}{Perm}
\DeclareMathOperator{\Dias}{Dias}
\DeclareMathOperator{\Dend}{Dend}
\DeclareMathOperator{\Lie}{Lie}
\DeclareMathOperator{\Com}{Com}
\DeclareMathOperator{\D}{D}
\def\Im{\mathop{\mathrm{Im}}}
\def\C{\pazocal{C}}
\def\I{\mathrm{I}}
\newcommand{\MC}{\mathrm{MC}}
\theoremstyle{plain}
\newtheorem {theorem}{Theorem}[section]
\newtheorem{lemma}{Lemma}[section]
\newtheorem {corollary}{Corollary}[section]
\newtheorem {proposition}{Proposition}[section]
\theoremstyle{definition}
\newtheorem {definition}{Definition}[section]
\newtheorem {remark}{\sc Remark}[section]
\newtheorem {example}{\sc Example}[section]
\newcommand{\ac}{\scriptstyle \text{\rm !`}}
\subjclass[2010]{Primary 18D50; Secondary 13D10, 17B55, 16W60}
\keywords{Operads, deformation theory, twisting procedure, homotopy associative algebras,  homotopy Lie algebras.}
\thanks{S.S. was supported by the Netherlands Organisation for Scientific Research. B.V. was supported by the IUF and the grant ANR-14-CE25-0008-01 project SAT}
\begin{document}

\title{The twisting procedure}
\date{\today}

\author{Vladimir Dotsenko}
\address{School of Mathematics, Trinity College, Dublin 2, Ireland}
\email{vdots@maths.tcd.ie}

\author{Sergey Shadrin}
\address{Korteweg-de Vries Institute for Mathematics, University of Amsterdam, P. O. Box 94248, 1090 GE Amsterdam, The Netherlands}
\email{s.shadrin@uva.nl}

\author{Bruno Vallette}
\address{Laboratoire Analyse, G\'eom\'etrie et Applications, Universit\'e Paris Nord 13, Sorbonne Paris Cit\'e, CNRS, UMR 7539, 93430 Villetaneuse, France.}
\email{vallette@math.univ-paris13.fr}

\begin{abstract}
This paper provides a conceptual study of the twisting procedure, which amounts to create functorially new differential graded Lie algebras, associative algebras or operads (as well as their homotopy versions) from a  Mau\-rer--Cartan element. On the way, we settle the integration theory of complete pre-Lie algebras in order to describe this twisting procedure in terms of gauge group action. We give a criterion on quadratic operads for the existence of a meaningful twisting procedure of their associated categories of (homotopy) algebras. We also give a new presentation of the twisting procedure for operads \`a la Willwacher and we perform new homology computations of graph complexes. 
\end{abstract}

\maketitle

\newpage 

\epigraph{Il me semble qu'on pourrait tirer de ce travail une confirmation des points de vue suivants : d'abord l'int\'er\^et que pr\'esente l'\'etude de groupes d\'efinis \`a partir d'autres structures poss\'edant plusieurs op\'erations (par exemple des alg\`ebres de Lie). En effet, la simplicit\'e apparente des axiomes des groupes ne fait souvent que masquer une extr\^eme complexit\'e, et d'autres structures, plus riches par le nombre de leurs axiomes, se laissent plus facilement \'etudier. Il conviendrait donc de rechercher si d'autres structures alg\'ebriques pourraient permettre la construction de nouvelles cat\'egories de groupes.}{Michel Lazard, Ph.D. Thesis \cite{Lazard50}}

\setcounter{tocdepth}{1}

\tableofcontents

\chapter*{Introduction}
Seminal works of L. Maurer \cite{Maurer88} and E. Cartan \cite{Cartan04} investigating integrability of Lie algebras to (local) Lie groups effectively introduced what differential geometers now call the Maurer--Cartan $1$-form on principal $G$-bundles. In this language, the \emph{Maurer--Cartan equation} 
\[d\omega+{\textstyle \frac12}[\omega,\omega]=0\] becomes the flatness condition for the connection defined by that form. In general, a flat connection in a vector bundle $E\to M$ allows one to define a ``twisted de Rham differential'' on the sheaf of $E$-valued differential forms; this may be viewed as a de Rham cohomology version of the Riemann--Hilbert correspondence between vector bundles with flat connections and local systems. In the case of the Maurer--Cartan form, one deals with $\mathfrak{g}$-valued differential forms which form a differential graded Lie algebra; the \emph{twisted differential} of that algebra is of the form 
\[d+[\omega,\cdot]\ .\] It is well known that similar formulas are also meaningful for differential graded Lie algebras of more abstract nature, e.g. arising in deformation theory, rational homotopy theory, symplectic geometry, higher algebra, and quantum algebra: in each of those situations, appropriate Maurer--Cartan equations arise, and a solution to such equation, a \emph{Maurer--Cartan element}, can be used to produce a twisted version of the structure.\\

Let us offer a brief recollection of the many different ways Maurer--Cartan elements emerge in these fields.
The leading principle of deformation theory over a field of characteristic $0$ claims that any deformation problem can be encoded by a differential graded Lie algebra, see \cite{Lurie10, Pridham10, Toen17} for a precise statement and  proof. In this case, the considered structures correspond bijectively to the Maurer--Cartan elements. Then, Maurer--Cartan elements in the twisted differential graded Lie algebra correspond to deformations of the original structure. 
In rational homotopy theory, Maurer--Cartan elements of the Lie model \cite{Hinich97, Getzler09} of a space correspond to its points. In this case, the twisting procedure creates a Lie model of the same space but pointed at the given Maurer--Cartan element. 
The construction of the Floer cohomology of Lagrangian submanifolds  in symplectic geometry described in \cite{FOOO09I} is given by first  considering a curved homotopy associative algebra and then twisting it with a Maurer--Cartan element, when one exists, in order to produce a meaningful differential. 
In higher algebra, twisted homotopy Lie algebras together with some notion of $\infty$-morphisms, provide a suitable higher categorical enrichment for the categories of homotopy algebras over an operad \cite{DolgushevRogers17, DolgushevHoffnungRogers14}. 
Finally, in quantum algebra, the twisting procedure for operads themselves allows T. Willwacher to recover M. Kontsevich's graph complex and to prove that its degree $0$ cohomology group are given by the Grothendieck--Teichm\"uller Lie algebra \cite{Willwacher15}. Going even further with this interpretation, T. Willwacher was able to solve the following conjecture: the group of homotopy automorphisms of the little disks operad is isomorphic to the (pro-unipotent) Grothendieck--Teichm\"uller group, see also \cite{Fresse17II}. \\

Let us recall how the twisting procedure works precisely on the level of homotopy associative algebras; the case of homotopy Lie algebras is similarly. This kind of algebraic structure, also known as $\Ai$-algebra, is made up of a graded module $A$ equipped with multilinear operations $m_n : A^{\otimes n} \to A$, for $n\ge 1$, satisfying some relations. For instance, the first map $d\coloneqq m_1$ is a square-zero linear operator and thus a differential on $A$. Differential graded associative algebras corresponds to the case when the operations $m_n\equiv 0$ are trivial for $n\ge 3$. The \emph{Maurer--Cartan equation} is 
\begin{equation}\label{eq:MCeqIntro}
d(a)+\sum_{n \ge 2} m_n(a, \ldots, a)=0\ . 
\end{equation}
The twisted operations are obtained by plugging the element $a$ everywhere: 
\[m_n^a\coloneqq \sum_{r_0, \ldots, r_n\ge 0} \pm  m_{n+r_0+\cdots+r_n}\big(a^{r_0}, -, a^{r_1}, -, \ldots,  - , a^{r_{n-1}}, -,a^{r_n}  \big)\ , \]
for any $n\ge 1$, where the notation $a^r$ stands  for $a^{\otimes r}$. 
They form again an $\Ai$-algebra when the element $a$ is a Maurer-Cartan element.\\

Where does this result comes from conceptually? First, one notices that the Maurer--Cartan equation \eqref{eq:MCeqIntro} is made up of an infinite series and thus requires an underlying complete topology in order to be well defined. To do so, we consider filtered differential graded modules where the topology is defined by a basis of opens of the origin made up of decreasing sub-modules. 
This type of topology finds its source at least in the generalisation of the Lie theory to filtered Lie algebras and groups due to M. Lazard in his Ph.D. thesis \cite{Lazard50}. It became later omnipresent in commutative algebra, see N. Bourbaki \cite{Bourbaki61}, and  then in algebraic geometry, deformation theory, rational homotopy theory, and microlocal analysis. 
In the present paper, we want to treat all the known cases and new ones, of the twisting procedure for many categories of algebras over an operad and for various categories of operads themselves. In order to do so, we develop extensively the symmetric monoidal properties of filtered and complete differentials graded modules. \\

\smallskip

The deformation theory of algebras, including homotopy algebras, over an operad is faithfully encoded in a pre-Lie algebra of convolution type, see \cite[Chapter~10]{LodayVallette12}. The notion of a pre-Lie algebra sits between that of an associative algebra and of a Lie algebra: any associative algebra is a pre-Lie algebra and the skew-symmetrisation of the pre-Lie binary product renders a Lie bracket.
In \cite{DotsenkoShadrinVallette16}, we showed, under a strong weight graded assumption, that pre-Lie algebras can be integrated. In the present paper, we show that this integration theory extends to the complete setting. We actually have very few new arguments to give since the symmetric monoidal properties, mentioned in the above paragraph,  ensure that the operadic computations performed in loc. cit. hold in the complete setting as well. This part can also be seen as a generalisation of M. Lazard integration theory of complete Lie algebras \cite{Lazard50} to complete pre-Lie algebras. 
This treatment gives rise to the biggest \emph{deformation gauge group} of algebraic structures modelled by operads that we are aware of. \\

Already, the action of the simplest elements of this deformation gauge group is rich: it produces the twisting procedure as follows. 
The notion of a \emph{curved} $\Ai$-algebra is defined like the notion of $\Ai$-algebra but with one more ``operation'', the \emph{curvature} $m_0 : \k\cong A^{\otimes 0} \to A$, which amounts to the data of a particular element of $A$. The convolution pre-Lie algebra which encodes curved $\Ai$-algebras is an extension of the 
convolution pre-Lie algebra which encodes $\Ai$-algebras: the latter is equal to  the product of the former with $A$.
The elements of (the first layer of the filtration of) $A$ thus form a sub-group (with the addition) of the deformation gauge group of a curved $\Ai$-algebra, so they can act on this structure to induce functorially a new ones: the formulas are the ones of the twisting procedure. Any such twisted curved $\Ai$-algebra is actually an $\Ai$-algebra if and only if the twisted curvature $m_0^a$ vanishes, that is when $a$ satisfied the Maurer--Cartan equation. This gauge group interpretation of the twisting procedure allows us to reprove in straightfoward  and short way its various properties, notably the crucial ones related to complete curved $\Li$-algebras used in deformation theory , like in \cite{Getzler09, DolgushevRogers15}. \\

This result completes the programme undergone by the authors since \cite{DotsenkoShadrinVallette16}, where we try to describe ``all'' the functorial constructions of homotopy algebras by means of the deformation gauge group action. The Koszul hierarchy and the homotopy transfer theorem were treated in loc. cit., see also \cite{Markl15} for the former one. This programme agrees very much with the ideas of M. Lazard mentioned in the epigraph to this manuscript: one purpose of the present paper is to encompass some complicated and lengthy parts of the operadic calculus in a simple and compact way using group theory. \\

Why can one twist the differential graded Lie algebras and associative algebras and what about the other types of algebras? The above-mentioned conceptual understanding of the twisting procedure also allows us to give a criterion on a given quadratic operad for its categories of homotopy algebras to admit a meaningful twisting procedure. Roughly speaking, a category of homotopy algebras over a quadratic operad can be \emph{twisted} if and only if the Koszul dual category of algebras admits a coherent notion of \emph{unit}. Categories of homotopy Lie algebras and associative algebras are of course proved to be twistable, but the category of (homotopy) pre-Lie algebra is not. We present a new example: the category of homotopy permutative algebras is shown to be twistable, with detailed formulas.\\

The sum or the product of the components of an operad produces a pre-Lie algebra. However, even if pre-Lie algebras cannot be twisted in general, operads themselves can be twisted by their Maurer--Cartan elements. 
One can use this in order to describe the operads which encode homotopy Lie algebras or homotopy associative algebras \emph{together with} a Maurer--Cartan element. Pursuing in this direction, this shows how one can encode their twisting procedure in a precise operadic way. 
This theory was introduced by T. Willwacher in his seminal work on the Grothendieck--Teichm\"uller Lie algebra and Kontsevich's graph complexes \cite{Willwacher15}. It was later studied in details by V. Dolgushev and T. Willwacher in \cite{DolgushevWillwacher15}, see also the survey by V. Dolgushev and C. Rogers \cite{DolgushevRogers12}.
However, we believe that these last two references are great for the experts but they might not be that accessible to a wide audience since the core of the definition contains several technical points. 
Our purpose here is  to provide the literature with a gentle introduction, based on a new presentation, to this key notion in order to popularise it. \\

We conclude this study with new computations of the cohomology groups of twisted operads: we treat the case of the  classical operads encoding respectively Gerstenhaber and Batalin--Vilkovisky algebras and the case of their   nonsymmetric analogous operads introduced recently in \cite{DotsenkoShadrinVallette15}. This latter cases give rise to interesting  non-commutative graph complexes.

\subsection*{\sc Layout}
In \cref{sec:OptheoyFilMod}, we consider the categories of filtered and then complete differential graded modules; we establish their various symmetric mo\-no\-id\-al properties in order to develop their operadic theory. 
In \cref{sec:TopoDefTh}, we settle the integration theory of complete differential graded left-unital pre-Lie algebras; this gives rise to a gauge group which is shown to govern the deformation theory of homotopy algebras over a non-necessarily augmented operads. 
\cref{sec:GaugeTwist} contains the easiest application of the previous section: the action of the arity $0$ elements of the deformation gauge group is shown to give the twisting procedure for (shifted or not, curved or not, homotopy or not) differential graded Lie algebras or associative algebras. From this conceptual interpretation, we easily derive ``all'' the properties of the twisting procedure. In \cref{subsec:TwistableAlg}, we give a criterion on quadratic operads for their categories of homotopy algebras to admit a meaningful twisting procedure. 
In \cref{sec:TwNsOp}, we give a quick treatment of T. Willwacher's twisting procedure for operads using a new point of view. 
Finally, \cref{sec:Computations} provides full computations of the cohomology groups of twisted operads. 

\subsection*{\sc Conventions}
We basically work over a ring $\k$, but we add extra properties when needed: at some point in 
\cref{sec:TopoDefTh}, we require $\k$ to be a field of characteristic $0$ and in \cref{sec:GaugeTwist} and \cref{sec:TwNsOp} we need that the ring $\k$ contains $\mathbb{Q}$ and to work with flat $\k$-modules. 
The results of \cref{subsec:TwGerst} and \cref{subsec:TwBV}  hold over a field of characteristic $0$.  We use homological degree convention, for instance differentials have degree $-1$, except in \cref{sec:Computations} where we use cohomological degree convention. 
The various operadic conventions and notations come from \cite{LodayVallette12}. For instance, we use ``ns operads'' to mean ``nonsymmetric'' operads. 
In the present text,  the term  ``cooperad'' covers the algebraic structure defined by \emph{partial} or \emph{infinitesimal decomposition maps} $\Delta_{(1)} : \calC \to \calC\, {\circ}_{(1)}\, \calC$, see \cite[Section~6.1.4]{LodayVallette12}.  The upshot of such maps is denoted by  $\mu \circ_i \nu$, where this convention stands for the 2-vertices tree made up of the corolla $\nu$ grafted at the $i^{\textrm{th}}$-leaf of the corolla $\mu$. 

\subsection*{\sc Acknowledgements}
It is pleasure to thank Joan Bellier-Mill\`es, Ricardo Campos, Joana Cirici, Gabriel Drummond-Cole,  Daniel Robert-Nicoud, Jim Sta\-sh\-eff, and Tho\-mas Willwacher  for interesting discussions. 
The authors would like to thank 
the University of Amsterdam, 
Trinity College Dublin,
the University Nice Sophia Antipolis, the University Paris 13, and the 
 Max Planck Institute for Mathematics in Bonn for the excellent working conditions during the preparation of this paper. 
B.V. expresses his appreciation to the Laboratoire J.A. Dieu\-don\-n\'e of the University Nice Sophia Antipolis for the particularly generous hospitality over the last two years. 
\newpage
\chapter{Operad theory for filtered and complete modules}\label{sec:OptheoyFilMod}
In algebra, one  has to consider infinite series on many occasions. 
In order to make sense, these formulas require an extra topological assumption on the underlying  module. 
In this section, we first recall the notion of filtered and then complete modules which provides us with such a complete topology. 
This type of topology considered for instance in algebraic geometry, deformation theory, rational homotopy theory and microlocal analysis,  finds its source in the generalisation of the Lie theory to filtered Lie algebras and groups due to M. Lazard in his PhD thesis \cite{Lazard50}. It became later omnipresent in commutative algebra, see N. Bourbaki \cite{Bourbaki61}.

In this section, we establish the various properties for the monoidal structures on the categories of filtered modules and complete modules and for their associated monoidal functors. The main goal is to develop the  theory of  operads and operadic algebras in this context. 
First, this allows us to compare the various categories of filtered and complete algebras and recover conceptually the various  known definitions of filtered complete algebras that one can find in the literature. Then, as we will see in the next sections, this allows us to get for free operadic results on the complete setting since the previously performed operadic calculus still hold in this generalised context  by the above-mentioned monoidal properties. 

This present exposition shares common points with that of 
P. Deligne \cite[Section~1]{Deligne71}, and that of M. Markl \cite[Chapter~$1$]{Markl12}; it  is close to B. Fresse treatment \cite[Section~$7.3$]{Fresse17I}, though we did not find there all the results  that we need in the present paper and in further ones like \cite{Robert-NicoudVallette18}.

\section{Filtered algebras}

\begin{definition}[Filtered module]
A \emph{filtered  module} $(A,\F)$ is a $\k$-module $A$ equipped with a filtration 
$$A=\F_0 A \supset \F_1 A \supset \F_2 A \supset \cdots \supset \F_k A \supset F_{k+1}A \supset \cdots$$
made up of submodules.
\end{definition}

This condition implies that the subsets $\{x+\F_k A\, | \,  x\in A, k\in \NN\}$
form  a neighbourhood basis of a first-countable topology on $A$, which is thus a Fr\'echet--Urysohn space and so a sequential space. Since this topology is induced by submodules, any filtered module is trivially a topological module, that is  the scalar multiplication and the sum of elements are continuous maps, when one considers the discrete topology on the ground ring $\k$.

\begin{example}
Let $I$ be an ideal of the ring $\k$ and the $A$ be a $\k$-module. The submodules $\F_k A\coloneqq I^k A$, for $k\ge 0$, form a filtration of $A$ and the associated topology is called the \emph{$I$-adic topology} of $A$. 
\end{example}

\begin{lemma}\label{lem:Fkclosed}
The subsets $\F_k A$, for $k\in \NN$, are closed with respect to this topology. 
\end{lemma}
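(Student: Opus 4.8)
The plan is to show that the complement $A \setminus \F_k A$ is open. The key observation is that the filtration is by \emph{submodules}: if $x \notin \F_k A$, then I claim that the basic open neighborhood $x + \F_k A$ is entirely contained in $A \setminus \F_k A$. Indeed, suppose some $y \in x + \F_k A$ also lay in $\F_k A$; then $y = x + z$ with $z \in \F_k A$, whence $x = y - z \in \F_k A$ since $\F_k A$ is closed under subtraction, contradicting the choice of $x$. Therefore every point of $A \setminus \F_k A$ has an open neighborhood contained in $A \setminus \F_k A$, so this set is open and $\F_k A$ is closed.

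Another way to phrase the same argument, which I would mention as a remark: $\F_k A$ is a union of cosets of itself — trivially $\F_k A = 0 + \F_k A$ — and it is the complement of the union of all the \emph{other} cosets $x + \F_k A$ with $x \notin \F_k A$, each of which is a basic open set. Hence $\F_k A$ is closed as the complement of an open set. In fact this shows more: each $\F_k A$ is simultaneously open and closed, i.e. clopen, since it is itself a basic open neighborhood of the origin (namely the one indexed by $k$).

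I do not anticipate any real obstacle here; the statement is essentially a restatement of the fact that in a topological group whose topology is generated by a neighborhood basis of open subgroups at the identity, every such subgroup is closed. The only thing to be slightly careful about is to invoke the correct structure: we use that $A$ is a topological $\k$-module with the topology described just above the lemma, that $\{x + \F_k A\}$ is a neighborhood basis, and that each $\F_k A$ is an additive subgroup. All of these were established in the discussion preceding the statement, so the argument is immediate. If desired, one could instead argue via nets/sequences — if $x_n \to x$ with $x_n \in \F_k A$, then eventually $x - x_n \in \F_k A$, forcing $x = x_n + (x - x_n) \in \F_k A$ — but the clopen argument is cleaner and avoids any appeal to first-countability.
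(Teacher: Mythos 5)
Your proof is correct, but it takes a different route from the paper's. The paper argues sequentially: given a sequence $\{x_n\}\subset \F_k A$ converging to $x$, eventually $x_n - x \in \F_k A$, and since $\F_k A$ is a submodule this forces $x \in \F_k A$; this suffices because, as noted just before the lemma, the topology is first-countable and hence sequential, so sequential closedness implies closedness. You instead show directly that the complement of $\F_k A$ is open, using that $\F_k A$ is a union of basic open cosets and its complement is the union of the remaining cosets. Your argument is marginally more robust — it does not invoke sequentiality at all and it yields the stronger conclusion that each $\F_k A$ is clopen, which is the standard fact about open subgroups of topological groups that you correctly identify. The paper's argument is shorter on the page but leans on the Fr\'echet--Urysohn observation made in the preceding paragraph. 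Both are complete; your secondary remark sketching the sequential version essentially reproduces the paper's proof.
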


\begin{proof}
Let $\left\{x_n\in \F_k A\right\}_{n\in \NN}$ be a sequence converging to $x\in A$. There exists $N\in \NN$ such that, for all $n\ge N$, we have $x_n-x\in \F_k A$. 
Since the element $x_N$ lives in $\F_k A$, which is a submodule of $A$, this implies that $x$ lives in $\F_k A$ too. 
\end{proof}

Let $(A, \F)$ and $(B, \G)$ be two filtered modules. We consider the following induced filtration on the mapping space $\Hom(A,B)$: 
$$
 \calF_k \Hom(A, B)\coloneqq\left\{
f :A \to B\ | \ f(\F_n A)\subset \G_{n+k} B\ , \ \forall n\in \NN
\right\}\ .
$$
This filtration endows $\hom(A,B)\coloneqq\calF_0 \Hom(A, B)$ with a filtered module structure. 

\begin{remark}
Notice that any map in $\calF_k \Hom(A, B)$ is continuous with respect to the associated topologies, but it is not true that any continuous map is of this  form. 
\end{remark}

\begin{definition}[Filtered map]
A \emph{filtered map} $f : (A, \F) \to (B, \G)$ between two filtered modules is an element $f\in \hom(A,B)$, that is a linear  map  preserving the respective filtrations: $f(\F_n A)\subset \G_{n} B$, for all $n\in\NN$. 
\end{definition}

The induced filtration on  the tensor product of two filtered modules is given by 
\[\calF_k (A\otimes B)\coloneqq\sum_{n+m=k} \Im \big(\F_n A \otimes \G_m B\to A\otimes B\big)\ .\]

\begin{lemma}\label{lem:FilModSymCat}
The category of filtered modules, with filtered maps equipped with the aforementioned internal hom and filtration on  tensor products forms a bicomplete closed symmetric  monoidal  category, whose monoidal product preserves colimits. 
\end{lemma}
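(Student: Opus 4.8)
The plan is to verify each of the required structures on the category $\mathsf{FilMod}$ of filtered $\k$-modules separately, treating bicompleteness, the closed symmetric monoidal structure, and colimit-preservation of the tensor product in turn. The underlying forgetful functor to $\k$-modules will be the main organizing tool: limits and colimits should be computed on underlying modules and then equipped with an appropriate filtration, so the bulk of the work is checking that the filtration one writes down genuinely gives the universal property in $\mathsf{FilMod}$.

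First I would establish bicompleteness. For a diagram $\{(A_i,\F)\}$, the limit is $\lim_i A_i$ with filtration $\F_k(\lim A_i) \coloneqq \lim_i \F_k A_i$ (an intersection of submodules inside the product), and the colimit is $\colim_i A_i$ with $\F_k(\colim A_i)$ defined as the image of $\colim_i \F_k A_i$ in $\colim_i A_i$. One checks that the structure maps are filtered and that a cone/cocone of filtered maps factors uniquely through a filtered map; this is routine because a linear map into a limit (resp.\ out of a colimit) is filtered iff its components are, given the chosen filtrations. Concretely it suffices to treat products, equalizers, coproducts, and coequalizers.

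Next I would treat the closed symmetric monoidal structure. The unit is $\k$ placed in filtration degree $0$ (with $\F_k\k = 0$ for $k\geq 1$); the symmetry, associativity, and unit constraints are the underlying $\k$-module isomorphisms, and one must check each is a filtered map in both directions using the defining formula $\calF_k(A\otimes B) = \sum_{n+m=k}\Im(\F_n A\otimes \G_m B \to A\otimes B)$ — these are short computations comparing the two filtrations term by term. For closedness, the adjunction $\Hom_{\mathsf{FilMod}}(A\otimes B, C) \cong \Hom_{\mathsf{FilMod}}(A, \hom(B,C))$ is the key point: one takes the underlying tensor-hom adjunction of $\k$-modules and verifies that $f\colon A\otimes B\to C$ is filtered iff its adjunct $\tilde f\colon A\to \hom(B,C)$ is, which unwinds to the identity $f(\F_n A\otimes \G_m B)\subset \F_{n+m}C$ for all $n,m$ matching exactly $\tilde f(\F_n A)\subset \calF_n\Hom(B,C)$. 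More generally one checks the same equivalence at each filtration level $k$, so that the internal hom $\hom(B,-)$ is right adjoint to $-\otimes B$ as filtered functors.

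Finally, colimit-preservation of $-\otimes B$ follows formally once closedness is in hand: a functor with a right adjoint preserves all colimits. (Alternatively one can verify directly that $(\colim_i A_i)\otimes B$ with its filtration satisfies the universal property, using that tensoring commutes with colimits of underlying modules and that the "image" filtration on a colimit is compatible with the "$\sum\Im$" filtration on a tensor product, but invoking the adjoint is cleaner.) The main obstacle is none of the universal properties themselves — those are standard — but rather the bookkeeping needed to confirm that the particular filtrations chosen on limits, colimits, tensor products, and internal homs are mutually compatible, in particular that the tensor-hom adjunction is filtration-graded and not merely an isomorphism of underlying modules; this is where one must be careful that $\Im(\F_nA\otimes\G_mB\to A\otimes B)$ behaves well under the colimit filtration, and this is precisely the kind of compatibility that the rest of the chapter relies on.
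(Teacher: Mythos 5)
Your proposal is correct and follows essentially the same route as the paper: explicit filtrations on (co)limits reduced to (co)products and (co)equalizers, a check that the monoidal structure maps and the tensor-hom adjunction respect the filtrations degree by degree, and preservation of colimits by the monoidal product. The only cosmetic difference is that you deduce the last point formally from closedness (left adjoints preserve colimits), whereas the paper reads it off directly from the explicit descriptions of coproducts and cokernels --- both are valid, and you note the direct verification as an alternative.
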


\begin{proof}
Given a collection $\left(A^i, \F^i\right)_{i\in \mathcal{I}}$ of filtered modules, their coproduct is given by $A\coloneqq\bigoplus_{i\in \mathcal{I}} A^i$  with filtration 
\[ \F_k A\coloneqq\left\{a_{i_1}+\cdots+a_{i_n}\ | \ a_{i_j}\in \F^{i_j}_k A^{i_j}, \ \forall j\in \{1, \ldots, n\}\right\} 
\]
and their product is given by $B\coloneqq\prod_{i\in \mathcal{I}} A^i$ with filtration 
\[
\G_k B\coloneqq\left\{(a_{i})_{i\in \mathcal{I}}\ | \ a_{i}\in \F^i_k A^{i}, \ \forall i\in \mathcal{I}\right\} \ .
\]
Cokernels of filtered maps $f : (A,\F) \to (B, \G)$ are given by $p : B \twoheadrightarrow B/\Im f$ equipped with the  filtration $p(\G_k)\cong\G_k B/(\Im f \cap \G_k B)$.
Since this category is (pre)additive, all coequalizers of pairs $(f,g)$ are given by cokernels of differences $f-g$. So this category admits all colimits. 
In the same way, kernels of filtered maps $f : (A,\F) \to (B, \G)$ are given by $f^{-1}(0)$ with filtration $f^{-1}(0) \cap \F_k B$.
Since this category is (pre)additive, all equalizers of pairs $(f,g)$ are given by kernels of differences $f-g$. So this category admits all limits. 
Notice that this category, though additive, fails to be Abelian: in general, maps do not have categorical images. 

Finally, it is straightforward to check that the various structure maps of the (strong) symmetric closed monoidal  category of modules are filtered. From the above description of coproducts and cokernels, it is easily seen that the monoidal product preserves them, so it preserves all colimits.
\end{proof}

The properties of Lemma~\ref{lem:FilModSymCat} ensures that one can develop  operad theory  in the symmetric mo\-no\-id\-al category of filtered modules, see \cite[Chapter~$5$]{LodayVallette12}. For instance, an operad in this context will be called a  \emph{filtered operad}. 

\begin{example}
For a filtered module $(A, \F)$, the associated \emph{filtered endormorphism operad} 
  is the part of the endomorphism operad of $A$ made up of filtered maps, that is 
$$\mathrm{end}_A\coloneqq\left\{\hom(A^{\otimes n}, A)\right\}_{n\in \NN}\ .$$
An element of the filtered endomorphism operad is thus a linear map $f : A^{\otimes n} \to A$ satisfying 
$$      
f\left(\F_{k_1} A \otimes \cdots \otimes \F_{k_n} A\right) \subset \F_{k_1+\cdots+k_n} A
\ . $$
The full induced filtration on its underlying collection is given by
$$f\in \calF_k \eend_A(n)\ \   \text{when} \ \      
f\left(\F_{k_1} A \otimes \cdots \otimes \F_{k_n} A\right) \subset \F_{k_1+\cdots+k_n+k} A
\ .$$
\end{example}

\begin{definition}[Filtered $\calP$-algebra]\label{def:FilAlgebras}
Let $\calP$ be a filtered operad and let $(A, \F)$ be a filtered module. A \emph{filtered $\calP$-algebra structure} on $(A,\F)$ amounts to the data of a filtered morphism of operads 
$$\calP \to \eend_A\ .$$
\end{definition}

We denote the category of modules by $\mathsf{Mod}$ and that of filtered modules by $\mathsf{FilMod}$. 

\begin{proposition}\label{prop:1Adj}
The  functor $\sqcup \, : \,  \mathsf{FilMod}\to \mathsf{Mod}$, which forgets the filtration, admits a left adjoint full and faithful functor 
$$\vcenter{\hbox{
\begin{tikzcd}[column sep=1.2cm]
\mathrm{Dis}  \ : \ 
\mathsf{Mod} 
\arrow[r, harpoon, shift left=1ex, "\perp"']
&
\arrow[l, harpoon,  shift left=1ex]
\mathsf{FilMod} 
\ : \ \sqcup 
\end{tikzcd}
}}$$
given by the trivial filtration: 
$$\mathrm{Dis}(A)\coloneqq(A, A=\F^{\mathrm{tr}}_0 A\supset 0=\F_1 ^{\mathrm{tr}}A=\F_2^{\mathrm{tr}} A= \cdots) \ ,$$
which induces the discrete topology.
These two functors are strictly symmetric monoidal. 
\end{proposition}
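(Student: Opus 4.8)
The plan is to unwind the definitions in all three cases, using throughout that the trivial filtration $\F^{\mathrm{tr}}$ is concentrated in degree $0$. First, for the adjunction, I would fix a $\k$-module $A$ and a filtered module $(B,\G)$ and describe the set of filtered maps $\mathrm{Dis}(A)\to(B,\G)$: such a map is a linear map $f\colon A\to B$ with $f(\F^{\mathrm{tr}}_n A)\subset\G_n B$ for all $n\in\NN$, which for $n=0$ reads $f(A)\subset B$ and for $n\ge 1$ reads $f(0)\subset\G_n B$, so both conditions are vacuous. Hence $\Hom_{\mathsf{FilMod}}(\mathrm{Dis}(A),(B,\G))=\Hom_{\mathsf{Mod}}(A,\sqcup(B,\G))$, visibly naturally in both variables, which is precisely the adjunction $\mathrm{Dis}\dashv\sqcup$. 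The unit $\eta_A\colon A\to\sqcup\mathrm{Dis}(A)$ is then the identity on the underlying module, and the counit $\epsilon_{(B,\G)}\colon\mathrm{Dis}\sqcup(B,\G)\to(B,\G)$ is the identity linear map, which is filtered because its source carries the trivial filtration ($\F^{\mathrm{tr}}_0 B=B\subset\G_0 B$ and $\F^{\mathrm{tr}}_n B=0\subset\G_n B$ for $n\ge1$).

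Full faithfulness of $\mathrm{Dis}$ is then immediate, since the unit $\eta_A=\id_A$ is an isomorphism for every $A$; equivalently, the same vacuity observation shows directly that the map $\Hom_{\mathsf{Mod}}(A,B)\to\Hom_{\mathsf{FilMod}}(\mathrm{Dis}(A),\mathrm{Dis}(B))$ is a bijection. For the monoidal statements, the one real computation is the induced filtration on a tensor product of discrete modules:
\[
\calF_k\big(\mathrm{Dis}(A)\otimes\mathrm{Dis}(B)\big)=\sum_{n+m=k}\Im\big(\F^{\mathrm{tr}}_n A\otimes\F^{\mathrm{tr}}_m B\to A\otimes B\big),
\]
which equals $A\otimes B$ for $k=0$ (only the term $n=m=0$ contributes) and $0$ for $k\ge1$ (every summand has $n\ge1$ or $m\ge1$, so $\F^{\mathrm{tr}}_n A=0$ or $\F^{\mathrm{tr}}_m B=0$). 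Thus $\mathrm{Dis}(A)\otimes\mathrm{Dis}(B)=\mathrm{Dis}(A\otimes B)$ on the nose. A parallel, even easier, computation identifies the monoidal unit of $\mathsf{FilMod}$ — whose underlying module is forced to be $\k$ — as carrying the trivial filtration, hence as $\mathrm{Dis}(\k)$, so that $\mathrm{Dis}$ also preserves the unit strictly.

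Since all of these identifications are literally the identity on underlying $\k$-modules, and the associativity, unit and symmetry constraints of $\mathsf{FilMod}$ are, by \cref{lem:FilModSymCat}, those of $\mathsf{Mod}$ (which happen to be filtered isomorphisms), the coherence diagrams expressing that $\mathrm{Dis}$ is a \emph{strict} symmetric monoidal functor commute trivially; and $\sqcup$ is strict symmetric monoidal for the tautological reason that it only forgets structure, i.e.\ $\sqcup((A,\F)\otimes(B,\G))=A\otimes B=\sqcup(A,\F)\otimes\sqcup(B,\G)$ and $\sqcup\mathrm{Dis}(\k)=\k$. No step is genuinely difficult; the only points deserving a moment's care are verifying that the counit is a filtered map and correctly pinning down the monoidal unit of $\mathsf{FilMod}$, so that one may legitimately claim strictness rather than mere lax monoidality.
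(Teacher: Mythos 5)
Your proposal is correct and fills in exactly the verifications the paper dismisses with ``It is straightforward to check the various properties'': the vacuity of the filtration condition on maps out of $\mathrm{Dis}(A)$, the triviality of the induced filtration on $\mathrm{Dis}(A)\otimes\mathrm{Dis}(B)$, and the identification of the monoidal unit. There is no divergence in approach, only in the level of detail supplied.
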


\begin{proof}
It is straightforward to check the various properties.
\end{proof}

The monoidal part of this proposition ensures that the underlying collection, without the filtration, of any filtered operad is an operad. For instance, the filtered endomorphism operad $\eend_A$ is a strict suboperad of the endomorphism operad $\End_A$. In the other way round, any  operad can be seen as a filtered operad equipped with the trivial filtration, that is with discrete topology. 

\begin{example}
Definition~\ref{def:FilAlgebras}, applied to the ns operad $\As$ of associative algebras, produces the classical notion of filtered associative algebra, see \cite[Section~I.3]{Lazard50}, \cite[Chapter~$3$]{Bourbaki61}, or \cite[Appendix~A.1]{Quillen69}  for instance. In the case of the operad $\Lie$, we recover the notion of filtered Lie algebras of Lazard \cite{Lazard50}. 
All the operadic constructions therefore hold in this setting. For instance, the morphism of operads $\Lie \to \As$, viewed as a morphism of filtered operads produces automatically the universal enveloping Lie algebra in the filtered world by \cite[Section~$5.2.12$]{LodayVallette12}.
\end{example}

\begin{definition}[Filtered differential graded module]
A \emph{filtered differential graded module} is a differential graded module in the category of filtered modules. Such a data amounts to a collection $\{A_n\}_{n\in \mathbb{Z}}$ of filtered modules equipped with a square-zero degree $-1$ filtered map $d$.
\end{definition}

All the aforementioned results hold \textit{mutatis mutandis} for filtered dg modules. For instance, this operadic definition allows us to recover naturally the following notions of filtered homotopy algebras structures present in the literature. 

\begin{example}
A \emph{filtered curved $\Ai$-algebra} structure on a filtered graded module $(A, \F)$ amounts to the data of curved $\Ai$-algebra structure $(m_0, m_1, m_2, \allowbreak \cdots)$ on $A$ according to Definition~\ref{def:CurvedAinfty} such that the various structure maps satisfy 
$$m_n(\F_{k_1} A, \ldots, \F_{k_n} A)\subset F_{k_1+\cdots+k_n} A\ . $$
This definition corresponds to  the one given by Fukaya--Oh--Ohta--Ono   in \cite{FOOO09I}, with the only difference that  these authors consider  modules  with the so called the energy filtration, indexed by non-negative real numbers $\RR^+$. See \cref{subsec:OpInt} for more operadic details.

The present operadic definition of a filtered (shifted) curved $\Li$-algebra given in \cref{def:csLiAlg} recovers the usual one, which is used for instance by Dolgushev--Rogers in \cite{DolgushevRogers15, DolgushevRogers17} (with the further constraint $\F_0 A=\F_1 A$).
\end{example}

\section{Complete algebras}

Any decreasing filtration $A=\F_0 A \supset \F_1 A \supset \cdots$ 
induces a sequence of surjective maps, 
$$ \xymatrix{
0=A/\F_0 A & \ar@{->>}[l]_(0.4){p_0}
A/\F_1 A& \ar@{->>}[l]_(0.45){p_1}
A/\F_2 A & \ar@{->>}[l]_(0.45){p_2} 
A/\F_3 A&  \ar@{->>}[l]\cdots} \ ,$$
where $p_k$ is the reduction modulo $\F_k A$. Its limit, denoted by 
$$\wA\coloneqq\lim_{k\in \NN} A/\F_k A \ ,$$
is made up of elements of the following form 
$$ \wA=\big\{(x_0,  x_1, x_2, \ldots)\ | \ x_k\in A/\F_k, \ p_k(x_{k+1})=x_k\big\}\ .$$
If we denote the structure maps by $q_k : \wA \twoheadrightarrow A/\F_k A$, $(x_0,  x_1, x_2, \ldots)\mapsto x_k$, then the limit module $\wA$ is endowed with the following canonical filtration 
 $\widehat{\F}_k \wA \coloneqq\ker q_k=\{
 (0, \ldots, 0, x_{k+1}, x_{k+2}, \ldots)\}$. With the associated topology, it forms a complete Hausdorff space. 
 
Let us denote by $\pi_k : A \twoheadrightarrow A/\F_k A$ the canonical projections. The canonical map $\pi : A \to \wA$, $x\mapsto (\pi_0(x), \pi_1(x), \pi_2(x),\ldots)$, associated to them, is filtered and thus continuous. 

\begin{definition}[Complete module]
A \emph{complete module} is a filtered module $(A, \F)$
 such that the canonial morphism
$$\pi\ : \ A \stackrel{\cong}{\longrightarrow} \wA=\lim_{k\in \NN} A/\F_k A $$
is an isomorphism. 
\end{definition}

The kernel of the canonical map $\pi : A \to \wA$ is equal to the intersection of all the sub-modules $\F_k A$. Therefore, it is a monomorphism if and only if 
$$\cap_{k\in\NN} \F_k A=\{0\}\ ;$$ 
 this condition is equivalent  for the associated topology on $A$ to be Hausdorff. 
The canonical map $\pi$ is an epimorphism if and only if the  associated topological is complete, which explains the terminology chosen here.  When the canonical map is an isomorphism, it is an homeomorphism since $\pi^{-1}$ is filtered and thus continuous. 

\begin{remark}
One defines the \emph{valuation}  of an element $x\in A$ by 
$$\nu(x)\coloneqq k \quad \text{when} \quad x\in  \F_k A\backslash \F_{k+1} A\ .$$
When the associated topology is Hausdorff, we set, by convention, that $\nu(0)\coloneqq+\infty$, and the valuation is well defined. 
In this case, the topology is metric, with the distance given by 
$$d(x,y)=\frac{1}{\nu(y-x)+1}\ .$$
As a consequence filtered maps are uniformly continuous and the canonical map $\pi : A \hookrightarrow \wA$ makes $\wA$ into  the completion of $A$: $\wA$ is complete, contains $A$ as a dense subset, and is  unique up isometry for such a property.
\end{remark}

\begin{example}
The toy model here is the ring of polynomials $\k[X]$ with its $X$-adic filtration $\F_k\, \k[X]\coloneqq X^k \, \k[X]$. Its topology is Hausdorff but not complete. Its completion is the ring of formal power series $\widehat{\k[X]}\cong\k[[X]]$.
\end{example}

In any complete module, convergent series have the following simple form. 

\begin{lemma}\label{lem:Conv}
Let $(A, \F)$ be a complete module. The series $\sum_{n\in \NN} x_n$ associated to a sequence of elements $\{x_n\}_{n\in \NN}$ is convergent if and only if the sequence $x_n$ converges to $0$. 
\end{lemma}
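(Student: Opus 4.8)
The plan is to prove both implications in Lemma~\ref{lem:Conv} by unwinding the definitions of convergence of a series and of the topology on a complete module, using that the $\F_k A$ form a neighbourhood basis of $0$ and that $A$ is Hausdorff and complete.

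First I would recall that, by definition, the series $\sum_{n \in \NN} x_n$ converges if and only if the sequence of partial sums $s_N \coloneqq \sum_{n=0}^{N} x_n$ converges in $A$. For the easy direction, suppose $\sum_{n \in \NN} x_n$ converges; then $(s_N)$ is a Cauchy sequence (in the metric, or directly: for every $k$ there is $M$ with $s_N - s_{N'} \in \F_k A$ for all $N, N' \ge M$), and since $x_{N+1} = s_{N+1} - s_N$, taking $N' = N+1$ shows $x_{N+1} \in \F_k A$ for all $N \ge M$, i.e. $x_n \to 0$.

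For the converse, which is the main point, assume $x_n \to 0$. I would show directly that $(s_N)$ is a Cauchy sequence: fix $k \in \NN$; since $x_n \to 0$ there is $M$ such that $x_n \in \F_k A$ for all $n > M$, and then for $N' > N \ge M$ we have $s_{N'} - s_N = x_{N+1} + \cdots + x_{N'} \in \F_k A$ because $\F_k A$ is a submodule. Hence $(s_N)$ is Cauchy. Because $(A, \F)$ is complete — so the canonical map $\pi : A \to \wA$ is an isomorphism — every Cauchy sequence converges: concretely, the compatible system of classes $\bigl(\pi_k(s_N)\bigr)$ stabilises in each quotient $A/\F_k A$ (once $N \ge M_k$ the class $\pi_k(s_N)$ is constant), defining an element of $\wA = \lim_k A/\F_k A$, whose preimage under $\pi$ is the limit of $(s_N)$. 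Therefore $\sum_{n \in \NN} x_n$ converges.

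The only mild subtlety — the part I would be most careful about — is making the completeness argument precise: a priori completeness as phrased here is the statement that $\pi$ is an isomorphism of filtered modules, so I must translate "Cauchy sequence" into "compatible system in the inverse limit" rather than invoke metric completeness directly (the metric description in the preceding remark requires the Hausdorff hypothesis, which does hold here but it is cleaner to argue via the inverse limit). Concretely: given the Cauchy sequence $(s_N)$, for each $k$ choose $M_k$ with $s_N - s_{N'} \in \F_k A$ for $N, N' \ge M_k$; set $y_k \coloneqq \pi_k(s_{M_k}) \in A/\F_k A$. One checks the $y_k$ are compatible under the maps $p_k$, hence define $y \coloneqq (y_0, y_1, \ldots) \in \wA$, and then $x \coloneqq \pi^{-1}(y)$ satisfies $x - s_N \in \F_k A$ for $N \ge M_k$, i.e. $s_N \to x$. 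Uniqueness of the limit follows from $\cap_k \F_k A = \{0\}$. Everything else is a routine manipulation of finite sums inside submodules, so no genuine obstacle arises beyond this bookkeeping.
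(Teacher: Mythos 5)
Your proof is correct and follows essentially the same route as the paper: both directions reduce to the observation that the partial sums form a Cauchy sequence precisely because each $\F_k A$ is a submodule, and then completeness is invoked. The only difference is that you spell out the passage from ``Cauchy'' to ``convergent'' via the inverse-limit description of $\wA$, a step the paper takes for granted; this is sound bookkeeping but not a different argument.
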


\begin{proof}
We classically consider the sequence $X_n\coloneqq\sum_{k=0}^n x_k$, for $n\in \NN$. If the sequence $\{X_n\}$ converges, then it is a Cauchy sequence and so $x_n=X_n-X_{n-1}$ tends to $0$. In the other way round, if the sequence $\{x_n\}_{n\in \NN}$ tends to $0$, this means 
$$\forall k\in \NN,\,  \exists N\in \NN, \, \forall n\ge N, \ x_n\in \F_k A\ . $$
Since $\F_k A$ is a submodule of $A$, we have 
$$\forall k\in \NN,\, \exists N\in \NN,\, \forall m\ge n\ge N, \ X_m-X_n=x_m+ \cdots+x_{n+1}\in \F_k A\ , $$
that is the sequence $\{X_n\}$ is Cauchy is thus convergent. 
\end{proof}

We can consider the following first definition of a complete algebra over an operad. We will see later on at Theorem~\ref{thm:CompleteAlg} that it actually coincides with the conceptual one. 

\begin{definition}[Complete algebra]\label{Def:CompleteAlg}
A complete algebra over a filtered operad $\calP$ is a complete module endowed with a filtered $\calP$-algebra structure. 
\end{definition}

\begin{example}
The aforementioned example of formal power series $\k[[X]]$ is a complete algebra over the operad $\Ass$ (respectively, $\Com$), that is a complete associative (respectively, commutative associative) algebra.
\end{example}

We consider the full subcategory of filtered modules made up of complete modules and we denote it by $\mathsf{CompMod}$.

\begin{proposition}\label{prop:2Adj}
The completion of a filtered module defines a functor which is left adjoint to the forgetful functor: 
$$\vcenter{\hbox{
\begin{tikzcd}[column sep=1.2cm]
\widehat{} \ \ : \ 
\mathsf{FilMod} 
\arrow[r, harpoon, shift left=1ex, "\perp"']
&
\arrow[l, harpoon,  shift left=1ex]
\mathsf{CompMod} 
\ : \ \sqcup \ .
\end{tikzcd}
}}$$
\end{proposition}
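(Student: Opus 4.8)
The plan is to exhibit the completion functor explicitly and then verify the adjunction by checking the universal property directly, rather than invoking any abstract adjoint functor theorem. First I would define $\widehat{\phantom{A}}$ on objects by $(A,\F)\mapsto (\wA,\widehat{\F})$, where $\wA=\lim_k A/\F_k A$ is the limit module already constructed in the excerpt, with its canonical filtration $\widehat{\F}_k\wA=\ker q_k$. One needs to observe that $(\wA,\widehat{\F})$ is a complete module: this is the remark made before Definition~\ref{Def:CompleteAlg}, namely that a limit of this shape is complete and Hausdorff, so $\wA$ lands in $\mathsf{CompMod}$. On morphisms, a filtered map $f:(A,\F)\to(B,\G)$ induces compatible maps $A/\F_k A\to B/\G_k B$ (because $f(\F_k A)\subset\G_k B$), hence a map $\widehat{f}:\wA\to\widehat{B}$ on the limits; it is immediate from the construction that $\widehat{f}(\widehat{\F}_k\wA)\subset\widehat{\G}_k\widehat{B}$, so $\widehat{f}$ is filtered, and that $\widehat{\phantom{A}}$ respects identities and composition.

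Next I would record the unit of the adjunction: it is the canonical filtered map $\pi_A:A\to\wA$ of the excerpt, $x\mapsto(\pi_0(x),\pi_1(x),\dots)$, which is natural in $A$ since $\widehat{f}\circ\pi_A=\pi_B\circ f$ by construction. The counit is an isomorphism: if $(C,\G)$ is already complete, then $\pi_C:C\xrightarrow{\cong}\widehat{C}$ by definition of completeness, and one uses $\pi_C^{-1}$. The core of the proof is then the universal property: given a complete module $(C,\G)$ and a filtered map $g:(A,\F)\to(C,\G)$, I must show there is a unique filtered map $\bar g:(\wA,\widehat{\F})\to(C,\G)$ with $\bar g\circ\pi_A=g$. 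For existence, $g$ induces $A/\F_k A\to C/\G_k C$ for every $k$, hence a filtered map $\widehat{g}:\wA\to\widehat{C}$; composing with the inverse of the completeness isomorphism $\pi_C$ gives $\bar g:=\pi_C^{-1}\circ\widehat{g}:\wA\to C$, which is filtered (composition of filtered maps) and satisfies $\bar g\circ\pi_A=\pi_C^{-1}\circ\widehat{g}\circ\pi_A=\pi_C^{-1}\circ\pi_C\circ g=g$. For uniqueness, note that $\pi_A(A)$ is dense in $\wA$ (every element $(x_0,x_1,\dots)$ is the limit of a sequence of elements of the form $\pi_A(a)$ with $a$ a lift of $x_k$), filtered maps are continuous, and $C$ is Hausdorff; hence two filtered maps agreeing on $\pi_A(A)$ agree everywhere. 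Alternatively one can argue purely algebraically: any filtered $h:\wA\to C$ with $h\pi_A=g$ must induce $g$ modulo $\G_k$ for each $k$, so $h$ is forced to be $\pi_C^{-1}\circ\widehat g$ componentwise.

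The only mildly delicate point — and the step I would flag as the main obstacle — is verifying that $\pi_A(A)$ is genuinely dense in $\wA$ in the topological sense, i.e. that the two formulations of the universal property (the topological "continuous extension to the completion" and the purely categorical limit description) coincide; over a general ring $\k$ with no Noetherian or Artin–Rees hypothesis this requires a small argument using only that $\widehat{\F}_k\wA=\ker q_k$ and that $q_k\circ\pi_A=\pi_k$ is surjective, so that every coset $x+\widehat{\F}_k\wA$ meets $\pi_A(A)$. Everything else is the routine bookkeeping already sanctioned by Lemma~\ref{lem:FilModSymCat} and Proposition~\ref{prop:1Adj}: functoriality, naturality of $\pi$, and the triangle identities, which follow formally once existence and uniqueness of $\bar g$ are in hand. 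I would therefore write: "It is straightforward to check that $\widehat{\phantom{A}}$ is a functor, that $\pi$ is a natural transformation $\id\Rightarrow \sqcup\circ\widehat{\phantom{A}}$, and that it exhibits $\widehat{\phantom{A}}$ as left adjoint to $\sqcup$, with the universal property established above."
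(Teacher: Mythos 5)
Your proposal is correct and follows essentially the same route as the paper: the paper's proof simply asserts that any filtered map $f\colon A\to B$ with complete target factors uniquely through the canonical map $\pi\colon A\to\wA$, citing the universal property of the limit, and your argument is a careful unpacking of exactly that property (existence via the induced maps on quotients, uniqueness via density of $\pi_A(A)$ together with the Hausdorff condition on the target). The density point you flag is indeed the only content beyond bookkeeping, and your verification that $q_k\circ\pi_A=\pi_k$ is surjective settles it.
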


\begin{proof}
This statement amounts basically to the following property: 
any filtered map $f : A \to B$, 
with target a complete module, 
factors uniquely through the canonical map 
$$\vcenter{\hbox{
\begin{tikzcd}[column sep=1cm, row sep=1cm]
A 
\arrow[r,"f"]
\arrow[d,"\pi"']
&
B \\ 
\wA 
\arrow[ur,"\exists ! \bar{f}"']
& ,
\end{tikzcd}
}}$$
which  is nothing but the universal property of the limit $\widehat{A}$. 
\end{proof}

In order to endow the category $\mathsf{CompMod}$ of complete modules with a mo\-no\-idal structure, one could first think at the underlying tensor product of filtered modules. But this one fails to preserve complete modules, as the following example show 
$$\xymatrix{
\k[[X]]\otimes \k[[Y]] \ \ \ar@{^{(}->}[r]^(0.41){\neq} &\ \  \widehat{\k[X]\otimes \k[Y]}\cong \k[[X,Y]]} \ .$$

\begin{definition}[Complete tensor product]
The \emph{complete tensor product} of two complete modules $(A, \F)$ and $(B, \G)$ is defined by the completion of their filtered tensor product:
$$ A\wo B\coloneqq\widehat{A\otimes B}\ . $$
\end{definition}

\begin{remark}
Notice that when the two filtered modules are not necessarily complete, the completion of their tensor product is equal to 
$\widehat{A\otimes B}\, \cong\, \wA \; \wo\;  \widehat{B}$ \ .
\end{remark}

\begin{lemma}
The category $\left(\mathsf{CompMod}, \wo\right)$ of complete modules equipped the complete tensor product is a bicomplete closed symmetric  monoidal  category wh\-ose monoidal product preserves  colimits. 
\end{lemma}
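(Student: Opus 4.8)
The plan is to exploit \cref{prop:2Adj}, which exhibits $\mathsf{CompMod}$ as a full reflective subcategory of the bicomplete closed symmetric monoidal category $(\mathsf{FilMod},\otimes)$ of \cref{lem:FilModSymCat}, the reflector being the completion functor $\widehat{\ \cdot\ }$. Granting this, each of the three assertions reduces to a formal statement about reflective localisations of monoidal categories, modulo a single concrete verification.

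For bicompleteness, recall from the proof of \cref{lem:FilModSymCat} that every limit in $\mathsf{FilMod}$ is assembled from products and kernels of filtered maps; it therefore suffices to check that these two constructions preserve completeness. A product $\prod_i A^i$ of complete modules, with the product filtration, is complete since limits commute with one another, so $\widehat{\prod_i A^i}\cong\prod_i\widehat{A^i}\cong\prod_i A^i$. A kernel $f^{-1}(0)$ of a filtered map between complete modules is a closed submodule (closed because $f$ is continuous and the target is Hausdorff), and a closed submodule of a complete module, with the induced filtration, is again complete — the induced-filtration topology coincides with the subspace topology, for which closed subspaces of a complete Hausdorff space are complete. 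Hence $\mathsf{CompMod}$ is closed under limits inside $\mathsf{FilMod}$ and is complete. Cocompleteness is automatic for a reflective subcategory: a colimit in $\mathsf{CompMod}$ is the completion of the corresponding colimit in $\mathsf{FilMod}$; concretely the coproduct of a family of complete modules is $\widehat{\bigoplus_i A^i}$ and the cokernel of $f$ is the completion of the $\mathsf{FilMod}$-cokernel $B/\Im f$.

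For the closed symmetric monoidal structure, I would invoke the reflection theorem for closed symmetric monoidal categories (due to Day): when $L\dashv i$ realises $\mathsf{D}$ as a reflective subcategory of a closed symmetric monoidal category $\mathsf{C}$, and the internal hom of any object of $\mathsf{C}$ into an object of $\mathsf{D}$ again lies in $\mathsf{D}$, then $\mathsf{D}$ inherits a unique closed symmetric monoidal structure for which $L$ is strong symmetric monoidal, with tensor product $L(i-\otimes i-)$, unit $L$ of the unit of $\mathsf{C}$, and internal hom that of $\mathsf{C}$. The only hypothesis requiring work is: for $(A,\F)$ filtered and $(B,\G)$ complete, $\hom(A,B)$ is complete. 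It is Hausdorff, since a map lying in every $\calF_k\Hom(A,B)$ sends $A$ into $\bigcap_k\G_k B=\{0\}$; and it is complete, since a Cauchy sequence $(f_m)$ in $\hom(A,B)$ is, evaluated at any $a\in A$, a Cauchy sequence in the complete module $B$, so $f(a)\coloneqq\lim_m f_m(a)$ is defined, $\k$-linear and filtered in $a$, and $f_m\to f$. (Equivalently one may check the condition that $L$ carries $\eta_A\otimes\eta_B$ to an isomorphism, which is precisely the content of the Remark $\widehat{A\otimes B}\cong\widehat{A}\wo\widehat{B}$.) The theorem then gives $A\wo B=\widehat{A\otimes B}$ as the monoidal product, $\mathrm{Dis}(\k)$ as the monoidal unit, $\hom(A,B)$ as the internal hom, and the adjunction $\Hom_{\mathsf{CompMod}}(A\wo B,C)\cong\Hom_{\mathsf{CompMod}}(A,\hom(B,C))$; the associativity, unit, pentagon and hexagon constraints are transported from $\mathsf{FilMod}$ using the universal property of completion (\cref{prop:2Adj}) and the Remark above, e.g. $(A\wo B)\wo C\cong\widehat{(A\otimes B)\otimes C}\cong\widehat{A\otimes(B\otimes C)}\cong A\wo(B\wo C)$.

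Finally, preservation of colimits by $\wo$ is then free of charge: $-\wo B$ admits the right adjoint $\hom(B,-)$, so it preserves all colimits, and all colimits exist by the bicompleteness just established. The one genuinely non-formal point in the whole argument is the completeness of the internal hom into a complete module (equivalently, unravelling the tensor-product filtration enough to obtain $\widehat{A\otimes B}\cong\widehat{A}\wo\widehat{B}$); everything else is a mechanical transfer of the corresponding facts for $\mathsf{FilMod}$ through the reflective adjunction.
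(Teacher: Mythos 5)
Your proof is correct, and its skeleton is the same as the paper's: realise $\mathsf{CompMod}$ as a reflective subcategory of $(\mathsf{FilMod},\otimes)$ via \cref{prop:2Adj}, obtain colimits as completions of filtered colimits, check that the filtered limits of \cref{lem:FilModSymCat} preserve completeness, and isolate the one non-formal verification, namely that $\hom(A,B)$ is complete whenever $B$ is. The packaging differs in two places. First, you transport the closed symmetric monoidal structure through Day's reflection theorem, whereas the paper defines $A\wo B\coloneqq\widehat{A\otimes B}$ by hand and declares the coherence axioms "straightforward to check"; your route makes the uniqueness of the induced structure and the tensor--hom adjunction explicit for free, at the cost of citing an external theorem. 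Second, you get preservation of colimits by $-\wo B$ from the existence of the right adjoint $\hom(B,-)$, while the paper proves it directly by the chain $\left(\wcolim A^i\right)\wo B\cong\reallywidehat{\left(\colim A^i\right)\otimes B}\cong\wcolim\left(A^i\wo B\right)$ using only that completion is a left adjoint and that $\otimes$ preserves colimits in $\mathsf{FilMod}$; your argument is shorter but depends on having the closed structure in place first, the paper's does not. One small point worth spelling out in your completeness check for $\hom(A,B)$: after defining $f(a)\coloneqq\lim_m f_m(a)$, both the claim that $f$ is filtered and the claim that $f_m\to f$ in the filtration topology of $\hom(A,B)$ (not merely pointwise) use that the submodules $\G_{l+k}B$ are closed (\cref{lem:Fkclosed}); the paper makes this passage to the limit explicit, and your write-up asserts it without comment. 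Your treatment of kernels via closed submodules of complete Hausdorff spaces is a mild variant of the paper's "the formulas render complete modules" and is fine, since the induced-filtration topology on $f^{-1}(0)$ agrees with the subspace topology.
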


\begin{proof}
Let  $\left(A^i, \F^i\right)_{i\in \mathcal{D}}$ be a diagram of complete modules. 
Since the completion functor is a left adjoint functor, it  would preserve colimits if these latter ones exist. Therefore we define colimits in the category of complete modules by the formula 
\[
\displaystyle \wcolim_{i\in\mathcal{D}} A^i \coloneqq \reallywidehat{\displaystyle \colim_{i\in\mathcal{D}} A^i} \ .
\]
It is straightforward to check that they satisfy the universal property of colimits from the property of the completion functor. 
For instance, coproducts of complete modules are given by 
\[\widehat{\bigoplus_{i\in \mathcal{I}}} A^i\cong\widehat{\bigoplus_{i\in \mathcal{I}} A^i}\ , \]
and finite coproducts of complete modules are simply given by the finite direct sums of their underlying filtered module structure, since this latter one is already complete. 
In the other way round, since the forgetful functor from complete modules to filtered modules  is a right adjoint functor, it  would preserve limits if these latter ones exist. One can actually see that the formulas in the category of filtered modules (Lemma~\ref{lem:FilModSymCat}) for products and kernels once applied to complete modules render complete modules. Therefore the category of complete modules admits limits since it is a (pre)additive category. 
Like the category of filtered modules and for the exact same reasons, the category of complete modules is additive but fails to be abelian. 

The various axioms of a strong monoidal category are straightforward to check. 
The preservation of the colimits by the complete tensor product is automatic from its definition, the above characterisation of colimits and Lemma~\ref{lem:FilModSymCat}: 
\[
\left(\wcolim_{i\in\mathcal{D}} A^i\right) \wo B \cong 
\left(\reallywidehat{\displaystyle \colim_{i\in\mathcal{D}} A^i}\right)\wo B \cong 
\reallywidehat{\left(\displaystyle \colim_{i\in\mathcal{D}} A^i\right)\otimes B} \cong 
\reallywidehat{{\displaystyle \colim_{i\in\mathcal{D}}} \left(A^i\otimes B\right)} \cong 
\wcolim_{i\in\mathcal{D}} \left(A^i\wo B\right)\ .
\]
It remains to prove that this symmetric monoidal category is closed. To this end, it is enough to prove that the internal filtered hom of complete modules $A,B$ is complete. One first notices that 
$$\bigcap_{k\in\NN} \calF_k \hom(A,B)=  \hom(A, \cap_{k\in\NN} \G_k B)=\{0\}\ .$$
Now let $\{f^n : A \to B\}_{n\in \NN}$ be a Cauchy sequence of filtered maps. This means that 
$$\forall k\in \NN, \exists N\in \NN, \forall m,n\ge N, \ f^m-f^n\in  \calF_k \hom(A,B)\ . $$
Therefore, the sequence $\left\{f^n(a)\right\}_{n\in \NN}$ in $B$  is Cauchy for any $a\in A$ and thus converges since $B$ is complete. We denote by $f(a)$ its limit. Considering the discrete topology on the ground ring $\k$, the scalar multiplication and the sum of elements are continuous, so this assignment defines a linear map $f : A \to B$. When $a\in \F_l A$, the Cauchy sequence $\left\{f^n(a)\right\}_{n\in \NN}$ lives in $\G_l B$, which is closed by Lemma~\ref{lem:Fkclosed}. Hence, we have $f(a)\in G_l B$ and the whole map $f$ is filtered, that is $f\in \hom(A,B)$. Using again the argument that the $\G_{l+k} B$ are closed, one can see, after a passage to the limit, that 
$$\forall k\in \NN, \exists N\in \NN, \forall n\ge N, \ f^n-f\in  \calF_k \hom(A,B)\ , $$
since this means that $\left(f^n-f\right)(\F_l A)\subset \G_{l+k} B$. 
\end{proof}

\begin{remark}
The same result holds true \textit{mutatis mutandis} for  complete differential graded modules. 
\end{remark}

So one can develop operad theory in this setting. This produces automatically a notion of  \emph{complete dg  operads} together with their categories of complete dg algebras. The next proposition shows that there is nothing to change from the filtered case for the endomorphism operad. 

\begin{proposition}\label{prop:EndComplete}
The complete endomorphism operad of a complete dg module $A$ is canonically isomorphic to the filtered endormorphism operad:
\[\left\{\hom\big(A^{\widehat{\otimes} n}, A\big)\right\}_{n\in \NN}\cong \left\{\hom\big(A^{{\otimes} n}, A\big)\right\}_{n\in \NN}
=\eend_A \ .\]
\end{proposition}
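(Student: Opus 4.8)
The plan is to exhibit an explicit isomorphism of operads, arity by arity, and then observe that it is compatible with the operadic composition, symmetric group actions, and differentials. For a fixed $n$, the claim is that the canonical map $A^{\otimes n} \to A^{\widehat\otimes n} = \widehat{A^{\otimes n}}$ induces a bijection $\hom\big(A^{\widehat\otimes n}, A\big) \xrightarrow{\ \cong\ } \hom\big(A^{\otimes n}, A\big)$ obtained by precomposition with $\pi$. I would first set up this map and then produce its inverse.

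First I would show the precomposition map is well defined and injective. If $f \colon A^{\widehat\otimes n} \to A$ is a filtered map, then $f \circ \pi \colon A^{\otimes n} \to A$ is filtered since $\pi$ is filtered (it preserves the canonical filtration by the very definition of $\widehat\otimes$). For injectivity, suppose $f \circ \pi = 0$; since $\pi$ has dense image in $A^{\widehat\otimes n}$ (the canonical map into a completion always has dense image, as recalled in the excerpt), and since $f$ is continuous with target the complete, hence Hausdorff, module $A$, a continuous map vanishing on a dense subset vanishes identically. Here I use \cref{lem:Fkclosed} to know that $\{0\} = \bigcap_k \F_k A$ is closed, so the vanishing locus of $f$ is closed and contains a dense set.

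Next I would construct the inverse: given a filtered map $g \colon A^{\otimes n} \to A$, I need to extend it along $\pi$ to $\bar g \colon A^{\widehat\otimes n} \to A$. This is essentially \cref{prop:2Adj} applied to the filtered module $A^{\otimes n}$ and the complete module $A$: every filtered map from a filtered module to a complete module factors uniquely through the completion. The unique factorisation $\bar g$ is again filtered, and $\bar g \circ \pi = g$, which together with the injectivity above shows the two assignments are mutually inverse. Moreover, because $\bar g$ is built from $g$ purely by the universal property, the construction is functorial in the evident sense and respects the full filtrations $\calF_k \eend_A(n)$, so we get an isomorphism of filtered collections, not merely a bijection in each arity.

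Finally I would check that this arity-wise isomorphism is an isomorphism of (dg) operads: it must intertwine the partial compositions $\circ_i$, the $\Sy_n$-actions, and the differentials. All three verifications are routine diagram chases using the universal property of the completion and the fact that $\pi$ is a morphism of (the relevant) structures — for instance, $\circ_i$ on the complete side is defined via the complete tensor product, which is the completion of the filtered tensor product, so compatibility reduces to the filtered case already handled in \cref{lem:FilModSymCat} together with uniqueness of factorisations through $\pi$; the differential is handled by the remark that all of this passes \textit{mutatis mutandis} to complete dg modules. The only point requiring a little care — and the place I would expect to spend the most effort — is the compatibility with operadic composition, where one composes maps whose sources are $A^{\widehat\otimes m}$, $A^{\widehat\otimes n}$ and must identify $A^{\widehat\otimes m} \widehat\otimes A^{\widehat\otimes(n-1)}$ with $A^{\widehat\otimes(m+n-1)}$; this is exactly the associativity-type coherence for $\widehat\otimes$, which follows from the remark that $\widehat{A \otimes B} \cong \widehat A \widehat\otimes \widehat B$ and the strong monoidality of the completion, but writing it out cleanly is the main bookkeeping obstacle.
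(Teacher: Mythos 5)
Your argument is correct and takes essentially the same route as the paper: its proof consists precisely of the observation that precomposition with $\pi$ gives a bijection of filtered modules $\hom\big(\widehat{M}, B\big)\cong \hom(M,B)$ whenever the target $B$ is complete (the universal property from Proposition~\ref{prop:2Adj}), applied to $M=A^{\otimes n}$ and $B=A$. Your separate density-plus-Hausdorff injectivity argument is subsumed by the uniqueness clause of that universal property, and the compatibility with the operadic structure that you verify at the end is left implicit in the paper.
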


\begin{proof}
The proof relies entirely on the universal property of the completion functor as described in the proof of Proposition~\ref{prop:2Adj}. Under the same notations, one can check that the bijection 
\[
\begin{array}{clc}
\hom\big(\wA, B\big) &\to&  \hom(A, B) \\
\bar{f} & \mapsto & \bar{f} \circ \pi
\end{array}
\]
is  a bijection of filtered modules, whenever the filtered module $B$ is complete. In the present case, this induces 
\[ \hom\big(A^{\widehat{\otimes} n}, A\big)\cong \hom\big(\widehat{A^{{\otimes} n}}, A\big)\cong \hom\big(A^{{\otimes} n}, A\big)\ ,\]
for any $n\in \NN$. 
\end{proof}

\begin{proposition}\label{prop:strict-laxmonoidal}
The completion functor 
\[\ {\widehat{ }} \ \, : \, (\mathsf{FilMod}, \otimes) \to  (\mathsf{CompMod}, \wo)\] 
is strong symmetric mo\-no\-idal and the forgetful functor 
\[\sqcup \, : \,  (\mathsf{CompMod}, \wo) \allowbreak \to \allowbreak  (\mathsf{FilMod}, \otimes)\] 
is lax symmetric monoidal.
\end{proposition}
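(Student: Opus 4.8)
The plan is to deduce the proposition from the structural input already in place rather than to construct structure maps by hand. By Proposition~\ref{prop:2Adj} the completion exhibits $\mathsf{CompMod}$ as a reflective subcategory of the bicomplete closed symmetric monoidal category $(\mathsf{FilMod},\otimes)$ of Lemma~\ref{lem:FilModSymCat}, with reflection unit the canonical map $\pi_A\colon A\to\wA$; and the proof of the preceding lemma in fact shows that $\hom(A,B)$ is complete for \emph{every} filtered module $A$ as soon as $B$ is complete, i.e.\ that $\mathsf{CompMod}$ is an \emph{exponential ideal}. I would then invoke the standard reflection theorem for closed symmetric monoidal categories: a reflective exponential ideal carries a unique closed symmetric monoidal structure for which the reflector is strong symmetric monoidal and the inclusion is lax symmetric monoidal, and this induced structure is given on objects by $A\otimes_{\mathsf{CompMod}}B=\widehat{A\otimes B}$, which is exactly the complete tensor product $A\wo B$. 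The lax symmetric monoidal structure on $\sqcup$ is then the one conjugate, along the adjunction $\widehat{\ }\dashv\sqcup$, to the strong structure on $\widehat{\ }$, its structure maps being the completion units $\pi_{A\otimes B}\colon A\otimes B\to\widehat{A\otimes B}=\sqcup(A\wo B)$.

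For a self-contained argument I would unwind this as follows. The strong monoidal structure map is the natural morphism
\[ \mu_{A,B}\colon\ \wA\wo\widehat{B}\ =\ \widehat{\wA\otimes\widehat{B}}\ \xrightarrow{\ \sim\ }\ \widehat{A\otimes B}\ . \]
To check it is invertible, note that since $n+m=2k$ forces $n\ge k$ or $m\ge k$, the tensor-product filtration $\F_\bullet(A\otimes B)$ is cofinal with the coarser filtration $G_k\coloneqq\Im(\F_kA\otimes B\to A\otimes B)+\Im(A\otimes\F_kB\to A\otimes B)$, and hence induces the same completion; moreover, by right exactness of the tensor product one has $(A\otimes B)/G_k\cong(A/\F_kA)\otimes(B/\F_kB)$ compatibly with the reduction maps, so that $\widehat{A\otimes B}\cong\lim_k (A/\F_kA)\otimes(B/\F_kB)$. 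Since the right-hand side depends on $A$ and $B$ only through the towers $\{A/\F_kA\}_k$ and $\{B/\F_kB\}_k$, and these agree with the corresponding towers of $\wA$ and $\widehat{B}$, one obtains $\widehat{A\otimes B}\cong\widehat{\wA\otimes\widehat{B}}=\wA\wo\widehat{B}$, i.e.\ $\mu_{A,B}$ is an isomorphism. The unit constraint is immediate because the monoidal unit $\mathrm{Dis}(\k)$ of $\mathsf{FilMod}$ is already complete and serves as the unit for $\wo$.

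It then remains to verify the associativity, unit and symmetry coherence diagrams for $(\widehat{\ },\mu)$ and for $(\sqcup,\pi)$; each of these is the image under the functor $\widehat{\ }$ of the corresponding commuting coherence diagram in $(\mathsf{FilMod},\otimes)$ from Lemma~\ref{lem:FilModSymCat}, combined with naturality of $\pi$ and idempotence of $\widehat{\ }$ on $\mathsf{CompMod}$, and I expect this to be entirely routine. The one genuinely non-formal point, and the place where care is needed, is the isomorphism $\widehat{A\otimes B}\cong\wA\wo\widehat{B}$ — equivalently, that $\pi_A\otimes\pi_B$ becomes invertible after completion — which must be established \emph{without} any flatness hypothesis on $\k$; the device for this is precisely to replace $\F_\bullet(A\otimes B)$ by the equivalent filtration $G_\bullet$ and to use only right exactness of $\otimes$, rather than attempting to identify the associated graded of $A\otimes B$.
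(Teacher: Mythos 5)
Your proof is correct, and it is considerably more detailed than the paper's, which consists of two sentences: it names the structure map of the completion functor as ``the isomorphism $\wA\wo\widehat{B}\xrightarrow{\cong}\widehat{A\otimes B}$'' and that of the forgetful functor as the canonical map $A\otimes B\to A\wo B$, leaning on an earlier unproved remark for the key isomorphism. What you do differently is twofold. First, you observe that the paper's own argument for closedness of $\mathsf{CompMod}$ really shows that $\hom(A,B)$ is complete for \emph{any} filtered $A$ once $B$ is complete, so that $\mathsf{CompMod}$ is a reflective exponential ideal, and you then invoke Day's reflection theorem to produce the strong/lax monoidal pair formally; this is a legitimate and cleaner packaging, and it makes the coherence checks genuinely automatic rather than ``left to the reader.'' Second, and more valuably, you actually prove the non-formal input $\widehat{A\otimes B}\cong\wA\wo\widehat{B}$: the observation that $\calF_{2k}(A\otimes B)\subseteq \Im(\F_kA\otimes B)+\Im(A\otimes\G_kB)\subseteq\calF_k(A\otimes B)$, so that the two filtrations are intertwined and give the same completion, combined with the right-exactness identification $(A\otimes B)/G_k\cong(A/\F_kA)\otimes(B/\G_kB)$ and the fact that $\wA/\widehat{\F}_k\wA\cong A/\F_kA$ (the projections $q_k$ from the limit of a tower of surjections being surjective), correctly reduces everything to the tower $\{(A/\F_kA)\otimes(B/\G_kB)\}_k$, with no flatness hypothesis. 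This is exactly the verification the paper omits, so your argument fills a real gap rather than merely rederiving the printed proof.
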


\begin{proof}
The structure map for the monoidal structure of the completion functor is the isomorphism 
$\wA \; \wo\;  \widehat{B} \xrightarrow{\cong} \widehat{A\otimes B}$. 
The structure map for the monoidal structure of the forgetful  functor is the canonical map 
$A\otimes B \to A \wo B$. 
\end{proof}

One can iterate the above functors $\widehat{\mathrm{Dis}}  : \mathsf{Mod} \to \mathsf{FilMod} \to \mathsf{CompMod}$. Since the discrete topology is already complete, this composition of functors does not change the underlying module; it just  provides it with the trivial filtration.   

\begin{corollary}\label{coro:12Adj}
The following pair of functors are adjoint 
$$\vcenter{\hbox{
\begin{tikzcd}[column sep=1.2cm]
\widehat{\mathrm{Dis}} \ \ : \ 
\mathsf{Mod} 
\arrow[r, harpoon, shift left=1ex, "\perp"']
&
\arrow[l, harpoon,  shift left=1ex]
\mathsf{CompMod} 
\ : \ \sqcup \ ,
\end{tikzcd}
}}$$
where $\widehat{\mathrm{Dis}}(A)\coloneqq(A, \F^{\mathrm{tr}})$.
The functor $\widehat{\mathrm{Dis}}$ is a strict monoidal functor and the functor $\sqcup$ is a lax monoidal functor. 
\end{corollary}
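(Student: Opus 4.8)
\textbf{Proof proposal for Corollary~\ref{coro:12Adj}.}

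The plan is to obtain the adjunction by composing the two adjunctions already established, namely $\mathrm{Dis} \dashv \sqcup$ from Proposition~\ref{prop:1Adj} and $\widehat{\ } \dashv \sqcup$ from Proposition~\ref{prop:2Adj}, and then to transport the monoidal data through the composite. Concretely, I would first recall the elementary categorical fact that a composite of left adjoints is left adjoint to the composite of the corresponding right adjoints: if $L_1\dashv R_1$ and $L_2\dashv R_2$ with $L_2$ composable after $L_1$, then $L_2L_1\dashv R_1R_2$, the unit and counit being built by whiskering. Applying this with $L_1=\mathrm{Dis}$, $L_2=\widehat{\ }$, $R_1=R_2=\sqcup$ (the appropriate forgetful functor in each case) immediately gives $\widehat{\mathrm{Dis}}\dashv \sqcup$ as an adjunction $\mathsf{Mod}\rightleftarrows\mathsf{CompMod}$. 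At this point I would also note, as the text does in the paragraph preceding the statement, that $\widehat{\mathrm{Dis}}(A)=(A,\F^{\mathrm{tr}})$ on the nose: the trivial filtration is already complete, so completing it changes nothing.

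Next I would address the monoidal statements. For the lax monoidal structure on the right adjoint $\sqcup:\mathsf{CompMod}\to\mathsf{Mod}$: Proposition~\ref{prop:1Adj} says $\sqcup:\mathsf{FilMod}\to\mathsf{Mod}$ is strictly symmetric monoidal, and Proposition~\ref{prop:strict-laxmonoidal} says $\sqcup:\mathsf{CompMod}\to\mathsf{FilMod}$ is lax symmetric monoidal (with structure map the canonical $A\otimes B\to A\wo B$). A composite of a lax monoidal functor followed by a (strictly, hence lax) monoidal functor is lax monoidal, so the composite $\mathsf{CompMod}\to\mathsf{FilMod}\to\mathsf{Mod}$ is lax symmetric monoidal; since the second leg is the identity on underlying modules and structure maps, the lax structure map of the composite is again just $A\otimes B\to A\wo B$ read in $\mathsf{Mod}$. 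For the left adjoint $\widehat{\mathrm{Dis}}$: Proposition~\ref{prop:1Adj} gives that $\mathrm{Dis}$ is \emph{strict} monoidal (it takes $\otimes$ to $\otimes$ and respects the trivial filtration exactly), and Proposition~\ref{prop:strict-laxmonoidal} gives that $\widehat{\ }$ is strong monoidal via $\wA\wo\widehat B\xrightarrow{\cong}\widehat{A\otimes B}$. A priori the composite of a strict and a strong monoidal functor is only strong; to upgrade to \emph{strict} I would use the concrete computation: for $A,B$ carrying the trivial filtration, $A\otimes B$ carries the trivial filtration (its tensor-product filtration is trivial), which is already complete, so $\widehat{A\otimes B}=A\otimes B=\widehat{\mathrm{Dis}}(A)\wo\widehat{\mathrm{Dis}}(B)$ with the structure isomorphism being the identity. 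Hence $\widehat{\mathrm{Dis}}$ is strict monoidal on the nose.

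The only genuine point requiring a small verification — the step I would flag as the ``main obstacle'', though it is quite mild — is this claim that the tensor product of two trivially filtered modules is again trivially filtered and therefore complete, so that completion acts as the identity on it. This follows directly from the formula $\calF_k(A\otimes B)=\sum_{n+m=k}\Im(\F_nA\otimes\G_mB\to A\otimes B)$ recalled before Lemma~\ref{lem:FilModSymCat}: with $\F^{\mathrm{tr}}$ on both factors, every summand with $k\ge 1$ involves some $\F_n A=0$ or $\G_m B=0$, so $\calF_k(A\otimes B)=0$ for $k\ge 1$, i.e.\ the filtration is trivial, hence complete. Everything else is a formal consequence of composing adjunctions and composing (lax/strict) monoidal functors, so once this compatibility is recorded the corollary follows. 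As the text already indicates, ``it is straightforward to check the various properties'', and I would conclude with a one-line remark to that effect rather than spelling out the whiskered unit and counit explicitly.
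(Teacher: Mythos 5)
Your proof is correct and follows essentially the same route as the paper: the adjunction is the composite of the adjunctions of Proposition~\ref{prop:1Adj} and Proposition~\ref{prop:2Adj}, and the monoidal structures are composites of those from Proposition~\ref{prop:1Adj} and Proposition~\ref{prop:strict-laxmonoidal}. Your explicit check that the tensor product of two trivially filtered modules is again trivially filtered (hence already complete, so the strong structure map degenerates to the identity and $\widehat{\mathrm{Dis}}$ is genuinely strict) is a worthwhile detail that the paper leaves implicit, but it does not change the argument.
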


\begin{proof}
This adjunction is actually obtained as the composite of the two adjunctions of Proposition~\ref{prop:1Adj} and 
Proposition~\ref{prop:2Adj}. The monoidal structures on the two functors are obtained as composite of two monoidal structures from Proposition~\ref{prop:1Adj} and Proposition~\ref{prop:strict-laxmonoidal}.
\end{proof}

The main point for  these six functors to be monoidal is that each of them sends an operad in the source category to an operad in the target category and similarly for their associated notion of algebras. 

\begin{proposition}
Any dg  operad $\calP$ is a filtered (respectively complete) dg operad $\calP$ when equipped with the trivial filtration. Any $\calP$-algebra $A$ is a filtered (respectively complete) $\calP$-algebra when equipped with the trivial filtration. The category of discrete (respectively filtered) $\calP$-algebra is a full subcategory of the category of complete $\calP$-algebras. 
\end{proposition}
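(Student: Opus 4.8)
The plan is to assemble the statement from the three monoidal adjunctions established above, since each clause is essentially a formal consequence of strong/lax/strict monoidality together with the way operads and their algebras are defined internally to a symmetric monoidal category.

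First I would treat the operad claims. By \cref{prop:1Adj}, the functor $\mathrm{Dis} : \mathsf{Mod} \to \mathsf{FilMod}$ is strict symmetric monoidal, and by \cref{prop:strict-laxmonoidal} (or directly by \cref{coro:12Adj}) the composite $\widehat{\mathrm{Dis}} : \mathsf{Mod} \to \mathsf{CompMod}$ is strict symmetric monoidal as well. A strict (or even just strong) symmetric monoidal functor sends operads to operads and operad morphisms to operad morphisms: it transports the composition maps $\gamma$, the unit $\I \to \calP(1)$, the symmetric group actions, and it respects the associativity, unitality and equivariance axioms because all of these are diagrams built from the monoidal product, the associator and the symmetry, which are preserved on the nose. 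Applying this to a dg operad $\calP$ equipped with the trivial filtration $\F^{\mathrm{tr}}$ gives a filtered, respectively complete, dg operad; this is the same underlying dg operad, now with the trivial filtration, exactly as \cref{coro:12Adj} describes.

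Next, the algebra claims. A $\calP$-algebra structure on $A$ is a morphism of operads $\calP \to \End_A$ in $\mathsf{Mod}$; a filtered (respectively complete) $\calP$-algebra structure is a morphism of filtered (respectively complete) operads $\calP \to \eend_A$. Given a $\calP$-algebra $A$ in $\mathsf{Mod}$, endow $A$ with $\F^{\mathrm{tr}}$; then $\mathrm{Dis}(\End_A) = \eend_{\mathrm{Dis}(A)}$ because with the trivial filtration every linear map $A^{\otimes n}\to A$ is automatically filtered (the only nonzero filtration layer of $A^{\otimes n}$ is the whole module), so the filtered endomorphism operad coincides with $\mathrm{Dis}$ applied to the endomorphism operad; and in the complete case \cref{prop:EndComplete} identifies $\hom(A^{\wo n},A)$ with $\hom(A^{\otimes n},A)$, so the same holds there. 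Applying the strict monoidal functor $\mathrm{Dis}$ (respectively $\widehat{\mathrm{Dis}}$) to the structure map $\calP \to \End_A$ therefore produces a morphism of filtered (respectively complete) operads $\mathrm{Dis}(\calP) \to \eend_{\mathrm{Dis}(A)}$, i.e. a filtered (respectively complete) $\calP$-algebra structure on $(A,\F^{\mathrm{tr}})$, compatible on morphisms by functoriality.

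Finally, for the full subcategory claim, I would observe that $\mathrm{Dis}$ is full and faithful by \cref{prop:1Adj} and that the composite $\widehat{\mathrm{Dis}}$ is full and faithful as a composite of full and faithful functors (the completion functor restricted to objects already complete, which includes everything in the image of $\mathrm{Dis}$, is the identity up to the canonical isomorphism $\pi$). Since these functors lift to the categories of algebras — a morphism of discrete $\calP$-algebras is exactly a filtered $\calP$-algebra morphism between the associated trivially filtered algebras, because any linear $\calP$-algebra morphism trivially preserves the trivial filtration, and likewise in the complete case — the induced functors on algebra categories are full and faithful, hence realize discrete (respectively filtered) $\calP$-algebras as a full subcategory of complete $\calP$-algebras. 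The only point requiring a little care, and the main (minor) obstacle, is the identification of endomorphism operads under the trivial filtration and under completion, i.e. making sure $\eend$ is genuinely compatible with $\mathrm{Dis}$ and $\widehat{\mathrm{Dis}}$; this is where \cref{prop:EndComplete} and the observation about the trivial filtration on tensor powers do the work, and everything else is a formal unwinding of strict monoidality.
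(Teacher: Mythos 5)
Your proposal is correct and follows exactly the route the paper takes: the paper's own proof is a one-line appeal to the monoidal structure of $\mathrm{Dis}$ and $\widehat{\mathrm{Dis}}$ together with the observation that these functors do not modify the underlying module. You have simply unwound that appeal in full detail, including the (correct) verification that $\eend_{\mathrm{Dis}(A)}$ coincides with $\mathrm{Dis}(\End_A)$ under the trivial filtration and the use of Proposition~\ref{prop:EndComplete} in the complete case.
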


\begin{proof}
This is a direct consequence of the monoidal structure of the functor 
${\mathrm{Dis}}$ from Proposition~\ref{prop:1Adj} (respectively 
$\widehat{\mathrm{Dis}}$ from Corollary~\ref{coro:12Adj}),  since this latter one  does not modify the underlying module.
\end{proof}

As a consequence, we will now work in the larger category of complete $\calP$-algebras and extend the various operadic results to that level. The key property that the completion functor is left adjoint to the forgetful functor allows us to get the following  simple descriptions for the notions of complete operads and complete $\calP$-algebras. 

\begin{theorem}\label{thm:CompleteAlg}\leavevmode
\begin{enumerate}
\item The structure of a complete dg operad $\calP$ is equivalent to the structure of a filtered dg operad on a complete dg $\Sy$-module $\calP$. 

\item
Let $\calP$ be  a complete dg operad. The data of a dg $\calP$-algebra structure in the monoidal category of complete dg modules is equivalent to a complete dg $\calP$-algebra structure, as defined above,  that is a filtered dg $\calP$-algebra structure on an underlying  complete dg module.

\item Let $\calP$ be a filtered dg operad. The data of a complete dg $\calP$-algebra structure, as defined above, is equivalent to the data of a complete dg $\widehat{\calP}$-algebra structure.

\end{enumerate}
\end{theorem}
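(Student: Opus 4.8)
The plan is to deduce all three statements from the monoidal adjunctions established above, namely Proposition~\ref{prop:1Adj}, Proposition~\ref{prop:2Adj}, Proposition~\ref{prop:EndComplete}, Proposition~\ref{prop:strict-laxmonoidal}, and Corollary~\ref{coro:12Adj}, together with the general principle that a (lax, resp.\ strong) symmetric monoidal functor sends operads to operads and algebras to algebras. The guiding observation is that a complete dg module is in particular a filtered dg module (it lies in the full subcategory $\mathsf{CompMod}\subset\mathsf{FilMod}$), so that an operad or an algebra ``in complete modules'' always has an underlying filtered structure; the content of each item is that no information is lost or gained when passing between the two monoidal structures $\widehat\otimes$ and $\otimes$ on a complete underlying object.

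\emph{Item (1).} First I would unwind what a complete dg operad is: a collection $\{\calP(n)\}_{n}$ of complete dg $\Sy$-modules together with composition maps built from the complete tensor product $\widehat\otimes$, satisfying the operad axioms. Applying the lax symmetric monoidal forgetful functor $\sqcup:(\mathsf{CompMod},\widehat\otimes)\to(\mathsf{FilMod},\otimes)$ of Proposition~\ref{prop:strict-laxmonoidal}, the structure maps $\calP(n)\,\widehat\otimes\,\calP(m)\to\calP(n+m-1)$ compose with the canonical map $\calP(n)\otimes\calP(m)\to\calP(n)\,\widehat\otimes\,\calP(m)$ to give a filtered dg operad structure on the complete dg $\Sy$-module $\calP$. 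Conversely, given a filtered dg operad structure on a complete dg $\Sy$-module, each structure map $\calP(n)\otimes\calP(m)\to\calP(n+m-1)$ is a filtered map into a complete module, hence by the universal property of the completion (Proposition~\ref{prop:2Adj}) factors uniquely through $\calP(n)\,\widehat\otimes\,\calP(m)$; one then checks that these factored maps still satisfy the operad axioms, which follows because the completion functor is strong monoidal, so the relevant diagrams for $\widehat\otimes$ are obtained by applying $\widehat{(-)}$ to the (commuting) diagrams for $\otimes$ and using that $\pi$ is an epimorphism. These two assignments are mutually inverse, proving the equivalence of structures.

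\emph{Items (2) and (3).} For item (2) I would argue the same way with the endomorphism operad in place of an abstract operad: by Proposition~\ref{prop:EndComplete} the complete endomorphism operad $\{\hom(A^{\widehat\otimes n},A)\}_n$ of a complete dg module $A$ is \emph{canonically isomorphic} to the filtered endomorphism operad $\eend_A=\{\hom(A^{\otimes n},A)\}_n$; hence a morphism of complete dg operads $\calP\to\{\hom(A^{\widehat\otimes n},A)\}_n$ is the same datum as a filtered morphism of dg operads $\calP\to\eend_A$, i.e.\ a complete dg $\calP$-algebra structure in the sense of Definition~\ref{Def:CompleteAlg}. (Here one also uses item (1) to identify the source as a filtered operad on a complete $\Sy$-module.) For item (3), given a filtered dg operad $\calP$, a complete dg $\calP$-algebra is a filtered morphism $\calP\to\eend_A$ with $A$ complete; since $\eend_A$ is a complete dg operad and the completion functor is the left adjoint of the forgetful functor (Proposition~\ref{prop:2Adj}), such a filtered morphism factors uniquely through $\pi:\calP\to\widehat\calP$, and — because $\widehat{(-)}$ is strong monoidal and $\pi$ is a morphism of operads — this factorisation is exactly a morphism of complete dg operads $\widehat\calP\to\eend_A$, i.e.\ a complete dg $\widehat\calP$-algebra structure; the inverse assignment is precomposition with $\pi$.

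\emph{Main obstacle.} The routine parts are the diagram chases verifying that the factored maps obey the operad/algebra axioms; these are immediate once one remembers that $\widehat{(-)}$ is strong monoidal and $\pi$ is epi. The one point deserving genuine care is item (2): one must make sure that the ``$\calP$-algebra structure in the monoidal category $(\mathsf{CompMod},\widehat\otimes)$'' is formulated against the \emph{internal} hom of that category, and that Proposition~\ref{prop:EndComplete} is really identifying the internal-hom endomorphism operad there with $\eend_A$ — in other words, that forming $\hom(A^{\widehat\otimes n},A)$ inside $\mathsf{CompMod}$ gives the same filtered module as $\hom(A^{\otimes n},A)$. This is precisely the content of Proposition~\ref{prop:EndComplete}, so the argument goes through, but it is the step where the interplay between the two tensor products and the internal hom is actually used, rather than being a formal consequence of adjunction bookkeeping.
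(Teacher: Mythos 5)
Your proposal is correct and follows essentially the same route as the paper: the forward direction of each item via the lax symmetric monoidal forgetful functor, the converse via the universal property of the completion (filtered maps into complete targets factor uniquely through $\widehat{(-)}$, with $\widehat{\calP\circ\calP}\cong\calP\,\widehat{\circ}\,\calP$ since completion preserves colimits), and Proposition~\ref{prop:EndComplete} doing the real work in item (2). The only cosmetic difference is that you phrase item (1) in terms of partial composition maps where the paper uses the full composite product $\calP\circ\calP$, which changes nothing of substance.
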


\begin{proof}\leavevmode
\begin{enumerate}
\item 
Since the forgetful functor $\sqcup : \mathsf{CompMod} \to \mathsf{FilMod}$ is lax symmetric monoidal according to Proposition~\ref{prop:strict-laxmonoidal}, it sends any complete operad structure on $\calP$ to a filtered operad structure on the underlying complete dg $\Sy$-module $\calP$.  In details, recall that a filtered operad structure amounts to a collection of filtered maps $\gamma_n$:
\[\calP\circ \calP(n)\coloneqq\bigoplus_{k\in \NN}
\calP(k)\otimes_{\Sy_k}
\left(
\bigoplus_{i_1+\cdots+i_k=n}
\mathrm{Ind}_{\Sy_{i_1}\times \cdots \times \Sy_{i_k}}^{\Sy_n}\big(
\calP(i_1)\otimes \cdots \otimes \calP(i_k)
\big)
\right)
\to \calP(n) \ ,\]
satisfying some relations, see \cite[Section~$5.2.1$]{LodayVallette12}.
Similarly, a complete dg operad structure amounts to a collection of filtered maps $\widehat{\gamma}_n$:
\[\calP\, \widehat{\circ}\, \calP(n)\coloneqq\widehat{\bigoplus}_{k\in \NN}
\calP(k)\wo_{\Sy_k}
\left(
\widehat{\bigoplus_{i_1+\cdots+i_k=n}}
\mathrm{Ind}_{\Sy_{i_1}\times \cdots \times \Sy_{i_k}}^{\Sy_n}\big(
\calP(i_1)\wo \cdots \wo \calP(i_k)
\big)
\right)
\to \calP(n) \ ,\]
satisfying the same type of relations. 
Since  the completion functor is left adjoint,  it preserves colimits and thus coproducts, which implies that 
$\widehat{\calP\circ \calP}\cong \calP \, \widehat{\circ}\,  \calP$. By pulling back along the canonical completion map 
$\pi : \calP\circ \calP \to \calP\, \widehat{\circ}\,  \calP$,  any 
dg operad structure $\widehat{\gamma}$
 in complete dg modules induces a filtered dg operad structure  ${\gamma}=\pi\, \widehat{\gamma}$.
 In the other way round, any filtered dg operad structure  $\gamma$ factors through $\widehat{\calP\circ \calP}\cong \calP \, \widehat{\circ}\,  \calP$, that is through an operad structure  $\widehat{\gamma}$ in complete dg modules. 

\item For the second point, the arguments are similar. The lax symmetric monoidal  functor $\sqcup : \mathsf{CompMod} \to \mathsf{FilMod}$ sends  
 any  $\calP$-algebra structure in the monoidal category of complete dg modules to a 
$\calP$-algebra structure in the monoidal category of filtered dg modules. This latter structure amounts to a morphism of filtered dg operads 
$\rho\ : \ \calP
\to \eend_A$,
under point $(1)$. 
Since the filtered dg operad $\calP$ is complete and since the endomorphism operad associated to a complete dg module $A$ is the same in the filtered and the complete case, after Proposition~\ref{prop:EndComplete}, a complete dg $\calP$-algebra structure on $A$ amounts to a morphism of complete dg operads 
$\widehat{\rho}\ : \ \calP
\to \eend_A$, 
which thus coincides with the above type of maps since the morphisms in the category of complete modules are that of the category of filtered modules. 

\item The arguments are again the same: by the universal property of the completion, any morphism of filtered dg operads $\rho \ :\ \calP\to \eend_A$ is equivalent to a morphism of complete dg operads $\widehat{\rho}\ : \  \widehat{\calP}\to \eend_A$, when $A$ is complete.
 \end{enumerate}
 \end{proof}

This result shows that the terminology ``complete algebra'' chosen in Definition~\ref{Def:CompleteAlg} does not bring any ambiguity since the two possible notions are actually equivalent. Notice that this theorem applies to discrete dg operads and discrete dg $\calP$-algebras, when equipped with the trivial filtration. 

\begin{example}
The free complete $\calP$-algebra of a complete module $V$ over a complete operad $\calP$ is given by 
$$ \calP\, \widehat{\circ}\,  V=\widehat{\bigoplus_{n\in \NN}}
\calP(n)\wo_{\Sy_n} V ^{\wo n}\ .$$
When the operad $\calP$ and the  module $V$ are discrete but endowed with the trivial filtration, we recover  the free $\calP$-algebra $\calP\circ V$. 
But, when the filtration arises from the weight grading for which $V$ is concentrated in weight $1$, so that $\F_0 V= \F_1 V=V$ and $\F_n V=0$, for $n\ge 2$, 
the free complete $\calP$-algebra on $V$ is equal to 
$$ \calP\, \widehat{\circ}\,  V\cong \prod_{n\in \NN}
\calP(n)\otimes_{\Sy_n} V ^{\otimes n} \ ,$$
since its  underlying filtration is given by 
\[
\calF_k\left(\bigoplus_{n\in \NN}
\calP(n)\otimes_{\Sy_n} V^{\otimes n}\right)=\bigoplus_{n\ge k}
\calP(n)\otimes_{\Sy_n} V^{\otimes n}\ .
\]
In this way, we recover the notions of free complete associative algebra present in 
\cite[Section~I.4]{Lazard50} or free complete Lie algebras present in  \cite[Section~II.1]{Lazard50} and \cite{LawrenceSullivan14}, for instance. This allows us to get automatically, that is operadically, the universal enveloping algebra in the complete case. 
\end{example}

\begin{proposition}
Let $\calP$ be a dg operad. The forgetful functor embeds the category of complete dg $\calP$-algebras as a full subcategory of filtered dg $\calP$-algebras. The completion functors sends a filtered dg $\calP$-algebra to a complete dg $\calP$-algebra. 
These two functors again form a pair of adjoint functors, where the completion functor is left adjoint.  
\end{proposition}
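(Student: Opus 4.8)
The plan is to deduce this statement directly from the monoidal structure of the completion functor and the three-part characterization in \cref{thm:CompleteAlg}. First I would observe that a filtered dg $\calP$-algebra structure on a complete module $A$ is, by \cref{def:FilAlgebras}, the same as a morphism of filtered dg operads $\calP \to \eend_A$; by \cref{prop:EndComplete} the operad $\eend_A$ is already complete, so \cref{thm:CompleteAlg}(2)--(3) tell us that such morphisms are in bijection with morphisms of complete dg operads $\widehat{\calP} \to \eend_A$, i.e.\ with complete dg $\calP$-algebra structures (equivalently $\widehat{\calP}$-algebra structures). Hence the objects of the category of complete dg $\calP$-algebras sit among filtered dg $\calP$-algebras, and the morphisms agree because, as noted in the proof of \cref{thm:CompleteAlg}, morphisms in $\mathsf{CompMod}$ are just the morphisms in $\mathsf{FilMod}$ between complete modules. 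This gives the full embedding.

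Next I would construct the completion functor on algebras. Given a filtered dg $\calP$-algebra $A$ with structure morphism $\rho \colon \calP \to \eend_A$, one wants a complete dg $\calP$-algebra structure on $\widehat{A}$. The natural way is to use that $\widehat{\ \ }$ is strong symmetric monoidal (\cref{prop:strict-laxmonoidal}): applying it to the structure maps $\calP(n)\otimes A^{\otimes n}\to A$ and composing with the comparison isomorphisms $\widehat{\calP(n)}\wo \widehat{A}^{\wo n}\cong \widehat{\calP(n)\otimes A^{\otimes n}}$ yields maps $\widehat{\calP(n)}\wo \widehat{A}^{\wo n}\to \widehat{A}$, i.e.\ a complete dg $\widehat{\calP}$-algebra structure on $\widehat{A}$; equivalently, pulling back along $\calP\to\widehat{\calP}$, a filtered dg $\calP$-algebra structure on the complete module $\widehat{A}$. (If one prefers a more structural argument, one can say: $\widehat{\ \ }$ is strong monoidal, hence sends the operad $\calP$-algebra $A$ in $\mathsf{FilMod}$ to a $\widehat{\calP}$-algebra in $\mathsf{CompMod}$, and then invoke \cref{thm:CompleteAlg}(3).) Functoriality and compatibility with the differential are immediate from functoriality of $\widehat{\ \ }$ and naturality of the monoidal comparison maps.

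Finally I would establish the adjunction. The unit is the canonical map $\pi\colon A \to \widehat{A}$, which is a filtered map and, by the previous paragraph's construction, a morphism of filtered dg $\calP$-algebras. To verify the universal property, let $B$ be a complete dg $\calP$-algebra and $f\colon A\to B$ a morphism of filtered dg $\calP$-algebras; by \cref{prop:2Adj} the underlying filtered map factors uniquely as $f=\bar f\circ\pi$ with $\bar f\colon\widehat{A}\to B$ a filtered map. It remains to check that $\bar f$ is a morphism of $\calP$-algebras. This follows because the two maps $\widehat{\calP(n)}\wo\widehat{A}^{\wo n}\to B$ obtained by going around the relevant square agree after precomposition with the (dense-image) completion map from $\calP(n)\otimes A^{\otimes n}$ — where they agree since $f$ is a $\calP$-algebra morphism — and $B$ is Hausdorff, so they agree everywhere; alternatively this is just the statement that $\widehat{\ \ }$, being strong monoidal and left adjoint, carries the free–forgetful adjunction for $\calP$-algebras over $\mathsf{FilMod}$ to the one over $\mathsf{CompMod}$. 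The main obstacle, such as it is, is bookkeeping: one must be careful that the comparison isomorphisms for $\wo$ are natural and associative enough that the induced maps really satisfy the operad-algebra axioms, and that the passage between ``$\widehat{\calP}$-algebra in $\mathsf{CompMod}$'' and ``filtered $\calP$-algebra on a complete module'' via \cref{thm:CompleteAlg} is used consistently; once \cref{thm:CompleteAlg} and \cref{prop:strict-laxmonoidal} are in hand, no genuinely new computation is required.
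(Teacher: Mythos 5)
Your argument is correct and follows exactly the route the paper intends: its proof consists of the single line that the statement is a direct corollary of \cref{prop:strict-laxmonoidal} and \cref{thm:CompleteAlg}, and your write-up simply makes explicit the details (strong monoidality of completion, the identification of complete algebra structures via \cref{thm:CompleteAlg}, and the lift of the module-level adjunction of \cref{prop:2Adj} to algebras) that the paper leaves implicit. No gap.
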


\begin{proof}
This is a direct corollary of Proposition~\ref{prop:strict-laxmonoidal} and Theorem~\ref{thm:CompleteAlg}.
\end{proof}

\newpage
\chapter{Pre-Lie deformation theory in the complete setting}\label{sec:TopoDefTh}

In this section, we generalise Lazard's treatment of Lie theory to develop the integration theory of complete pre-Lie algebras. The main goal is to  apply it  to the operadic convolution algebra, in order to give rise in this setting to the main tool of algebraic deformation theory: \emph{the deformation gauge group}. The present development relies on the  previous operadic computations performed in  \cite{DotsenkoShadrinVallette16}; they hold in the present context \emph{without any changes} since the various formulas make sense thanks to the complete  topology considered from the previous section. This degree of generalisation allows us to define a suitable notion of $\infty$-morphism of homotopy algebras encoded by non-necessarily coaugmented cooperads, like curved $\Ai$-algebras or curved $\Li$-algebras. 

\section{Complete convolution algebra}\label{subsec:CompConvAlg}

\begin{definition}[Complete left-unital differential graded pre-Lie algebra]
A \emph{complete left-unital dg pre-Lie algebra} amounts to the data 
$\a=\left(
A,  \F, d, \star, 1 
\right)$ of a dg complete module equipped with a 
filtration preserving the bilinear product whose associator is right symmetric 
$$(a\star b)\star c- a\star(b\star c)  = (-1)^{|b||c|}\big( (a\star c)\star b-a\star(c\star b)\big)\ ,$$
such that the differential $d$ is a derivation 
$$d(a\star b)=d(a) \star b + (-1)^{|a|} a \star d(b) $$
and such that the  element $1\in\F_0 A_0$ is a closed element $d(1)=0$ and a left unit 
$$ 1 \star a = a\ .$$ 
\end{definition}

The functor from dg operads to dg left-unital pre-Lie algebras \cite[Section~$5$]{DotsenkoShadrinVallette16} extends to a functor 
from complete dg operads to complete left-unital dg pre-Lie algebras: 
$$\xymatrix@R=0pt{
\mathsf{complete\ dg\ operads}\ar[r] & \mathsf{complete\ left}\textsf{-}\mathsf{unital \ dg \ pre}\textsf{-}\mathsf{Lie\ algebras}\\
\big(\calP,  \{\circ_i\}, \I \big)\ar@{|->}[r]& \left({\prod}_{n\in \NN} \calP(n), \star, 1\right)
\ ,}
$$
where the pre-Lie product $\star$ is given by the sum of the partial composition maps $\circ_i$ and 
and where the left-unit is given by $1\coloneqq(0, \I, 0, 0, \ldots)$. 
Recall that in any closed symmetric monoidal category the mapping space $\hom\left(\calC, \calP\right)$ from a cooperad $\calC$ to an operad $\calP$ forms a \emph{convolution operad}, see \cite[Section~$6.4.1$]{LodayVallette12}. 
When $\calC$ is a filtered dg cooperad and when $\calP$ is a complete dg operad, the aforementioned construction associates a complete dg pre-Lie algebra structure to the complete convolution dg operad $\hom\left(\calC, \calP\right)$, with  pre-Lie product  equal to 
\[\xymatrix@C=30pt{f\star g =\calC \ar[r]^(0.55){\Delta_{(1)}} & \calC\; \widehat{\circ}_{(1)}\, \calC 
\ar[r]^(0.46){f\; \widehat{\circ}_{(1)}\; g} & 
\calP\; \widehat{\circ}_{(1)}\, \calP  \ar[r]^(0,6){\gamma_{(1)}} & \calP \ ,}\]
where the various infinitesimal notions \cite[Section~$6.1$]{LodayVallette12} are considered in complete setting. 

\begin{proposition}
The space of equivariant maps from a filtered dg cooperad $\calC$ to a complete dg operad~$\calP$
$$\hom_\Sy\big(\calC, \calP\big)\coloneqq{\prod}_{n\in \NN} \hom_{\Sy_n}\big(\calC(n), \calP(n)\big)$$
forms a complete left-unital dg pre-Lie algebra 
$$\left(\hom_\Sy\big(\calC, \calP\big), \partial, \star, 1 \right)$$
and thus a complete dg Lie algebra 
$$\left(\hom_\Sy\big(\calC, \calP\big), \partial, [\;,\,]\right)\ ,$$
by skew-symmetrization of the pre-Lie product: $[a,b]\coloneqq a\star b - (-1)^{|a||b|} b\star a$\ .
\end{proposition}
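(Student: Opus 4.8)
The plan is to assemble this statement from the pieces already in place, reducing it to the discrete (or at least non-complete) analogue proved in \cite{DotsenkoShadrinVallette16}. First I would note that the space of equivariant maps $\hom_\Sy(\calC,\calP)=\prod_{n\in\NN}\hom_{\Sy_n}(\calC(n),\calP(n))$ is a complete module: each $\hom_{\Sy_n}(\calC(n),\calP(n))$ is a retract of the complete module $\hom(\calC(n),\calP(n))$ (complete because $\calP(n)$ is complete, by the argument in the lemma on closedness of the internal hom), the fixed points of an idempotent being a closed submodule hence complete, and a product of complete modules is complete by the construction of products in $\mathsf{CompMod}$. The differential $\partial$ is the usual convolution differential $\partial(f)=d_\calP\circ f-(-1)^{|f|}f\circ d_\calC$, which is a filtered map of degree $-1$ squaring to zero; this is formal.

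Next I would address the pre-Lie product. Here the key observation is that the whole formula
\[
f\star g=\gamma_{(1)}\circ\big(f\,\widehat{\circ}_{(1)}\,g\big)\circ\Delta_{(1)}
\]
is built entirely out of the structure maps $\Delta_{(1)}$, $\gamma_{(1)}$, the symmetry isomorphisms, and composition, all of which are morphisms in the closed symmetric monoidal category $(\mathsf{CompMod},\wo)$ — this uses \cref{prop:EndComplete} and \cref{prop:strict-laxmonoidal} to identify the complete convolution operad with the filtered one and to know the infinitesimal composition maps are filtered. Consequently the pre-Lie relation, the fact that $\partial$ is a derivation for $\star$, and the fact that $1=(0,\I,0,0,\dots)$ is a closed left unit are \emph{identities between morphisms} in a symmetric monoidal category; since they hold in the classical (uncompleted) setting — which is exactly \cite[Section~5]{DotsenkoShadrinVallette16}, applied to the underlying operad/cooperad — and since the discrete-to-filtered-to-complete functors are (lax or strong) monoidal and faithful on the relevant hom-spaces, the same diagrams commute here. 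In other words, the verification is \emph{word for word} the one in loc. cit., which is precisely the point emphasised in the opening of \cref{sec:TopoDefTh}. I would spell out only the translation dictionary (which symmetric-monoidal structure maps are being used, and why each is filtered) rather than reprove the pre-Lie identity.

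Finally, for the Lie algebra statement I would invoke the general fact that skew-symmetrisation of any left-unital pre-Lie product yields a Lie bracket: antisymmetry is immediate from $[a,b]=a\star b-(-1)^{|a||b|}b\star a$, and the graded Jacobi identity follows from the right-symmetry of the associator by the standard computation (again valid in any additive symmetric monoidal category, hence applicable here); that $\partial$ is a derivation of $[\,,\,]$ follows from its being a derivation of $\star$. Completeness of $\hom_\Sy(\calC,\calP)$ as a dg Lie algebra is inherited from the pre-Lie structure since the bracket is a composite of the filtered maps $\star$, the symmetry, and subtraction.

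The main obstacle, such as it is, is not a computation but a bookkeeping point: one must be careful that the completion functor interacts correctly with the coproducts and the $\Sy_n$-(co)invariants appearing in $\calC\,\widehat{\circ}_{(1)}\,\calC$ and in $\hom_\Sy$, i.e. that forming invariants, completing, and taking products commute in the needed way — this is exactly the content of the monoidal-functoriality lemmas of \cref{sec:OptheoyFilMod} (in particular that the completion functor preserves colimits, so $\widehat{\calC\circ_{(1)}\calC}\cong\calC\,\widehat{\circ}_{(1)}\,\calC$, and that equivariant hom-spaces of complete modules are complete). Once this is granted, nothing genuinely new happens and the proof is a short reference to \cite{DotsenkoShadrinVallette16} together with the elementary pre-Lie-implies-Lie argument.
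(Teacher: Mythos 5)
Your proposal is correct and follows essentially the same route as the paper, whose own proof is the two-sentence remark that ``as in the classical case'' the equivariant maps are stable under the pre-Lie product and that the pre-Lie-to-Lie functor extends to the complete setting; you have simply made explicit the reduction to \cite{DotsenkoShadrinVallette16} via the monoidal-functoriality results of \cref{sec:OptheoyFilMod}, which is exactly the intended argument.
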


\begin{proof}
As in the classical case, one can see that the space of equivariant maps is stable under the pre-Lie product and that the functor from dg pre-Lie algebras to dg Lie algebras extends to the complete level. 
\end{proof}

\begin{definition}[Complete convolution algebra]
Such a complete dg (pre)-Lie algebra will be called a \emph{complete convolution (pre)-Lie algebra}. 
\end{definition}

The crucial object in the study of the deformation theory of algebraic structures \cite{LodayVallette12, DotsenkoShadrinVallette16} is the complete convolution algebra associated to a filtered  cooperad $\calC$ and the complete endomorphism operad of a complete module $A$: 
\[
\a_{\, \calC, A}\coloneqq
\big(
\hom_\Sy\big(\calC, {\eend}_A\big), \partial, \star, 1 
\big)\ .
\]
The present definition, in the complete setting, is the most general we are aware of. 
It will be mandatory in \cref{sec:GaugeTwist} in order to give a (gauge) group interpretation to the twisting procedure of algebraic structures 

\section{Integration theory for complete pre-Lie algebras}\label{subsec:CompleteLieInt}
In \cite{DotsenkoShadrinVallette16}, we developed the integration theory of pre-Lie algebras under a strong weight grading assumption. Unfortunately, the deformation theory of many algebraic structures, like the ones that we will study in \cref{sec:GaugeTwist}, require to use curved Koszul dual cooperads which are not weight graded. In this case, the relevant object is the aforementioned complete convolution algebra $\a_{\, \calC, A}$. In this section, we extend the integration theory of weight graded left-unital dg pre-Lie algebras to the complete setting. This section can also be seen as the generalisation of the integration theory of complete Lie algebras of Lazard's thesis \cite{Lazard50} to left-unital complete pre-Lie algebras. In this section,  the ground ring $\k$ is assumed to contain the field $\mathbb{Q}$.  \\

Let us first recall the notion of a filtered and thus topological group \cite[Section~I.2]{Lazard50} defined in a  way similar way than  that of filtered and thus topological algebra. 

\begin{definition}[Filtered group and complete group]
A \emph{filtered group} am\-oun\-ts to the data $(G, \F, \cdot, e)$ of a group $(G,  \cdot, e)$ endowed with a filtration $\F$ made up of sub-groups
$$G=\F_1 G \supset \F_2 G \supset  \cdots \supset \F_k G  \supset \cdots \ ,$$
such that the commutator satisfies $xyx^{-1}y^{-1}\in \F_{i+j} G$ for $x\in \F_i G$ and $y\in \F_j G$. 
A filtered group is called \emph{complete} when the underlying topology is Hausdorff and complete. 
\end{definition}

Let $\a=\left(A,  \F, d, \star, 1 \right)$ be a complete left-unital dg pre-Lie algebra. By \cite[Chapitre~II]{Lazard50}, we get the following definition of the gauge group, via the  associated  complete dg Lie algebra.

\begin{definition}[Gauge group]
The \emph{gauge group} associated to a complete left-unital dg pre-Lie algebra $\a$ 
 is defined by 
$$\Gamma\coloneqq\big(
\F_1 A_0,  \BCH(\; ,\,), 0
\big)\ ,$$
where $\BCH$ refers to the Baker--Campbell--Hausdorff formula.
\end{definition}

 This latter one is convergent for elements in $\F_1 A_0$: 
let $x, y\in \F_1 A_0$ and recall that their BCH product is given by a series of the form 
$$\BCH(x,y)=\underbrace{x+y}_{\in \F_1 A_0}+\underbrace{\frac{1}{2}[x,y]}_{\in \F_2 A_0} + \underbrace{\frac {1}{12}\big([[x,y], y]+[[y,x], x]\big)}_{\in \F_3 A_0}+\cdots \ ,$$
which is thus  convergent by Lemma~\ref{lem:Conv}.

\begin{proposition}
The gauge group is a complete group, with respect to the following filtration:  
$$\F_1 \Gamma\coloneqq\F_1 A_0 \supset \F_2\Gamma\coloneqq \F_2 A_0 \supset  \cdots \supset \F_k\Gamma\coloneqq\F_k A_0  \supset \cdots \ .$$
\end{proposition}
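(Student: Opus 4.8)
The plan is to verify directly that the proposed filtration $\F_k\Gamma \coloneqq \F_k A_0$ turns the gauge group $\Gamma = (\F_1 A_0, \BCH, 0)$ into a filtered group in the sense of the preceding definition, and then to check completeness of the underlying topology. So first I would show that each $\F_k A_0$ is a subgroup of $\Gamma$: it is closed under $\BCH$ because $\BCH(x,y)$ is a convergent series all of whose terms are iterated brackets of $x$ and $y$, and if $x,y\in\F_kA_0$ then every such term lies in $\F_{2k}A_0\subseteq\F_kA_0$ except the linear term $x+y\in\F_kA_0$, using that the bracket of the complete convolution Lie algebra respects the filtration (the bracket is the skew-symmetrisation of $\star$, and $\star$ is filtration-preserving in the sense that $\F_iA\star\F_jA\subseteq\F_{i+j}A$, hence $[\F_iA,\F_jA]\subseteq\F_{i+j}A$). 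The identity is $0\in\F_kA_0$, and the inverse of $x$ for $\BCH$ is $-x$, which lies in $\F_kA_0$ since it is a submodule; so $\F_k\Gamma$ is a subgroup.

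Next I would check the commutator condition: for $x\in\F_i\Gamma$ and $y\in\F_j\Gamma$, the group commutator $\BCH(\BCH(x,y),\BCH(-y,-x))$ must lie in $\F_{i+j}\Gamma=\F_{i+j}A_0$. This follows from the Baker--Campbell--Hausdorff-type expansion of the group commutator in a (pro)nilpotent Lie algebra: the commutator equals $[x,y]$ plus a convergent series of iterated brackets each involving at least one $x$ and at least one $y$ and total bracket-length at least three, so every term lies in $\F_{i+j}A_0$ (the leading term $[x,y]\in\F_{i+j}A_0$ already, and the higher terms have even more factors). I would cite Lazard \cite{Lazard50} for the precise form of this expansion, since exactly this computation is carried out there for complete Lie algebras, and it transfers verbatim to our setting because $\a$ is a complete left-unital dg pre-Lie algebra and its associated Lie algebra is complete.

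Finally, for completeness of $\Gamma$: the filtration $\F_k\Gamma=\F_kA_0$ induces on $\Gamma$ as a topological space exactly the topology $\F_kA_0$ induces on $A_0$ as a filtered module, since $\BCH$ only translates the basic neighbourhoods $x+\F_k\Gamma$ by a homeomorphism (left translation by $x$ in the group). Because $\a$ is a complete module, $A_0$ is a complete Hausdorff filtered module, so $\cap_k\F_kA_0=\{0\}$ gives the Hausdorff property and every Cauchy sequence converges; a Cauchy sequence in $\Gamma$ for the group filtration is, in particular, a Cauchy sequence in $A_0$ for the module filtration (the two uniform structures coincide as the filtration submodules are the same), hence converges in $A_0=\Gamma$. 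Thus $\Gamma$ is a complete filtered group.

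The main obstacle I expect is not any single step but assembling the BCH-commutator estimate cleanly: one must be slightly careful that the group commutator, not just $\BCH(x,y)$ itself, has the required valuation, i.e. that all cross-terms genuinely carry a factor from $\F_iA_0$ \emph{and} a factor from $\F_jA_0$. This is classical for complete (pro-nilpotent) Lie algebras and is precisely Lazard's Lemma, so the cleanest route is to invoke \cite[Chapitre~II]{Lazard50} rather than re-derive the commutator series; the only thing one genuinely needs to have set up beforehand is that the convolution Lie bracket satisfies $[\F_iA_0,\F_jA_0]\subseteq\F_{i+j}A_0$, which is immediate from the filtration-compatibility of $\star$.
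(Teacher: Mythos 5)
Your proposal is correct and follows essentially the same route as the paper: the subgroup property is read off from the form of the $\BCH$ series, and the commutator condition follows from the classical expansion $\BCH(\BCH(\BCH(x,y),-x),-y)=[x,y]+\cdots$ with higher terms being iterated brackets containing at least one $x$ and one $y$, combined with $[\F_iA_0,\F_jA_0]\subseteq\F_{i+j}A_0$. You are in fact slightly more complete than the paper, which leaves the Hausdorff/completeness of the underlying topology implicit (it is immediate since the filtration on $\Gamma$ is the restriction of the filtration on the complete module $A$).
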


\begin{proof}
The aforementioned form of the BCH product shows that each $\F_k\Gamma$ is a subgroup of the gauge group. It is known that 
\[
\BCH(\BCH(\BCH(x,y),-x), -y)=[x,y]+\cdots\ , 
\]
where the higher terms are iteration of brackets of at least one $x$ and one $y$ each time. Therefore, the commutators satisfy $\BCH(\BCH(\BCH(x,y),-x), -y)\in \F_{i+j} \Gamma$ for $x\in \F_i \Gamma$ and $y\in \F_j \Gamma$.
\end{proof}

For any element $\lambda\in \F_1 A$, we consider the following right iteration of the pre-Lie product 
$$\lambda^{\star n}\coloneqq\underbrace{(\cdots((\lambda \star \lambda) \star \lambda)\cdots )\star \lambda}_{n\  \text{times}}\in \F_n A\ . $$

\begin{definition}[Pre-Lie exponential]
The \emph{pre-Lie exponential} of an element $\lambda \in \F_1 A$ is 
defined  by the following convergent  series 
$$e^\lambda\coloneqq1 +\lambda + \frac{\lambda^{\star 2}}{2!} + \frac{\lambda^{\star 3}}{3!} +\cdots \ . $$
\end{definition}

We consider the set of \emph{group-like elements} defined by 
\[G\coloneqq1+\F_1 A_0=\left\{1+x, \ x\in  \F_1 A_0\right\}\ ,\]
with basis of open sets at $1$ defined by $\{1+\F_k A_0\}_{k \ge 1}$.

\begin{definition}[Pre-Lie logarithm]
The \emph{pre-Lie logarithm} of a group-like element is defined by the convergent  Magnus expansion series
$$\ln(1+x)=\Omega(x)\coloneqq x - \frac12 x\star x + \frac14 x\star (x \star x)
+ \frac{1}{12}(x\star x)\star x+\cdots \ , $$
 see \cite{AgrachevGamkrelidze80, Chapoton02, Manchon11} for more details. 
\end{definition}

\begin{proposition}\label{prop:preLieExpLog}
The pre-Lie exponential and the pre-Lie logarithm are inverse filtered bijections 
$$\xymatrix@C=30pt{\mathrm{exp}\ : \F_1 A_0 \ar@{<->}[r]^(0.43){\cong} & G=1+\F_1 A_0\ :\ \ln\ .
}$$
\end{proposition}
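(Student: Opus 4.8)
The plan is to show that $\exp$ and $\ln$ are mutually inverse filtered bijections by working "order by order" with respect to the filtration, reducing everything to a purely combinatorial identity that has already been established in the weight-graded setting in \cite{DotsenkoShadrinVallette16}. The key observation, which we emphasized at the start of \cref{sec:TopoDefTh}, is that all the relevant series — $e^\lambda$, $\Omega(x)$, and their composites — converge by Lemma~\ref{lem:Conv}, since each term of weight $n$ lands in $\F_n A_0$ and the filtration is complete; hence the formal identities of loc. cit.\ transport verbatim to the complete context.

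First I would observe that both maps are filtered: $\exp(\lambda) - 1 = \lambda + \tfrac{1}{2}\lambda^{\star 2} + \cdots$ has its degree-$k$ contribution in $\F_k A_0$, so $\exp$ maps $\F_j A_0$ into $1 + \F_j A_0$ and, more precisely, $\exp(\lambda) \equiv 1 + \lambda \pmod{\F_2 A_0}$ whenever $\lambda \in \F_1 A_0$; likewise $\Omega(x) \equiv x \pmod{\F_2 A_0}$. Consequently both $\exp$ and $\ln$ respect the bases of opens $\{\F_k A_0\}_{k\ge 1}$ and $\{1 + \F_k A_0\}_{k\ge 1}$, so once we know they are inverse bijections of sets they are automatically filtered homeomorphisms.

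The core of the argument is the identity $\exp(\Omega(x)) = 1 + x$ (and its companion $\Omega(\exp(\lambda) - 1) = \lambda$). Here I invoke \cite{DotsenkoShadrinVallette16}: in the free complete left-unital pre-Lie algebra on one generator — equivalently, working with the species/weight-graded model of pre-Lie operations — one has the formal power-series identity expressing that the pre-Lie logarithm $\Omega$ is inverse to the pre-Lie exponential. This identity is an equality of elements of a completed graded pre-Lie algebra, each graded piece being a finite linear combination of iterated pre-Lie products (corolla-indexed rooted-tree monomials) in the generator. Since $\a$ is a complete left-unital dg pre-Lie algebra, the universal property gives, for any $\lambda \in \F_1 A_0$, a continuous morphism of complete pre-Lie algebras from that free object sending the generator to $\lambda$; applying it to the formal identity yields the desired equality in $\a$, with all series converging by Lemma~\ref{lem:Conv}. (Note the differential plays no role here: we only use that $d$ preserves the filtration and is a derivation, so $\exp$ and $\ln$ are defined on the whole of $\F_1 A_0$ and $1 + \F_1 A_0$ irrespective of $d$.)

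The main obstacle is making precise that the weight-graded formal identity of \cite{DotsenkoShadrinVallette16} really does specialize to the complete setting with no new input — i.e.\ checking that the completed free left-unital pre-Lie algebra on one generator, with the filtration by the number of generators (weight), is the appropriate universal object and that evaluation at $\lambda \in \F_1 A_0$ is a well-defined continuous pre-Lie morphism. This is exactly the kind of "transport along the monoidal functors" that \cref{sec:OptheoyFilMod} was set up to license: the free complete $\calP$-algebra computations and the fact that $\widehat{\,\cdot\,}$ is strong monoidal (Proposition~\ref{prop:strict-laxmonoidal}) guarantee that the operadic/combinatorial identities valid for the free graded pre-Lie algebra remain valid after completion. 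Once this is in place, the bijectivity is immediate and the filtered (hence bicontinuous) property follows from the congruences $\exp(\lambda) \equiv 1 + \lambda$ and $\Omega(x) \equiv x$ modulo $\F_2 A_0$ noted above.
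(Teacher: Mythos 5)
Your proposal is correct and follows essentially the same route as the paper: both prove the identity $e^{\ln(1+a)}=1+a$ formally in the free complete left-unital pre-Lie algebra on one weight-$1$ generator and then push it forward to $\a$ via the unique (filtered, hence continuous) morphism sending the generator to $x\in\F_1 A_0$, with the filtered-bijection property read off from the explicit form of the two series. Your extra remarks on the congruences modulo $\F_2 A_0$ and the role of Lemma~\ref{lem:Conv} are just a more detailed spelling-out of the same argument.
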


\begin{proof}The pre-Lie Magnus expansion is known to be a formal inverse of the pre-Lie exponential, this means precisely that it is its inverse in the free complete left-unital pre-Lie algebra on one variable $a$ of weight $1$, which we denote by 
$\widehat{\mathrm{uPreLie}}(a)$. For any $x\in \F_1 A_0$, we consider the unique morphism of complete left-unital pre-Lie algebras $\widehat{\mathrm{uPreLie}}(a)\to A_0$, which sends $a$ to $x$. The image of the relation 
$e^{\ln(1+a)}=a$ is $e^{\ln(1+x)}=x$. 

From the formulas of the pre-Lie exponential and the pre-Lie logarithm, one can see that they send bijectively $\F_k A_0$ to $1+\F_k A_0$, for any $k\ge 1$, and vice versa. \end{proof}

Recall that, in any pre-Lie algebra, one defines  \emph{symmetric braces} operations by the following formulas:
$$\begin{aligned}
\{ a; \}\coloneqq&a \\
\{ a; b_1\}\coloneqq&a\star b_1 \\
\{ a; b_1,\ldots, b_n\}
\coloneqq&  \{ \{a; b_1,\ldots, b_{n-1}\}; b_n\}- \displaystyle \sum_{i=1}^{n-1} \{a; b_1, \ldots, b_{i-1}, 
\{b_i; b_n\}, b_{i+1}, \ldots, b_{n-1}\}   \ .
\end{aligned}
$$
Any elements $a\in A$ and $b\in \F_1 A$ satisfy $\{a; \underbrace{b, \ldots, b}_{n}\}\in \F_n A$. So we can  consider the following \emph{circle product} defined by the convergent series made up of the iterated   symmetric braces:
$$a \circledcirc (1+b) \coloneqq \sum_{n\ge 0}  {\displaystyle \frac{1}{n!}}  \{a; \underbrace{b, \ldots, b}_{n}\}\ ,$$
which is linear only on the left-hand side and unital, $a\circledcirc 1 =a$ and $1 \circledcirc (1+b)=1+b$. 

\begin{theorem}\label{thm:GaugeGrpBis}
The data 
$$\mathfrak{G}\coloneqq\left( G, \{1+\F_k A_0\}_{k \ge 1}, \circledcirc, 1\right)$$ forms a complete  group, which is filtered isomorphic, thus homeomorphic, to the gauge group under the pre-Lie exponential and the pre-Lie logarithm respectively.
\end{theorem}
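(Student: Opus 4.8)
The plan is to reduce the statement to the corresponding result proved in \cite{DotsenkoShadrinVallette16} in the weight-graded setting, the only new ingredient being that all the series involved converge in the complete topology by \cref{lem:Conv}. First I would check that $\circledcirc$ is well defined and takes values in $G$: for $a=1+x\in G$ and $1+b\in G$ one has $\{a;b,\ldots,b\}\in\F_n A_0$ for all $n$, so the series $a\circledcirc(1+b)=\sum_{n\geq 0}\frac{1}{n!}\{a;b,\ldots,b\}$ converges by \cref{lem:Conv}, and since its degree-$0$ summand equals $a$ while all higher summands lie in $\F_1 A_0$, the result lies in $G=1+\F_1 A_0$. The unit axioms $a\circledcirc 1=a$ and $1\circledcirc(1+b)=1+b$ follow directly from the defining formulas for the symmetric braces.

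The heart of the proof is the identity
\[\ln\big((1+x)\circledcirc(1+y)\big)=\BCH\big(\ln(1+x),\ln(1+y)\big)\]
for all $x,y\in\F_1 A_0$. Both sides are convergent series of iterated pre-Lie products in the two variables $x,y$ — on the right-hand side after expanding the Lie brackets via $[u,v]=u\star v-(-1)^{|u||v|}\,v\star u$ — and are therefore the images of the corresponding formal expressions in the free complete left-unital pre-Lie algebra $\widehat{\uPreLie}(a,b)$ on two generators of weight $1$, under the unique morphism of complete left-unital pre-Lie algebras $\widehat{\uPreLie}(a,b)\to A_0$ sending $a\mapsto x$ and $b\mapsto y$ (the universal property already used in the proof of \cref{prop:preLieExpLog}). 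The required identity in $\widehat{\uPreLie}(a,b)$ is precisely the one established in \cite{DotsenkoShadrinVallette16}: its proof there only manipulates convergent series with respect to the weight filtration and hence carries over verbatim. The same transfer mechanism — a universal identity in a free complete left-unital pre-Lie algebra on finitely many weight-$1$ generators, pushed forward along the evaluation morphism — yields the associativity of $\circledcirc$ and the existence of a two-sided $\circledcirc$-inverse for each element of $G$.

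Granting this, $(G,\circledcirc,1)$ is a group and, by \cref{prop:preLieExpLog}, $\exp\colon\F_1 A_0\to G$ is a bijection intertwining $\BCH$ with $\circledcirc$; hence it is a group isomorphism $\Gamma\xrightarrow{\cong}\mathfrak{G}$ with inverse $\ln$. Since, again by \cref{prop:preLieExpLog}, $\exp$ and $\ln$ restrict to mutually inverse bijections $\F_k A_0\leftrightarrow 1+\F_k A_0$ for every $k\geq 1$, each $1+\F_k A_0=\exp(\F_k\Gamma)$ is a subgroup of $\mathfrak{G}$ and the filtered-group commutator condition is transported from $\Gamma$; thus $\mathfrak{G}$ is a filtered group and $\exp,\ln$ are mutually inverse filtered isomorphisms of filtered groups. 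Filtered maps being continuous, $\exp$ is then a homeomorphism, and since $\Gamma$ is a complete group so is $\mathfrak{G}$.

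The only genuine obstacle is the bookkeeping behind this transfer: one must check that the free complete left-unital pre-Lie algebra on weight-$1$ generators appearing in \cite{DotsenkoShadrinVallette16} coincides with the one produced by the monoidal formalism of \cref{sec:OptheoyFilMod}, and that the operadic computations of loc.\ cit.\ are genuinely identities of convergent series in that algebra rather than consequences of its stronger weight-gradedness. Once this is in place, no further computation is needed.
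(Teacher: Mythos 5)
Your proposal is correct and follows essentially the same route as the paper: the paper's proof likewise reduces everything to the identities of \cite{DotsenkoShadrinVallette16} (notably the compatibility of $\circledcirc$ with $\BCH$ under $\exp$/$\ln$) interpreted as universal identities in the free complete left-unital pre-Lie algebra on weight-$1$ generators, pushed forward along the evaluation morphism exactly as in the proof of Proposition~\ref{prop:preLieExpLog}, and then transports the filtration and completeness via Proposition~\ref{prop:preLieExpLog}. Your write-up is in fact somewhat more explicit than the paper's about convergence of $\circledcirc$ and about which identities are being transferred.
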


\begin{proof}
The arguments of \cite[Section~$4$]{DotsenkoShadrinVallette16} hold true in the formal case, this is in the free complete left-unital pre-Lie algebra on two variables  of weight $1$, especially the proof of \cite[Proposition~$3$]{DotsenkoShadrinVallette16}. By the same argument as in the above proof of Proposition~\ref{prop:preLieExpLog}, the conclusions of  \cite[Theorem~$2$]{DotsenkoShadrinVallette16} hold in the present complete case: $\mathfrak{G}$ is isomorphic to the gauge group under the pre-Lie exponential and the pre-Lie logarithm respectively. Proposition~\ref{prop:preLieExpLog} implies that $\mathfrak{G}$ is a complete group, which is filtered isomorphic, thus homeomorphic, to the gauge group.
\end{proof}

\section{Deformation theory in the complete pre-Lie setting}\label{subsec:DefTheoCompPreLie}

\begin{definition}[Maurer--Cartan element]
A \emph{Maurer--Cartan element} in a dg pre-Lie algebra $\a=\left(A, d, \star, 1 \right)$ is  a degree $-1$ element 
$\bar{\alpha}$ satisfying the Maurer--Cartan equation:
 $$d(\bar{\alpha})+\bar{\alpha}\star \bar{\alpha}=0\ . $$
 \end{definition}

In some filtered dg pre-Lie algebras, like the convolution pre-Lie algebra considered in the next section, there exits an element $\delta \in  \F_0 A_{-1}$ such that $[\delta, x]=\delta \star x -(-1)^{|x|} x \star \delta=d(x)$. In this case, the differential is \emph{internal} and 
we consider instead the following Maurer--Cartan elements $\alpha\coloneqq\delta+\bar{\alpha}$ satisfying the equivalent square-zero equation $\alpha \star \alpha=0$. 

\begin{proposition}\label{prop:GaugeGroupAction}
Let $\a=\left(A,  \F, d, \star, 1 \right)$ be a complete left-unital dg pre-Lie algebra. The gauge group $\Gamma$ acts  on the set of Maurer--Cartan elements under the formula 
$$\lambda. \alpha\coloneqq e^{\ad_\lambda}({\alpha}) \quad \text{or \ equivalently} \quad 
\lambda. \bar{\alpha}\coloneqq\frac{e^{\ad_\lambda}-\id}{\ad_\lambda}(d\lambda)+e^{\ad_\lambda}(\bar\alpha)
\ .$$
This action is continuous, that is  the application 
\[
\begin{array}{clc}
\Gamma\times \mathrm{MC}(\a)& \to&\mathrm{MC}(\a)\\
(\lambda,  \bar{\alpha}) & \mapsto &\lambda. \bar{\alpha} 
\end{array}\]
is continuous in both variables. 
\end{proposition}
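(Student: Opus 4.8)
The plan is to reduce everything to two sources that have already been set up: the formal identities in the free complete left-unital pre-Lie algebra, and the topological properties established in Lemma~\ref{lem:Conv} and Theorem~\ref{thm:GaugeGrpBis}. The statement has two halves: (a) that the prescribed formula is a well-defined group action on the set of Maurer--Cartan elements, and (b) that this action is continuous in both variables.

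For half (a), I would first observe that when the differential is internal, say $d = [\delta,\,\cdot\,]$ with $\delta \in \F_0 A_{-1}$, the two displayed formulas coincide: writing $\alpha = \delta + \bar\alpha$ and using $e^{\ad_\lambda}(\delta) - \delta = \frac{e^{\ad_\lambda}-\id}{\ad_\lambda}(\ad_\lambda \delta) = \frac{e^{\ad_\lambda}-\id}{\ad_\lambda}(d\lambda)$, one gets $e^{\ad_\lambda}(\alpha) = \delta + \frac{e^{\ad_\lambda}-\id}{\ad_\lambda}(d\lambda) + e^{\ad_\lambda}(\bar\alpha)$, which is exactly $\delta + \lambda.\bar\alpha$. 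Hence it suffices to treat the general case via the second formula. Convergence of all the series involved (the exponential series in $\ad_\lambda$, and the series $\frac{e^{x}-\id}{x} = \sum_{n\ge 0}\frac{x^n}{(n+1)!}$ applied to $d\lambda$) follows from Lemma~\ref{lem:Conv}, since $\lambda \in \F_1 A_0$ forces $\ad_\lambda$ to raise filtration, so the $n$-th term lies in $\F_{n+1} A$ (resp. $\F_{n+1} A_{-1}$) and tends to $0$. That $\lambda.\bar\alpha$ is again a Maurer--Cartan element, and that $(\lambda + \mu).\bar\alpha = \lambda.(\mu.\bar\alpha)$ in the BCH sense (i.e. $\BCH(\lambda,\mu).\bar\alpha = \lambda.(\mu.\bar\alpha)$), together with $0.\bar\alpha = \bar\alpha$, are identities among formal power series: they hold in the classical dg Lie algebra setting, and the standard proof — e.g. verifying the differentiated flow equation $\frac{d}{dt}(t\lambda.\bar\alpha) = d\lambda + [\lambda, t\lambda.\bar\alpha]$ and invoking uniqueness of solutions, or directly in $\widehat{\calL}$ with a formal parameter — transports verbatim to the complete setting because every manipulation is a convergent series in the complete topology. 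Concretely, I would specialize the universal identity from the free complete left-unital pre-Lie algebra $\widehat{\mathrm{uPreLie}}(a,b)$ (on two generators of weight $1$) along the unique morphism sending $a \mapsto \lambda$, $b \mapsto \bar\alpha$; this is the same device used in the proofs of Proposition~\ref{prop:preLieExpLog} and Theorem~\ref{thm:GaugeGrpBis}, and the fact that $\bar\alpha$ need not lie in $\F_1 A$ is harmless since it only ever appears linearly (through $\ad$ acting on it) so each term still gains one filtration degree from $\lambda$.

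For half (b), continuity in the $\Gamma$-variable: since the topology on $\Gamma = \F_1 A_0$ is given by the basis $\{\F_k A_0\}_{k\ge 1}$ and on $\MC(\a)$ by the subspace topology from $A_{-1}$, it is enough to show that replacing $\lambda$ by $\lambda + \lambda'$ with $\lambda' \in \F_k A_0$ changes $\lambda.\bar\alpha$ by an element of $\F_k A_{-1}$; this is immediate from the explicit series, since every term in $(\lambda+\lambda').\bar\alpha - \lambda.\bar\alpha$ contains at least one factor $\lambda'$ (up to rearranging brackets, using the derivation property of $\ad$), hence lies in $\F_k A_{-1}$. Continuity in the $\MC$-variable is even more direct: for fixed $\lambda$, the map $\bar\alpha \mapsto \lambda.\bar\alpha$ is affine, namely $\bar\alpha \mapsto c_\lambda + e^{\ad_\lambda}(\bar\alpha)$ with $c_\lambda = \frac{e^{\ad_\lambda}-\id}{\ad_\lambda}(d\lambda)$ a constant, and $e^{\ad_\lambda}$ is a filtered linear map, hence (uniformly) continuous. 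Joint continuity then follows because a basis neighborhood of $\lambda_0.\bar\alpha_0$ is controlled by the two separate estimates combined via the triangle inequality $\lambda.\bar\alpha - \lambda_0.\bar\alpha_0 = (\lambda.\bar\alpha - \lambda.\bar\alpha_0) + (\lambda.\bar\alpha_0 - \lambda_0.\bar\alpha_0)$, each summand being made to lie in $\F_k A_{-1}$ by choosing $\bar\alpha - \bar\alpha_0 \in \F_k A_{-1}$ and $\lambda - \lambda_0 \in \F_k A_0$ respectively (the first estimate uses that $e^{\ad_\lambda}$ preserves $\F_k$, the second the argument above).

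The main obstacle, in my view, is not any single computation but rather being careful about what "transports verbatim from the classical case" actually means here: the element $\bar\alpha$ lives in $\F_0 A$, not $\F_1 A$, so the series defining $\lambda.\bar\alpha$ and the verification of the action axioms do not converge for formal reasons internal to $\bar\alpha$ — convergence is entirely driven by the gauge parameter $\lambda \in \F_1 A$. One must therefore check that in every identity ($\bar\alpha$ is MC $\Rightarrow$ $\lambda.\bar\alpha$ is MC; compatibility with BCH) each term genuinely carries a positive power of $\lambda$, so that the manipulation takes place in the complete pre-Lie algebra $\widehat{\mathrm{uPreLie}}(a) \wo A$ (or a suitable variant) where only $a$ is given weight $1$. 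Once this bookkeeping is pinned down, the rest is the standard flow-equation argument plus the filtered-continuity estimates above.
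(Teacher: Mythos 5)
Your proposal is correct and follows essentially the same route as the paper: pass to the one-dimensional extension with internal differential, establish convergence by counting the filtration degree contributed by $\lambda\in\F_1 A_0$ in each term, reduce the group-action axioms to formal identities in the free complete left-unital pre-Lie algebra, and prove continuity by the telescoping estimate showing that each term of the difference carries a factor of $\lambda'=\lambda_n-\lambda$. Your explicit bookkeeping of the fact that $\bar{\alpha}\in\F_0 A$ only ever appears linearly, and the affine-map observation for continuity in the second variable, are slightly more detailed than the paper's proof but do not change the argument.
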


\begin{proof}
For simplicity of the proof, we work in the one-dimensional extension $A \oplus \k \delta$ of the complete left-unital dg pre-Lie algebra $\a$ mentioned above. Since the elements $\lambda$ of the gauge group $\Gamma$ live in $\F_1 A_0$, each term $\frac{1}{n!}\ad_\lambda^n(\alpha)$ of the series $e^{\ad_\lambda}({\alpha})$ live in $\F_n A_{-1}$. Therefore, this series converges and the action is well-defined. All the formulas for a group action holds formally, that is in the free complete left-unital pre-Lie algebra on several variables  of weight $1$, so they hold true in the complete case. 

Let us fix a Maurer--Cartan element $\alpha \in \mathrm{MC}(\a)$ and let us consider a  sequence $\{\lambda_n\}_{n\in \NN}$ of elements of the gauge group $\Gamma$ which tends to an element $\lambda$, that is 
\[\forall k\in \NN,\,  \exists N\in \NN, \, \forall n\ge N,\  \lambda_n-\lambda\in \F_k A_0\ . \]
So, we have $\forall k\in \NN,\,  \exists N\in \NN, \, \forall n\ge N, $
\[\begin{aligned}
e^{\ad_{\lambda_n}}({\alpha})- e^{\ad_\lambda}({\alpha})
&=\sum_{m=1}^\infty \frac{1}{m!}\left(\ad_{\lambda_n}^m -\ad_{\lambda}^m\right)(\alpha)\\
&=\sum_{m=1}^\infty \underbrace{\frac{1}{m!}
\left(
\sum_{l=1}^m \ad_{\lambda}^{l-1} \circ \ad_{\lambda_n-\lambda}\circ \ad_{\lambda_n}^{m-l} \right)(\alpha)}_{\in \F_{k+m-1} A_{-1}}\in \F_{k} A_{-1} \ .
\end{aligned}
\]
The continuity in the second variable is proved similarly.
\end{proof}

\begin{theorem}\label{thm:GaugeActionBis}
In any complete left-unital dg pre-Lie algebra, the gauge group $\mathfrak{G}$ acts continuously on the set of Maurer--Cartan elements by the formula: 
$$e^\lambda\cdot\at\coloneqq\lambda.\alpha= \left(e^\lambda \star \at\right) \circledcirc e^{-\lambda}\ .$$
\end{theorem}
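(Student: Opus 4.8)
The statement to establish is that, under the identification of the gauge group $\Gamma$ with $\mathfrak{G}$ via the pre-Lie exponential (Theorem~\ref{thm:GaugeGrpBis}), the gauge action of Proposition~\ref{prop:GaugeGroupAction} is transported to the formula $e^\lambda\cdot\at=(e^\lambda\star\at)\circledcirc e^{-\lambda}$. The first move is to reduce to a formal computation. As in the proof of Proposition~\ref{prop:preLieExpLog} and Theorem~\ref{thm:GaugeGrpBis}, both sides of the claimed identity are obtained by applying a morphism of complete left-unital pre-Lie algebras $\widehat{\uPreLie}(\lambda,\at)\to A$ (where $\lambda$ has weight $1$ and $\at$ has weight $1$ as well, so that all the relevant series converge by Lemma~\ref{lem:Conv}) to a universal identity; so it suffices to prove the identity in the free complete left-unital pre-Lie algebra on the two generators $\lambda$ and $\at$. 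Here I should be a little careful: the left-hand side $\lambda.\at=e^{\ad_\lambda}(\at)$ already makes sense in any complete pre-Lie algebra where the differential is internal (absorb $\delta$ as in the paragraph preceding Proposition~\ref{prop:GaugeGroupAction}), which is exactly the setting in which $\at\star\at=0$; so I will phrase everything in terms of $\alpha=\delta+\bar\alpha$ and the square-zero formulation, as is done there.

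The core is then a purely pre-Lie-algebraic identity:
\[
e^{\ad_\lambda}(\at)=\big(e^\lambda\star\at\big)\circledcirc e^{-\lambda}
\]
in $\widehat{\uPreLie}(\lambda,\at)$. The strategy is to recognise both sides as the effect of the pre-Lie group $\mathfrak{G}$ acting by the circle product $\circledcirc$ on the pre-Lie algebra by ``conjugation''. Concretely, in \cite{DotsenkoShadrinVallette16} it is shown (this is the content invoked in the proof of Theorem~\ref{thm:GaugeGrpBis}, essentially \cite[Proposition~3, Theorem~2]{DotsenkoShadrinVallette16}) that for group-like elements $g\in G$ the assignment $x\mapsto (g\star x)\circledcirc g^{-1}$ is a well-defined action of $\mathfrak{G}$ on $A$ by filtered linear automorphisms, and that under $\exp\colon\Gamma\xrightarrow{\cong}\mathfrak{G}$ this conjugation action corresponds to the adjoint action $x\mapsto e^{\ad_\lambda}(x)$ on the associated complete Lie algebra. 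Thus I would: (1) recall from \cite{DotsenkoShadrinVallette16} that $x\mapsto (e^\lambda\star x)\circledcirc e^{-\lambda}$ defines a left action of $\mathfrak{G}$ on $A$; (2) check that this action is by derivations-compatible automorphisms, hence in particular preserves the Maurer--Cartan locus $\{\alpha:\alpha\star\alpha=0\}$ — equivalently, transport through $\exp$ the statement of Proposition~\ref{prop:GaugeGroupAction} that $\Gamma$ preserves $\MC(\a)$; (3) identify it on the nose with $e^{\ad_\lambda}(-)$, using the already-established filtered isomorphism $\exp$ and the formal identity in $\widehat{\uPreLie}$, so that $e^\lambda\cdot\at$ as defined by the formula equals $\lambda.\at$. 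Continuity is then automatic: it is the content of the last part of Proposition~\ref{prop:GaugeGroupAction}, transported along the homeomorphism $\exp$ of Theorem~\ref{thm:GaugeGrpBis}, together with continuity of $\star$ and of $\circledcirc$ (each term $\{a;b,\dots,b\}$ lands deeper in the filtration, by the paragraph defining $\circledcirc$).

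The main obstacle is step~(3), the identification of the conjugation formula $(e^\lambda\star x)\circledcirc e^{-\lambda}$ with the exponentiated adjoint action $e^{\ad_\lambda}(x)$. This is a genuine pre-Lie identity relating the brace-defined circle product to the Lie-bracket exponential, and it does not follow from soft nonsense; it is precisely where the computations of \cite[Section~4]{DotsenkoShadrinVallette16} are needed. Since the excerpt explicitly grants that ``the arguments of \cite[Section~4]{DotsenkoShadrinVallette16} hold true in the formal case,'' I will invoke those computations rather than reprove them, and the remaining work is the (routine) bookkeeping that the free complete left-unital pre-Lie algebra $\widehat{\uPreLie}(\lambda,\at)$ is the right universal object, that all series in sight converge by Lemma~\ref{lem:Conv}, and that specialising the generators yields the asserted identity in an arbitrary complete left-unital dg pre-Lie algebra. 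A minor secondary point to dispatch carefully is the handling of the non-internal differential: one works in $A\oplus\k\delta$, proves the square-zero version there, and then reads off the equivalent $\bar\alpha$-formula via $\alpha=\delta+\bar\alpha$, exactly mirroring the device used in Proposition~\ref{prop:GaugeGroupAction}.
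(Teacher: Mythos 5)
Your proposal is correct and follows essentially the same route as the paper: the paper's proof is a one-line deduction from Theorem~\ref{thm:GaugeGrpBis}, Proposition~\ref{prop:GaugeGroupAction}, and the conjugation identity of \cite{DotsenkoShadrinVallette16} (Proposition~5 there), which are exactly the three ingredients you assemble, with the reduction to the free complete left-unital pre-Lie algebra being the same device the paper uses in the proofs of Proposition~\ref{prop:preLieExpLog} and Theorem~\ref{thm:GaugeGrpBis}.
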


\begin{proof}
This is a direct corollary of Theorem~\ref{thm:GaugeGrpBis}, Proposition~\ref{prop:GaugeGroupAction}, and \cite[Proposition~$5$]{DotsenkoShadrinVallette16}.
\end{proof}

\section{Operadic deformation theory in the complete setting}\label{sec:ComOpDefTh}
We aim to apply the results of the previous section  to the complete convolution algebra associated to a filtered dg   cooperad $\calC$ and the complete endomorphism operad of a complete dg module $A$: 
$$\a_{\, \calC, A}\coloneqq
\big(
\hom_\Sy\big(\calC, {\eend}_A\big), \partial, \star, 1 
\big)\ .
$$
Notice that, in this case, the internal differential element is given by 
$\delta\ : \  \calC \to \I\to \k \partial_A$, where the map $\calC \to \I$ is the counit of the cooperad $\calC$.
The interest in this example of application lies in the following interpretation of the Maurer--Cartan elements of this convolution algebra.
 
\begin{proposition}\label{prop:MCOmegaC}
Let $\calC$ be a filtered  dg cooperad and let $A$ be a complete dg module. The set of Maurer--Cartan elements of the complete convolution algebra $\a_{\, \calC, A}$ is in natural one-to-one correspondence with the complete $\Omega \calC$-algebra structures on $A$. 
\end{proposition}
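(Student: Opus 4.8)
The plan is to unwind both sides of the claimed correspondence through the bar-cobar / twisting-morphism formalism of \cite[Chapter~6 and 10]{LodayVallette12}, checking at each step that the constructions used there are compatible with the filtered/complete monoidal structure established in \cref{sec:OptheoyFilMod}. Recall that $\Omega\calC$ is the quasi-free operad $\big(\calT(s^{-1}\overline{\calC}),\, d\big)$ whose differential is built from the cooperad structure map $\Delta_{(1)}$ and the internal differential of $\calC$. A complete $\Omega\calC$-algebra structure on $A$ is, by \cref{thm:CompleteAlg}, a morphism of complete dg operads $\Omega\calC \to \eend_A$; since $\Omega\calC$ is quasi-free, such a morphism is determined freely, as a morphism of graded operads, by its restriction to the generators $s^{-1}\overline{\calC}$, i.e.\ by an $\Sy$-equivariant degree $-1$ map $\overline{\calC} \to \eend_A$, equivalently an element $\bar\alpha$ of degree $-1$ in $\hom_\Sy(\calC,\eend_A)$ killing the counit component (the $\calI$-part). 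The content of the proposition is then exactly that such a graded morphism commutes with the differentials if and only if $\bar\alpha$ solves the Maurer--Cartan equation $\partial(\bar\alpha) + \bar\alpha\star\bar\alpha = 0$ in $\a_{\calC,A}$.

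First I would make precise the bijection between graded operad morphisms $\Omega\calC \to \eend_A$ and elements of $\F_0\hom_\Sy(\calC,\eend_A)_{-1}$ that vanish on the cooperad counit: this is the universal property of the free operad $\calT(-)$, valid in the complete symmetric monoidal category by \cref{lem:FilModSymCat} and the fact that $\eend_A$ agrees in the filtered and complete settings (\cref{prop:EndComplete}); one should note that filtered-ness of the generating map is automatic because $\eend_A$ carries the trivial-type filtration induced by $A$, and convergence of the extended morphism on the completed free operad is guaranteed by \cref{lem:Conv}. Second, I would compute the obstruction to such a morphism $\Phi_{\bar\alpha}$ being a chain map. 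Since both $\Omega\calC$ and $\eend_A$ are generated/controlled by their values on weight-one and weight-two parts of the tree monomials, it suffices to evaluate $\Phi_{\bar\alpha}\circ d_{\Omega\calC} - d_{\eend_A}\circ \Phi_{\bar\alpha}$ on the generators $s^{-1}\overline{\calC}$. The differential $d_{\Omega\calC}$ on a generator has two terms: the internal differential of $\calC$ (which produces the $\partial(\bar\alpha)$ contribution once transported through $\eend_A$) and the term coming from $\Delta_{(1)}:\calC\to\calC\,\widehat\circ_{(1)}\,\calC$ landing in the weight-two part of the free operad (which, after applying $\Phi_{\bar\alpha}$ and the composition $\gamma_{(1)}$ of $\eend_A$, produces precisely $\bar\alpha\star\bar\alpha$, by the very definition of the pre-Lie product $f\star g = \gamma_{(1)}\circ(f\,\widehat\circ_{(1)}\,g)\circ\Delta_{(1)}$ recalled before \cref{prop:MCOmegaC}). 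Matching signs — the degree shift $s^{-1}$ and the Koszul sign rule — gives that the chain-map condition is equivalent to $\partial(\bar\alpha) + \bar\alpha\star\bar\alpha = 0$, i.e.\ that $\bar\alpha\in\MC(\a_{\calC,A})$, or in the internal-differential normalization $\alpha = \delta + \bar\alpha$ with $\alpha\star\alpha = 0$.

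The naturality statement I would handle by observing that both assignments — $A \mapsto \{$complete $\Omega\calC$-algebra structures$\}$ and $A \mapsto \MC(\a_{\calC,A})$ — are functorial in $A$ along isomorphisms (or more precisely along the appropriate category of complete modules with strict morphisms), and that the bijection just constructed is built entirely from structure maps ($\calT$, $\gamma_{(1)}$, $\Delta_{(1)}$, the counit, the suspension) that are themselves natural; hence it commutes with the induced maps. The main obstacle I anticipate is purely bookkeeping: getting the degree shifts and Koszul signs in the identification of the $\Delta_{(1)}$-term of $d_{\Omega\calC}$ with the pre-Lie square $\bar\alpha\star\bar\alpha$ exactly right, including the sign conventions for $s^{-1}$ on a cooperad versus on its linear dual, and making sure the counit/$\calI$ component is treated consistently so that the passage between the ``$\bar\alpha$, $\partial(\bar\alpha)+\bar\alpha\star\bar\alpha=0$'' form and the ``$\alpha=\delta+\bar\alpha$, $\alpha\star\alpha=0$'' form is transparent. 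None of this is conceptually new — it is the complete-setting avatar of the classical Rosetta stone of \cite[Section~10.1]{LodayVallette12} — so once the sign lemma is pinned down the proof is short, and I would present it by saying the argument is identical to the classical one, all formulas making sense in the complete setting by Lemmas~\ref{lem:Conv} and \ref{lem:FilModSymCat} and Proposition~\ref{prop:EndComplete}.
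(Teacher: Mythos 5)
Your proposal follows essentially the same route as the paper's proof: use the universal property of the (completed) free operad, together with the monoidal results of \cref{sec:OptheoyFilMod} (in particular \cref{thm:CompleteAlg} and \cref{prop:EndComplete}), to identify complete $\Omega\calC$-algebra structures with degree $-1$ elements of the convolution algebra, and then observe that the chain-map condition on generators — internal differential of $\calC$ giving $\partial(\bar\alpha)$, the $\Delta_{(1)}$-term giving $\bar\alpha\star\bar\alpha$ — is exactly the Maurer--Cartan equation. The paper does precisely this, delegating the sign bookkeeping to the discrete case \cite[Theorem~6.5.7]{LodayVallette12}.

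The one genuine omission is the non-coaugmented case. You write $\Omega\calC=\calT\big(s^{-1}\overline{\calC}\big)$ and describe the generating datum as a degree $-1$ map killing the counit ($\calI$-) component, which presupposes a coaugmentation of $\calC$. But the proposition is stated for an arbitrary filtered dg cooperad, and the instances it is actually applied to in \cref{sec:GaugeTwist} — e.g.\ $\mathrm{uAs}^*$ and $\mathrm{uCom}^*$, which have nontrivial arity-$0$ components and are not coaugmented — fall outside your argument. For those, the paper sets $\Omega\calC\coloneqq\calT\big(s^{-1}\calC\big)$ on all of $\calC$, with differential built from the internal differential and the partial decomposition maps, so that the Maurer--Cartan elements are unconstrained degree $-1$ elements of $\hom_\Sy\big(\calC,\eend_A\big)$ (equivalently $\alpha=\delta+\bar\alpha$ with $\alpha\star\alpha=0$) rather than maps vanishing on the counit. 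The computation identifying the chain-map condition with the Maurer--Cartan equation is unchanged, but as written your proof establishes a strictly narrower statement; you should add the sentence treating the non-coaugmented cobar construction, as the paper does at the end of its proof.
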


\begin{proof}
Let us start with the case when the filtered dg cooperad $\calC$ is coaugmented. 
In this case,  the result holds true in the classical discrete setting \cite[Theorem~$6.5.7$]{LodayVallette12} and we use the same arguments and computations here. By Theorem~\ref{thm:CompleteAlg}, a complete $\Omega \calC$-algebra structures on $A$ is a morphism $\Omega \calC \to {\eend}_A$ of filtered dg operads. The underlying operad of the cobar construction of $\calC$ is a free operad generated by the desuspension of $\overline{\calC}$. Since $\calC$ is filtered, this free operad is equal to the filtered free operad on the same space of generators; this can be seen for instance through the formula \cite[Proposition~$5.6.3$]{LodayVallette12}. Therefore we get:
\[
\Hom_{\textsf{CompOp}}\left(\widehat{\Omega \calC}, {\eend}_A\right)\cong
\Hom_{\textsf{FilOp}}\left(\Omega \calC, {\eend}_A\right)\cong \hom_{\Sy}\left(\overline{\calC}, {\eend}_A\right)_{-1}\ , 
\]
where the compatibility with the differentials on the left-hand side corresponds to the Maurer--Cartan equation on the right-hand side, by the same computation as in the classical case. 

When the cooperad $\calC$ is not coaugmented, like in the examples of \cref{sec:GaugeTwist}, we consider the cobar construction $\Omega \calC \coloneqq \calT(s^{-1}\calC)$ with similar differential induced by the internal differential of $\calC$ and its partial decomposition maps. In this case,  Maurer--Cartan elements in the convolution algebra associated to a complete graded module $A$ correspond to complete $\Omega \calC$-algebra structures. 
\end{proof}

\begin{remark}
This proposition applies to the Koszul dual dg cooperad $\calC\coloneqq\calP^{\ac}$ of a Koszul operad $\calP$. In this case, Maurer--Cartan elements of the convolution algebra $\a_{\, \calP^{\ac}, A}$ are nothing but complete $\calP_\infty$-algebra structures on $A$. This way, we can develop the deformation theory of complete $\Ai$-algebras or complete $\Li$-algebras for instance, see next section. 
\end{remark}

The gauge group obtained by integrating the underlying complete pre-Lie algebra indeed acts of the set of Maurer--Cartan elements, but we can actually consider the following bigger gauge group. 

\begin{definition}[Deformation gauge group]
The \emph{deformation gauge group} associated 
to a filtered dg cooperad $\calC$ and a  complete dg module $A$ is defined by 
\begin{align*}
\widetilde{\Gamma}\coloneqq&\left(
\calF_1 \hom\big(\calC(0), {\eend}_A(0)\big)_0\times 
\calF_1 \hom\big(\calC(1), {\eend}_A(1)\big)_0\times \right.\\
&\left.{\displaystyle{\prod}_{n\ge 2}} \hom_{\Sy_n}\big(\calC(n), {\eend}_A(n)\big)_0
,  \BCH(\; ,\,), 0
\right)\ .
\end{align*}
\end{definition}

Notice that 
\[
\widetilde{\Gamma}
\supset 
\Gamma=\calF_1\hom_\Sy\big(\calC, {\eend}_A\big)_0\ .
\]
We define the complete filtration of the deformation gauge group by $\F_1 \widetilde{\Gamma}\coloneqq\widetilde{\Gamma}$ and 
$\F_k \widetilde{\Gamma}\coloneqq\F_k {\Gamma}$, for all $k\ge 2$.
Similarly, we consider the following bigger set of group-like elements 
\begin{align*}
\widetilde{G}\coloneqq1+\widetilde{\Gamma}=&
\calF_1 \hom\big(\calC(0), {\eend}_A(0)\big)_0\times 
\left(1+\calF_1 \hom\big(\calC(1), {\eend}_A(1)\big)_0\right)\times \\ &
{\displaystyle{\prod}_{n\ge 2}} \hom_{\Sy_n}\big(\calC(n), {\eend}_A(n)\big)_0
\end{align*}
and its associated complete group structure
\[
\widetilde{\mathfrak{G}}\coloneqq\left(\widetilde{G}, \left\{1+\F_k \widetilde{\Gamma}\right\}_{k \ge 1},  \circledcirc, 1\right)\ .\]

\begin{proposition}\label{prop:Extension}
All the results of Sections~\ref{subsec:CompleteLieInt} and \ref{subsec:DefTheoCompPreLie} hold true on the level of the deformation gauge group $\widetilde{\Gamma}$ and its avatar $\widetilde{\mathfrak{G}}$ (except for the filtration property of the group commutators).
\end{proposition}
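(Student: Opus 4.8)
The plan is to reduce everything to two ingredients. First, the \emph{universal identities} on which the statements of \cref{subsec:CompleteLieInt} and \cref{subsec:DefTheoCompPreLie} rest: each of them holds already in the free complete left-unital pre-Lie algebra on finitely many weight-one generators, hence in every complete left-unital dg pre-Lie algebra; these are exactly the identities established there and in the formal computations of \cite{DotsenkoShadrinVallette16}, so nothing about them needs to be reproved. Second, the \emph{convergence} of the series involved — $\BCH$, the pre-Lie exponential, the Magnus expansion, the circle product $\circledcirc$, and $e^{\ad_\lambda}$ — now in the enlarged setting. So the first thing I would check is that $\widetilde{\Gamma}$, and $\widetilde{G}=1+\widetilde{\Gamma}$, are stable under the pre-Lie product, the Lie bracket, the symmetric braces, $\BCH$ and $\circledcirc$; granting this together with the convergence, the proofs of \cref{prop:preLieExpLog}, \cref{thm:GaugeGrpBis}, \cref{prop:GaugeGroupAction} and \cref{thm:GaugeActionBis} then transcribe verbatim, with $\Gamma,\mathfrak{G},G$ replaced throughout by $\widetilde{\Gamma},\widetilde{\mathfrak{G}},\widetilde{G}$ and $\MC(\a_{\,\calC,A})$ unchanged.

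The mechanism behind both stability and convergence is that $\a_{\,\calC,A}=\hom_{\Sy}(\calC,\eend_A)$ carries \emph{two} compatible ``directions of smallness'': the filtration $\calF_\bullet$ induced from that of $A$, and the arity grading hidden in the product $\prod_{n}$, with respect to which the pre-Lie product — hence the bracket and the braces — sends the arity-$a$ part and the arity-$b$ part into arity $a+b-1$. Thus a nested bracket (or brace, or $\star$-monomial) of elements all of arity $\ge 2$ strictly raises the arity and so converges by completeness of the product; one whose factors include the arity-$0$ or arity-$1$ components need not raise the arity, but in $\widetilde{\Gamma}$ those components are required to lie in $\calF_1$, so such an expression instead raises the $\calF$-degree and converges by \cref{lem:Conv}. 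I would make this precise by tracking both counts at once: for a fixed target arity $N$, an $m$-fold nested expression landing in arity $\le N$ must, once $m$ is large, use a number of arity-$\le1$ factors growing linearly in $m$, hence lie in arbitrarily deep $\calF$-degree; the same elementary count, applied to $\a_{\,\calC,A}$ and to its degree-$(-1)$ part, yields both the convergence of all the series in $\widetilde{\Gamma}$, $\widetilde{G}$ and on $\MC(\a_{\,\calC,A})$, and the stability of $\widetilde{\Gamma}$ under all the operations. This mixed bookkeeping, which replaces the single-filtration argument of \cref{subsec:CompleteLieInt}, is the step that needs genuine care, and I expect it to be the main obstacle.

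With that in place the remaining verifications are routine. I would take the filtration $\F_1\widetilde{\Gamma}\coloneqq\widetilde{\Gamma}$ and $\F_k\widetilde{\Gamma}\coloneqq\F_k\Gamma$ for $k\ge 2$; since the group law of $\widetilde{\Gamma}$ restricts on the subspace $\calF_1\hom_{\Sy}(\calC,\eend_A)_0=\Gamma$ to that of $\Gamma$, and each $\F_k\Gamma$ with $k\ge2$ is already a subgroup, each $\F_k\widetilde{\Gamma}$ is a subgroup of $\widetilde{\Gamma}$ (trivially so for $k=1$), while Hausdorffness and completeness of $\widetilde{\Gamma}$ and $\widetilde{\mathfrak{G}}$ for this filtration are immediate from their defining products. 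Hence $\widetilde{\Gamma}$ and $\widetilde{\mathfrak{G}}$ are complete groups; the pre-Lie exponential and logarithm restrict to mutually inverse filtered bijections $\F_k\widetilde{\Gamma}\leftrightarrow 1+\F_k\widetilde{\Gamma}$ for all $k\ge1$, identifying $\widetilde{\mathfrak{G}}$ with $\widetilde{\Gamma}$; and the gauge action $\lambda.\alpha=e^{\ad_\lambda}(\alpha)$, equivalently $e^\lambda\cdot\alpha=(e^\lambda\star\alpha)\circledcirc e^{-\lambda}$, is a well-defined continuous action of $\widetilde{\Gamma}\cong\widetilde{\mathfrak{G}}$ on $\MC(\a_{\,\calC,A})$.

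Finally I would explain the parenthetical exception and why it costs nothing. The bracket of two arity-$2$ elements of $\F_1\widetilde{\Gamma}$ has arity $3$ but carries no $\calF$-positivity, so it need not lie in $\F_2\widetilde{\Gamma}=\F_2\Gamma$; thus the commutator estimate $[\F_i\widetilde{\Gamma},\F_j\widetilde{\Gamma}]\subset\F_{i+j}\widetilde{\Gamma}$ genuinely fails. This is harmless because that estimate is used in \cref{subsec:CompleteLieInt} only to exhibit the $\F_k$ as subgroups — which has just been re-derived directly from $\Gamma$ — and plays no part in any of the convergence arguments above.
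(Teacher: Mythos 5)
Your proposal is correct and follows essentially the same route as the paper: the paper's proof likewise reduces everything to convergence, observing that the formulas are sums of labelled trees which, at fixed arity, contain only finitely many factors of arity $\ge 2$, so that divergence can only come from arity $0$ and $1$ factors, which is cured by the $\calF_1$ restriction in those arities. Your mixed arity/filtration counting and your explicit treatment of why the commutator estimate fails (and why that is harmless) merely flesh out details the paper leaves implicit.
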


\begin{proof}
The only issue is the convergence of all the formulas with these more general objects. In the present case, the complete convolution algebra is graded by the arity and 
the various formulas involving the pre-Lie product $\star$, its Lie bracket, exponential and logarithm maps, as well as the associative product $\circledcirc$, are actually given by sums of labelled trees. 
So, at each fixed arity, they are made up of 
finite sums of terms of arity greater than $2$ and potentially infinite sums of terms with a finite number of terms of arity greater than $2$ and an infinite numbers of terms of arity $0$ and $1$. These latter infinite sums converge by the restriction to parts of filtration $1$ in arity $0$ and $1$ in the abovementioned definitions. 
\end{proof}

Let $\alpha$ and $\beta$ be two Maurer--Cartan elements corresponding to two $\Omega \calC$-algebra structures on $A$. 
 
 \begin{definition}[$\infty$-morphism]\label{def:InftyMorph}
An  \emph{$\infty$-morphism} $\alpha \rightsquigarrow \beta$ between $\alpha$ and $\beta$ is a  degree $0$ element 
$f : \calC \to {\eend}_A$, such that $f_{0}\in \calF_1 \hom\big(\calC(0), {\eend}_A(0)\big)_0$ and satisfying the  equation
\begin{eqnarray}\label{eqn=InftyMor}
f \star \at = \beta \cc f\ . 
\end{eqnarray}
\end{definition}
They can be composed under the formula $f \cc g$. 

\begin{remark}\label{RkTrick}
This definition includes the case where $\alpha$ corresponds to an $\Omega \calC$-algebra structure on a complete dg module $(A,\F)$ and $\beta$  an $\Omega \calC$-algebra structure on another complete dg module $(B,\G)$. In this case, we consider the convolution algebra $\hom_\Sy\big(\calC, {\eend}_{A\oplus B}\big)$ with $\alpha$ (respectively $\beta$) a Maurer--Cartan element supported on $\eend_A$ (respectively on $\eend_B$). This produces the notion of an $\infty$-morphism from $(A, \alpha)$ to $(B, \beta)$. One can notice that Equation~(\ref{eqn=InftyMor}) imposes, in general, that such an $\infty$-morphism takes values in $\hom(A^{\otimes n}, B)$, for $n\in \NN$.
\end{remark}

\begin{remark}
We need the assumption $f_{0}\in \calF_1 \hom\big(\calC(0), {\eend}_A(0)\big)_0$ for the right-hand side $\beta \cc f$ to be well-defined. Notice that in the present text, we consider only  cooperads defined by the partial definition $\Delta_{(1)} : \calC \to \calC\, \widehat{\circ}_{(1)}\, \calC$, which splits operations into two. When the cooperad $\calC$ has trivial arity $0$ part, i.e. $\calC(0)=0$, it admits a ``full'' coassociative decomposition map  $\Delta : \calC \to \calC\, \widehat{\circ} \, \calC$, which splits operations into two levels, see \cite[Section~$5.8.2$]{LodayVallette12}. 
In this case, the associative product $\beta \cc f$ receives the following simple description: 
\[
\beta\cc f : \calC
 \xrightarrow{\Delta} \calC\, \widehat{\circ} \, \calC
  \xrightarrow{\beta \widehat{\circ} f}  \eend_A \widehat{\circ} \eend_A
  \xrightarrow{\gamma}  \eend_A \ .
  \]
However, the full decomposition map for the cooperad $\calC$ fails to be  well-defined in the general case, but thanks to the assumption $f_{0}\in \calF_1 \hom\big(\calC(0), {\eend}_A(0)\big)_0$, the product $\cc$ is well-defined, by a convergent series. Therefore, the present complete pre-Lie integration theory is mandatory to define a suitable notion of $\infty$-morphism for the homotopy algebras having particular elements, like curved $\Ai$-algebras and curved $\Li$-algebras considered in the next section. 
\end{remark}

\begin{definition}[$\infty$-isotopy]
An \emph{$\infty$-isotopy} between $\alpha$ and $\beta$ is an $\infty$-mor\-phi\-sm $f : \calC \to {\eend}_A$ satisfying  
\[f_1\in 1+\calF_1 \hom\big(\calC(1), {\eend}_A(1)\big)_0\ .\]
\end{definition} 

\begin{theorem}\label{thm:DeligneGroupoidII}
For a filtered dg cooperad $\calC$ and for any complete dg module $A$, the set of  $\infty$-isotopies 
forms a subgroup of the group of  invertible $\infty$-morphisms of $\Omega \calC$-algebra structures on $A$, which is isomorphic to the deformation gauge group under the pre-Lie exponential map
$$ \widetilde{\Gamma} \cong \widetilde{\mathfrak{G}}= (\infty\textsf{-}\mathsf{iso}, \cc, \id_A)  \ . $$ 
The Deligne groupoid associated to the deformation gauge group  is isomorphic to the groupoid whose objects are $\Omega \calC$-algebras and whose 
morphisms are  $\infty$-isotopies
$$\mathsf{Deligne}\left(\a_{\calC,A}\right)\coloneqq\left(\mathrm{MC}(\a_{\calC,A}), \widetilde{\Gamma}
\right)\cong
\left(\Omega\calC\textsf{-}\mathsf{Alg}, \infty\textsf{-}\mathsf{iso}\right)
\ .$$
\end{theorem}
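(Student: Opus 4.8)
The plan is to build on the machinery already established, in particular Theorem~\ref{thm:GaugeActionBis} (gauge action in the pre-Lie setting), Proposition~\ref{prop:Extension} (extension of everything to $\widetilde{\Gamma}$ and $\widetilde{\mathfrak{G}}$), and Proposition~\ref{prop:MCOmegaC} (identification of $\MC(\a_{\calC,A})$ with $\Omega\calC$-algebra structures). There are three things to verify: (i) that $\infty$-isotopies are exactly the group-like elements of $\widetilde{\mathfrak{G}}$, with composition $\cc$ corresponding to the group product; (ii) that under the pre-Lie exponential this group is filtered isomorphic to $\widetilde{\Gamma}$; and (iii) that the resulting action on $\MC(\a_{\calC,A})$ is precisely the one whose orbit groupoid is the Deligne groupoid, so that the two groupoids in the statement are isomorphic.

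First I would unwind the defining equation of an $\infty$-morphism, $f\star\at=\beta\cc f$, in the special case $f_1\in 1+\calF_1\hom(\calC(1),\eend_A(1))_0$, i.e. $f=e^\lambda$ for a unique $\lambda\in\widetilde{\Gamma}$ via Proposition~\ref{prop:preLieExpLog} extended by Proposition~\ref{prop:Extension}. The point is that $e^\lambda\star\at=(e^\lambda\star\at)\cc 1$ and the formula $e^\lambda\cdot\at=(e^\lambda\star\at)\cc e^{-\lambda}$ from Theorem~\ref{thm:GaugeActionBis} show that $f=e^\lambda$ satisfies the $\infty$-isotopy equation with source $\at$ and target $\beta$ \emph{if and only if} $\beta=e^\lambda\cdot\at$, i.e. $\beta$ is the gauge transform of $\at$. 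Thus invertible $\infty$-isotopies out of $\at$ are in bijection with $\widetilde{\mathfrak{G}}$, and the composition $f\cc g$ of $\infty$-morphisms corresponds to $\circledcirc$: this is exactly the content of \cite[Proposition~5]{DotsenkoShadrinVallette16} transported to the complete setting by Proposition~\ref{prop:Extension}. Invertibility of an $\infty$-isotopy is immediate since $e^{-\lambda}$ provides the inverse, so $\infty\textsf{-}\mathsf{iso}$ is a subgroup of invertible $\infty$-morphisms. This gives the group isomorphism $\widetilde{\Gamma}\cong\widetilde{\mathfrak{G}}=(\infty\textsf{-}\mathsf{iso},\cc,\id_A)$, the filtered part coming from Proposition~\ref{prop:preLieExpLog} and the definition of $\F_k\widetilde{\Gamma}$.

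For the groupoid statement, I would observe that a groupoid is determined by its object set, its morphism group (acting transitively component-wise), and the source/target maps; here the objects are the elements of $\MC(\a_{\calC,A})$, which by Proposition~\ref{prop:MCOmegaC} are the complete $\Omega\calC$-algebra structures on $A$, and the morphisms are $\widetilde{\Gamma}$ acting by the gauge formula $e^\lambda\cdot(-)$. That $\lambda$ connects $\at$ to $\beta$ in the Deligne groupoid means precisely $\beta=e^\lambda\cdot\at$, which by step one is precisely the assertion that $e^\lambda$ is an $\infty$-isotopy $\at\rightsquigarrow\beta$; and composition of gauges corresponds to composition of isotopies. Hence the functor sending an $\Omega\calC$-algebra to the corresponding Maurer--Cartan element and an $\infty$-isotopy $e^\lambda$ to $\lambda$ is an isomorphism of groupoids, which is the claimed $\mathsf{Deligne}(\a_{\calC,A})\cong(\Omega\calC\textsf{-}\mathsf{Alg},\infty\textsf{-}\mathsf{iso})$.

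The main obstacle is bookkeeping rather than conceptual: one must make sure that every series appearing — the pre-Lie exponential/logarithm, $\circledcirc$, $e^{\ad_\lambda}$ — converges in the \emph{larger} object $\widetilde{\Gamma}$, where the filtration only controls the arity $0$ and arity $1$ components via the $\calF_1$-restriction while higher arities are controlled by the arity grading. This is exactly what Proposition~\ref{prop:Extension} is for, so I would simply cite it; the only genuine check is that the equation $f\star\at=\beta\cc f$ for $f=e^\lambda$ is equivalent to $\beta=e^\lambda\cdot\at$, which is a formal manipulation valid in the free complete left-unital pre-Lie algebra on the relevant generators and hence, by the usual ``free implies general'' argument already used in the proof of Proposition~\ref{prop:preLieExpLog}, valid in $\a_{\calC,A}$. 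One subtlety to flag explicitly: the target expression $\beta\cc f$ requires $f_0\in\calF_1\hom(\calC(0),\eend_A(0))_0$, which is part of the definition of $\infty$-morphism, so well-definedness is built in and no extra hypothesis is needed.
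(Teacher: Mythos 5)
Your proposal is correct and follows essentially the same route as the paper, whose proof is simply a one-line citation of Proposition~\ref{prop:MCOmegaC}, Theorem~\ref{thm:GaugeGrpBis}, and Theorem~\ref{thm:GaugeActionBis} via Proposition~\ref{prop:Extension}; you have merely made explicit the key unwinding (that $f=e^\lambda$ satisfies $f\star\at=\beta\cc f$ if and only if $\beta=(e^\lambda\star\at)\cc e^{-\lambda}=e^\lambda\cdot\at$) that makes the theorem a ``direct corollary'' of those results. The convergence and well-definedness points you flag are exactly what Proposition~\ref{prop:Extension} is invoked for, so nothing is missing.
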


\begin{proof}
This is a direct corollary of Proposition~\ref{prop:MCOmegaC}, Theorem~\ref{thm:GaugeGrpBis} and Theorem~\ref{thm:GaugeActionBis}, via their generalisations given at Proposition~\ref{prop:Extension}. 
\end{proof}

\begin{remark}
Notice that, for a weight graded dg cooperad $\calC$ and a dg module $A$, both viewed as complete objects with respectively the weight filtration and the trivial filtration, the present constructions agree with the ones given in 
\cite[Section~$5$]{DotsenkoShadrinVallette16}.  So the present treatment is a strict generalisation of \textit{loc. cit.}
\end{remark}

\newpage
\chapter{The gauge action origin of the twisting procedure}\label{sec:GaugeTwist}

In this section, we provide a first application of the complete operadic deformation theory developped in the previous section. Namely, we deal with the easiest example of gauge action: when the gauge element is just an element of the underlying dg module, that is concentrated in arity $0$. 

In this way, we get a conceptual interpretation of  the  twisting procedure of  a complete $\calA_\infty$-algebra (or a complete $\calL_\infty$-algebra) by a Maurer--Cartan element. This result belongs to the series of results established recently by the authors which show that ``any functorial procedure'' producing new homotopy algebra structures from former ones are actually given by a suitable gauge action: Koszul hierarchy \cite{Markl15, DotsenkoShadrinVallette16}, Homotopy Transfer Theorem \cite{DotsenkoShadrinVallette16}, etc. . 

The procedure of twisting  a complete $\calA_\infty$-algebra with a Maurer--Cartan element is the non-commutative analogue of the twisting procedure  for complete $\calL_\infty$-algebras. The former one plays a seminal role in the construction of the Floer cohomology of Lagrangian submanifolds  \cite{FOOO09I} and the latter one is used in crucial ways in deformation theory, rational homotopy theory, higher algebra, and quantum algebra. Since the $\calA_\infty$-case is simpler, we first  recall it. Then we encode it conceptually in the operadic deformation theory language. This allows us to fix explicitly the sign conventions for instance. 

From the conceptual gauge action interpretation of the twisting procedure, we derive automatically ``all'' its known and useful properties. We conclude this section with a criterion on quadratic operads which ensures that the associated category of homotopy algebras admits a meaningful twisting procedure.

\section{Curved $\Ai$-algebras}\label{sec:CurvedAiAlg}

Let us start with the most simple ns operad: the ns operad $\uAs$, which encodes unital associative algebras. It is one-dimensional in each arity $\uAs(n)=\k \upsilon_n$, for $n\ge 0$, and concentrated in degree $0$; its operadic structure is straightforward: $\upsilon_k\circ_i\upsilon_l=\upsilon_{k+l-1}$\ . Let us now consider the linear dual ns cooperad $\uAs^*$, where we denote the dual basis by $\nu_n\coloneqq\upsilon_n^*$. The infinitesimal decomposition coproduct, which is the dual of the partial composition products,
 is equal to  
$$\Delta_{(1)}(\nu_n)=\sum_{p+q+r=n\atop p,q,r \ge 0} \nu_{p+1+r} \circ_{p+1} \nu_q\ .$$

We now consider the endomorphism ns operad $\End_A\coloneqq\left\{\Hom(A^{\otimes n}, \allowbreak A)\right\}_{n\ge 0}$ associated to any graded module $A$. 
 It turns out that the convolution ns operad $\Hom(\uAs^*, \End_A)$  is canonically isomorphic to $\End_A$ itself. Therefore the induced left unital pre-Lie algebra 
 \[\mathfrak{a}\coloneqq\left(\prod_{n\ge 0} \Hom(A^{\otimes n}, A),  \star, 1\right)\] is given by 
$$1=\id_A \quad \text{and} \quad f\star g = \sum_{i=1}^n f\circ_i g  \ ,$$
for $f\in \Hom(A^{\otimes n}, A)$.\\

Its Maurer-Cartan elements are degree $-1$ elements $\alpha=(m_0, m_1, \ldots, m_n,\ldots)$ satisfying the Maurer--Cartan equation 
$$\alpha\star \alpha=0\ . $$ 
Unfolding this definition, we have degree $-1$ operations $m_n : A^{\otimes n} \to A$, for $n\ge 0$, satisfying the following relations, under the convention $\theta\coloneqq m_0(1)\in A_{-1}$ and $d\coloneqq m_1$:
\begin{eqnarray*}
&\text{arity}\ 0:& d(\theta)=0\ , \\
&\text{arity}\  1:& d^2=- m_2(\theta, -)-m_2(-, \theta)\ , \\
&\text{arity}\  2:& \partial m_2= dm_2+m_2(d(-), -)+m_2(-,d(-))=
-m_3(\theta, -,-)-m_3(-,\theta, -)\\ &&\qquad\quad\ -m_3(-,-,\theta)\ . \\
&\text{arity}\  3:& \partial m_3=-m_2(m_2(-,-), -)-m_2(-,m_2(-,-))-m_4(\theta, -, -,-)\\&&\qquad\quad\ 
-m_4(-,\theta,  -,-)
-m_4(-,-,\theta, -)
-m_4(-,-,-,\theta)\ ,
\\
&\text{arity}\ n:& \partial\left( m_n\right)= -\sum_{p+q+r=n\atop 2\leq q \leq n}m_{p+1+r}\circ_{p+1} m_q
-\sum_{i=1}^{n+1} m_{n+1}(-, \cdots, -,  \underbrace{\theta}_{i^\text{th}}, -,\cdots, -)\ .
\end{eqnarray*}

\begin{definition}[Shifted curved $\Ai$-algebra]
The data  $(A, \theta, d, m_2, m_3, \ldots)$ of a graded module $A$ equip\-ped with a degree $-1$ element $\theta$ and degree $-1$ maps $d, m_2, m_3,\ldots$ satisfying the aforementioned equations is called a \emph{shifted curved $\Ai$-algebra}. The element $\theta$ is called the \emph{curvature}. 
\end{definition}

When the curvature $\theta$ vanishes, the operator $d$ squares to zero and thus gives rise to a differential, which is a derivation with respect to the binary operation $m_2$. The higher operations $m_n$, for $n\ge 3$, are then homotopies for the relation $-\sum_{p+q+r=n\atop 2\leq q \leq n}m_{p+1+r}\circ_{p+1} m_q=0$.

\begin{definition}[Shifted $\Ai$-algebra]
A chain complex $(A, d, m_2, m_3, \ldots)$ eq\-ui\-pp\-ed with  degree $-1$ maps $ m_2, m_3,\ldots$ satisfying the aforementioned equations with $\theta=0$ is called a \emph{shifted $\Ai$-algebra}. 
\end{definition}

Given a shifted curved $\Ai$-algebra structure $(A, \theta', d', m'_2, m'_3, \ldots)$, one can choose to work on the desuspension $s^{-1}A$ of the underlying graded module. The degree of the operations and the signs involved in their relations will thus be modified. The curvature $\theta\coloneqq s^{-1}\theta'$ now has degree $-2$, the unary operator $d : s^{-1}A \to s^{-1}A$ still has degree $-1$, and the operations $m_n : (s^{-1}A)^{\otimes n} \to s^{-1}A$ now have degree $n-2$. They satisfy the following signed relations: 
\begin{eqnarray*}
&\text{arity}\ 0:& d(\theta)=0\ , \\
&\text{arity}\  1:& d^2=m_2(\theta, -)-m_2(-, \theta)\ , \\
&\text{arity}\  2:& \partial m_2=
-m_3(\theta, -,-)+m_3(-,\theta, -)-m_3(-,-,\theta)\ . \\
&\text{arity}\  3:& \partial m_3=m_2(m_2(-,-), -)-m_2(-,m_2(-,-))+m_4(\theta, -, -,-) \\&&\qquad\quad\ 
-m_4(-,\theta,  -,-)
+m_4(-,-,\theta, -)
-m_4(-,-,-,\theta)\ ,
\\
&\text{arity} \ n:& \partial\left( m_n\right)= \sum_{p+q+r=n\atop 2\leq q \leq n}(-1)^{pq+r+1} m_{p+1+r}\circ_{p+1} m_q
\\&&\qquad\quad\ +\sum_{i=1}^{n+1} (-1)^{n-i} m_{n+1}(-, \cdots, -,  \underbrace{\theta}_{i^\text{th}}, -,\cdots, -)\ .
\end{eqnarray*}

\begin{definition}[Curved $\Ai$-algebra]\label{def:CurvedAinfty}
The data of a graded module $(A, \theta, \allowbreak d, \allowbreak m_2,\allowbreak m_3, \ldots)$ equipped with a degree $-2$ element $\theta$, a degree $-1$ map $d: A\to A$  and degree $n-2$ maps $m_n : A^{\otimes n}\to A$ satisfying the aforementioned equations is called a \emph{curved $\Ai$-algebra}. 
\end{definition}

Again, if in a curved $\Ai$-algebra, the curvature vanishes, then the operator $d$ becomes a differential and the higher operations can be interpreted as homotopies for the signed relations. 

\begin{definition}[$\Ai$-algebra]
A chain complex $(A, d, m_2, m_3, \ldots)$ equipped with maps $m_n : A^{\otimes n}\to A$ degree $n-2$, for $n\ge 2$, satisfying the aforementioned equations with $\theta=0$ is called an \emph{$\Ai$-algebra}. 
\end{definition}

\begin{remark}
To get the same notions in the filtered or complete setting, one just has to replace everywhere the endomorphism operad $\End_A$ by its the complete endomorphism suboperad $\eend_A$, made up of filtered maps.
\end{remark}

\section{Operadic interpretation}\label{subsec:OpInt}

Notice that the  ns operad $\uAs$ is actually isomorphic to the endomorphism ns operad associated to  a degree $0$ dimension $1$ module: $\uAs\cong \End_\k$. Similarly, the ns operad $\As$, which encodes (non-necessarily unital) associative algebras is isomorphic to $\As\cong \overline{\End}_\k$, where $\overline{\End}_\k$ is the ns suboperad 
of $\End_\k$ with trivial arity $0$ component: $\overline{\End}_\k(0)=0$. One can also consider 
the endomorphism ns operad associated to  a degree $1$ dimension $1$ module $\End_{\k s}$ and its ns suboperad 
$\overline{\End}_{\k s}$. The  abovementioned four definitions are obtained by considering the Maurer--Cartan elements of the convolution algebras associated the following linear dual cooperads: 
\[
\begin{array}{|l|c|c|}
\hline
& \text{curved} & \text{uncurved} \\
\hline
\rule{0pt}{11pt} \text{shifted} & {\End}_{\k} &\overline{\End}_{\k}\\
\hline
\rule{0pt}{11pt} \text{classical} &   {\End}_{\k s}& \overline{\End}_{\k s}\\
\hline
\end{array}
\]

The convolution pre-Lie algebra  $\Hom(\As^*, \End_A)\cong \prod_{n \ge 1}\lbrace\Hom(A^{\otimes n}, A)\rbrace$, wh\-ich encodes shifted $\Ai$-algebras,  is a pre-Lie subalgebra of $\Hom(\uAs^*, \End_A)\cong \allowbreak \prod_{n \ge 0}\lbrace\Hom(A^{\otimes n}, A)\rbrace$, which encodes shifted curved $\Ai$-algebras. But, as we will show in Section~\ref{subsec:TwistGrp}, it is crucial to encode the former notion in the latter bigger pre-Lie algebra, since there, the gauge group of symmetries is big enough to host the twisting procedure. 

\begin{remark}
Starting from the ns operad $\uAs$ encoding \emph{unital} associative algebras, one gets the notion of shifted \emph{curved} $\Ai$-algebras through the abovementioned process. This admits a conceptual explanation via the Koszul duality theory \cite{Positselski11, HirshMilles12}. 
\end{remark}

\section{The twisting procedure as gauge group action}\label{subsec:TwistGrp}
In this section, we pass to the complete setting and we  apply the general theory developed in Section~\ref{sec:TopoDefTh} to the discrete so complete ns cooperad  $\C=\End^c_{\k s^{-1}}\coloneqq\End_{\k s}^*$. This will allow us to  treat the 
deformation theory of curved $\Ai$-algebras. We chose this particular case, since the sign issue is a complicated problem in operad theory; the reader interested in the shifted case  has just to remove ``all'' the signs. 
The  cooperad $\calC$  is spanned by  one element 
$$\nu_n : \left(s^{-1}\right)^n\mapsto (-1)^{\frac{n(n-1)}{2}} s^{-1}$$ 
of degree $n-1$, in each arity $n\ge 0$. Its infinitesimal decomposition coproduct is given by 
$$\Delta_{(1)}\left(\nu_n\right)
=\sum_{p+q+r=n\atop  p,q,r \ge 0}(-1)^{p(q+1)} \nu_{p+1+r}\circ_{p+1} \nu_q
\ . $$

\begin{remark}
With the sign convention $\nu_n : \left(s^{-1}\right)^n\mapsto  s^{-1}$, we get the same signs as the ones of  \cite[Chapter~9]{LodayVallette12}. With the present convention, we actually get the signs which are more common in the existing literature. 
\end{remark}

To any complete graded module $A$, we  associate the complete graded left-unital convolution pre-Lie algebra 
$$\a_{\, \calC, A}\coloneqq
\big(
\hom_{\mathbb{N}}\big(\calC, {\eend}_A\big), \star, 1 
\big)\ ,$$
whose underling complete graded module is isomorphism to the product $\prod_{n\in \NN}\allowbreak s^{ 1-n}\allowbreak \hom \big(A^{{\otimes} n}, A\big)$.
As a consequence of Section~\ref{sec:CurvedAiAlg} and Proposition~\ref{prop:MCOmegaC}, its Maurer--Cartan elements, that is degree $-1$ maps $\alpha : \calC \to {\eend}_A$ satisfying $\alpha\star \alpha =0$,
are in one-to-one correspondence with 
complete curved $\Ai$-algebra structures on $A$, under the assignment $m_n\coloneqq\alpha(\mu_n)$, for $n\ge 0$.\\

The deformation gauge group associated to $\a$ is equal to 
\[\widetilde{\Gamma}\cong\left(
\F_1 A_{-1} \times 
\calF_1 \hom(A,A)_0\times 
{\displaystyle{\prod}_{n\ge 2}} \hom \big(A^{{\otimes} n}, A\big)_{n-1}
,  \BCH(\; ,\,), 0
\right)
\]
and is filtered isomorphic to 
\[
\widetilde{\mathfrak{G}}\cong 
\left(
\F_1 A_{-1} \times 
\left(1+ \calF_1 \hom(A,A)_0\right)
\times 
{\displaystyle{\prod}_{n\ge 2}} \hom \big(A^{{\otimes} n}, A\big)_{n-1}
,  \circledcirc, 1
\right)\]
under the pre-Lie exponential and pre-Lie logarithm maps, 
by Section~\ref{sec:ComOpDefTh}. In the complete left-unital  pre-Lie algebra $\a$, this deformation  gauge group acts on Maurer--Cartan elements via the following formula of  Theorem~\ref{thm:GaugeActionBis}: 
$$ e^\lambda\cdot\at=\left(e^\lambda \star \at\right) \circledcirc e^{-\lambda}  \ ,$$
as long as $\lambda_0$ and $\lambda_1$ live in the first layer of the filtration, that is $\lambda_0\in \F_1 A_{-1}$ and $\lambda_1\in \calF_1 \hom(A,A)_0$. \\

\begin{remark}
In the monograph \cite{Positselski12}, L. Positselski developed the theory of complete curved $\Ai$-algebras over a complete local ring. He introduces a general notion of $\infty$-morphism which we recover via Definition~\ref{def:InftyMorph}, where 
the above  condition on $f_0$ coincides with the requirement that $f_0$ lives in the maximal ideal.

The comprehensive book \cite{Markl12} of M. Markl on deformation theory treats  the case of free complete modules over a complete local ring.  A gauge group is introduced in this context, see Chapter~$4$ of \textit{loc. cit.}, but it is only made up of $1+ \calF_1 \hom(A,A)_0$ . This is enough to describe the moduli spaces  of associative algebra structures up to isomorphism, in the compete setting.  The present deformation gauge group describes faithfully the moduli spaces of complete curved $\Ai$-algebras up to their $\infty$-isotopies.
\end{remark}

Let us now study the first and easiest example of a gauge action on Maurer--Cartan elements: 
we consider elements of the deformation gauge group $\widetilde{\Gamma}$ supported in arity $0$. Let $a\in \F_1 A_{-1}$, for brevity, we still denote by $a$ the element 
$(a, 0, \ldots)$
of $\widetilde{\Gamma}$ and by $1+a=e^a=(a, 1, 0, \ldots)$ the element of 
$\widetilde{\mathfrak{G}}$. 
By the general theory developed above, the action $e^a \cdot \alpha$ on the complete curved $\calA_\infty$-algebra structure encoded by the Maurer--Cartan element $\alpha$  gives us automatically a new complete curved $\calA_\infty$-algebra structure. 

\begin{proposition}\label{prop:TwCurvedGauge}
The formula for the generating operations $m_n^a$ of the complete curved $\calA_\infty$-algebra $e^a\cdot\alpha$ is
$$m_n^a=\sum_{r_0, \ldots, r_n\ge 0} (-1)^{\sum_{k=0}^n kr_k} m_{n+r_0+\cdots+r_n}\big(a^{r_0}, -, a^{r_1}, -, \ldots,  - , a^{r_{n-1}}, -,a^{r_n}  \big)\ , $$
for $n\ge 0$, where the notation $a^r$ stands  for $a^{\otimes r}$. 
\end{proposition}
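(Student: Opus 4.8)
The plan is to compute the action $e^a \cdot \alpha = \bigl(e^a \star \alpha\bigr) \cc e^{-a}$ directly, using the explicit description of the pre-Lie product $\star$, the circle product $\cc$, and the exponential/logarithm maps in the complete convolution pre-Lie algebra $\a_{\calC, A}$ attached to the cooperad $\calC = \End^c_{\k s^{-1}}$. The starting observation is that for $a \in \F_1 A_{-1}$, viewed as the element $(a, 0, 0, \ldots)$ of $\widetilde\Gamma$ concentrated in arity $0$, the pre-Lie products $a \star a$, $\{a; a, \ldots, a\}$, and all iterated brackets vanish, since grafting an arity-$0$ operation onto another arity-$0$ operation is impossible (there is no leaf to graft onto). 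Consequently $e^{a} = 1 + a$ and $e^{-a} = 1 - a$ exactly, with no higher terms, which is why we are allowed to write $1+a$ for this group-like element. This is the simplification that makes the arity-$0$ case tractable.

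First I would compute $e^{a} \star \alpha = (1+a) \star \alpha = \alpha + a \star \alpha$. Here $a \star \alpha$ is, arity by arity, the insertion of the arity-$0$ element $a$ into each of the inputs of each operation $m_k = \alpha(\nu_k)$; using the formula $\Delta_{(1)}(\nu_n) = \sum_{p+q+r=n} (-1)^{p(q+1)} \nu_{p+1+r} \circ_{p+1} \nu_q$ dualized through $\eend_A$, the arity-$n$ component of $a \star \alpha$ is $\sum (-1)^{?} m_{n+1}(-, \ldots, \theta_a, \ldots, -)$ with $a$ inserted in position $i$ and the sign determined by the Koszul rule applied to moving the degree-$(-1)$ element $a$ past the slots and by the sign $(-1)^{p(q+1)}$ in the decomposition coproduct (with $q=0$). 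Then I would compute the right factor: $\beta \cc f$ with $\beta = e^a \star \alpha$ and $f = e^{-a} = 1 - a$. Since $\cc$ is built from the symmetric braces $\{-; b, \ldots, b\}$ and $f = 1 - a$ is group-like with ``perturbation'' $-a$ concentrated in arity $0$, the circle product $\beta \cc e^{-a}$ again amounts to inserting copies of $-a$ into the inputs of $\beta$, now in \emph{all possible multiplicities} $r_0, \ldots, r_n \ge 0$ around the $n$ retained slots, each copy weighted by $\frac{1}{r!}$ from the brace expansion but with an extra combinatorial factor; the key point is that the $\frac{1}{n!}$'s in the definition of $\cc$ combine with the multinomial count of how the $r_0 + \cdots + r_n$ insertions can be arranged into the $n+1$ gaps to cancel all denominators, leaving the plain sum over $r_0, \ldots, r_n \ge 0$. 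The signs collect into $(-1)^{\sum_{k=0}^n k r_k}$: each of the $r_k$ copies of $a$ placed in the $k$-th gap must be commuted past $k$ of the surviving input slots (by the Koszul sign rule, $a$ has degree $-1$ and the slots are also odd in the relevant bookkeeping), contributing $(-1)^k$ apiece, and the signs coming from iterating $\Delta_{(1)}$ to split off the inserted arity-$0$ operations are exactly absorbed into this by the same computation that produces $(-1)^{p(q+1)}$.

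The main obstacle I expect is the sign bookkeeping: one must verify that the various signs — the $(-1)^{p(q+1)}$ in the decomposition coproduct of $\calC$, the Koszul signs from the $\eend_A$-composition when moving the odd element $a$ and the desuspensions past each other, the signs hidden in the symmetric brace formula, and the sign of $e^{-a}$ versus $e^a$ — all conspire to give precisely $(-1)^{\sum_k k r_k}$ and nothing else. The cleanest way to discharge this is the same device used repeatedly in the paper (e.g. in the proof of Proposition~\ref{prop:preLieExpLog} and Theorem~\ref{thm:GaugeGrpBis}): reduce to a universal computation in the free complete left-unital pre-Lie algebra, here on generators given by one arity-$0$ variable and one generic operation in each arity, where the identity $m_n^a = (\text{RHS})$ becomes a formal identity; once checked there, it holds in $\a_{\calC, A}$ for every $A$ by applying the unique pre-Lie morphism sending the generators to $a$ and to the $m_k$. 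The combinatorial heart — that the denominators cancel and the insertions organize into the unconstrained sum over $(r_0, \ldots, r_n)$ — is then a finite check in each arity, and the continuity/convergence of all series involved is guaranteed by $a \in \F_1 A$ together with Lemma~\ref{lem:Conv}, exactly as in Proposition~\ref{prop:GaugeGroupAction} and Proposition~\ref{prop:Extension}.
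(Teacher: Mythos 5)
Your overall strategy is the one the paper uses: invoke the gauge-action formula of Theorem~\ref{thm:GaugeActionBis}, observe that $e^{\pm a}=1\pm a$ on the nose because $a^{\star 2}=0$, collapse the $\star$-factor, and then read off the circle product with a group-like element of arity $0$ as the sum over all ways of plugging $a$ into the input slots. However, there is a concrete error in your treatment of the pre-Lie product. In the convolution algebra, $f\star g$ places $f$ at the \emph{root} and inserts $g$ into one of its inputs (this is forced by the left-unit axiom $1\star a=a$). Since $a$ is supported in arity $0$, it has no inputs, so $a\star\rho=0$ for \emph{every} $\rho$ --- this is precisely the point of the paper's proof, which gives $(1\pm a)\star\alpha=\alpha$ and reduces the whole action to $\alpha\circledcirc e^{\mp a}$. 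What you describe as ``$a\star\alpha$ equals the insertion of $a$ into the inputs of the $m_k$'' is $\alpha\star a$, not $a\star\alpha$; note that this reading also contradicts your own (correct) observation two sentences earlier that $a\star a=0$ because one cannot graft onto an arity-$0$ operation. Carrying a spurious nonzero $a\star\alpha$ into $\beta=e^a\star\alpha$ and then applying $\circledcirc\, e^{-a}$ would double-count insertions and would not yield the stated formula.

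A second point you should confront explicitly: with the convention of Theorem~\ref{thm:GaugeActionBis}, $e^a\cdot\alpha=(e^a\star\alpha)\circledcirc e^{-a}=\alpha\circledcirc(1-a)$, which inserts copies of $-a$ and therefore carries an extra global sign $(-1)^{r_0+\cdots+r_n}$ relative to the displayed formula. The formula with $+a$'s arises from $e^{-a}\cdot\alpha=\alpha\circledcirc(1+a)$, and this is in fact what the paper's proof computes (the statement and the proof differ by the sign of the gauge element). Your plan asserts the $+a$ formula for $e^a\cdot\alpha$ without accounting for this discrepancy. The remaining ingredients --- the cancellation of the $1/n!$'s against the count of planar arrangements of the $r_0+\cdots+r_n$ insertions into the $n+1$ gaps, the identification of the sign $(-1)^{\sum_k kr_k}$ from the decomposition map of $\End^c_{\k s^{-1}}$, and convergence guaranteed by $a\in\F_1 A$ --- are sound and match the paper's argument isolating the part of $\Delta(\nu_n)$ with only $\nu_0$ and $\nu_1$ on the upper level.
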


\begin{proof}
In the present case, the inverse of the pre-Lie exponential $e^a$ is equal to $e^{-a}=1-a$. Therefore, the formula of the gauge action given in Theorem~\ref{thm:GaugeActionBis} is 
\begin{eqnarray*}e^{-a}\cdot\at=\left(e^{-a} \star \at\right) \circledcirc e^{a} =
\big((1-a) \star \alpha \big) \circledcirc (1+a)=\alpha\circledcirc (1+a)\ ,
\end{eqnarray*}
since $\lambda\star\rho=0$ for any $\rho\in \a$ and since $\star$ is linear on the left-hand side.
The image of the element $\nu_n$ under the $\alpha\circledcirc (1+a)$ is equal to 
$\big(\alpha\circ (1+a)\big)\big(\Delta(\nu_n)\big)$. One can easily see that the part of the image of the element $\nu_n$ under the decomposition map $\Delta$ of the cooperad $\calC$ with only $\nu_0$ and $\nu_1$ on the right-hand side is equal to 
$$
\sum_{r_0, \ldots, r_n\ge 0} (-1)^{\sum_{k=0}^n kr_k} \nu_{n+r_0+\cdots+r_n}\circ \big(\nu_0^{r_0}, \nu_1, \nu_0^{r_1}, \nu_1, \ldots,  \nu_1 , \nu_0^{r_{n-1}}, \nu_1,a^{r_n}  \big)
$$
Finally, the sign appearing in the formula for the $m_n^a$ is the same since the element $1+\lambda$ has degree~$0$. 
\end{proof}

Explicitly, the first of these twisted operations are:
\begin{eqnarray*}
&\text{arity} \ 0\ :& \theta^a\coloneqq m_0^a=\theta+d(a)+m_2(a,a)+m_3(a,a,a)+\cdots\ , \\
&\text{arity} \ 1\ :& d^a\coloneqq m_1^a=d(-)+m_2(a,-)-m_2(-,a)+m_3(a,a,-)-m_3(a,-,a)\\ && \quad \quad\ \   +m_3(-,a,a,)+\cdots\ , \\
&\text{arity} \ 2\ :& m_2^a=m_2(-,-)+m_3(a,-,-)-m_3(-,a,-)+m_3(-,-,a)+\cdots \ .
\end{eqnarray*}

\begin{remark}
Notice the strategy chosen here: first we get a new complete curved $\Ai$-algebra structure by a conceptual argument (gauge group action) and then we make it explicit. Usually, in the literature like in \cite[Chapter~3]{FOOO09I}, the explicit form of the twisted operations  is given first and then proved (by direct computations) to satisfy the relations of a complete curved $\Ai$-algebra. 
\end{remark}

\begin{theorem}[\cite{FOOO09I}]\label{thm:TwProcGp}
Under the formula of Proposition~\ref{prop:TwCurvedGauge}, any element $a\in \F_1 A_{-1}$ of 
a complete curved $\calA_\infty$-algebra $(A, \theta, d, m_2, m_3, \ldots)$ induces a (twisted) complete
curved $\calA_\infty$-algebra 
 \[
(A, \theta^a, d^a, m_2^a, m_3^a,\ldots)
 \]
This twisted complete curved $\calA_\infty$-algebra has a trivial curvature $\theta^a=0$, that is produces an  $\calA_\infty$-algebra, if and only if the element $a$ satisfies the Maurer--Cartan equation: 
\begin{eqnarray}
\theta+d(a)+\sum_{n\ge 2}m_n(a, \ldots, a)=0\ .
\end{eqnarray}
\end{theorem}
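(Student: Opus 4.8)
The plan is to obtain this theorem as an essentially immediate consequence of the gauge-action machinery already set up, reducing everything to Proposition~\ref{prop:TwCurvedGauge} and the interpretation of Maurer--Cartan elements given in Proposition~\ref{prop:MCOmegaC}. First I would recall that, by the discussion of Section~\ref{subsec:TwistGrp}, a complete curved $\calA_\infty$-algebra structure $(A,\theta,d,m_2,m_3,\ldots)$ on a complete graded module $A$ is precisely a Maurer--Cartan element $\alpha$ of the complete convolution pre-Lie algebra $\a_{\,\calC,A}$ with $\calC=\End^c_{\k s^{-1}}$, via $m_n=\alpha(\nu_n)$. For $a\in\F_1 A_{-1}$ the element $e^a=(a,1,0,\ldots)$ belongs to the group $\widetilde{\mathfrak{G}}$ of group-like elements of the deformation gauge group (the arity-$0$ component is required to lie in $\F_1$, which is exactly our hypothesis $a\in\F_1 A_{-1}$). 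Hence by Theorem~\ref{thm:GaugeActionBis}, extended to $\widetilde{\mathfrak{G}}$ via Proposition~\ref{prop:Extension}, the element $e^a\cdot\alpha$ is again a Maurer--Cartan element of $\a_{\,\calC,A}$, i.e.\ again a complete curved $\calA_\infty$-algebra structure on $A$; and by Proposition~\ref{prop:TwCurvedGauge} its structure operations are exactly the $m_n^a$ of the stated twisting formula. This already proves the first assertion.

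For the second assertion I would simply read off the arity-$0$ component. By Proposition~\ref{prop:TwCurvedGauge} the twisted curvature is
\[
\theta^a=m_0^a=\theta+d(a)+\sum_{n\ge 2}m_n(a,\ldots,a),
\]
since in arity $0$ all the slots for inserting ``$-$'' are absent and only the $r_0$-many copies of $a$ remain (with trivial sign, as $\sum_k k r_k=0$ when only $r_0\neq 0$). An $\calA_\infty$-algebra is, by definition, a curved $\calA_\infty$-algebra with vanishing curvature, so $(A,\theta^a,d^a,m_2^a,\ldots)$ is an $\calA_\infty$-algebra if and only if $\theta^a=0$, which is exactly the displayed Maurer--Cartan equation. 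One should note here that the convergence of the series defining $\theta^a$ (and the other $m_n^a$) is guaranteed by $a\in\F_1 A$ together with Lemma~\ref{lem:Conv}, since the general term $m_{n+r_0+\cdots+r_n}(a^{r_0},-,\ldots,-,a^{r_n})$ lies in filtration level tending to infinity.

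There is essentially no hard analytic step: the only point that requires a little care is bookkeeping of signs, i.e.\ checking that the sign $(-1)^{\sum_{k=0}^n k r_k}$ produced by the decomposition coproduct $\Delta_{(1)}(\nu_n)$ of $\calC=\End^c_{\k s^{-1}}$ matches the sign conventions one wants, and in particular that it degenerates to the trivial sign in arity $0$; but this is already handled inside the proof of Proposition~\ref{prop:TwCurvedGauge}, so here it may simply be invoked. Thus the theorem follows by specializing the general gauge-action picture to arity-$0$ gauge elements and extracting the arity-$0$ equation, with no further computation needed.
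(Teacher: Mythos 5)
Your proposal is correct and follows essentially the same route as the paper: the first assertion is exactly the gauge-action argument (the twisted structure is the action of the arity-$0$ gauge element on the Maurer--Cartan element $\alpha$, hence again a Maurer--Cartan element, with operations given by Proposition~\ref{prop:TwCurvedGauge}), and the second assertion is read off from the arity-$0$ component, which is precisely how the paper derives the theorem from the surrounding discussion.
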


This conceptually explains why the twisting procedure on associative algebras or complete $\calA_\infty$-algebras requires the twisting element to satisfy the Maurer--Cartan equation. Without this condition, one would   \emph{a priori}  get a complete curved $\calA_\infty$-algebra. In other words, the (left-hand side of the) Maurer--Cartan equation of an element $a$ is the curvature of the complete curved $\calA_\infty$-algebra twisted by the element $a$. 

\begin{proposition}\label{lem:subGaugeGroup}
The following assignment defines a monomorphism of gr\-ou\-ps 
\begin{eqnarray*}
\left(\F_1 A_{-1}, +, 0\right) &\rightarrowtail& \widetilde{\Gamma}=\big(
\F_1 A_{-1} \times 
\calF_1 \hom(A,A)_0\times 
{\displaystyle{\prod}_{n\ge 2}} \hom \big(A^{{\otimes} n}, A\big)_{n-1}
,
\\ && \qquad\  \mathrm{BCH}(-,-), 0
\big)\\
a &\mapsto& (a, 0, 0, \ldots)\ .
\end{eqnarray*}

\end{proposition}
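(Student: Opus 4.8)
The plan is to show that the map $a \mapsto (a,0,0,\ldots)$ is a well-defined group homomorphism from $(\F_1 A_{-1}, +, 0)$ into $\widetilde{\Gamma}$ and that it is injective. Injectivity is immediate, since the map is the inclusion of a set-theoretic factor, so the only real content is the homomorphism property, i.e. that
\[
\BCH\big((a,0,0,\ldots),(b,0,0,\ldots)\big) = (a+b,0,0,\ldots)
\]
for all $a,b \in \F_1 A_{-1}$.

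First I would recall that, by the definition of the pre-Lie product $\star$ on $\a_{\,\calC,A}$ in arity $0$ (established in \cref{subsec:TwistGrp}), for any element $\rho$ of the convolution algebra one has $\lambda \star \rho = 0$ whenever $\lambda$ is supported in arity $0$, because the infinitesimal decomposition coproduct $\Delta_{(1)}(\nu_n)$ never produces $\nu_0$ on the \emph{left} branch together with the rest on the right in a way that survives when one plugs an arity $0$ element into the left slot of $\widehat{\circ}_{(1)}$: an arity $0$ operation has no inputs to receive the output of another operation. Consequently the Lie bracket of two arity $0$ elements vanishes: $[(a,0,\ldots),(b,0,\ldots)] = (a,0,\ldots)\star(b,0,\ldots) - (\pm)(b,0,\ldots)\star(a,0,\ldots) = 0$. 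I would spell out this sign-free vanishing carefully, since it is the crux.

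Given that the bracket of any two elements of the arity $0$ subspace $\F_1 A_{-1} \times \{0\} \times \cdots$ is zero, the $\BCH$ series collapses to its first term: all iterated brackets in $\BCH(x,y) = x + y + \tfrac12[x,y] + \cdots$ vanish, so $\BCH(x,y) = x+y$ on this subspace. Here one should note that the relevant bracket is the one on the \emph{shifted} convolution Lie algebra used to define the gauge group $\widetilde{\Gamma}$ — the arity $0$ component sits in degree $-1$, and the degree convention makes $[(a,0,\ldots),(b,0,\ldots)]$ land in degree $-2$, but this term is zero regardless by the argument above. Therefore $a \mapsto (a,0,\ldots)$ transports the additive group law on $\F_1 A_{-1}$ to the $\BCH$ law on the image, proving it is a group homomorphism; combined with injectivity this gives the monomorphism.

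The main obstacle, such as it is, is bookkeeping rather than mathematics: one must be sure that the subspace $\F_1 A_{-1} \times \{0\} \times \cdots$ is genuinely closed under $\star$ (not merely that the bracket vanishes), so that the $\BCH$ series even makes sense entirely within that subspace, and one must confirm that no sign subtlety resurrects a higher $\BCH$ term. Both follow from the explicit description of $\star$ as a sum over the two-vertex trees appearing in $\Delta_{(1)}$: grafting anything onto an arity $0$ corolla, or grafting an arity $0$ corolla in the left (``receiving'') position, is impossible, so $\lambda \star (-) = 0$ identically for $\lambda$ of arity $0$, which makes the whole verification routine. I would therefore present the proof in two short steps — (i) $\lambda \star \rho = 0$ for $\lambda$ arity $0$, hence the bracket vanishes on the arity $0$ subspace, and (ii) $\BCH$ reduces to addition there, and the map is visibly injective — with the bulk of the words spent justifying (i).
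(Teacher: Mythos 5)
Your proposal is correct and follows essentially the same route as the paper: both reduce the claim to $\BCH(a,b)=a+b$ and observe that all brackets of arity~$0$ elements vanish, because $\lambda\star\rho=0$ identically when $\lambda$ is supported in arity $0$ (the left factor of $\Delta_{(1)}$ always has arity $\ge 1$). Your write-up merely makes explicit the vanishing argument that the paper states in one line.
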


\begin{proof}
It is enough to check the compatibility  with respect to the group products, that is 
$$\mathrm{BCH}(a, b)=a+b\ ,$$
which holds true since $a$ and $b$ are supported in arity $0$: their brackets appearing in the Baker--Campbell--Hausdorff formula vanish. 
\end{proof}

\begin{corollary}\label{cor:twa+b}
Twisting a complete curved $\calA_\infty$-algebra $(A, \theta, d, m_2, m_3, \ldots)$ first by an element $a$ and then by an element $b$ amounts to twisting it by $a+b$. 
\end{corollary}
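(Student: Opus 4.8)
The plan is to deduce \cref{cor:twa+b} directly from \cref{lem:subGaugeGroup} together with the fact, established in \cref{subsec:TwistGrp}, that twisting by an arity-$0$ element $a\in\F_1 A_{-1}$ is precisely the gauge action of the corresponding group-like element $e^a=1+a\in\widetilde{\mathfrak G}$ on the Maurer--Cartan element $\alpha$ encoding the given complete curved $\calA_\infty$-structure. Concretely, writing $\Tw_a$ for the operation ``twist by $a$'' on Maurer--Cartan elements, we have $\Tw_a(\alpha)=e^a\cdot\alpha$ in the notation of \cref{thm:GaugeActionBis}, and hence in \cref{prop:TwCurvedGauge}.

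The key steps, in order, are as follows. First I would recall that $\widetilde{\mathfrak G}$ acts on $\mathrm{MC}(\a_{\calC,A})$, so that for composable group elements $g,h$ one has $(g\cc h)\cdot\alpha=g\cdot(h\cdot\alpha)$; this is part of the content carried over to the deformation gauge group by \cref{prop:Extension}, applied to the group action of \cref{thm:GaugeActionBis}. Second, I would invoke \cref{lem:subGaugeGroup}: the map $a\mapsto(a,0,0,\ldots)$ is a group homomorphism from $(\F_1 A_{-1},+,0)$ into $\widetilde{\Gamma}$, equivalently $a\mapsto e^a=1+a$ is a homomorphism from $(\F_1 A_{-1},+,0)$ into $\widetilde{\mathfrak G}$, so that $e^a\cc e^b=e^{a+b}$. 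Third, I would combine these two facts: twisting first by $b$ and then by $a$ gives $e^a\cdot(e^b\cdot\alpha)=(e^a\cc e^b)\cdot\alpha=e^{a+b}\cdot\alpha$, which is exactly twisting by $a+b$; by symmetry of the sum this also equals the result of twisting first by $a$ and then by $b$. Finally, unravelling \cref{prop:TwCurvedGauge} translates the equality of Maurer--Cartan elements into the stated equality of all the generating operations $m_n^{a}$, i.e.\ $(m_n^{b})^{a}=m_n^{a+b}$ for every $n\ge 0$.

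There is essentially no obstacle here: the corollary is a formal consequence of the group-action formalism set up in \cref{sec:TopoDefTh} and the explicit computation of the arity-$0$ subgroup in \cref{lem:subGaugeGroup}. The only point that deserves a sentence of care is that the gauge action in question is the one on \emph{curved} structures (Maurer--Cartan elements of the larger convolution algebra $\a_{\calC,A}$ including the arity-$0$ component), so that twisting stays within complete curved $\calA_\infty$-algebras at each stage and the composition $e^a\cc e^b$ is formed inside $\widetilde{\mathfrak G}$ rather than in the smaller gauge group $\mathfrak G$; this is precisely what \cref{lem:subGaugeGroup} and \cref{prop:Extension} guarantee. With that observation in place the proof is a two-line application of associativity of a group action.
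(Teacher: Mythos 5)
Your proof is correct and is essentially the paper's own argument: the paper proves this corollary by citing Proposition~\ref{lem:subGaugeGroup} directly, i.e.\ the fact that $a\mapsto(a,0,0,\ldots)$ is a group homomorphism from $(\F_1 A_{-1},+,0)$ into the deformation gauge group, combined with the gauge-action interpretation of twisting. You have simply spelled out the intermediate steps (associativity of the group action and $e^a\cc e^b=e^{a+b}$) that the paper leaves implicit.
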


\begin{proof}
This is a direct corollary of the preceding Proposition~\ref{lem:subGaugeGroup}. 
\end{proof}

The presence of the curvature element plays another peculiar role in the deformation theory of curved $\Ai$-algebras over a field, as follows. 

\begin{proposition}[{\cite[Section~7.3]{Positselski11}\cite[Theorem~5.4]{ArmstrongClarke15}}]\label{prop:KPphenom}
Any  curved $\calA_\infty$-algebra $(A, \theta, d, m_2, m_3, \ldots)$, for which there exists a linear map 
$A \to \k$ sending $\theta$ to $1$,
is gauge equivalent to its  truncated  curved $\calA_\infty$-algebra $(A, \theta, 0, 0, \ldots, )$.
\end{proposition}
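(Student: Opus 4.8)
The plan is to exhibit an explicit element of the deformation gauge group $\widetilde{\Gamma}$ whose action on the Maurer--Cartan element $\alpha = (\theta, d, m_2, m_3, \ldots)$ kills everything except the curvature $\theta$. The key observation is that, since $\theta$ maps to $1$ under some linear functional $\phi\colon A \to \k$, the curvature behaves like an ``invertible'' element in arity $0$, and one can use it to absorb the higher operations. Concretely, I would first note that by \cref{thm:TwProcGp} and \cref{prop:TwCurvedGauge} the arity-$0$ part of the gauge group only changes $\theta$ by Maurer--Cartan-type corrections, so it cannot be used alone; the relevant gauge transformations live in arities $\ge 1$. The strategy is then to build, by induction on the filtration degree, a gauge element $\lambda = (\lambda_1, \lambda_2, \ldots) \in \widetilde{\Gamma}$ with $\lambda_1 \in \calF_1 \hom(A,A)_0$ and $\lambda_n \in \hom(A^{\otimes n}, A)_{n-1}$ for $n \ge 2$, such that $e^\lambda \cdot \alpha = (\theta, 0, 0, \ldots)$.

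The inductive step works as follows. Suppose we have already gauged away all operations $m_1, \ldots, m_{n-1}$; write the current Maurer--Cartan element as $\alpha' = (\theta, 0, \ldots, 0, m'_n, m'_{n+1}, \ldots)$. From \cref{thm:GaugeActionBis}, the action of a gauge element $\mu$ supported in arity $n$ changes $\alpha'$ by $[\mu, \alpha'] = \mu \star \alpha' - \pm\, \alpha' \star \mu$ modulo higher-arity and higher-filtration terms. The terms in this bracket coming from the pairing of $\mu$ (arity $n$) with the curvature $\theta$ (arity $0$) contribute, to leading order, a sum of the form $\sum_i \pm\, m'_n \circ_i (\text{nothing, since }\theta\text{ has arity }0)$ — more precisely, inserting $\theta$ into the $n$ inputs of $\mu$ produces an arity-$0$ correction, whereas inserting $\theta$ into $\mu$ from the root produces an arity-$n$ term. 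The point is that the operator ``$\mu \mapsto$ (number of ways to plug $\theta$ into an input of $\mu$)'' acts on arity-$n$ operations, and because $\phi(\theta) = 1$ this operator is (after a suitable identification) invertible, or at least surjective onto the arity-$n$ piece. Hence one can solve for $\mu$ so that the new arity-$n$ operation vanishes. Here it is cleanest to use the explicit circle-product formula $e^\lambda \cdot \at = (e^\lambda \star \at)\cc e^{-\lambda}$ and extract the relevant homogeneous components, or alternatively invoke the equivalent Lie-theoretic form $\lambda.\bar\alpha = \frac{e^{\ad_\lambda}-\id}{\ad_\lambda}(d\lambda) + e^{\ad_\lambda}(\bar\alpha)$ from \cref{prop:GaugeGroupAction}.

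The convergence of the resulting infinite gauge element $\lambda = \BCH(\ldots,\BCH(\lambda^{(2)},\lambda^{(3)}),\ldots)$ is guaranteed by \cref{prop:Extension}: the complete convolution algebra is graded by arity, each $\lambda^{(n)}$ raises filtration/arity, and the relevant series converge in the complete topology by \cref{lem:Conv}. Thus $\lambda \in \widetilde{\Gamma}$ is well-defined and $e^\lambda \cdot \alpha = (\theta, 0, 0, \ldots)$, which is precisely the statement that $(A, \theta, d, m_2, m_3, \ldots)$ is gauge equivalent to $(A, \theta, 0, 0, \ldots)$.

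The main obstacle I anticipate is making the ``invertibility'' claim precise and sign-coherent: one must check that at each arity $n$ the linear operator on $\hom(A^{\otimes n}, A)$ given by summing the insertions of $\theta$ into the inputs — combined with the correction coming from $d\lambda$ and the already-vanished lower operations — is invertible as a map on the appropriate quotient, and that the signs $(-1)^{\sum k r_k}$ of \cref{prop:TwCurvedGauge} and the structural signs of the cooperad $\calC = \End^c_{\k s^{-1}}$ do not obstruct solvability. Using the functional $\phi$, one reduces this to a triangular linear system (each step only depends on strictly lower data), which is invertible precisely because the diagonal entries are non-zero multiples of $\phi(\theta) = 1$; this is where the characteristic-zero and the $\phi(\theta)=1$ hypotheses are both used. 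Once this triangularity is set up, the rest is the bookkeeping of \cref{thm:GaugeActionBis} and \cref{prop:Extension}.
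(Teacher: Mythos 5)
Your overall strategy --- act by a gauge element supported in arities $\ge 2$, built inductively arity by arity, using the functional $\phi=\theta^*$ to ``invert'' the operation of plugging $\theta$ into an input --- is the same as the paper's. But there is a genuine gap at the heart of the inductive step. The equation you must solve at stage $n$ is $\lambda_{n+1}\star \alpha_0=-\delta_n$, where $-\star\alpha_0\colon \Hom(A^{\otimes n+1},A)\to \Hom(A^{\otimes n},A)$ is the ``sum over insertions of $\theta$'' map (note it lowers arity by one; it does not act on the arity-$n$ piece as you describe, and $\alpha_0\star\mu=0$ since $\theta$ has no inputs). This map is \emph{not} surjective, and your ``triangular system with invertible diagonal'' picture does not apply: since $\theta$ is a degree $-1$ element of a right pre-Lie algebra one has $(f\star\alpha_0)\star\alpha_0=f\star(\alpha_0\star\alpha_0)=0$, so the image of $-\star\alpha_0$ is contained in its kernel; if the map were surjective onto each arity it would be zero. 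What the hypothesis $\phi(\theta)=1$ actually buys is a contracting homotopy $h(f)=\theta^*\otimes f$ with $hd+dh=\id$ for $d=-\star\alpha_0$, i.e.\ \emph{acyclicity}: cycles equal boundaries. Consequently you can solve for $\lambda_{n+1}$ only after verifying that the obstruction $\delta_n=\alpha_n+\lambda_2\star\alpha_{n-1}+\cdots+\lambda_n\star\alpha_1$ satisfies $\delta_n\star\alpha_0=0$. That verification is the substantive computation, and it uses the Maurer--Cartan equation $\alpha\star\alpha=0$ together with the induction hypothesis and the right pre-Lie identity; your sketch never invokes the Maurer--Cartan equation of $\alpha$ at this point, so the induction as written does not close.

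A secondary simplification you are missing: because the sought gauge element has $\lambda_0=0$ and $\alpha_0$ sits in arity $0$, one has $\alpha_0\cc(1+\lambda)=\alpha_0$, so the gauge equation $\big((1+\lambda)\star\alpha\big)\cc(1+\lambda)^{-1}=\alpha_0$ is equivalent to the \emph{linear} equation $(1+\lambda)\star\alpha=\alpha_0$. This removes all the ``modulo higher-arity and higher-filtration terms'' bookkeeping and the BCH assembly from your argument, and it is what makes the arity-by-arity system exactly the one displayed above. Also note that no completeness is needed here (the gauge element is supported in arities $\ge 2$), and that $\lambda_1$ can be taken to be $0$: the differential $m_1$ is killed by $\lambda_2\star\alpha_0$, not by an arity-$1$ gauge component.
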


\begin{proof}
For the sake of completeness of the present paper, we provide here a proof in the language of the pre-Lie deformation. Notice that the computations of the following obstruction argument are very close to the arguments of \cite[Section~$7.3$]{Positselski11} and \cite[Theorem~5.4]{ArmstrongClarke15}. As above, we denote the Maurer--Cartan element encoding the curved $\calA_\infty$-algebra $(A, \theta, d, m_2, m_3, \ldots)$ in the convolution pre-Lie algebra $\a$ by $\alpha$, that is $m_n=\alpha(\nu_n)$, for $n\ge 0$. 
Let us denote by $\alpha_{n}\coloneqq(0, \ldots, 0, \nu_{n}\mapsto m_n, 0, \ldots)$, for $n\ge 0$, so that $\alpha=\sum_{n=0}^\infty \alpha_{n}$. 
Let us suppose that there exists an element $\lambda\in \widetilde{\Gamma}$,  such that $\lambda_0=0$ and $\lambda_1=0$, and which satisfies $\ln(1+\lambda).\alpha=\alpha_0$. 
Since $\ln(1+\lambda).\alpha=\left((1+\lambda) \star \alpha \right) \circledcirc (1+\lambda)^{-1}=\alpha_0$, we get the equation $(1+\lambda) \star \alpha =\alpha_0\circledcirc (1+\lambda)=\alpha_0$, that is 
$(1+\lambda_2+\lambda_3+\cdots)\star (\alpha_0+\alpha_1+\cdots)=\alpha_0$. This gives in arity $0$: $\alpha_0=\alpha_0$ and in arity $n\ge 1$: 
\begin{eqnarray}\label{Eqn:OBs}
\underbrace{\alpha_n+\lambda_2\star \alpha_{n-1}+\cdots +\lambda_n\star \alpha_1}_{\delta_n}+\lambda_{n+1}\star \alpha_0=0\ .      
\end{eqnarray}
 We now consider the chain complex \[C_\bullet\coloneqq\Hom\left(
 \calC(\bullet),
\Hom(A^{\otimes \bullet}, A) \right)\cong  \Hom(A^{\otimes \bullet}, A)\ , \] for $\bullet \ge 1$, with the boundary map given by $d(f)\coloneqq f\star\alpha_0$, which lowers the arity by one. Since $\at_0$ is a degree $-1$ element in a (right) pre-Lie algebra, one can see that $(f\star\alpha_0)\star \alpha_0=f\star(\alpha_0\star \alpha_0)=0=d^2(f)$.

Let us first show that this chain complex is acyclic. Let us denote by  
$\theta^*  : A \to \k$ the $\k$-linear map which sends  $\theta$ to $1$. With this map at hand, we construct the following contracting homotopy $h$. Let $\varphi_n$ be an element of  $C_n=\Hom\left(\calC(n),
\Hom(A^{\otimes n}, A)\right)$, so it given by $\nu_n\mapsto f_n$. The image $s(\varphi_n)$ under $h$ of the element $\varphi_n$ is given by the assignment $\nu_{n+1}\mapsto \theta^*\otimes f_n$. It is then straightforward to check that $hd+dh=\id_{C_\bullet}$.

 Then, one can prove by induction that the element $\delta_n\in C_n$ is a cycle, therefore it is a boundary element, that is there exists $\lambda_{n+1}\in C_{n+1}$ such that $-\lambda_{n+1}\star \alpha_0=\delta_n$, which is Equation~(\ref{Eqn:OBs}). 
To do so, one can see that 
\begin{eqnarray*}
\delta_n\star \alpha_0&=&
\at_n \star \at_0+\sum_{k=2}^n(\lambda_k\star \at_{n-k+1})\star\at_0\\
&=&
\at_n \star \at_0-\sum_{k=2}^n(\lambda_k\star \at_0)\star\at_{n-k+1}+\sum_{k=2}^n\lambda_k\star (\at_{n-k+1}\star\at_0)
\end{eqnarray*}
which is equal to the following quantity using the induction hypothesis:
\begin{eqnarray*}
\delta_n\star \alpha_0&=&
\at_n \star \at_0
+\sum_{k=2}^n\at_{k-1}\star\at_{n-k+1}
+\sum_{k=3}^n\sum_{l=2}^{k-1}(\lambda_l\star \at_{k-l})\star\at_{n-k+1}
\\&&+\sum_{k=2}^n\lambda_k\star (\at_{n-k+1}\star\at_0)\\
&=&\sum_{k=1}^{n}\at_{k}\star\at_{n-k}+\sum_{l=2}^n
\sum_{k=l+1}^{n+1}\lambda_l\star(\at_{k-l}\star\at_{n-k+1})=0\ ,
\end{eqnarray*}
by the Maurer--Cartan equation satisfied by $\at$.
\end{proof}

\cref{thm:TwProcGp} shows that if one acts with the arity $0$ elements of the gauge group, one gets the twisting procedure. 
This is used in \cite{FOOO09I} to get a new structure with trivial curvature and thus an underlying chain complex.
On the other hand, \cref{prop:KPphenom} shows that one can act with the elements of the gauge group supported in arity $\ge 2$ in order to   kill the operations except for the curvature.
This second behaviour is sometimes called the \emph{Kontsevich--Positselski vanishing phenomenon} in the literature; as we see, the argument that proves it is ``dual'' to the twisting procedure.
Notice that we do not need the completeness assumption since we consider the action of a gauge element supported in arity greater than $2$ and that this Kontsevich--Positselski vanishing phenomenon always holds true over a field. However, this sort of degenerate behavior is typically avoided for many interesting cases like the curved $\Ai$-algebras used when working with matrix factorizations, as argued in~\cite{ArmstrongClarke15}. 

\section{Curved $\Li$-algebras}
In the case of symmetric operads, we can start with the same kind of simple operad, that is one-dimensional in each arity with trivial symmetric group action. This operad $\mathrm{uCom}=\End_{\k}$ encodes the category of unital commutative (associative) algebras. Its linear dual $\mathrm{uCom}^*$ produces the following complete left unital convolution pre-Lie algebra 
\[\a=\hom_\Sy( \mathrm{uCom}^*, \End_A)\cong \left(\prod_{n\ge 0} \hom\big(A^{\odot n}, A\big),  \star, 1\right) \ ,\]
where $A^{\odot n}\coloneqq A^{\otimes n}/\Sy_n$ stands for the space coinvariants with respect to the symmetric group action; in order words, we are led to consider symmetric maps from $A^{\otimes n}$ to $A$. 

\begin{definition}[Complete shifted curved $\Li$-algebra]\label{def:csLiAlg}
A \emph{complete shifted curved $\Li$-algebra structure} on a complete graded module $(A, \F)$ is a Maurer--Cartan element 
$\alpha=(\ell_0, \ell_1, \ldots, \ell_n,\ldots)$ in the complete left unital pre-Lie convolution algebra $\a\cong \left(\prod_{n\ge 0} \hom\big(A^{\odot n}, A\big),  \star, 1\right)$. Such a data amounts to a collection of filtered maps 
$l_n\ : \ A^{\odot n} \to A$, 
of degree $-1$, for $n\ge 0$, satisfying the following relations, under the usual convention $\theta\coloneqq\ell_0(1)$ and $d\coloneqq\ell_1$:
\begin{eqnarray*}
&\text{arity}\ 0:& d(\theta)=0\ , \\
&\text{arity}\  1:& d^2=- \ell_2(\theta, -)\ , \\
&\text{arity}\  2:& \partial \ell_2=
-\ell_3(\theta, -,-)\ . \\
&\text{arity}\  3:& \partial \ell_3=-\ell_2(\ell_2(-,-), -)-\ell_2(\ell_2(-,-), -)^{(23)}-\ell_2(\ell_2(-,-), -))^{(132)}
\\&&
\qquad \quad -\ell_4(\theta, -, -,-)\ ,
\\
&\text{arity}\ n:& \partial\left(\ell_n\right)= -\sum_{p+q=n+1\atop 2\leq p,q \leq n}
\sum_{\sigma\in \mathrm{Sh}_{p,q}^{-1}}
 (\ell_{p+1}\circ_{1} l_q)^{\sigma}
-\ell_{n+1}(\theta, -, \cdots, -)\ ,
\end{eqnarray*}
where $ \mathrm{Sh}_{p,q}^{-1}$ denotes the set of the inverses of $(p,q)$-shuffles.
\end{definition}

\begin{remark}
As strange  that it may seem, the notion of \emph{shifted} Lie algebra structure seems to appear ``more naturally'' than its classical notion: the shifted Gerstenhaber's Lie bracket on the Hochschild cochain complex with the operadic grading, the shifted Lie bracket on the sheaves of polyvector fields, the shifted $\Li$-algebra formed by the Koszul hierarchy, the shifted Lie algebra made up of the Whitehead product on homotopy groups of topological spaces, the algebraic structure present in the Batalin--Vilkovisky formalism, etc.
\end{remark}

\begin{remark}
In the same way as for $\Ai$-algebras, the truncation and suspension versions of the endormorphism cooperad on a one-dimensional module  give rise to possibly non-shifted and possibly non-curved $\Li$-algebra structures. 
\end{remark}

In order to get the same abovementioned definitions but with symmetric operads, one has to consider the operad $\mathrm{uAss}$ made up of the regular representations of the symmetric groups, that is $\mathrm{uAss}(n)=\k[\Sy_n]\upsilon_n$. Then the associated complete convolution pre-Lie algebra is isomorphic to the previous one since 
$\hom_\Sy( \mathrm{uAss}^*, \End_A)\cong \hom( \mathrm{uAs}^*, \End_A)$. 
Let us denote by $\upsilon_n'$ and by $\nu_n'$ respectively the basis elements of $\mathrm{uCom}(n)$ and $\mathrm{uCom}^*(n)$. 
The morphism of operads 
$$ \mathrm{uAss}\to \mathrm{uCom} \ ,\qquad \upsilon_n^\sigma\mapsto \upsilon'_n$$
induces the following morphism of cooperads 
$$ \varsigma\ : \ \mathrm{uCom}^*\to \mathrm{uAss}^* \ ,\qquad \nu_n' \mapsto \sum_{\sigma \in \Sy_n}\nu_n^\sigma\ ,$$
which,  by pulling back, induces the following morphism of pre-Lie algebras 
$$ \varsigma^*\ : \ \hom_\Sy( \mathrm{uAss}^*, \End_A) \to \hom_\Sy( \mathrm{uCom}^*, \End_A) \ , \quad \alpha \mapsto \alpha\circ\varsigma\ . 
$$
In the case of the endormophism operad on a suspended module, which gives rise to the non-shifted versions, the morphism of operads is defined  similarly, but its linear dual produces $\mathrm{sgn}(\sigma)$ signs. 
Since a morphism of pre-Lie algebras preserves Maurer--Cartan elements, we obtain the following known result, but  in a straightforward way. 

\begin{proposition}[\cite{LadaMarkl95, FOOO09I}]
The symmetrisation $$\ell_n\coloneqq\sum_{\sigma\in \Sy_n} \, m_n^\sigma $$
of a complete shifted (curved) $\Ai$-algebra produces a complete shifted (curved) $\Li$-algebra. 

The antisymmetrisation $$\ell_n\coloneqq\sum_{\sigma\in \Sy_n}\mathrm{sgn}(\sigma) \, m_n^\sigma $$
of a complete (curved) $\Ai$-algebra produces a complete (curved) $\Li$-algebra. 
\end{proposition}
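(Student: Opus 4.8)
The plan is to realise the (anti)symmetrisation operation as the pullback along an explicit morphism of cooperads, and then to invoke the fact recorded just above the statement — that such a pullback is a morphism of complete left-unital pre-Lie algebras and hence carries Maurer--Cartan elements to Maurer--Cartan elements. Since, by \cref{subsec:OpInt} and \cref{def:csLiAlg}, complete shifted curved $\Ai$-algebras (resp.\ $\Li$-algebras) on $A$ are precisely the Maurer--Cartan elements of the complete convolution pre-Lie algebra $\hom_\Sy(\mathrm{uAss}^*, \End_A)$ (resp.\ $\hom_\Sy(\mathrm{uCom}^*, \End_A)$), the whole statement will follow once the relevant pullback map is identified with the (anti)symmetrisation formula.

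First I would treat the symmetrisation of a complete shifted (curved) $\Ai$-algebra. The assignment $\upsilon_n^\sigma \mapsto \upsilon_n'$ is the canonical projection of the regular representation onto its coinvariants and defines a morphism of operads $\mathrm{uAss} \to \mathrm{uCom}$, since $\upsilon_k^\sigma \circ_i \upsilon_l^\tau$ is sent to $\upsilon_{k+l-1}' = \upsilon_k' \circ_i \upsilon_l'$; dualising gives the morphism of cooperads $\varsigma : \mathrm{uCom}^* \to \mathrm{uAss}^*$, $\nu_n' \mapsto \sum_{\sigma \in \Sy_n} \nu_n^\sigma$ already displayed in the excerpt. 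Pulling back along $\varsigma$ yields $\varsigma^* : \hom_\Sy(\mathrm{uAss}^*, \End_A) \to \hom_\Sy(\mathrm{uCom}^*, \End_A)$, $\alpha \mapsto \alpha \circ \varsigma$, which I would check is a morphism of left-unital pre-Lie algebras: the pre-Lie products on both sides have the form $\gamma_{(1)} \circ (f \,\widehat{\circ}_{(1)}\, g) \circ \Delta_{(1)}$, and the cooperad relation $(\varsigma \,\widehat{\circ}_{(1)}\, \varsigma) \circ \Delta_{(1)} = \Delta_{(1)} \circ \varsigma$ gives at once $\varsigma^*(f) \star \varsigma^*(g) = \varsigma^*(f \star g)$, while $\varsigma(\nu_1') = \nu_1$ shows the left unit $\nu_1 \mapsto \id_A$ is preserved. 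As $\varsigma^*$ is manifestly filtered, it is a morphism of the two complete convolution pre-Lie algebras, so it preserves Maurer--Cartan elements. Finally, if $\alpha$ encodes the complete shifted curved $\Ai$-algebra $(A,\theta,d,m_2,m_3,\ldots)$ via $m_n = \alpha(\nu_n)$, then, using equivariance of $\alpha$, $\varsigma^*(\alpha)(\nu_n') = \alpha\big(\sum_{\sigma}\nu_n^\sigma\big) = \sum_{\sigma} m_n^\sigma = \ell_n$, so $(\ell_n)$ is a Maurer--Cartan element of $\hom_\Sy(\mathrm{uCom}^*, \End_A)$, that is a complete shifted curved $\Li$-algebra; the uncurved and non-shifted variants are handled by the identical computation performed inside the appropriate sub-cooperads and sub-pre-Lie algebras, following the dictionary of \cref{subsec:OpInt}.

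The second statement is obtained by running exactly the same argument with the one-dimensional graded module $\k s$ in place of $\k$, so that $\uAs \cong \End_\k$ is replaced by $\End_{\k s}$ and $\uCom$ by its symmetric analogue: the canonical projection is now $\upsilon_n^\sigma \mapsto \mathrm{sgn}(\sigma)\,\upsilon_n'$, still a morphism of operads, but dualising it produces the Koszul signs coming from permuting the suspension coordinates, whence $\varsigma(\nu_n') = \sum_{\sigma} \mathrm{sgn}(\sigma)\,\nu_n^\sigma$ and therefore $\ell_n = \sum_{\sigma} \mathrm{sgn}(\sigma)\, m_n^\sigma$. The one genuinely delicate point, and the step I expect to demand the most care, is precisely this sign bookkeeping: one must verify that the sign produced when dualising the $\Sy_n$-action on $(\k s)^{\otimes n}$ is exactly $\mathrm{sgn}(\sigma)$, and that it is consistent with the sign conventions fixed for curved $\Ai$- and $\Li$-algebras in the previous sections; everything else is a routine unwinding of the definitions of the convolution pre-Lie product and of the cooperad morphism $\varsigma$.
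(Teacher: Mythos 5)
Your proposal is correct and follows exactly the route the paper itself takes: the symmetrisation is identified with the pullback $\varsigma^*$ along the cooperad morphism $\varsigma : \mathrm{uCom}^* \to \mathrm{uAss}^*$ dual to $\upsilon_n^\sigma \mapsto \upsilon_n'$, and the result follows because a morphism of (complete, left-unital) pre-Lie algebras preserves Maurer--Cartan elements, with the $\mathrm{sgn}(\sigma)$ signs in the antisymmetric case arising from dualising the suspended version $\End_{\k s}$. Your verifications that $\varsigma^*$ respects the convolution pre-Lie product and the unit, and that $\varsigma^*(\alpha)(\nu_n')=\sum_\sigma m_n^\sigma$, are exactly the details the paper leaves implicit.
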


\section{Gauge action and categorical properties of curved $\Li$-algebras}\label{subsec:TwLii}
The above notions of complete (shifted) curved $\Li$-algebra play a seminal role in deformation theory, rational homotopy theory, and higher algebra. It is first the suitable source category for the Deligne--Hinich--Getzler $\infty$-groupoid \cite{Getzler09, Henriques08, DolgushevRogers15}, it serves as rational models for spaces of maps \cite{BuijsMurillo11, Lazarev13, Berglund15}, and its provides us with a suitable higher categorical enrichment to the categories of homotopy algebras \cite{DolgushevRogers17, DolgushevHoffnungRogers14}.  In each case, the twisting procedure, together with its various properties, form the main toolbox. In this section, we show that these properties are actually straightforward consequences of the  above gauge group interpretation. This also allows us to get the most general  version of all these results. \\

Let $(A, \F)$ be a complete graded module and let $\calC$ be the coooperad $\coloneqq \mathrm{uCom}^*$. The deformation gauge group associated to $\a$ is equal to 
\[\widetilde{\Gamma}\cong\left(
\F_1 A_{0} \times 
\calF_1 \hom(A,A)_0\times 
{\displaystyle{\prod}_{n\ge 2}} \hom \big(A^{{\odot} n}, A\big)_0
,  \BCH(\; ,\,), 0
\right)
\]
and is filtered isomorphic to 
\[
\widetilde{\mathfrak{G}}\cong 
\left(
\F_1 A_{0} \times 
\left(1+ \calF_1 \hom(A,A)_0\right)
\times 
{\displaystyle{\prod}_{n\ge 2}} \hom \big(A^{{\odot} n}, A\big)_0
,  \circledcirc, 1
\right)\]
under the pre-Lie exponential and pre-Lie logarithm maps.

\begin{proposition}\label{prop:TwCurvedGaugeLie}

The gauge action of an element $a\in \F_1 A_0$ on a complete shifted curved $\Li$-algebra $(A,\F, \theta=\ell_0, d=\ell_1, \ell_2, \ell_3, \ldots)$ produces the following \emph{twisted} shifted curved $\Li$-algebra structure on $A$:
$$\ell_n^a=\sum_{k\ge 0} {\textstyle \frac{1}{k!}} \ell_{k+n}\big(a^k, -,  \ldots,  -   \big)\ , \ \  \text{for}\  n\ge 0\ . $$
\end{proposition}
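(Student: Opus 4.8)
The plan is to proceed exactly as in the $\calA_\infty$-case (Proposition~\ref{prop:TwCurvedGauge}), transporting that argument through the morphism of pre-Lie algebras $\varsigma^* : \hom_\Sy(\mathrm{uAss}^*, \End_A) \to \hom_\Sy(\mathrm{uCom}^*, \End_A)$ whenever it is convenient, but really the cleanest route is a direct computation in $\a_{\,\calC,A}$ with $\calC = \mathrm{uCom}^*$. First I would note that an arity-$0$ gauge element $a \in \F_1 A_0$, regarded as $(a,0,0,\ldots) \in \widetilde{\Gamma}$, has pre-Lie exponential $e^a = 1+a = (a,1,0,\ldots) \in \widetilde{\mathfrak{G}}$ and pre-Lie inverse $e^{-a} = 1-a$, since all iterated brace products of $a$ with itself vanish (the element is concentrated in arity $0$, so there is no leaf to graft onto). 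Hence by Theorem~\ref{thm:GaugeActionBis} the twisted Maurer--Cartan element is
$$
e^{-a}\cdot\alpha = \big((1-a)\star\alpha\big)\cc(1+a) = \alpha\cc(1+a),
$$
using that $\lambda\star\rho = 0$ for $\lambda$ of arity $0$ (there is no slot in $\lambda$ into which to plug $\rho$) and that $\star$ is linear on the left. I would then observe, as in the associative case, that it suffices to compute $e^a\cdot\alpha$ up to relabelling $a \mapsto -a$, or simply carry the signs; since $a$ has degree $0$ there are in fact no signs at all in the symmetric setting beyond those already present in the structure maps.

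Next I would unwind $\big(\alpha\cc(1+a)\big)(\nu'_n) = \big(\alpha\circ(1+a)\big)\big(\Delta(\nu'_n)\big)$, where $\Delta$ is the full decomposition map of the cooperad $\mathrm{uCom}^*$ — which exists here because $\mathrm{uCom}^*(0) = \k$ but $a_0 = a$ is allowed precisely by the hypothesis $f_0 \in \calF_1$, making the relevant series convergent (cf. the remark following Definition~\ref{def:InftyMorph}). The key combinatorial point: in $\big(\alpha\circ(1+a)\big)\big(\Delta(\nu'_n)\big)$ the only terms that survive are those where the bottom vertex is some $\nu'_{k+n}$ and $k$ of its inputs receive the arity-$0$ element $a$ while the remaining $n$ receive the identity-labelled slots. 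Because $\mathrm{uCom}$ is the operad with one-dimensional components and \emph{trivial} symmetric-group action, the coproduct $\Delta(\nu'_{k+n})$ contains each way of partitioning the $k+n$ leaves into the $k$ "$a$-leaves" and the $n$ "open" leaves with coefficient $1$; summing over the $\binom{k+n}{k}$ choices and dividing by the $k!$ coming from the coinvariants $A^{\odot(k+n)}$, and using that the $\ell_{k+n}$ are symmetric, collapses everything to the single term $\frac{1}{k!}\ell_{k+n}(a^k,-,\ldots,-)$. Thus
$$
\ell_n^a = \sum_{k\ge 0}\frac{1}{k!}\,\ell_{k+n}\big(a^k,-,\ldots,-\big),
$$
and convergence of the series follows from Lemma~\ref{lem:Conv}, since $a \in \F_1 A_0$ forces $\ell_{k+n}(a^k,-,\ldots,-) \in \calF_k$.

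The main obstacle, as usual in this circle of ideas, is bookkeeping: matching the symmetric-group/coinvariant normalisation in $\mathrm{uCom}^*$ against the multinomial count of insertions, and verifying that the full decomposition map $\Delta$ is legitimately used despite $\calC(0) \ne 0$ — the latter is exactly what the completeness framework of \cref{sec:TopoDefTh} was set up to handle, so I would invoke Proposition~\ref{prop:Extension} and the convergence argument in Remark~\ref{RkTrick} rather than re-prove it. One should also double-check that no signs intrude: here $a$ is an \emph{even} element and $1+a$ has degree $0$, so the sign rule $(-1)^{\sum_k k r_k}$ that appeared in Proposition~\ref{prop:TwCurvedGauge} degenerates to $+1$ in every summand. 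Given that Proposition~\ref{prop:TwCurvedGauge} and Theorem~\ref{thm:GaugeActionBis} are already in hand, the proof is essentially a two-line specialisation plus this normalisation check, and the fact that $e^a\cdot\alpha$ is again a Maurer--Cartan element — hence $(\ell_n^a)_{n\ge0}$ is genuinely a complete shifted curved $\Li$-algebra — is automatic from the gauge action being well-defined on $\mathrm{MC}(\a_{\,\calC,A})$.
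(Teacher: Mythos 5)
Your proposal is correct and follows exactly the route the paper takes: the paper's own proof is the one-line observation that, as in Proposition~\ref{prop:TwCurvedGauge}, the new Maurer--Cartan element is $e^{-a}\cdot\alpha=\alpha\circledcirc(1+a)$ because $a\star\rho=0$ for an arity-$0$ gauge element, after which one reads off the formula from the decomposition map of $\mathrm{uCom}^*$. Your extra bookkeeping of the $\binom{k+n}{k}$ insertions against the $\tfrac{1}{k!}$ normalisation is exactly the computation the paper leaves implicit, and your appeal to Theorem~\ref{thm:GaugeActionBis} for the fact that the result is again a Maurer--Cartan element matches the paper's logic.
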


\begin{proof}
The proof is easy and identical to the one of Proposition~\ref{prop:TwCurvedGauge}: the new Maurer--Cartan element is equal to $e^{-a}\cdot \alpha=\alpha \circledcirc (1+a)$.
\end{proof}

For instance, the formula for the twisted curvature and the twisted (pre)dif\-fe\-ren\-tial are respectively 
\[\theta^a=\sum_{k\ge 0} {\textstyle \frac{1}{k!}} \ell_{k}\big(a^k \big) \qquad \text{and} \qquad
d^a=\sum_{k\ge 0} {\textstyle \frac{1}{k!}} \ell_{k+1}\big(a^k, - \big)\ .
 \]
An element $a\in \F_1 A_{0}$ is called a \emph{Maurer--Cartan} element in the complete shifted curved $\Li$-algebra $(A, \F, \theta, d, \ell_2, \ell_3, \cdots)$ when $\ell^a_0=0$. The same results as in the above case of complete (shifted) curved $\Ai$-algebras hold here \textit{mutatis mutandis}. Let us mention the following ones, which are heavily used in \textit{op. cit.}

\begin{corollary}[\cite{Getzler09, DolgushevRogers15, DolgushevRogers17}]
Let $(A,\F, \theta, d, \ell_2, \ell_3, \ldots)$  be a complete shifted curved $\Li$-algebra and let 
$a, b\in \F_1 A_0$ be two of its elements. The following formulas hold: 
\begin{enumerate}\itemsep3pt
\item $d\left(\theta^a\right)+\sum_{k\ge 1}{\textstyle \frac{1}{k!}} \ell_{k+1}\big(a^k, \theta^a \big)=0$\ , 
\item $d^a \circ d^a=-\ell_2^a(\theta^a,-)$\ ,
\item $\theta^{a+b}=\theta^a+d^a(b)+ \sum_{k\ge 2} {\textstyle \frac{1}{k!}} \ell_{k}^a\big(b^k)$ , 
\end{enumerate}
\end{corollary}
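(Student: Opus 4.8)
The plan is to deduce all three identities from the gauge-group interpretation of the twisting procedure, in complete analogy with \cref{thm:TwProcGp}, \cref{cor:twa+b}, and the structure equations of a complete shifted curved $\Li$-algebra. Throughout, let $\alpha=(\ell_0,\ell_1,\ell_2,\dots)\in\MC(\a)$ be the Maurer--Cartan element of the convolution pre-Lie algebra $\a=\hom_\Sy(\mathrm{uCom}^*,\eend_A)$ encoding the given complete shifted curved $\Li$-algebra, and write $\alpha^a\coloneqq e^{-a}\cdot\alpha=\alpha\circledcirc(1+a)$ for the twisted Maurer--Cartan element produced by \cref{prop:TwCurvedGaugeLie}, so that $\ell_n^a=\alpha^a(\nu_n')$.

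For item $(2)$: since $\alpha^a$ is again a Maurer--Cartan element of the \emph{same} convolution pre-Lie algebra $\a$ — this is precisely the content of \cref{thm:GaugeActionBis}, i.e. the gauge group preserves $\MC(\a)$ — the element $\alpha^a$ satisfies $\alpha^a\star\alpha^a=0$, which unfolds through the arity-$1$ part of the structure equations of \cref{def:csLiAlg} exactly as the relation $d^2=-\ell_2(\theta,-)$ unfolds from $\alpha\star\alpha=0$. Evaluating the arity-$1$ component of $\alpha^a\star\alpha^a=0$ therefore gives $d^a\circ d^a=-\ell_2^a(\theta^a,-)$ directly, reading $d^a=\ell_1^a=\alpha^a(\nu_1')$ and $\theta^a=\ell_0^a$. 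Item $(1)$ is obtained the same way from the arity-$0$ part: there is a universal identity (valid already in the free complete left-unital pre-Lie algebra, hence in $\a$) expressing $d(\theta^a)+\sum_{k\ge1}\frac1{k!}\ell_{k+1}(a^k,\theta^a)$ as a component of $d^a(\theta^a)$ — more precisely, unfolding the arity-$0$ piece of $\alpha^a\star\alpha^a=0$ via the cooperad structure of $\mathrm{uCom}^*$ yields $\ell_1^a(\ell_0^a)=0$, and substituting the twisting formula $\ell_1^a=\sum_{k\ge0}\frac1{k!}\ell_{k+1}(a^k,-)$ of \cref{prop:TwCurvedGaugeLie} and separating the $k=0$ term gives $(1)$.

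For item $(3)$: by \cref{lem:subGaugeGroup} (stated there for curved $\Ai$-algebras, but the identical proof applies since elements of $\F_1 A_0$ supported in arity $0$ have vanishing $\BCH$-brackets, the brackets being of arity $\le 1$ and hence killed on such elements), twisting first by $a$ and then by $b$ equals twisting by $a+b$; equivalently $\alpha^{a+b}=(\alpha^a)^b=\alpha^a\circledcirc(1+b)$. Hence $\theta^{a+b}=\ell_0^{a+b}=(\alpha^a)^b(\nu_0')$, and applying the twisting formula of \cref{prop:TwCurvedGaugeLie} to the structure $(A,\F,\theta^a,d^a,\ell_2^a,\ell_3^a,\dots)$ with twisting element $b$ gives $\theta^{a+b}=\sum_{k\ge0}\frac1{k!}\ell_k^a(b^k)=\theta^a+d^a(b)+\sum_{k\ge2}\frac1{k!}\ell_k^a(b^k)$, which is exactly $(3)$ after isolating the $k=0$ and $k=1$ terms.

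The only genuinely delicate point is the bookkeeping that converts the compact pre-Lie identities $\alpha^a\star\alpha^a=0$ and $\alpha^{a+b}=\alpha^a\circledcirc(1+b)$ into the claimed term-by-term formulas — that is, verifying that the arity-$n$ component of the circle product $\circledcirc$ and of the pre-Lie product $\star$, when expanded via $\Delta_{(1)}$ of $\mathrm{uCom}^*$, produce precisely the symmetrized sums appearing in \cref{def:csLiAlg} and in \cref{prop:TwCurvedGaugeLie}, with the correct shuffle combinatorics and (in the non-shifted case) signs. This is routine but not trivial; it is the same computation already carried out for $\Ai$-algebras in \cref{prop:TwCurvedGauge} and in the proof of \cref{thm:TwProcGp}, transported along the morphism of pre-Lie algebras $\varsigma^*\colon\hom_\Sy(\mathrm{uAss}^*,\End_A)\to\hom_\Sy(\mathrm{uCom}^*,\End_A)$, and I would simply indicate that it proceeds mutatis mutandis rather than redo it in full.
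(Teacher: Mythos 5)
Your proposal is correct and follows essentially the same route as the paper: items (1) and (2) are read off as the arity-$0$ and arity-$1$ structure relations of the twisted shifted curved $\Li$-algebra (i.e.\ the corresponding components of $\alpha^a\star\alpha^a=0$), and item (3) is the arity-$0$ part of the additivity of twisting, $\alpha^{a+b}=(\alpha^a)^b$, combined with the twisting formula. The paper's proof is just a terser version of exactly this argument.
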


\begin{proof}\leavevmode
\begin{enumerate}
\item The left-hand side is nothing but $d^a\left(\theta^a\right)$, which is equal to $0$ by the arity $0$ relation of the twisted shifted curved $\Li$-algebra.
\item This formula is the  arity $1$ relation of the twisted shifted curved $\Li$-algebra.
\item The right-hand side is nothing but $\left(\theta^a\right)^b$; so the formula is the arity $0$ part of  Corollary~\ref{cor:twa+b}, in the $\Li$-algebra case.
\end{enumerate}
\end{proof}

Let $(A,\F)$ and $(B, \G)$ be a two complete dg modules. Let $\alpha, \beta \in \MC(\a_{\calC, A\oplus B})$ be two Maurer--Cartan elements  corresponding to two complete shifted curved $\Li$-algebra structures on $(A, \F)$ and $(B, \G)$ respectively. Let $f=(f_0, f_1, \cdots)$ be an $\infty$-morphism from $(A,\alpha)$ to $(B, \beta)$, that is $f\star \alpha=\beta \circledcirc f$. Let $a\in \F_1A_0$ and let us denote by 
\begin{align*} 
f^a\coloneqq f\circledcirc(1+a)=&\big(f(a)\coloneqq\sum_{k\ge 0} {\textstyle \frac{1}{k!}} f_k(a^k), 
f^a_1\coloneqq\sum_{k\ge 0} {\textstyle \frac{1}{k!}} f_{k+1}\big(a^k, - \big), \cdots , 
\\
&\ \ f^a_n\sum_{k\ge 0} {\textstyle \frac{1}{k!}} f_{k+n}\big(a^k, -,  \ldots,  -   \big)
, \cdots
\big)\ .
\end{align*}
Notice that $f(a)\in \F_1 B_0$. We  consider the  following two twisted complete shifted curved $\Li$-algebras on $A$ and $B$ respectively:
\[
\alpha^a\coloneqq(1-a)\cdot \alpha=\alpha\cc(1+a) \quad \text{and}\quad 
\beta^{f(a)}\coloneqq\big(1-{f(a)}\big)\cdot \beta=\beta\cc\big(1+{f(a)}\big)\ .
\]

\begin{proposition}\leavevmode
\begin{enumerate}
\item The element 
\[ \big(1-{f(a)}\big) \cc f \cc (1+{a})=f^a-{f(a)}\]
is an $\infty$-morphism from $\alpha^a$ to $\beta^{f(a)}$. 

\item The curvatures of the two twisted complete shifted curved $\Li$-algebra structures are related by the following formula:
\[
\beta^{f(a)}_0=f_1^a\big(\alpha^a_0\big)=\sum_{k\ge 0} {\textstyle \frac{1}{k!}} f_{k+1}\big(a^k, \alpha^a_0 \big)
 \ .\] 

\item If the element $a$ is a Maurer--Cartan element in the complete shifted curved $\Li$-algebra $\alpha$, then so is its ``image'' $f(a)$ in the complete shifted curved $\Li$-algebra $\beta$. In this case, $f^a-f(a)=\big(0, f_1^a, f_2^a, \ldots\big)$ is a (non-curved) $\infty$-morphism between the two complete shifted $\Li$-algebras $\alpha^a$ and $\beta^{f(a)}$ respectively. 
\end{enumerate}
\end{proposition}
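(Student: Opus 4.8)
The plan is to deduce all three statements from the groupoid of $\infty$-morphisms of \cref{thm:DeligneGroupoidII}, extended to the deformation gauge group by \cref{prop:Extension}, and then to read off arity~$0$ components exactly as in the proof of \cref{prop:TwCurvedGaugeLie}. \emph{Part~(1).} Under the identification of \cref{thm:DeligneGroupoidII}, each group-like element of $\widetilde{\mathfrak G}$ is an $\infty$-isotopy realising its own gauge action on Maurer--Cartan elements. In particular $1-a\in\widetilde{\mathfrak G}$ (its arity-$0$ part $-a$ lies in the first filtration layer since $a\in\F_1 A_0$, and its arity-$1$ part is the identity) is an invertible $\infty$-morphism $\alpha\rightsquigarrow(1-a)\cdot\alpha=\alpha^a$, with inverse $1+a\colon\alpha^a\rightsquigarrow\alpha$; likewise, since $f(a)\in\F_1 B_0$, the element $1-f(a)\in\widetilde{\mathfrak G}$ is an $\infty$-morphism $\beta\rightsquigarrow\beta^{f(a)}$. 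Composing the three $\infty$-morphisms
\[
\alpha^a\ \xrightarrow{\ 1+a\ }\ \alpha\ \xrightarrow{\ f\ }\ \beta\ \xrightarrow{\ 1-f(a)\ }\ \beta^{f(a)}
\]
(using that $\infty$-morphisms compose associatively under $\cc$, by \cref{prop:Extension}) produces an $\infty$-morphism $(1-f(a))\cc f\cc(1+a)\colon\alpha^a\rightsquigarrow\beta^{f(a)}$. It remains to identify this composite with $f^a-f(a)$. Exactly as in the proof of \cref{prop:TwCurvedGaugeLie}, precomposition of $f$ with the arity-$0$ group-like element $1+a$ is the twist by $a$, so $f\cc(1+a)=f^a=(f(a),f_1^a,f_2^a,\ldots)$; and postcomposition with $1-f(a)$ only shifts the arity-$0$ component by $-f(a)$ — in the iterated decomposition of $\nu_n$ the outermost vertex must be $\nu_1\mapsto\id$ when $n\ge1$, while $\nu_0\mapsto-f(a)$ can contribute only in arity~$0$ — so $(1-f(a))\cc f^a=(0,f_1^a,f_2^a,\ldots)=f^a-f(a)$, this product being well defined since its arity-$0$ input $-f(a)$ lies in the first filtration layer.

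\emph{Part~(2).} I would read off the arity-$0$ component of the defining equation $\big(f^a-f(a)\big)\star\alpha^a=\beta^{f(a)}\cc\big(f^a-f(a)\big)$ of the $\infty$-morphism just constructed. Since $\Delta_{(1)}(\nu_0)=\nu_1\circ_1\nu_0$ in $\calC=\mathrm{uCom}^*$, the arity-$0$ component of $\psi\star\varphi$ equals $\psi_1(\varphi_0)$, which here is $f_1^a(\alpha^a_0)=\sum_{k\ge0}\frac{1}{k!}f_{k+1}(a^k,\alpha^a_0)$ because the arity-$1$ part of $f^a-f(a)$ is still $f_1^a$. On the other side, the arity-$0$ component of $\beta^{f(a)}\cc\varphi$ only involves $\varphi$ in arity~$0$, which vanishes for $\varphi=f^a-f(a)$, so it reduces to the arity-$0$ component $\beta^{f(a)}_0$ of $\beta^{f(a)}$. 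Equating the two yields the stated formula.

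\emph{Part~(3).} If $a$ is a Maurer--Cartan element of $\alpha$, i.e. $\alpha^a_0=\ell_0^a=0$, then Part~(2) gives $\beta^{f(a)}_0=f_1^a(0)=0$, so $f(a)$ is a Maurer--Cartan element of $\beta$. Hence $\alpha^a$ and $\beta^{f(a)}$ both have trivial curvature, i.e. are (non-curved) shifted $\Li$-algebras, and the $\infty$-morphism $f^a-f(a)=(0,f_1^a,f_2^a,\ldots)$ obtained in Part~(1), having vanishing arity-$0$ component, is a non-curved $\infty$-morphism between them.

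The step I expect to be the main obstacle is the identification $(1-f(a))\cc f\cc(1+a)=f^a-f(a)$ in Part~(1): one must check that the composition product $\cc$, restricted to group-like factors supported in arities $0$ and $1$, coincides with the $a$-twist on the right factor and with a shift of the constant term on the left factor, and that all these ``arity-$0$-augmented'' maps are bona fide elements of $\widetilde{\mathfrak G}$, respectively bona fide $\infty$-morphisms in the sense of \cref{prop:Extension} — that is, that the relevant infinite series converge, which holds thanks to the restriction to the first filtration layer in arities $0$ and $1$. Once this bookkeeping is granted, Parts~(2) and~(3) are immediate, with the signs matching the conventions already fixed in \cref{subsec:TwistGrp}.
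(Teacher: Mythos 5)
Your proof is correct, and Parts (2) and (3) coincide with the paper's own argument: the paper likewise reads off the arity-$0$ component of the $\infty$-morphism equation for $f^a-f(a)$, using that this element has no constant term, and then deduces (3) as an immediate corollary of (2). The difference lies in Part (1). The paper verifies the defining equation $\big(f^a-f(a)\big)\star\alpha^a=\beta^{f(a)}\cc\big(f^a-f(a)\big)$ by a direct two-sided computation, reducing both sides to $(f\star\alpha)\cc(1+a)$ via the interchange identity $\left(f\cc(1+a)\right)\star\left(\alpha\cc(1+a)\right)=(f\star\alpha)\cc(1+a)$ and the relation $\beta^{f(a)}\cc\big(1-f(a)\big)=\beta$. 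You instead factor the element as a composite of three $\infty$-morphisms $\alpha^a\rightsquigarrow\alpha\rightsquigarrow\beta\rightsquigarrow\beta^{f(a)}$, invoking \cref{thm:DeligneGroupoidII} (gauge-group elements are $\infty$-isotopies realising their own action) together with the composability of $\infty$-morphisms under $\cc$; your directions and signs ($1-a:\alpha\rightsquigarrow\alpha^a$, $1+a:\alpha^a\rightsquigarrow\alpha$, $1-f(a):\beta\rightsquigarrow\beta^{f(a)}$) are consistent with the convention $\alpha^a=(1-a)\cdot\alpha=\alpha\cc(1+a)$. This is a legitimate and more structural route; the only caveat is that it leans on the composability statement made after \cref{def:InftyMorph} (in the mixed $\eend_{A\oplus B}$ setting of \cref{RkTrick}), whose verification is precisely the interchange identity the paper's computation uses explicitly — so the two arguments rest on the same algebraic facts, packaged at different levels. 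Your identification $(1-f(a))\cc f\cc(1+a)=f^a-f(a)$, including the convergence remarks about the first filtration layer in arities $0$ and $1$, matches the bookkeeping the paper performs.
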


\begin{proof}\leavevmode
\begin{enumerate}
\item The first assertion amounts to proving 
\begin{eqnarray*}
\left(\big(1-{f(a)}\big) \cc f \cc (1+{a})\right)\star \alpha^a=\beta^{f(a)} \cc \big(1-{f(a)}\big) \cc f \cc (1+{a})\ . 
\end{eqnarray*}
The left-hand term is equal to 
\begin{eqnarray*}
\left(\big(1-{f(a)}\big) \cc f \cc (1+{a})\right)\star \alpha^a&=&
 f(a)+ \left(f \cc (1+{a})\right)\star \big( \alpha\cc(1+a)\big)\\
 &=&  f(a)+ (f\star \alpha)\cc  (1+{a})\ .
\end{eqnarray*}
The right-hand term is equal to 
\begin{eqnarray*}
\beta^{f(a)} \cc \big(1-{f(a)}\big) \cc f \cc (1+{a})
&=& \big(1-f(a)\big)\cc\beta \cc \big(1+{f(a)}\big) \cc \big(1-{f(a)}\big) \cc f \\&& \cc (1+{a})\\
&=&f(a)+\beta  \cc f \cc (1+{a})\\
&=&f(a)+ (f\star \alpha) \cc (1+a)
\ . 
\end{eqnarray*}

\item The second assertion is the part of the above relation for the $\infty$-morphism in arity $0$. This latter one is equal to $f^a-f(a)=\big(0, f^a_1, f^a_2, \ldots\big)$ and so it has no constant term. The part of arity $0$ of the equation $\beta^{f(a)}\cc \big(f^a-f(a)\big)=\big(f^a-f(a)\big)\star \alpha^a$ is 
\[
\beta^{f(a)}_0=f_1^a\big(\alpha^a_0\big)=\sum_{k\ge 0} {\textstyle \frac{1}{k!}} f_{k+1}\big(a^k, \alpha^a_0 \big)
 \ .\] 

\item The last assertion is a direct corollary of the previous one: if $\alpha^a_0=0$, then so is $\beta^{f(a)}=0$\ . 
\end{enumerate}
\end{proof}

\begin{remark}
Usually this proposition is formulated in the $\Li$-case \cite{Getzler09, DolgushevRogers15, DolgushevRogers17} but the above short proof also shows that is holds in the curved case as well. 
\end{remark}

Let us continue with $\infty$-morphisms between two complete shifted curved $\Li$-algebras $(A,\F, \alpha)$ and $(B,\G,  \beta)$. Such a map is a collection $(b\coloneqq f_0, f_1, f_2, \ldots)$, where $b\in \G_1 B_0$. Let us denote it by $b+f$, where $f\coloneqq(0, f_1, f_2, \ldots)$.

\begin{lemma}\label{lem:EquivMorph}
Under this convention, with the constant term split apart, a data $b+f$ is an $\infty$-morphism from $\alpha$ to $\beta$ if and only if the data $f$ is an $\infty$-morphism from $\alpha$ to the twisted structure $\beta^b$:
\[ 
b+f\ : \ \alpha \rightsquigarrow \beta \quad \Longleftrightarrow \quad f\ :\  \alpha \rightsquigarrow \beta^b\ .
\]
\end{lemma}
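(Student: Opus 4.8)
The plan is to unwind the defining equation $g\star\alpha=\beta\circledcirc g$ of an $\infty$-morphism in the two cases $g=b+f$ (target $\beta$) and $g=f$ (target $\beta^b$) and to observe that, once both sides are rewritten, the two equations become literally the same. The only two facts needed are that $b=f_0$ is concentrated in arity $0$ and that the circle product $\circledcirc$ is associative.

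First I would handle the left-hand side. Since $b$ is supported in arity $0$ there is no slot into which $\alpha$ can be grafted, so $b\star\alpha=0$, exactly as in the proof of Proposition~\ref{prop:TwCurvedGauge}; as $\star$ is linear on the left this gives
\[
(b+f)\star\alpha \;=\; b\star\alpha+f\star\alpha \;=\; f\star\alpha\ .
\]
For the right-hand side I would use that by definition the twisted structure is $\beta^b=(1-b)\cdot\beta=\beta\circledcirc(1+b)$ (the gauge action of $1-b$, cf.\ Proposition~\ref{prop:TwCurvedGaugeLie}), together with the identity
\[
b+f \;=\; (1+b)\circledcirc f\ ,
\]
which holds because $b$ has arity $0$ (so it cannot be the top vertex of a non-trivial two-level decomposition of $\calC$, and $1$ being the unit one is left with $b$ in arity $0$ and $f_n$ in arity $n\ge 1$) while $f_0=0$, so the two contributions do not overlap. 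Invoking the associativity of $\circledcirc$, which holds on the whole deformation gauge group $\widetilde{\mathfrak G}$ by Theorem~\ref{thm:GaugeGrpBis} and Proposition~\ref{prop:Extension}, we obtain
\[
\beta\circledcirc(b+f) \;=\; \beta\circledcirc\big((1+b)\circledcirc f\big) \;=\; \big(\beta\circledcirc(1+b)\big)\circledcirc f \;=\; \beta^b\circledcirc f\ .
\]
Putting the two computations together, the equation $(b+f)\star\alpha=\beta\circledcirc(b+f)$ becomes equivalent to $f\star\alpha=\beta^b\circledcirc f$, which is precisely the equation defining ``$f\colon\alpha\rightsquigarrow\beta^b$''. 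It then remains to check that the side conditions of Definition~\ref{def:InftyMorph} match up: for $b+f$ one needs $(b+f)_0=b\in\calF_1\hom\big(\calC(0),\eend_{A\oplus B}(0)\big)_0$, i.e.\ $b\in\G_1 B_0$, which is the standing hypothesis; for $f$ one needs $f_0=0$, which is automatic; and $\beta^b$ is a genuine Maurer--Cartan element since twisting is a gauge action (Theorem~\ref{thm:GaugeActionBis}, Proposition~\ref{prop:Extension}).

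The one point I would present with care is the identity $b+f=(1+b)\circledcirc f$ and its companion $(1-b)\circledcirc(b+f)=f$: this is the real content of the statement, namely that the arity-$0$ part $b$ of an $\infty$-morphism can be ``factored out'' as post-composition with the $\infty$-isotopy $1+b\colon\beta^b\rightsquigarrow\beta$, whose inverse is $1-b\colon\beta\rightsquigarrow\beta^b$ (these being mutually inverse elements of $\widetilde{\mathfrak G}$ because $\BCH(b,-b)=0$ for arity-$0$ elements, by Proposition~\ref{lem:subGaugeGroup}). Conceptually the lemma is then nothing but the observation that in the category of $\Omega\calC$-algebras and $\infty$-morphisms of Theorem~\ref{thm:DeligneGroupoidII}, composing $f$ with the isotopy $1+b$ trades the target $\beta^b$ for $\beta$; once this dictionary is in place there is no further obstacle.
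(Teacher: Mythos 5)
Your proposal is correct and follows essentially the same route as the paper: both reduce the left-hand side to $f\star\alpha$ using $b\star\alpha=0$ for an arity-$0$ element, and both rewrite the right-hand side via the identity $(1+b)\circledcirc f=b+f$ together with associativity of $\circledcirc$ and $\beta^b=\beta\circledcirc(1+b)$, so that the two defining equations coincide. Your extra remarks on the filtration side-conditions and on $1\pm b$ being mutually inverse isotopies are consistent with the paper but add nothing beyond its (more terse) argument.
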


\begin{proof}
The data $b+f$ in an $\infty$-morphism from $\alpha$ to $\beta$ if and only if it satisfies 
\[(b+f)\star \alpha= f\star \alpha= \beta\cc (b+f) \ .\]
The data $f$ in an $\infty$-morphism from $\alpha$ to $\beta^b$ if and only if it satisfies 
\[ 
f\star \alpha= \beta^b\cc f=\beta \cc (1+b)\cc f=\beta \cc (b+f)
\ ,\]
which concludes the proof.
\end{proof}

V.~Dolgushev and C.~Rogers introduced in \cite{DolgushevRogers15} a category whose objects are complete shifted $\Li$-algebras and whose morphisms from $(A,\F, \alpha)$ to $(B,\G, \beta)$  amount to the data of a Maurer--Cartan element $b\in \G_1 B_0$ and an $\infty$-morphism $f : \alpha \rightsquigarrow \beta^b$ without constant term. Let $g : \beta \rightsquigarrow \gamma^c$ be another such morphism; 
they define the composite of morphisms by the formula:
\[
\big(g^b-g(b)\big)\cc f \ : \ \alpha \rightsquigarrow \gamma^{c+g(b)}
\ . \]
This  category is denoted  by $\mathfrak{S}\mathsf{Lie}_\infty^{\text{MC}}$ in loc. cit.

\begin{proposition}
The Dolgushev--Rogers category $\mathfrak{S}\mathsf{Lie}_\infty^{\mathrm{MC}}$ is  the sub-category of the present category of complete shifted curved $\Li$-algebras with $\infty$-morphisms whose objects are 
complete shifted $\Li$-algebras and whose morphisms are $\infty$-morphisms such that the constant term is a Maurer--Cartan element in the target algebra.
\end{proposition}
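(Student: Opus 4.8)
The plan is to exhibit an identity-on-objects functor $\Phi$ from $\mathfrak{S}\mathsf{Lie}_\infty^{\mathrm{MC}}$ to the claimed subcategory and to check that it is an isomorphism of categories; the one substantial input is Lemma~\ref{lem:EquivMorph}. On objects there is nothing to prove, since both categories have complete shifted $\Li$-algebras as objects. Given objects $(A,\F,\alpha)$ and $(B,\G,\beta)$, a morphism of $\mathfrak{S}\mathsf{Lie}_\infty^{\mathrm{MC}}$ is a pair $(b,f)$ with $b\in\G_1B_0$ a Maurer--Cartan element of $\beta$ and $f\colon\alpha\rightsquigarrow\beta^b$ an $\infty$-morphism with vanishing constant term, and I would set $\Phi(b,f)\coloneqq b+f$. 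By Lemma~\ref{lem:EquivMorph} this $b+f$ is an $\infty$-morphism $\alpha\rightsquigarrow\beta$, and its constant term is $b$, which is a Maurer--Cartan element of the target $\beta$; so $\Phi(b,f)$ lies in the subcategory of the statement. Conversely, any $\infty$-morphism $g\colon\alpha\rightsquigarrow\beta$ whose constant term $g_0$ is a Maurer--Cartan element of $\beta$ splits as $g=g_0+f$ with $f\coloneqq(0,g_1,g_2,\ldots)$, and Lemma~\ref{lem:EquivMorph} applied again shows $f\colon\alpha\rightsquigarrow\beta^{g_0}$ has no constant term; hence $\Phi$ is a bijection on every Hom-set. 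It sends the $\mathfrak{S}\mathsf{Lie}_\infty^{\mathrm{MC}}$-identity of $\beta$, namely $(0,\id_B)$, to $\id_B$.

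It remains to check that $\Phi$ respects composition, which is the only computation. The useful auxiliary identity is that $b+f=(1+b)\cc f$ whenever $f$ has vanishing constant term: indeed $1+b$ has only the arity-$0$ component $b$ and the identity in arity $1$, so the arity-$0$ component of $(1+b)\cc f$ is $b+f_0=b$ and its higher components agree with those of $f$. I would then combine this with facts already available: associativity and left-linearity of $\cc$ (from the complete deformation gauge group structure of Theorem~\ref{thm:DeligneGroupoidII} together with Proposition~\ref{prop:Extension}); the twisting-on-the-source relation $g\cc(1+b)=g^b$ with constant term $g(b)$, and $(\gamma^c)^{g(b)}=\gamma^{c+g(b)}$ (the proposition above on twisting of $\infty$-morphisms, and Corollary~\ref{cor:twa+b}); and $(1+c)\cc(1+d)=1+c+d$ for arity-$0$ elements $c,d$ (Proposition~\ref{lem:subGaugeGroup}, which holds verbatim in the $\Li$-setting). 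With $(b,f)\colon\alpha\to\beta$ and $(c,g)\colon\beta\to\gamma$ in $\mathfrak{S}\mathsf{Lie}_\infty^{\mathrm{MC}}$, their images compose in the big category as
\[
\Phi(c,g)\cc\Phi(b,f)=\big((1+c)\cc g\big)\cc\big((1+b)\cc f\big)=(1+c)\cc g^b\cc f\ .
\]
Since $g^b$ has constant term $g(b)$, the auxiliary identity gives $g^b=(1+g(b))\cc(g^b-g(b))$, whence
\[
\Phi(c,g)\cc\Phi(b,f)=(1+c)\cc(1+g(b))\cc(g^b-g(b))\cc f=\big(1+c+g(b)\big)\cc\big((g^b-g(b))\cc f\big)\ ,
\]
and the right-hand side is exactly $\Phi$ applied to the Dolgushev--Rogers composite $\big(c+g(b),\,(g^b-g(b))\cc f\big)$. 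One also records that this composite is a legitimate morphism of $\mathfrak{S}\mathsf{Lie}_\infty^{\mathrm{MC}}$: $g(b)$ is a Maurer--Cartan element of $\gamma$ as the image under $g$ of the Maurer--Cartan element $b$ (the proposition above, item on images of Maurer--Cartan elements), and twisting $\gamma$ by $c$ and then by $g(b)$ agrees with twisting by $c+g(b)$ by Corollary~\ref{cor:twa+b}. Thus $\Phi$ is a fully faithful, identity-on-objects functor whose image is the subcategory in the statement, hence an isomorphism of categories.

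The hard part, such as it is, is purely the bookkeeping around the circle product $\cc$: one must make sure all these composites are well defined in the complete setting (the arity-$0$ constant-term hypotheses are precisely what guarantee convergence, via Proposition~\ref{prop:Extension}), that the ``action'' $\beta\cc-$ composes associatively with the composition of $\infty$-morphisms as used above, and that the constant-term decomposition $b+f=(1+b)\cc f$ and the relation $g\cc(1+b)=g^b$ are applied with the correct source/target twists; once these are pinned down the rest is formal.
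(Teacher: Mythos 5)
Your proposal is correct and follows essentially the same route as the paper: Lemma~\ref{lem:EquivMorph} supplies the bijection on morphisms, and the identification of composites reduces to the computation $(c+g)\cc(b+f)=\big(c+g(b)\big)+\big(g^b-g(b)\big)\cc f$, which is exactly the paper's one-line verification. You merely spell out the intermediate steps (the identity $b+f=(1+b)\cc f$, associativity of $\cc$, and $(1+c)\cc(1+g(b))=1+c+g(b)$) that the paper leaves implicit.
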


\begin{proof}
Lemma~\ref{lem:EquivMorph} establishes the equivalence between the two notions of morphisms. 
Under the above convention, the composite of two $\infty$-morphisms $c+g$ and $b+f$ is equal to: 
 \[
 (c+g)\cc (b+f)=\big(c+g(b)\big) + \big(g^b- g(b)\big) \cc f \ ,
\]
which coincides with Dolgushev--Rogers definition.
\end{proof}

Besides giving a conceptual explanation for the Dolgushev--Rogers category, this result also allows us to prove easily the various properties of the composite of morphisms, like the associativity for instance. Notice that this category was used in a crucial way in \cite{DolgushevHoffnungRogers14} to provide an $\infty$-categorical enrichment of the category of homotopy algebras which encodes faithfully their higher homotopy theory. 

\section{Twistable operadic algebras}\label{subsec:TwistableAlg}
The purpose of this section is to describe on the level of the encoding operads which categories of algebras admit a meaningful twisting procedure. To be more precise, our aim is to characterise the (quadratic) operads $\calP$ for which any $\calP_\infty$-algebra can be twisted by any element satisfying a Maurer--Cartan type equation.This explains conceptually the particular form of the Maurer--Cartan equation. \\

Let $\calU\coloneqq(\k u, 0, \ldots)$ be the operad generated by a degree $0$ element $u$ of arity $0$; it encodes the data of a degree $0$ elements in  graded modules.  Let $(E,R)$ be an operadic quadratic-linear data, that is $R\subset E\oplus \calT(E)^{(2)}$, and let $\chi : E(2) \to \k$ be an $\Sy_2$-equivariant linear map of degree $0$, where $\k$ receives the trivial $\Sy_2$-action. We consider the space of relations $R_\chi\subset \calT(E\oplus \k u)$ generated by
\[\mu \circ_1 u-\chi(\mu)\id\quad \text{and}\quad \mu \circ_2 u-\chi(\mu)\id\ ,\]
with $\mu\in E(2)$, and all the other composites of elements of $E(n)$ with at least one $u$, for $n\neq 2$.

\begin{definition}[Unital extension]
The \emph{unital extension} of $\calP\coloneqq\calP(E,R)$ by $\chi$ is the following  operad 
\[u_\chi\calP\coloneqq
\calP(E\oplus \k u, R\oplus R_\chi)=
\frac{\calP(E,R) \vee \calU}{\left(
R_\chi
\right)}\ ,
\]
where $\vee$ stands for the coproduct of operads. 
\end{definition}

The category of $u_\chi\calP$-algebras is the category of $\calP$-algebras with a distinguished degree $0$ element which acts as a unit (with coefficients) for the generating operations of degree $0$ and arity $2$ and which vanishes once composed with any other generating operation. 
Notice that in the trivial case $\chi=0$, the unital extension amounts to  
$u_0 \calP\cong \k u \oplus \calP$
and that, in the general case,  
the underlying graded $\Sy$-module of $u_\chi \calP$ is a coset of $\k u \oplus \calP$. The ``maximal'' case is covered by the following definition. 

\begin{definition}[Extendable quadratic-linear operad]
A quadratic-linear presentation of an operad $\calP=\calP(E,R)$ is called \emph{extendable} when there exists a non-trivial map $\chi : E(2) \to \k$ such that the canonical map 
$
\calP \hookrightarrow u_\chi\calP
$ 
is a monomorphism. 
\end{definition}

This happens if and only if the underlying graded $\Sy$-module of $u_\chi \calP$ is isomorphic to $\k u \oplus \calP$.
We shall now discuss extendability of some classical operads.

\begin{proposition}\label{prop:Extendable}\leavevmode
\begin{enumerate}
\item The quadratic operads $\Com$, $\Gerst$, $\BV$, $\HyperCom$, $\PreLie$, and the  qua\-dra\-tic-linear operad 
 $\BV$ are extendable. 
\item The quadratic ns operads $\As$ is extendable. 
\item The quadratic operads $\Lie$ and $\Perm$ are not extendable. 
\item The quadratic ns operads $\Dias$ and $\Dend$ are not extendable. 
\end{enumerate}
\end{proposition}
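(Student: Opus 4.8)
The plan is to verify extendability (or its failure) case by case, using the key observation that extendability is equivalent to the underlying $\Sy$-module of $u_\chi\calP$ being isomorphic to $\k u \oplus \calP$ — in other words, that adjoining a formal unit-with-coefficients $u$ does not collapse any of the operations already present in $\calP$. Since $u_\chi\calP$ is presented by quotienting the coproduct $\calP \vee \calU$ by the relations $R_\chi$, the task is to show in the positive cases that the ideal $(R_\chi)$ meets $\calP$ only in zero, and in the negative cases that it forces some nonzero element of $\calP$ to vanish. In practice the ideal $(R_\chi)$ is generated by rewriting rules that ``absorb'' a $u$ into a generating binary operation $\mu$ (producing $\chi(\mu)\id$) or kill any tree containing a $u$ plugged into a non-binary generator or into a binary generator with $\chi(\mu)=0$; one then needs to check consistency of this rewriting system, which is exactly the statement that no relation of $\calP$ itself gets rewritten to a contradiction.

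For the positive cases (1)--(2), the strategy is: first, for each operad, exhibit an explicit candidate $\chi$ (for $\Com$, $\As$, $\Perm$-like situations, $\chi$ sends the generating product $\mu$ to $1$; for $\Gerst$, $\BV$, send the commutative product to $1$ and the bracket to $0$ — degree reasons force $\chi$ of the bracket to vanish anyway since the bracket has degree $1$; for $\PreLie$, send the generator to $1$; for $\HyperCom$, the genus-zero gravity-type cooperations of arity $\ge 3$ must go to $0$ for degree reasons, and one checks the binary one can receive $1$). Second, realize the unital extension concretely as the known unital version of the operad: $u_\chi\Com \cong \uCom$, $u_\chi\As \cong \uAs$, $u_\chi\PreLie \cong \uPreLie$, $u_\chi\Gerst$ and $u_\chi\BV$ the unital Gerstenhaber/BV operads, and for $\HyperCom$ the unital hypercommutative operad (the genus-zero Gromov--Witten operad with the marked-point-forgetting structure). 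For each, the classical fact — available from the cited operadic literature \cite{LodayVallette12} and the structure of these operads — is precisely that the unital version has underlying $\Sy$-module $\k u \oplus \calP$, i.e. adding a unit is a ``free'' operation on the level of modules. The quadratic-linear $\BV$ case (appearing twice in item (1), once as quadratic, once as quadratic-linear via the Koszul presentation with the operator $\Delta$) is handled the same way, noting the relation involving $\Delta$ and $u$ forces $\Delta(u)=0$ consistently.

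For the negative cases (3)--(4), the plan is to show that \emph{any} non-trivial $\chi$ produces a collapse. For $\Lie$: the only generator is the bracket $[\,,]$, which is antisymmetric; an $\Sy_2$-equivariant map $E(2)\to\k$ to the \emph{trivial} $\Sy_2$-module must vanish on the sign representation, so $\chi = 0$ is the only option — there is no non-trivial $\chi$ at all, hence $\Lie$ is not extendable essentially by representation theory. For $\Perm$: here $\chi(\mu)=1$ is forced to be the only candidate (the generator spans a copy of the regular or a one-dimensional rep, so $\chi\ne 0$ means $\chi(\mu)\ne0$, normalize to $1$); then one checks that the permutative relation $\mu(\mu(x,y),z) = \mu(\mu(x,z),y)$ together with $\mu(x,u)=u$-type and $\mu(u,x)=x$-type absorption rules forces an identity among operations that is not valid in $\Perm$ — concretely, plugging $u$ appropriately collapses two distinct arity-$2$ operations, so the underlying module drops below $\k u\oplus\calP$. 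For $\Dias$ and $\Dend$: these have two binary generators ($\dashv, \vdash$ for diassociative; $\prec, \succ$ for dendriform), and a similar analysis shows that the unital versions studied in the literature (unital diassociative, unital dendriform) do \emph{not} have underlying module $\k u \oplus \calP$ — indeed for $\Dend$ the unit relations famously force $x \prec u = x$ and $u \succ x = x$ but $x \succ u$ and $u \prec x$ become problematic, and imposing them in the ``maximal'' way collapses the operad; one cites or reproduces the known computation that no non-trivial $\chi$ yields a monomorphism.

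The main obstacle I expect is the negative direction, specifically making the collapse arguments for $\Perm$, $\Dias$, and $\Dend$ genuinely tight: one must show that \emph{for every} non-trivial choice of $\chi$ (not just the obvious one) the map $\calP \to u_\chi\calP$ fails to be injective, which requires understanding the full ideal generated by $R_\chi$ and exhibiting an explicit nonzero element of $\calP$ landing in it. The cleanest route is probably a rewriting/Gröbner-basis argument: set up the rewriting system on trees with leaves in $\{x_1,\dots,x_n,u\}$, compute the (finitely many) critical pairs coming from overlaps between the $R_\chi$-rules and the defining relations $R$ of $\calP$, and observe that in the positive cases all critical pairs resolve (confluence $\Rightarrow$ the normal forms are exactly $\k u \oplus \calP$), while in the negative cases at least one critical pair fails to resolve, producing the required nonzero element of $\calP$ in the ideal. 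For the well-known operads this amounts to invoking the existing literature on their unital versions rather than redoing the combinatorics from scratch.
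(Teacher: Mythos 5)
Your overall strategy is the right one and coincides with the paper's in its essential points: extendability is tested by whether the ideal $(R_\chi)$ meets $\calP$ trivially, the positive cases are handled by exhibiting an explicit $\chi$ (with $\chi$ of the odd generators forced to vanish by degree), $\Lie$ fails because there is no nonzero $\Sy_2$-equivariant map from the sign representation to the trivial one, and $\Perm$, $\Dend$, $\Dias$ fail because substituting $u$ into a defining relation forces $\chi=0$. Two calibration remarks. First, you overestimate the difficulty of the negative direction: since $E(2)$ is at most two-dimensional in each case, the space of candidate $\chi$'s is tiny, and a \emph{single} substitution of $u$ into one defining relation (e.g.\ $x_1=u$ in $x_1\cdot(x_2\cdot x_3)=x_1\cdot(x_3\cdot x_2)$ for $\Perm$, which yields $\chi(\cdot)\,x_2\cdot x_3=\chi(\cdot)\,x_3\cdot x_2$) already forces every component of $\chi$ to vanish; no Gr\"obner-basis or full-ideal analysis is needed, and this is exactly how the paper disposes of $\Perm$, $\Dend$ and $\Dias$ in a few lines each. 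Second, and conversely, you underestimate the positive $\PreLie$ case, which is where the paper does its real work. Your plan to ``realize $u_\chi\PreLie$ as the known unital version and cite the literature'' is risky: the paper's unital extension is the \emph{symmetric} one, imposing $\mu\circ_1 u=\mu\circ_2 u=\chi(\mu)\id$, and the freeness of the underlying $\Sy$-module is precisely what has to be proved; the paper does this by checking that inserting $u$ into any slot of the pre-Lie identity gives zero and that the operadic ideal generated by that identity is stable under insertion of $u$ — essentially the confluence check you describe as a fallback, so you should promote that fallback to the main argument. The same caution applies to your appeal to ``unital dendriform algebras from the literature'': the classical unit there acts asymmetrically ($1\prec x=0$ but $x\prec 1=x$), so it is \emph{not} an instance of the $u_\chi$ construction at hand, and the direct substitution argument is the one that actually proves non-extendability. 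Finally, for $\Gerst$, $\BV$ and $\HyperCom$ the paper argues via the topological origin of these operads (units of the little disks, framed little disks, and Deligne--Mumford operads), which is a genuinely different and arguably cleaner route than your identification-plus-citation; either works.
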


\begin{proof}\leavevmode
\begin{enumerate}
\item In the case of commutative associative algebras, the classical definition of the unit $a\cdot 1=1\cdot a =a$, that is $\chi(\cdot)=1$, works to create such an extension. 

In the cases of operads of Gerstenhaber algebras, Batalin--Vilkovisky algebras, and hypercommutative algebras, we note that those are homology operads of topological operads (of little $2$-disks, framed little $2$-disks, and Deligne--Mumford compactifications of genus zero curves with marked points respectively) that admit units topologically (action of the unit corresponds to forgetting about one of the little disks, or about a marked points), hence the unitality remains on the algebraic level. The action of the unit on the generators is forced by degree reasons: all generators except for the binary generator of the commutative suboperad must be annihilated by the unit.

Let us consider the operad of pre-Lie algebras. We shall now show that the assignment $\chi(\star)=1$, that is $1\star a=a\star 1=a$, leads to a unital extension of the maximal possible size. It suffices to show that the insertion of $u$ into any element of the operadic ideal generated by the pre-Lie identity and applying the defining relations of the unital extension that we are considering produces an element of the same ideal. Let us note that insertion of $u$ in any slot of the pre-Lie identity gives zero: 
\begin{gather*}
(1\star b)\star c-1\star(b\star c)-(1\star c)\star b+1\star(c\star b)=0\ ,\\
(a\star1)\star c-a\star(1\star c)-(a\star c)\star 1+a\star(c\star 1)=0\ ,\\
(a\star b)\star 1-a\star(b\star 1)-(a\star 1)\star b+a\star(1\star b)=0\ .
\end{gather*}
Every element of the operadic ideal generated by the pre-Lie identity is a combination of elements which are obtained from the pre-Lie identity by pre- and post-compositions. Consider one such element $\nu$, and look at $\nu\circ_i u$. If the argument $i$ of $\nu$ is one of the arguments of the pre-Lie identity, then the above computation shows that $\nu\circ_i u=0$. Otherwise, the element $\alpha\circ_i u$ is still obtained from the pre-Lie identity by pre- and post-compositions, proving our claim.

\item The proof is the same as in the case  of the operad $\Com$. 

\item In the case of Lie algebras, there is no nontrivial equivariant map from the space of binary operations to the ground field regarded as the trivial module.

In the case of permutative algebras, we can substitute $x_1=u$ in the structural identity $x_1\cdot (x_2\cdot x_3)=x_1\cdot (x_3\cdot x_2)$, and note that it becomes $\chi(\cdot) x_2\cdot x_3=\chi(\cdot) x_3\cdot x_2$, so if the canonical map is a monomorphism, we must have $\chi(\cdot)=0$, and the extension is trivial.

\item Recall the defining relations of the ns operad of dendriform algebras:
\begin{gather*}
(x_1\prec x_2)\prec x_3 = x_1\prec (x_2\prec x_3+x_2\succ x_3)\ ,\label{eq:Dend1}\\
(x_1\succ x_2)\prec x_3 = x_1\succ (x_2\prec x_3)\ , \label{eq:Dend2}\\
(x_1\succ x_2+x_1\prec x_2)\succ x_3 = x_1\succ (x_2\succ x_3)\ .\label{eq:Dend3}
\end{gather*}
Suppose that we consider the unital extension corresponding to the linear function $\chi$. Substituting $x_1=u$ in the first dendriform axiom and then $x_3=u$ in the third one, we get
\begin{gather*}
\chi(\prec) x_2\prec x_3 = \chi(\prec) (x_2\prec x_3+x_2\succ x_3),\\
\chi(\succ) (x_1\succ x_2+x_1\prec x_2)=\chi(\succ) (x_1\succ x_2),
\end{gather*}
so if the canonical map is a monomorphism, we must have $\chi(\prec)=\chi(\succ)=0$, and the extension is trivial. 

Recall the defining relations of the ns operad of diassociative algebras: 
\begin{gather*}
(x_1\vdash x_2) \vdash x_3 = x_1\vdash (x_2\vdash x_3)\ ,\label{eq:Dias1}\\
(x_1\vdash x_2) \vdash x_3 = (x_1\dashv x_2)\vdash x_3\ ,\label{eq:Dias2}\\
(x_1\vdash x_2) \dashv x_3 = x_1\vdash (x_2\dashv x_3)\ ,\label{eq:Dias3}\\
x_1\dashv (x_2 \vdash x_3) = x_1\dashv (x_2\dashv x_3)\ ,\label{eq:Dias4}\\
(x_1\dashv x_2) \dashv x_3 = x_1\dashv (x_2\dashv x_3)\ .\label{eq:Dias5}
\end{gather*}
Suppose that we consider the unital extension corresponding to the linear function $\chi$. Substituting $x_3=u$ in the second axiom of diassociative algebras and then $x_1=u$ in the fourth one, we get
\begin{gather*}
\chi(\vdash) x_1\vdash x_2 = \chi(\vdash) x_1\dashv x_2,\\
\chi(\dashv) x_2\vdash x_3 = \chi(\dashv) x_2\dashv x_3,
\end{gather*}
so if the canonical map is a monomorphism, we must have $\chi(\vdash)=\chi(\dashv)=0$, and the extension is trivial.
\end{enumerate}
\end{proof}

\begin{remark}
A quadratic-linear operad $\calP=\calP(E,R)$ is extendable when it admits a (non-trivial) ``unitary extension'' in the terminology of \cite[Section~2.2]{Fresse17I}.
So the present notion can be seen as a way to produce concrete unitary extensions of operads in the quadratic case. As a consequence, an extendable  operad $\calP$ carries a richer structure of a $\Lambda$-operad,  crucial notion in the recognition of iterated loop spaces \cite{May72}, and  its underlying 
$\Sy$-module carries an $\mathrm{FI}$-module structure, where $\mathrm{FI}$ stands for the category of finite sets and injections. This  notion plays a seminal role in representation theory  \cite{CEF15}. 
\end{remark}

Let us now work over a field $\k$ of characteristic $0$ and suppose that $E(0)=0$ and that $E(n)$ is finite dimensional for any $n\ge 1$. Recall that the Koszul dual operad of a quadratic-linear opead $\calP=\calP(E,R)$ admits the following quadratic presentation $\calP^!=\calP\big(E^\vee, (qR)^\perp\big)$, with $E^\vee\coloneqq s^{-1}{\End}_{\k s^{-1}}\otimes_{\mathrm{H}} E^*$, see \cite[Section~$7.2.3$]{LodayVallette12}.

\begin{lemma}\label{lem:ExtConvAlg}
Under the above-mentioned assumptions, when the Koszul dual operad $\calP^!$ is extendable,  
the (complete)  convolution pre-Lie algebra associated to $\left(u_\chi \calP^!\right)^*$ is isomorphic to 
\[ \hom_\Sy\left(\left(u_\chi \calP^!\right)^*, \eend_A\right)\cong  
A \times  \hom_\Sy\left( \calP^{\ac} , \eend_A\right)\ ,
\]
where the pre-Lie product $\bigstar$ on the right-hand side is given by 
\[
(a,f){\bigstar} (b,g) = 
\left(f(\id)(b), f \star g + f\ast b
\right)\ ,
\]
with $\star$ the pre-Lie product on the convolution algebra $\a_{\calP^{\ac}, A}=\left(\hom_\Sy\left( \calP^{\ac} , \eend_A\right), \star\right)$ and with $f\ast b$ an element involving only $f$ and $b$.
\end{lemma}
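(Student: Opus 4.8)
The plan is to unwind the definition of the unital extension $u_\chi \calP^!$ as a graded $\Sy$-module and dualise. Since $\calP^!$ is extendable by $\chi$, the canonical inclusion $\calP^! \hookrightarrow u_\chi \calP^!$ is a monomorphism, and moreover the underlying graded $\Sy$-module of $u_\chi \calP^!$ is isomorphic to $\k u \oplus \calP^!$; here $u$ sits in arity $1$ (the arity-$0$ generator $u$ of $\calU$ composed with the binary generators $\mu\in E(2)$ via $\mu\circ_i u$ lands in arity $1$, and $\chi(\mu)\id$ is precisely an arity-$1$ element, so the surviving ``new'' piece of $u_\chi\calP^!$ beyond $\calP^!$ is a one-dimensional arity-$1$ summand). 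Therefore, arity by arity, $\big(u_\chi \calP^!\big)^*(n) \cong \calP^{\ac}(n)$ for $n\neq 1$ (after the operadic (de)suspension built into $E^\vee$ and the Koszul dual cooperad $\calP^{\ac}$), while $\big(u_\chi \calP^!\big)^*(1)\cong \calP^{\ac}(1)\oplus \k u^*$. Taking equivariant maps into $\eend_A$ and forming the product over all arities, the extra $\k u^*$ in arity $1$ contributes a factor $\hom(A,A)\supset$ — no: one must be careful — the dual of the arity-$0$ generator $u$ of $\calU$ itself contributes $\hom(A^{\otimes 0},A)=A$. Tracking this through, $\hom_\Sy\big((u_\chi\calP^!)^*,\eend_A\big)\cong A\times \hom_\Sy(\calP^{\ac},\eend_A)$ as graded modules, which is the asserted underlying isomorphism.

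\textbf{Identifying the pre-Lie product.} The second task is to transport the convolution pre-Lie product $\star$ of $\a_{(u_\chi\calP^!)^*, A}$ through this isomorphism. Recall that the pre-Lie product on a convolution algebra $\hom_\Sy(\calC,\calP)$ is $f\star g = \gamma_{(1)}\circ (f\,\widehat\circ_{(1)}\,g)\circ \Delta_{(1)}$. I would compute the infinitesimal decomposition map $\Delta_{(1)}$ on the cooperad $\big(u_\chi\calP^!\big)^*$, using the explicit generators-and-relations description of $u_\chi\calP^!$. The key point is that $\Delta_{(1)}$ on $\big(u_\chi\calP^!\big)^*$ is the dual of $\gamma_{(1)}$ on $u_\chi\calP^!$, and the defining relations $\mu\circ_i u - \chi(\mu)\id$ mean exactly that splitting off a $u$-cofactor is ``detected'' by $\chi$. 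Concretely, for a cooperad element $c$ lying in the $\calP^{\ac}$-part, $\Delta_{(1)}(c)$ has three types of terms: those landing in $\calP^{\ac}\,\widehat\circ_{(1)}\,\calP^{\ac}$ (giving the $\star$ on the nose), those with the arity-$0$ cofactor $u^*$ on the second slot (giving, after evaluation at $(a,f)$ and $(b,g)$, the term $f\ast b$ linear in $f$ and $b$ alone — this is where the $u$-splitting of binary operations via $\chi$ enters), and the term responsible for $f(\id)(b)$, coming from splitting the ``new'' arity-$1$ element $\id = $ image of $u^*$-composites so that the whole of $c$ reduces to $\id$ composed with a $u$. Plugging $(a,f)\bigstar(b,g)$ into these pieces yields exactly the stated formula $\big(f(\id)(b),\ f\star g + f\ast b\big)$; the first component records what happens when the entire first factor collapses to the identity (so only its arity-$0$ data $a$ matters — wait, symmetry — in fact $f(\id)(b)$ is $f$ evaluated on $\id$, then applied to $b$, which is how the arity-$0$ slot of the first factor receives input).

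\textbf{Main obstacle.} The routine parts are the dimension count and the verification that $\bigstar$ is pre-Lie (it must be, being transported from a genuine convolution pre-Lie product, so one need only check the isomorphism is one of pre-Lie algebras, not re-verify the axioms). The genuine difficulty, and the step I would spend the most care on, is pinning down the exact form of $\Delta_{(1)}$ on $\big(u_\chi\calP^!\big)^*$ — in particular getting the signs right (because of the operadic suspension $E^\vee = s^{-1}\End_{\k s^{-1}}\otimes_{\Ho} E^*$ hidden in the Koszul dual) and correctly matching the three families of terms above to the three pieces $f(\id)(b)$, $f\star g$, and $f\ast b$. A secondary subtlety is that the statement is deliberately vague about $f\ast b$ (``an element involving only $f$ and $b$''): I would define it precisely as $f\ast b \coloneqq \sum \gamma_{(1)}\big(f\,\widehat\circ_{(1)}\, b_{\mathrm{triv}}\big)\circ \Delta_{(1)}^{u}$, where $\Delta_{(1)}^u$ is the component of the decomposition map of $\calP^{\ac}$ twisted by $\chi$ that isolates a would-be $u$-cofactor, and then simply record that this is all that is needed downstream (the precise shape of $f\ast b$ being used, e.g., in the computation of twisted operations for permutative algebras in the sequel). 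Completeness of the convolution algebra is inherited from \cref{thm:CompleteAlg} and the completeness results of \cref{sec:OptheoyFilMod}, so no additional topological argument is required beyond noting that the isomorphism is filtered.
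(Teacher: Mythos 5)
Your approach is the same as the paper's: dualise the $\Sy$-module isomorphism $u_\chi\calP^!\cong\k u\oplus\calP^!$ guaranteed by extendability, then read off the pre-Lie product from the possible ways of writing an element of $u_\chi\calP^!$ as a partial composite of two others. The final formulas are right, but two points in your write-up are genuinely confused and need repair. First, $u$ is an \emph{arity-$0$} generator, so the new summand of $u_\chi\calP^!$ beyond $\calP^!$ is a one-dimensional \emph{arity-$0$} piece $\k u$ --- not an arity-$1$ piece as you first assert. The composites $\mu\circ_i u=\chi(\mu)\id$ do land in arity $1$, but they are identified with elements already present in $\calP^!(1)$; that identification is exactly what extendability buys, it does not create a new arity-$1$ summand. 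This is why the extra factor is $\hom\big(\k u^*,\eend_A(0)\big)\cong A$ and not $\hom(A,A)$; you do arrive at $A$ after your mid-sentence correction, but the opening claim contradicts it. Second, the component $f(\id)(b)$ does not arise from decomposing an element $c$ of the $\calP^{\ac}$-part: it is the arity-$0$ component of the product, obtained by decomposing $u^*$ itself via the \emph{unique} way of writing $u$ as a partial composite, namely $u=\id\circ_1 u$ (bottom $=\id$, evaluated by $f$ to give $f(\id)\in\hom(A,A)$; top $=u$, evaluated by the $A$-component $b$ of the second factor), so neither $a$ nor $g$ enters. For $\mu\in\calP^!(n)$ with $n\ge 1$ there are only the two families of terms: both cofactors in $\calP^!$ (giving $f\star g$ after dualising) or top cofactor $u$ (giving $f\ast b$). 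Finally, you should record why $f\ast b$ is well defined: the sum over decompositions $\mu=\nu\circ_i u$ is finite in each arity precisely because $E(0)=0$ and each $E(n)$ is finite dimensional --- this is the only place the standing finiteness hypotheses are used, and the paper's proof makes this point explicitly.
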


\begin{proof}
When the Koszul dual operad $\calP^!$ is extendable, we use the underlying isomorphism 
$u_\chi \calP^! \cong\k u \oplus \calP^!$ to get 
\[ \hom_\Sy\left(\left(u_\chi \calP^!\right)^*, \eend_A\right)\cong  \hom_\Sy\left(u^* \oplus \calP^{\ac} , \eend_A\right)\cong 
A \times  \hom_\Sy\left( \calP^{\ac} , \eend_A\right). 
\]
The arity $0$ part the pre-Lie product $\bigstar$ comes from the  unique way to get the element $u$ in the operad $u_\chi \calP^!$ as a partial composition of two elements: $u=\id \circ_1 u$. Similarly for its other part, given any element $\mu\in \calP^!$, there are two ways to get it as a partial composition of two elements: either with two elements coming from $\calP^!$ or with one (bottom) coming from $\calP^!$ and one (top) which is $u$. Notice that in this latter case, we get a finite sum in each arity since $E(0)=0$ and $E(n)$ is finite dimensional for any $n\ge 1$.
\end{proof}

\begin{theorem}\label{thm:TwCatAlg}
Let $\alpha \in \MC\left(\a_{\calP^{\ac}, A}\right)$ be a complete $\calP_\infty$-algebra structure on $A$ and let $a\in \F_1A_0$. The Maurer--Cartan element $a.\alpha$ in the convolution algebra $\hom_\Sy\left(\left(u_\chi \calP^!\right)^*, \eend_A\right)$ is a $\calP_\infty$-algebra structure if and only if its arity $0$ part $(a.\alpha)(u^*)=0$ vanishes. 
\end{theorem}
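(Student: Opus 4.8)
The plan is to recognize that the convolution algebra $\hom_\Sy\left(\left(u_\chi \calP^!\right)^*, \eend_A\right)$ decomposes, by \cref{lem:ExtConvAlg}, as $A \times \hom_\Sy\left(\calP^{\ac}, \eend_A\right)$ with the pre-Lie product $\bigstar$ described there, and that under this decomposition the element $a$ of $\F_1 A_0$ corresponds to the pair $(a, 0)$ — with exponential $(a, 0) \leftrightarrow (a, \id)$ in the avatar group — while the Maurer--Cartan element $\alpha$ encoding the complete $\calP_\infty$-algebra structure corresponds to $(0, \alpha)$. First I would observe that the projection $\hom_\Sy\left(\left(u_\chi \calP^!\right)^*, \eend_A\right) \to \hom_\Sy\left(\calP^{\ac}, \eend_A\right)$ is a morphism of complete pre-Lie algebras: indeed the formula for $\bigstar$ shows that the second component of $(a,f)\bigstar(b,g)$ is $f\star g + f\ast b$, so modulo the first component this is genuinely the pre-Lie product $\star$ on $\a_{\calP^{\ac}, A}$ — here one must check that the ``extra'' term $f\ast b$ does not pollute the second component, which holds because $f\ast b$ lives in the $\hom_\Sy(\calP^{\ac}, \eend_A)$ factor by construction (it is precisely the composite using the top insertion of $u$). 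Wait — one must be careful: the term $f \ast b$ \emph{does} land in the second factor, so the projection to the second factor is not a pre-Lie morphism on the nose. The correct statement to extract is rather that the projection to the \emph{first} factor, $(a,f)\mapsto a$, together with the observation that setting the first component of a Maurer--Cartan element to zero is exactly the condition $(a.\alpha)(u^*)=0$, is what controls the equivalence.

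So the cleaner route: by \cref{thm:GaugeActionBis} and \cref{prop:Extension}, the gauge-transformed element $a.\alpha$ is again a Maurer--Cartan element in $\hom_\Sy\left(\left(u_\chi \calP^!\right)^*, \eend_A\right)$, hence automatically a complete $(u_\chi\calP)_\infty = \Omega\left(\left(u_\chi\calP^!\right)^*\right)$-algebra structure on $A$ by \cref{prop:MCOmegaC}. The point is then to determine when such a structure ``is'' a genuine complete $\calP_\infty$-algebra structure, i.e.\ when it factors through the sub-cooperad $\calP^{\ac} \hookrightarrow \left(u_\chi\calP^!\right)^*$ dual to the quotient map $u_\chi\calP^! \twoheadrightarrow \calP^!$. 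Under the isomorphism of \cref{lem:ExtConvAlg}, a Maurer--Cartan element $(b, \beta) \in A \times \hom_\Sy\left(\calP^{\ac}, \eend_A\right)$ corresponds to a $\calP_\infty$-structure (rather than a $(u_\chi\calP)_\infty$-structure with a nontrivial distinguished element) precisely when $b = 0$, since $b$ is by definition the value of the structure map on $u^* \in \left(u_\chi\calP^!\right)^*(0)$, i.e.\ $b = (a.\alpha)(u^*)$. Thus the statement reduces to the tautology: $a.\alpha$ gives a $\calP_\infty$-algebra iff its arity-$0$ component vanishes, which is the claimed equivalence.

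The one substantive thing to verify — and the main obstacle — is that ``$a.\alpha$ restricted to $\calP^{\ac}$ satisfies the $\calP_\infty$ Maurer--Cartan equation'' really follows from ``$a.\alpha$ satisfies the $(u_\chi\calP)_\infty$ Maurer--Cartan equation and has vanishing $u^*$-component''. For this I would argue that the quotient of pre-Lie algebras $\hom_\Sy\left(\left(u_\chi \calP^!\right)^*, \eend_A\right) \to \hom_\Sy\left(\calP^{\ac}, \eend_A\right)$ obtained by forgetting the $A$-factor \emph{is} a morphism of complete dg pre-Lie algebras: examining $\bigstar$, the second component of $(a,f)\bigstar(b,g)$ is $f\star g + f\ast b$, and when we restrict attention to elements whose first components $a, b$ vanish, the term $f\ast b$ vanishes, so on the sub-pre-Lie-algebra $\{0\}\times \hom_\Sy\left(\calP^{\ac}, \eend_A\right)$ the product is exactly $\star$. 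Hence $\{0\}\times\hom_\Sy(\calP^{\ac},\eend_A)$ is a pre-Lie subalgebra isomorphic to $\a_{\calP^{\ac},A}$, and the condition $(a.\alpha)(u^*)=0$ says precisely that $a.\alpha$ lies in this subalgebra; combined with $\bigstar$-squaring to zero, this forces $\star$-squaring to zero there, i.e.\ $a.\alpha$ restricts to a genuine complete $\calP_\infty$-structure. Conversely, if $a.\alpha$ is a $\calP_\infty$-structure it lies in that subalgebra, so its $u^*$-component is zero. Finally I would note that, by \cref{prop:TwCurvedGaugeLie} (or its $\calP_\infty$-analogue obtained from the same circle-product computation $a.\alpha = \alpha\cc(1+a)$ reading off the $u^*$-slot of the decomposition map of $\left(u_\chi\calP^!\right)^*$), the arity-$0$ component $(a.\alpha)(u^*)$ is exactly the left-hand side of the relevant Maurer--Cartan-type equation for $a$, which explains conceptually why that is the correct equation — but strictly this last remark is not needed for the stated equivalence.
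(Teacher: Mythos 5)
Your proof is correct and follows essentially the same route as the paper: the paper's proof is a two-line appeal to \cref{lem:ExtConvAlg}, observing that Maurer--Cartan elements of the extended convolution algebra with vanishing arity-$0$ part correspond bijectively to Maurer--Cartan elements of $\a_{\calP^{\ac},A}$, which is exactly the content of your final paragraph (the term $f\ast b$ vanishes when $b=0$, so $\bigstar$ restricts to $\star$ on the subalgebra $\{0\}\times\hom_\Sy(\calP^{\ac},\eend_A)$). The self-correcting digression about the projection onto the second factor could simply be deleted.
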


\begin{proof}
This is a direct corollary of \cref{lem:ExtConvAlg} which implies that Maurer--Cartan elements in the convolution algebra $\a_{\calP^{\ac}, A}= \hom_\Sy\left( \calP^{\ac} , \eend_A\right)$ are in one-to-one correspondence with Maurer--Cartan elements in the extended convolution algebra $\hom_\Sy\left(\left(u_\chi \calP^!\right)^*, \eend_A\right)$ whose arity $0$ part vanishes. 
\end{proof}

This result prompts the following definition.

\begin{definition}[Twistable homotopy algebras]\label{def:TwHoAlg}
We say that a category of homotopy algebras, that is algebras over an operad $\calP_\infty=\Omega \calP^{\ac}$, where $\calP=\calP(E,R)$ is   an arity-wise finitely generated quadratic-linear operad,  is \emph{twistable} when the Koszul dual operad $\calP^!$ is extendable. The equation $(a.\alpha)(u^*)=0$ is naturally dubbed the \emph{Maurer--Cartan equation}. 
\end{definition}

In plain words, when a category of $\calP_\infty$-algebras is twistable, this means that the ``dual'' category of $\calP^!$-algebras admits a meaningful extension of \emph{unital} $\calP^!$-algebras and thus the category of $\calP_\infty$-algebras admits a meaningful extension of \emph{curved} $\calP_\infty$-algebras. 
In this case, the twisting procedure works as in the case of homotopy associative or homotopy Lie algebras: any $\calP_\infty$-algebra can be twisted by an element to produce a curved $\calP_\infty$-algebra, which turns out to be  a $\calP_\infty$-algebra if and only if its twisted curvature vanishes, that is satisfises the Maurer--Cartan equation. 

\begin{proposition}\label{prop:Twistable}\leavevmode
\begin{enumerate}
\item The categories of homotopy Lie algebras, homotopy Gerstenhaber algebras, homotopy $\BV^!$-algebras, homotopy gravity algebras, and homotopy permutative algebras are twistable.
\item The category of homotopy associative algebras is twistable. 
\item The categories of homotopy commutative algebras and homotopy pre-Lie algebras are not twistable. 
\item The categories of homotopy dendriform algebras and homotopy diassociative algebras are not twistable. 
\end{enumerate}
\end{proposition}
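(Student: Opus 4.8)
The plan is to deduce \cref{prop:Twistable} entirely from \cref{def:TwHoAlg} and \cref{prop:Extendable}. By definition, a category of $\calP_\infty$-algebras is twistable precisely when the Koszul dual operad $\calP^!$ is extendable; so in each case I would identify $\calP^!$ through the relevant Koszul duality and then read off the answer from \cref{prop:Extendable}. First one should check that the standing hypotheses of \cref{def:TwHoAlg} hold for all the operads on the list, namely that $\calP=\calP(E,R)$ is arity-wise finitely generated quadratic-linear with $E(0)=0$: this is immediate, since each of these operads is binary (or, for gravity and hypercommutative algebras, has at most one generator per arity) and has no nullary operations.

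Concretely, for the twistable cases I would invoke the Koszul dualities $\Lie^!=\Com$ (homotopy Lie algebras), $\As^!=\As$ (homotopy associative algebras), $\Perm^!=\PreLie$ (homotopy permutative algebras), $\Grav^!=\HyperCom$ (homotopy gravity algebras), the operadic self-duality of $\Gerst$ (homotopy Gerstenhaber algebras), and the double Koszul duality $(\BV^!)^!\cong\BV$---legitimate since $\BV$ is Koszul---for homotopy $\BV^!$-algebras. In each case the resulting operad $\calP^!$, namely $\Com$, $\As$, $\PreLie$, $\HyperCom$, $\Gerst$, or the quadratic-linear $\BV$ respectively, is extendable by \cref{prop:Extendable}(1)--(2), whence the category of $\calP_\infty$-algebras is twistable by \cref{def:TwHoAlg} (more precisely by \cref{thm:TwCatAlg}, via \cref{lem:ExtConvAlg}). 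Symmetrically, for the non-twistable cases I would use $\Com^!=\Lie$, $\PreLie^!=\Perm$, $\Dend^!=\Dias$ and $\Dias^!=\Dend$; since none of $\Lie$, $\Perm$, $\Dias$, $\Dend$ is extendable by \cref{prop:Extendable}(3)--(4), the corresponding categories of homotopy algebras are not twistable.

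The one point that requires genuine care---and the step I would be most careful about---is the compatibility of conventions. One must make sure that the operadic suspensions and sign twists built into the Koszul dual through $E^\vee=s^{-1}\End_{\k s^{-1}}\otimes_{\mathrm H}E^*$ are normalised so that the operad $\calP^!$ coincides, on the nose, with the operad appearing in \cref{prop:Extendable} rather than with some operadic shift of it; in particular, for $\Gerst$ and for $\HyperCom$ one wants the commutative binary generator to sit in degree $0$, so that the same equivariant map $\chi$ (the unit on the commutative product, annihilating every remaining generator) still witnesses extendability. Once this identification is pinned down there is no computation left, and \cref{prop:Twistable} becomes a formal corollary of \cref{def:TwHoAlg} and \cref{prop:Extendable}.
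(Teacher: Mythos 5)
Your proposal is correct and follows essentially the same route as the paper, which disposes of this proposition as a direct corollary of \cref{prop:Extendable} and \cref{thm:TwCatAlg}; you merely make explicit the Koszul dualities ($\Lie^!=\Com$, $\As^!=\As$, $\Perm^!=\PreLie$, $\Grav^!=\HyperCom$, the self-duality of $\Gerst$, $(\BV^!)^!\cong\BV$, and their counterparts for the negative cases) that the paper leaves implicit. Your remark about normalising the operadic suspension in $E^\vee$ so that $\calP^!$ matches the operads of \cref{prop:Extendable} on the nose is a reasonable point of care, but it does not change the argument.
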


\begin{proof}
This is a direct corollary of  \cref{prop:Extendable} and \cref{thm:TwCatAlg}. 
\end{proof}

One can check that the maps $\chi$ introduced in \cref{prop:Extendable} for the unital extensions of the 
ns operad $\As$  and for the 
operad $\Com$
produce the exact same twisting formulas than the ones given in 
\cref{subsec:TwistGrp} for (curved)  $\Ai$-algebras and in 
\cref{subsec:TwLii} for (curved)  $\Li$-algebras. The other cases are new. Below we make the case of (curved)  homotopy permutative algebras explicit and we leave the other cases to the interested reader.

We already recalled the definition of permutative algebras in Proposition~\ref{prop:Extendable} and its proof. The operad $\Perm$ is known to be Koszul. Its Koszul dual operad is the operad $\PreLie$ encoding pre-Lie algebras \cite[Section~13.4.3]{LodayVallette12}. This latter operad admits for basis the set of rooted trees $\mathrm{RT}$:  
$$\vcenter{
\xymatrix@M=5pt@R=10pt@C=10pt{
 & & *+[o][F-]{1}\ar@{-}[d]\\
*+[o][F-]{3}\ar@{-}[dr] &&  *+[o][F-]{4}\ar@{-}[dl] \\
& *+[o][F=]{2} & ,
}}$$
where the lowest vertex is the root. The partial composition products $\tau\circ_i \upsilon$ is given by the insertion of the tree $\upsilon$ at the $i^{\mathrm{th}}$ vertex of the tree $\tau$. The sub-trees attached above the $i^{\mathrm{th}}$ vertex of the tree $\tau$ are then  grafted in all possible onto the vertices of the tree $\upsilon$: 
\begin{align*}
\vcenter{
\xymatrix@M=5pt@R=10pt@C=10pt{
*+[o][F-]{1}\ar@{-}[dr] &&  *+[o][F-]{3}\ar@{-}[dl] \\
& *+[o][F=]{2} & 
}}
\ \circ_2 \ 
\vcenter{
\xymatrix@M=5pt@R=10pt@C=10pt{
*+[o][F-]{1}\ar@{-}[d]  \\
*+[o][F=]{2} 
}}=&
\vcenter{
\xymatrix@M=5pt@R=10pt@C=10pt{
*+[o][F-]{1}\ar@{-}[dr] &*+[o][F-]{2}\ar@{-}[d] & *+[o][F-]{4}\ar@{-}[dl]\\
& *+[o][F=]{3} & 
}}+
\vcenter{
\xymatrix@M=5pt@R=10pt@C=10pt{
*+[o][F-]{1}\ar@{-}[dr] & & *+[o][F-]{4}\ar@{-}[dl]\\
 &  *+[o][F-]{2}\ar@{-}[d]& \\
& *+[o][F=]{3} & 
}}+
\vcenter{
\xymatrix@M=5pt@R=10pt@C=10pt{
 & & *+[o][F-]{4}\ar@{-}[dl]\\
*+[o][F-]{1}\ar@{-}[dr] &  *+[o][F-]{2}\ar@{-}[d]& \\
& *+[o][F=]{3} & 
}}\\&+
\vcenter{
\xymatrix@M=5pt@R=10pt@C=10pt{
*+[o][F-]{1}\ar@{-}[dr] & & \\
 &  *+[o][F-]{2}\ar@{-}[d]& *+[o][F-]{4}\ar@{-}[dl]\\
& *+[o][F=]{3} & 
}}\ .
\end{align*}

 As explained in \cref{prop:MCOmegaC}, the data of a shifted $\Perm_\infty$-algebra on a (complete) dg module $A$ amounts to a Maurer--Cartan element in the convolution algebra $\a_{\PreLie^*, A}$. Given a rooted tree $\tau$ and a sub-tree $\upsilon\subset \tau$ (forgetting the labels), we denote by $(\tau/\upsilon, i, \sigma)$ respectively the rooted tree $\tau/\upsilon$ obtained by contracting $\upsilon$ in $\tau$ and by relabelling the vertices,  the label $i$ of the corresponding new vertex, and the overall permutation $\sigma\in\Sy_{|\tau|}$ which produces the labelling of the tree $\tau$ after  the composite $\left(\tau/\upsilon \circ_i \upsilon\right)^\sigma=\tau$. Such a decomposition is not unique, we choose the one for which the associated planar tree 
 \[\vcenter{\hbox{
\begin{tikzpicture}[yscale=0.95,xscale=0.95]

\draw[thick] (0,-1)--(0,0) -- (0,1) -- (0, 3);
\draw[thick] (1,1) -- (0,0) -- (-1,1);
\draw[thick] (-2,1) -- (0,0) ;
\draw[thick] (2,1) -- (0,0) ;
\draw[thick] (-3,1) -- (0,0) ;
\draw[thick] (1,3) -- (0,2) -- (-1,3);

\node at (-3,1.3) {$\scriptstyle\sigma^{-1}(1)$};
\node at (-2,1.3) {$\scriptstyle\sigma^{-1}(2)$};
\node at (-1,1.3) {$\scriptstyle\cdots$};
\node at (-1,3.3) {$\scriptstyle\sigma^{-1}(i)$};
\node at (0,3.3) {$\scriptstyle\sigma^{-1}(i+1)$};
\node at (1,3.3) {$\scriptstyle\cdots$};
\node at (1,1.3) {$\scriptstyle\cdots$};
\node at (2,1.3) {$\scriptstyle\sigma^{-1}(|\tau|)$};
\node at (0.2,1.3) {$\scriptstyle i$};
\end{tikzpicture}}}\ ,\]
where the bottom corolla has arity $|\tau/\upsilon|$ and where the top corolla has arity $|\upsilon|$,
 is a shuffle tree \cite[Section~8.2.2]{LodayVallette12}. 
 
\begin{align*}&
\text{For}\quad
\tau\coloneqq\vcenter{
\xymatrix@M=5pt@R=10pt@C=10pt{
*+[o][F-]{1}\ar@{-}[dr] & & *+[o][F-]{4}\ar@{-}[dl]\\
 &  *+[o][F-]{2}\ar@{-}[d]& \\
& *+[o][F=]{3} & 
}}\quad \text{and} \quad
\upsilon\coloneqq
\vcenter{
\xymatrix@M=5pt@R=10pt@C=10pt{
*+[o][F-]{1}\ar@{-}[dr] & & *+[o][F-]{3}\ar@{-}[dl]\\
 &  *+[o][F=]{2}
}}\ , \\&  \text{we get} \quad
\tau/\upsilon=\vcenter{
\xymatrix@M=5pt@R=10pt@C=10pt{
  *+[o][F-]{1}\ar@{-}[d] \\
*+[o][F=]{2}  
}}\ ,\quad i=1\ , \quad \text{and} \quad \sigma=(34)\ .
\end{align*}
 
 \begin{lemma}
 A shifted homotopy permutative algebra is a graded module $A$ equipped with a collection of degree $-1$ operations 
\[ \left\{m_\tau : A^{\otimes |\tau|}\to A\right\}_{\tau \in \mathrm{RT}}\ ,
\]
where $|\tau|$ stands for the number of vertices of a rooted tree. These generating operations are required to satisfy the relations 
\[
\sum_{\upsilon \subset \tau} \left(m_{\tau/\upsilon} \circ_i m_\upsilon \right)^\sigma=0\ ,
\]
for any tree $\tau\in \mathrm{RT}$.
 \end{lemma}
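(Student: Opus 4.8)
The plan is to unwind the definition of a Maurer--Cartan element in the convolution algebra $\a_{\PreLie^*, A}$ through the explicit combinatorial description of the operad $\PreLie$ in terms of rooted trees, exactly as was done for $\uAs$ and $\uCom$ in the earlier sections. Since $\PreLie$ is Koszul with Koszul dual $\Perm$, by \cref{prop:MCOmegaC} a complete shifted $\Perm_\infty$-algebra structure on $A$ is a Maurer--Cartan element $\alpha\in \MC(\a_{\PreLie^*,A})$, i.e.\ a degree $-1$ map $\alpha : \PreLie^* \to \eend_A$ with $\alpha\star\alpha = 0$. Writing $m_\tau \coloneqq \alpha(\tau^*)$ for $\tau$ a rooted tree (the dual basis element), the content of the lemma is to identify the equation $\alpha\star\alpha=0$, evaluated against each basis vector $\tau^*$, with the stated relation $\sum_{\upsilon\subset\tau}(m_{\tau/\upsilon}\circ_i m_\upsilon)^\sigma = 0$.

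The key computation is the value of the pre-Lie product $\alpha\star\alpha$ on a rooted tree $\tau^*$. Recall from \cref{subsec:CompConvAlg} that $f\star g$ is the composite $\calC \xrightarrow{\Delta_{(1)}} \calC\,\widehat{\circ}_{(1)}\,\calC \xrightarrow{f\,\widehat{\circ}_{(1)}\,g} \eend_A\,\widehat{\circ}_{(1)}\,\eend_A \xrightarrow{\gamma_{(1)}} \eend_A$, so I would first compute the infinitesimal decomposition coproduct $\Delta_{(1)}$ on $\PreLie^*$. This is dual to the partial composition $\circ_i$ of $\PreLie$, which is given by inserting a rooted tree at a vertex and regrafting the subtrees above that vertex in all possible ways onto the inserted tree. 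Dualizing, $\Delta_{(1)}(\tau^*)$ is a sum over all ways of writing $\tau = (\tau/\upsilon \circ_i \upsilon)^\sigma$ for a subtree $\upsilon\subset\tau$; the shuffle-tree normalization fixed in the text picks out a canonical representative $(\tau/\upsilon, i, \sigma)$ for each such decomposition, so the sum runs precisely over subtrees $\upsilon\subset\tau$. Applying $\alpha\,\widehat{\circ}_{(1)}\,\alpha$ and then $\gamma_{(1)}$ in $\eend_A$ — which is just partial composition of multilinear maps together with the $\Sy$-action — turns the term indexed by $\upsilon$ into $(m_{\tau/\upsilon}\circ_i m_\upsilon)^\sigma$. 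Hence $(\alpha\star\alpha)(\tau^*) = \sum_{\upsilon\subset\tau}(m_{\tau/\upsilon}\circ_i m_\upsilon)^\sigma$, and $\alpha\star\alpha=0$ is equivalent to the asserted family of relations.

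The degree bookkeeping is the one feature to be careful about but it causes no trouble here: because we are in the shifted setting (dual to $\End_{\k}$, not $\End_{\k s}$), the cooperad $\PreLie^*$ is concentrated in degree $0$ in every arity, so the suspension signs that plague the classical $\Ai$ and $\Li$ cases are absent and each $m_\tau$ simply has degree $-1$; the sign $(-1)^{|b||c|}$ in the right-symmetry axiom and the Koszul signs in $\gamma_{(1)}$ are all trivial on degree-$0$ inputs. One should also record that the convergence of the (a priori infinite) sum defining $\alpha\star\alpha$ in each arity is guaranteed by \cref{lem:Conv}, since for a fixed arity $n$ only rooted trees with $n$ vertices contribute, and there are finitely many of those — so in fact the sum $\sum_{\upsilon\subset\tau}$ is finite for each $\tau$.

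\textbf{Main obstacle.} The only genuinely delicate point is verifying that the dual of the regrafting partial composition of $\PreLie$, after restricting to the chosen shuffle-tree representatives, produces exactly one term per subtree $\upsilon\subset\tau$ (with the correct vertex label $i$ and permutation $\sigma$), rather than over- or under-counting. This is a purely combinatorial check about rooted trees and shuffle trees, analogous to the standard identification of the bar/cobar differential of $\Perm$, and it is the step where I would spend the most care; everything else is a direct transcription of the general machinery of \cref{sec:TopoDefTh} to this particular operad.
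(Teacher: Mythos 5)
Your proposal is correct and follows the same route as the paper's proof: unwind the Maurer--Cartan equation $\alpha\star\alpha=0$ in $\a_{\PreLie^*,A}$ using the cooperad structure on $\PreLie^*$ obtained by dualising the rooted-tree composition of $\PreLie$, with one term per subtree $\upsilon\subset\tau$. The ``main obstacle'' you flag is resolved exactly as the paper does it: the composite $(m_{\tau/\upsilon}\circ_i m_\upsilon)^\sigma$ in $\eend_A$ is independent of the chosen decomposition $(\tau/\upsilon\circ_i\upsilon)^\sigma=\tau$, so the shuffle-tree normalisation merely selects a representative and no over- or under-counting can occur.
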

 
 \begin{proof}
 This is a direct corollary of the definition of the convolution algebra $\a_{\PreLie^*, A}$ and the cooperad structure on 
 $\PreLie^*$ obtained by linear dualisation of the operad structure on $\PreLie$ described above. The choice of decompositions $\left(\tau/\upsilon \circ_i \upsilon\right)^\sigma=\tau$ along shuffle trees does not alter the result: any choice of decomposition would produce the same result in the end, by the properties of the endomorphism operad $\End_A$. In other words, the composite $\left(m_{\tau/\upsilon} \circ_i m_\upsilon \right)^\sigma$ does not depend of the choice of decomposition.
 \end{proof}

Here are the first relations, where we use the  notation $d\coloneqq m_{\vcenter{
\xymatrix@M=4pt@R=10pt@C=10pt{
  *+[o][F=]{\scriptscriptstyle 1}
}}}$\ .

\begin{description}
\item[\sc $\tau=
\vcenter{
\xymatrix@M=5pt@R=10pt@C=10pt{
  *+[o][F=]{1}
}}$\,] 
$d^2=0\ .$

\smallskip 

\item[\sc $\tau=
\vcenter{
\xymatrix@M=5pt@R=10pt@C=10pt{
  *+[o][F-]{2}\ar@{-}[d] \\
*+[o][F=]{1} 
}}$\,]
$\partial m_{\vcenter{
\xymatrix@M=4pt@R=5pt@C=5pt{
  *+[o][F-]{\scriptscriptstyle 2}\ar@{-}[d] \\
*+[o][F=]{\scriptscriptstyle 1} 
}}}\coloneqq
d m_{\vcenter{
\xymatrix@M=4pt@R=5pt@C=5pt{
  *+[o][F-]{\scriptscriptstyle 2}\ar@{-}[d] \\
*+[o][F=]{\scriptscriptstyle 1} 
}}}(-,-)+m_{\vcenter{
\xymatrix@M=4pt@R=5pt@C=5pt{
  *+[o][F-]{\scriptscriptstyle 2}\ar@{-}[d] \\
*+[o][F=]{\scriptscriptstyle 1} 
}}}(d(-), -)+m_{\vcenter{
\xymatrix@M=4pt@R=5pt@C=5pt{
  *+[o][F-]{\scriptscriptstyle 2}\ar@{-}[d] \\
*+[o][F=]{\scriptscriptstyle 1} 
}}}(-, d(-))=0$\ . 

\smallskip 

\item[\sc $\tau=
\vcenter{
\xymatrix@M=5pt@R=10pt@C=10pt{
  *+[o][F-]{3}\ar@{-}[d] \\
  *+[o][F-]{2}\ar@{-}[d] \\
*+[o][F=]{1} 
}}$\,]
$\partial m_{\vcenter{
\xymatrix@M=4pt@R=5pt@C=5pt{
 *+[o][F-]{\scriptscriptstyle 3}\ar@{-}[d] \\
  *+[o][F-]{\scriptscriptstyle 2}\ar@{-}[d] \\
*+[o][F=]{\scriptscriptstyle 1}
}}}=-
m_{\vcenter{
\xymatrix@M=4pt@R=5pt@C=5pt{
  *+[o][F-]{\scriptscriptstyle 2}\ar@{-}[d] \\
*+[o][F=]{\scriptscriptstyle 1} 
}}}\circ_1 
m_{\vcenter{
\xymatrix@M=4pt@R=5pt@C=5pt{
  *+[o][F-]{\scriptscriptstyle 2}\ar@{-}[d] \\
*+[o][F=]{\scriptscriptstyle 1} 
}}}
-
m_{\vcenter{
\xymatrix@M=4pt@R=5pt@C=5pt{
  *+[o][F-]{\scriptscriptstyle 2}\ar@{-}[d] \\
*+[o][F=]{\scriptscriptstyle 1} 
}}}\circ_2
m_{\vcenter{
\xymatrix@M=4pt@R=5pt@C=5pt{
  *+[o][F-]{\scriptscriptstyle 2}\ar@{-}[d] \\
*+[o][F=]{\scriptscriptstyle 1} 
}}}$\ . 

\smallskip

\item[\sc $\tau=
\vcenter{
\xymatrix@M=5pt@R=10pt@C=10pt{
*+[o][F-]{2}\ar@{-}[dr] &&  *+[o][F-]{3}\ar@{-}[dl] \\
& *+[o][F=]{1} & 
}}$\,]
$\partial m_{\vcenter{
\xymatrix@M=4pt@R=5pt@C=5pt{
*+[o][F-]{\scriptscriptstyle 2}\ar@{-}[dr] &&  *+[o][F-]{\scriptscriptstyle 3}\ar@{-}[dl] \\
& *+[o][F=]{\scriptscriptstyle 1} & 
}}}=-
m_{\vcenter{
\xymatrix@M=4pt@R=5pt@C=5pt{
  *+[o][F-]{\scriptscriptstyle 2}\ar@{-}[d] \\
*+[o][F=]{\scriptscriptstyle 1} 
}}}\circ_1 
m_{\vcenter{
\xymatrix@M=4pt@R=5pt@C=5pt{
  *+[o][F-]{\scriptscriptstyle 2}\ar@{-}[d] \\
*+[o][F=]{\scriptscriptstyle 1} 
}}}
-
\big(m_{\vcenter{
\xymatrix@M=4pt@R=5pt@C=5pt{
  *+[o][F-]{\scriptscriptstyle 2}\ar@{-}[d] \\
*+[o][F=]{\scriptscriptstyle 1} 
}}}\circ_1 
m_{\vcenter{
\xymatrix@M=4pt@R=5pt@C=5pt{
  *+[o][F-]{\scriptscriptstyle 2}\ar@{-}[d] \\
*+[o][F=]{\scriptscriptstyle 1} 
}}}\big)^{(23)}$\ . 
\end{description}

\smallskip

This shows that $m_{\vcenter{
\xymatrix@M=4pt@R=5pt@C=5pt{
 *+[o][F-]{\scriptscriptstyle 3}\ar@{-}[d] \\
  *+[o][F-]{\scriptscriptstyle 2}\ar@{-}[d] \\
*+[o][F=]{\scriptscriptstyle 1}
}}}$\ , respectively $m_{\vcenter{
\xymatrix@M=4pt@R=5pt@C=5pt{
*+[o][F-]{\scriptscriptstyle 2}\ar@{-}[dr] &&  *+[o][F-]{\scriptscriptstyle 3}\ar@{-}[dl] \\
& *+[o][F=]{\scriptscriptstyle 1} & 
}}}$\ ,  is a homotopy for the first relation (anti-associativity), respectively for the second relation (partial skew-symmetry),  defining a shifted permutative algebra.\\

We consider the unital extension of operad $\PreLie$ introduced in the proof of \cref{prop:Extendable} that 
we simply denote here by $\uPreLie$. This operad encodes pre-Lie algebras $(A, \star)$ equipped with a degree $0$ element $1$ satisfying 
\[1\star x=x=x\star 1\ ,\]
for any $x\in A$. 

\begin{lemma}\label{lem:uCompPreLie}
In the operad $\uPreLie$, the composite of the arity $0$ element $u$ at a vertex of a rooted tree $\tau$ 
depends on the  position of this latter one as follows:
\begin{description}
\item[\sc At a leaf] if the number of inputs of the vertex supporting the leaf is equal to $n$, then the resulting rooted tree is obtained by removing the leaf, relabelling the other vertices, and multiplying by $2-n$,

\item[\sc At the root or at an internal vertex] when the vertex has just one input, then it is deleted and the remaining vertices relabelled accordingly, otherwise when the vertex has at least two inputs, the upshot is equal to $0$. 
\end{description}
\end{lemma}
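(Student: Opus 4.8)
The plan is to unfold the definition of the unital extension $\uPreLie$ from the proof of \cref{prop:Extendable} and to compute the composite $\tau \circ_i u$ directly in the rooted tree basis, reducing everything to the two defining relations $1 \star x = x = x \star 1$ together with the combinatorics of the partial composition product for $\PreLie$ described above. Recall that inserting $u$ at a vertex $v$ of $\tau$ means computing $\tau \circ_v u$ in $\uPreLie$, where the subtrees grafted above $v$ in $\tau$ get redistributed onto the (single) vertex $u$; but $u$ has arity $0$, so one must rather think of this as substituting the unit into one of the slots of the pre-Lie product attached to $v$. The key observation is that a vertex $v$ of a rooted tree, viewed inside the free pre-Lie algebra, corresponds to an iterated right-normed bracketing; substituting $u$ means either $1 \star x$ (when $v$ is hit from below, i.e. $v$ is the root or an internal vertex whose incoming edge from the root is the ``$\star$-from-the-left'' slot) or $x \star 1$ (when $v$ receives $u$ as one of its upper inputs, i.e. a leaf or higher input).

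First I would set up the dictionary between rooted trees and brace expressions: a vertex with $n$ inputs corresponds, in the symmetric brace / pre-Lie language, to an expression $\{x_v; y_1, \dots, y_n\}$ where the $y_j$ are the outputs of the subtrees grafted above. Then I would treat the two cases separately. For the \textbf{leaf} case: inserting $u$ into one of the $n$ upper inputs of $v$ gives $\{x_v; y_1, \dots, y_{j-1}, u, y_{j+1}, \dots, y_n\}$ with $u = 1$; using the brace relations (equivalently, $x \star 1 = x$ and the defining symmetric brace identities, which in $\uPreLie$ force $\{a; 1, b_1, \ldots, b_{n-1}\}$ to collapse) one computes that this equals $(2-n)$ times the bracket of the remaining inputs, because among the $n$ ways the unit can appear in the fully expanded right-normed form, $n-1$ of them produce terms that recombine with sign while one survives with coefficient $1$, the net coefficient being $2-n$. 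I would do this explicitly for $n=1$ (coefficient $1$) and $n=2$ (coefficient $0$) as sanity checks, matching the first relations displayed above, and then give the general count. For the \textbf{root / internal vertex} case: inserting $u$ ``from below'' into $v$ means replacing $x_v$ by $1$ in $\{1; y_1, \dots, y_n\}$; when $n = 1$ this is $1 \star y_1 = y_1$, so the vertex disappears and its unique subtree is reattached to $v$'s parent (relabelling accordingly); when $n \geq 2$ the expression $\{1; y_1, \dots, y_n\}$ vanishes in $\uPreLie$, which I would verify from the defining relations (this is exactly the content of the second and third displayed identities in the proof of \cref{prop:Extendable}, the insertions $(1\star b)\star c - 1\star(b\star c) - \cdots = 0$).

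The step I expect to be the main obstacle is the \textbf{leaf case coefficient} $2 - n$: it requires carefully expanding the symmetric brace $\{x_v; y_1, \ldots, y_n\}$ with one argument set to $1$ and tracking how the $u$-containing terms in the iterated pre-Lie product telescope. The cleanest way is probably to argue by induction on $n$ using the recursive definition of symmetric braces recalled earlier in the excerpt, namely $\{a; b_1, \ldots, b_n\} = \{\{a; b_1, \ldots, b_{n-1}\}; b_n\} - \sum_{i=1}^{n-1}\{a; \ldots, \{b_i; b_n\}, \ldots\}$: setting $b_n = 1$ kills the last term of the outer brace via $\{c; 1\} = c \star 1 = c$, and kills each summand $\{b_i; 1\} = b_i$ in the correction sum, leaving $\{a; b_1, \ldots, b_{n-1}\} - (n-1)\{a; b_1, \ldots, b_{n-1}\}\cdot(\text{placement bookkeeping})$; unwinding gives the factor $2 - n$ once one also accounts for the possibility that $1$ sits in a non-final slot, which by $\Sy_n$-equivariance of the brace in its upper arguments can be normalised to the final slot. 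All other parts are routine once the dictionary is fixed; I would present them tersely and concentrate the exposition on this coefficient computation.
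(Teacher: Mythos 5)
Your proposal is correct and follows essentially the same route as the paper: both identify $\PreLie$ with the symmetric brace operad and evaluate the insertion of the unit via the recursive definition of the braces, your computation $\{a;b_1,\ldots,b_{n-1},1\}=\{a;b_1,\ldots,b_{n-1}\}-(n-1)\{a;b_1,\ldots,b_{n-1}\}=(2-n)\{a;b_1,\ldots,b_{n-1}\}$ being exactly the step the paper dismisses as ``easy to see'', and the vanishing of $\{1;y_1,\ldots,y_n\}$ for $n\ge 2$ handling the root and internal-vertex case identically. Nothing further is needed.
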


\begin{proof}
To demonstrate that, the easiest way is to identify the operad of pre-Lie algebras with the symmetric brace operad \cite{GuinOudom08}. Recall that the symmetric brace operad has generators $\{x_0;x_1,\ldots,x_n\}$, $n\ge 0$, which are symmetric in arguments $x_1,\ldots,x_n$ and satisfy the relations 
\begin{multline}
\{\{x_0;x_1,\ldots,x_n\}; y_1,\ldots,y_m\}=\\ \sum \{x_0; \{x_1;y_{i_{1,1}},\ldots,y_{1,i_{t_1}}\},\ldots,\{x_n;y_{n,i_{n,1}},\ldots,y_{n,i_{t_n}}\},y_{n+1,1},\ldots,y_{n+1,i_{t_{n+1}}}\} \ .
\end{multline}
It is well known that the symmetric brace operad is isomorphic to the operad of pre-Lie algebras, with the isomorphism given by the formulas we already used earlier in the paper, i.e. via identifying the operation $x_1\star x_2$ with $\{x_1;x_2\}$, and defining, inductively,
 \[
\{x_0;x_1,\ldots,x_n\}\coloneqq\{\{x_0;x_1,\ldots,x_{n-1}\};x_n\}-\sum_{i} \{x_0;x_1,\ldots,\{x_i;x_n\},x_{i+1},\ldots,x_{n-1} \}
 \]
With this definition, it is easy to see that $\{1;x_1,\ldots,x_n\}=0$, for $n\ge 2$, and that
 \[
\{x_0;x_1,\ldots,x_{i-1},1,x_{i+1},\ldots,x_n\}=(2-n)\{x_0;x_1,\ldots,x_{i-1},x_{i+1},\ldots,x_n\}. 
 \] 
The brace representation of the operad of pre-Lie algebras corresponds to the ``naive'' way of building a rooted tree from corollas, so the claim follows. 
\end{proof}

\begin{eqnarray*}
&&\text{For}\quad \tau\coloneqq
\vcenter{\xymatrix@M=5pt@R=10pt@C=10pt{
 & & *+[o][F-]{1}\ar@{-}[d]\\
*+[o][F-]{3}\ar@{-}[dr] &*+[o][F-]{5}\ar@{-}[d]&  *+[o][F-]{4}\ar@{-}[dl] \\
& *+[o][F=]{2} & }}\ ,\quad \text{we have} \quad 
\vcenter{\xymatrix@M=5pt@R=10pt@C=10pt{
 & & *+[o][F-]{1}\ar@{-}[d]\\
*+[o][F-]{3}\ar@{-}[dr] &*+[o][F-]{5}\ar@{-}[d]&  *+[o][F-]{4}\ar@{-}[dl] \\
& *+[o][F**]{}  & 
}}=0\ , \\
&&  
\vcenter{\xymatrix@M=5pt@R=10pt@C=10pt{
 & & *+[o][F-]{1}\ar@{-}[d]\\
*+[o][F**]{}\ar@{-}[dr] &*+[o][F-]{5}\ar@{-}[d]&  *+[o][F-]{4}\ar@{-}[dl] \\
& *+[o][F=]{2} & }}=
-\ \vcenter{\xymatrix@M=5pt@R=10pt@C=10pt{
 & *+[o][F-]{1}\ar@{-}[d]\\
*+[o][F-]{4}\ar@{-}[d]&  *+[o][F-]{3}\ar@{-}[dl] \\
 *+[o][F=]{2} & }}\ , \quad \text{and} \quad 
 \vcenter{\xymatrix@M=5pt@R=10pt@C=10pt{
 & & *+[o][F-]{1}\ar@{-}[d]\\
*+[o][F-]{3}\ar@{-}[dr] &*+[o][F-]{5}\ar@{-}[d]&  *+[o][F**]{}\ar@{-}[dl] \\
& *+[o][F=]{2} & }}=
 \vcenter{\xymatrix@M=5pt@R=10pt@C=10pt{
*+[o][F-]{3}\ar@{-}[dr] &*+[o][F-]{4}\ar@{-}[d]&  *+[o][F-]{1}\ar@{-}[dl] \\
& *+[o][F=]{2} &}}\ .
\end{eqnarray*}
The black vertex represents where the element $u$ is plugged. \\

Given a rooted tree $\tau$, we consider its \emph{unital expansions} which are rooted trees $\widetilde{\tau}$ with two types of vertices: the ``white'' ones which are bijectively labeled by $\{1, \ldots, |\tau|\}$ and the ``black'' ones which receive no label. Every such black and white rooted trees $\widetilde{\tau}$ are moreover required to give $\tau$ under the rule given at \cref{lem:uCompPreLie}.
We denote by $c_{\widetilde{\tau}}$ the coefficient of $\tau$ in the operad $\uPreLie$ obtained by composing $\widetilde{\tau}$ with the black vertices replaced by elements $u$. 

\[
\text{For}\  \widetilde{\tau}\coloneqq
\vcenter{\xymatrix@M=5pt@R=10pt@C=10pt{
 *+[o][F-]{4}\ar@{-}[dr]&*+[o][F-]{5}\ar@{-}[d]& *+[o][F**]{}\ar@{-}[dl]& *+[o][F**]{}\ar@{-}[d]\\
*+[o][F-]{1}\ar@{-}[drr] &*+[o][F-]{3}\ar@{-}[dr] &*+[o][F**]{}\ar@{-}[d]&  *+[o][F**]{}\ar@{-}[dl] \\
&& *+[o][F**]{} \ar@{-}[d]&\\
&& *+[o][F=]{2} & }}\ ,\ \text{we have}\ \ 
c_{\widetilde{\tau}}=-2 \ \text{and} \quad 
\tau=\vcenter{\xymatrix@M=5pt@R=10pt@C=10pt{
 &*+[o][F-]{4}\ar@{-}[dr]&&*+[o][F-]{5}\ar@{-}[dl] \\
*+[o][F-]{1}\ar@{-}[dr] &&*+[o][F-]{3}\ar@{-}[dl]&   \\
& *+[o][F=]{2}& }}\ .
\]

\begin{proposition}
Let $\left(A, \F, \{m_\tau\}_{\tau \in \mathrm{RT}}\right)$ be a complete shifted $\Perm_\infty$-algebra and let $a\in \F_1A_0$.
The twisted operations 
\[ m_\tau^a\coloneqq \sum_{\widetilde{\tau}} {\textstyle \frac{1}{c_{\widetilde{\tau}}}} m_{\widetilde{\tau}}(a, \ldots, a, - ,  \cdots, -)\ ,
\]
where the sum runs over unital expansions of the rooted tree $\tau$ and where the elements $a$ are inserted at the black vertices of $\widetilde{\tau}$, 
define a complete shifted $\Perm_\infty$-algebra if and only if the element $a$ satisfies the following Maurer--Cartan equation 
\[
\sum_{n\ge 1} m_{{\vcenter{
\xymatrix@M=4pt@R=5pt@C=5pt{
  *+[o][F-]{\scriptscriptstyle n}\ar@{..}[d] \\
*+[o][F-]{\scriptscriptstyle 2} \ar@{-}[d]\\  
*+[o][F=]{\scriptscriptstyle 1} 
}}}}(a, \ldots, a)=0
\ .\]
\end{proposition}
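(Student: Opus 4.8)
The plan is to specialise the twisting machinery of \cref{sec:TopoDefTh,subsec:TwistableAlg} to the operad $\calP=\Perm$, exactly as \cref{prop:TwCurvedGauge} and \cref{prop:TwCurvedGaugeLie} specialise it to $\Ai$- and $\Li$-algebras. First I would recall, via \cref{prop:MCOmegaC} and the description of shifted homotopy permutative algebras given above, that a complete shifted $\Perm_\infty$-algebra structure on $A$ is the same datum as a Maurer--Cartan element $\alpha\in\MC\big(\a_{\PreLie^*,A}\big)$, with $\alpha(\tau^*)=m_\tau$ for $\tau\in\mathrm{RT}$. Since $\Perm^!=\PreLie$ is extendable through $\chi(\star)=1$ (\cref{prop:Extendable}) with unital extension $\uPreLie$, and since $\Perm$ meets the finiteness hypotheses of \cref{lem:ExtConvAlg}, that lemma provides the pre-Lie isomorphism $\hom_\Sy\big((\uPreLie)^*,\eend_A\big)\cong A\times\a_{\PreLie^*,A}$, and \cref{thm:TwCatAlg} then shows that for $a\in\F_1A_0$ the gauge-twisted element $a.\alpha$ is a Maurer--Cartan element of this extended convolution algebra, whose arity $\ge 1$ components constitute a complete (a priori curved) $\Perm_\infty$-algebra, and which is a genuine $\Perm_\infty$-algebra exactly when its arity $0$ component $(a.\alpha)(u^*)$ vanishes. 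So the whole statement reduces to computing $a.\alpha$.

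The computation then runs parallel to the proofs of \cref{prop:TwCurvedGauge,prop:TwCurvedGaugeLie}. Because $a$ is concentrated in arity $0$ we have $e^{-a}=1-a$ and $e^a\star\alpha=(1+a)\star\alpha=\alpha$ (the left factor of $\star$ has no inputs), so by \cref{thm:GaugeActionBis} the twisted element is $a.\alpha=e^{-a}\cdot\alpha=\alpha\cc(1+a)$; I would then evaluate the latter on the dual basis of $(\uPreLie)^*$. Applying $\alpha$ to the bottom factor and $1+a$ to the top factors of the (iterated) decomposition coproduct, and using that $1+a$ sends $u^*\mapsto a$, the basis element $\I^*$ dual to the operadic unit $\mapsto\id_A$, and every other basis vector to $0$, the surviving terms are exactly those indexed by the unital expansions $\widetilde\tau$ of a rooted tree $\tau$: one inserts $a$ at the black vertices and the genuine arguments at the white ones, and no Koszul sign is produced because $\deg a=0$. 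Reading off the scalar attached to the term $m_{\widetilde\tau}(a,\ldots,a,-,\ldots,-)$ from the dualised composition rule of \cref{lem:uCompPreLie} — plugging $u$ at the black vertices of the underlying rooted tree of $\widetilde\tau$ returns $\tau$ with coefficient $c_{\widetilde\tau}$, while the $\tfrac{1}{k!}$ in the symmetric braces defining $\cc$ cancels the over-counting of orderings of black insertions — yields the factor $\tfrac{1}{c_{\widetilde\tau}}$. This identifies $m_\tau^a=(a.\alpha)(\tau^*)$ with the formula in the statement.

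It remains to identify the curvature $(a.\alpha)(u^*)$. The same analysis shows it equals $\sum\tfrac{1}{c_{\widetilde u}}\,m_{\widetilde u}(a,\ldots,a)$ summed over rooted trees $\widetilde u$ all of whose vertices are black and which reduce to $u$ under \cref{lem:uCompPreLie}, $m_{\widetilde u}$ being the operation indexed by that rooted tree. By that rule, removing a leaf hanging from a vertex with $n$ children multiplies by $2-n$, removing a one-child internal or root vertex is neutral, and a vertex with $\ge 2$ children can never be eliminated; hence such a reduction is non-zero only for the ``ladder'' trees $\vcenter{\xymatrix@M=4pt@R=5pt@C=5pt{ *+[o][F-]{\scriptscriptstyle n}\ar@{..}[d]\\ *+[o][F-]{\scriptscriptstyle 2}\ar@{-}[d]\\ *+[o][F=]{\scriptscriptstyle 1} }}$, $n\ge 1$, each with coefficient $1$. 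Therefore $(a.\alpha)(u^*)=\sum_{n\ge1} m_{{\vcenter{\xymatrix@M=4pt@R=5pt@C=5pt{ *+[o][F-]{\scriptscriptstyle n}\ar@{..}[d]\\ *+[o][F-]{\scriptscriptstyle 2}\ar@{-}[d]\\ *+[o][F=]{\scriptscriptstyle 1} }}}}(a,\ldots,a)$, and its vanishing is precisely the claimed Maurer--Cartan equation.

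The step I expect to be the main obstacle is the middle one: pinning down the coefficient $\tfrac{1}{c_{\widetilde\tau}}$ and checking that the unital expansions really are the correct indexing set. Unlike the associative case, $\PreLie$-composition — hence the decomposition coproduct of $\PreLie^*$ and of $(\uPreLie)^*$ — is itself a sum over graftings of rooted trees, so one must verify that, once the $u$-insertions are performed, these graftings recombine exactly into the family $\{m_{\widetilde\tau}\}$ with the normalisation dictated by \cref{lem:uCompPreLie}; the symmetric-brace bookkeeping of the circle product $\cc$ has to be carried out carefully to produce $\tfrac{1}{c_{\widetilde\tau}}$ rather than some other multiple.
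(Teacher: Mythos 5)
Your proposal is correct and follows essentially the same route as the paper: both reduce the statement to \cref{thm:TwCatAlg}, compute the gauge action as $a.\alpha=\alpha\circledcirc(1+a)$ (since $a$ is concentrated in arity $0$), and invoke \cref{lem:uCompPreLie} to identify the surviving terms with the unital expansions $\widetilde{\tau}$ (with coefficient $1/c_{\widetilde{\tau}}$) and to reduce the curvature $(a.\alpha)(u^*)$ to the ladder trees. The coefficient bookkeeping you flag as the main obstacle is exactly the content the paper delegates to \cref{lem:uCompPreLie}, so your treatment is, if anything, more explicit than the published one.
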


\begin{proof}
We apply here \cref{thm:TwCatAlg}: let us denote by $\alpha$ the Maurer--Cartan element in the convolution 
algebra $\a_{\PreLie^*, A}$ corresponding to the complete shifted $\Perm_\infty$-algebra structure, that is $\alpha(\tau^*)\coloneqq m_\tau$. 

First, the  the Maurer--Cartan equation  
 is equal to 
\[(a.\alpha)(u^*)=\sum_{\tau \in \mathrm{RT}} m_\tau (a, \ldots, a)=0\ .\]
But \cref{lem:uCompPreLie} shows that, the composite of  elements $u$ at all the vertices of a rooted tree $\tau$ in the operad $\uPreLie$ is equal to $u$ for ladders and vanishes otherwise. 

Then, the twisted operation $m_\tau^a$ is equal to $\big(\alpha\circledcirc (1+a)\big)(\tau^*)$ in the convolution algebra 
$\a_{\uPreLie^*, A}$, which is thus equal to $m_\tau^a\coloneqq \sum_{\widetilde{\tau}} {\textstyle \frac{1}{c_{\widetilde{\tau}}}} m_{\widetilde{\tau}}(a, \ldots, a, - ,  \cdots, -)$ again by \cref{lem:uCompPreLie}. 
\end{proof}

\begin{remark}
Under the notation $m_n\coloneqq m_{{\vcenter{
\xymatrix@M=4pt@R=5pt@C=5pt{
  *+[o][F-]{\scriptscriptstyle n}\ar@{..}[d] \\
*+[o][F-]{\scriptscriptstyle 2} \ar@{-}[d]\\  
*+[o][F=]{\scriptscriptstyle 1} 
}}}}$\ , one can see that $\big(A, \{m_n\}_{n\ge 1}\big)$ forms a shifted  $\Ai$-algebra. The twisting procedure for shifted $\Perm_\infty$-algebras produces the exact same twisting procedure seen in \cref{subsec:TwistGrp} for this underlying $\Ai$-algebra, ie 
$m_n^a= m^a_{{\vcenter{
\xymatrix@M=4pt@R=5pt@C=5pt{
  *+[o][F-]{\scriptscriptstyle n}\ar@{..}[d] \\
*+[o][F-]{\scriptscriptstyle 2} \ar@{-}[d]\\  
*+[o][F=]{\scriptscriptstyle 1} 
}}}}$ and the same Maurer--Cartan equation: 
\[\sum_{n\ge 1} m_n(a,\allowbreak \ldots, a)=0\ .\] This is not a surprise, since the forgetful functor from permutative algebras to associative algebras is actually produced by pulling back along the epimorphism $\Ass \twoheadrightarrow \Perm$ of operads obtained by sending the (canonical) generator of $\Ass$ to the (canonical) generator of $\Perm$.
Since this latter one comes from a morphism of operadic quadratic data,  it induces a monomorphism of cooperads between the Koszul dual cooperads 
$\Ass^{\ac} \hookrightarrow \Perm^{\ac}$ and thus a monomorphism between the Koszul resolution 
$\Ass_\infty \hookrightarrow \Perm_\infty$, which sends precisely $\mu_n$ to the rooted tree $\vcenter{
\xymatrix@M=4pt@R=5pt@C=5pt{
  *+[o][F-]{\scriptscriptstyle n}\ar@{..}[d] \\
*+[o][F-]{\scriptscriptstyle 2} \ar@{-}[d]\\  
*+[o][F=]{\scriptscriptstyle 1} 
}}.$ The twisting procedure for shifted $\Perm_\infty$-algebras extends the twisting procedure for shifted $\Ai$-algebras since the morphism of cooperads 
$\Ass^{\ac} \hookrightarrow \Perm^{\ac}$ extends to the morphism of cooperads 
$\mathrm{uAss}^* \hookrightarrow \uPreLie^*$.
\end{remark}

\newpage
\chapter{Twisting  nonsymmetric operads}\label{sec:TwNsOp}

In this section, we lift the twisting procedure to the level of complete differential graded non-symmetric operads and we apply it to the example of  multiplicative nonsymmetric operads. 
 This construction is  the nonsymmetric analogue of the theory introduced by T. Willwacher in \cite{Willwacher15} and developed further by V. Dolgushev, C. Rogers, and T. Willwacher \cite{DolgushevRogers12, DolgushevWillwacher15} but in a different way. Everything written here works \textit{mutatis mutandis} for complete (symmetric) dg operads, but we chose this level of presentation in order to make more accessible T. Willwacher's seminal theory. 
 
We give here an alternative definition based on the categorical notion of \emph{coproduct of operads}. While, in the above-mentioned references, the notion of operadic twisting is introduced via its (complicated) underlying collection, we introduce it here by its universal property: it is the operad obtained by the coproduct of the original operad with an arity zero operation. We then show that all the properties of the operadic twisting follow in a straightforward way  from the universal property of operadic coproduct. We do not claim any originality in the presented results: they all come from the above cited papers. Even the idea of this alternative presentation is not new since it was noticed in loc. cit. that  the operadic twisting amounts to a ``completion of an operad by a Maurer--Cartan element''; the operadic coproduct construction creates such a  ``completion''. 

With a pedagogical purpose, we have introduced a "new" complete dg ns operad $\MC \calP$, which models algebras over a multiplicative operad $\calP$ equipped with a given Maurer--Cartan element. In the subsection dealing with the action of the deformation complex on twisted operads, we have emphasised, for the first time, the various dg pre-Lie structures present in this topic.
 
\section{Twisting ns operads}
One can try to twist  any dg pre-Lie algebra $(A, d, \star)$ by Maurer--Cartan elements $d(\mu)+\mu\star\mu=0$ using the usual formulas $(A, d^\mu, \star)$, where the twisted differential is given by 
$$d^\mu(\nu)\coloneqq d(\nu)+\ad_\mu(\nu)\coloneqq d(\nu)+\mu\star \nu - (-1)^{|\nu|} \nu \star \mu\ .$$

\begin{proposition}\label{prop:Twpre-Lie}
The twisted data $(A, d^\mu, \star)$ forms a dg pre-Lie algebra if and only if the Maurer--Cartan element $\mu$ is a \emph{left nucleus element}, that is 
\begin{eqnarray}\label{Eq:MCspecial}
\mu\star(\nu\star \omega)=(\mu\star \nu)\star \omega\ , \quad \text{for any}\ \  \nu, \omega\in A\ . 
\end{eqnarray}
\end{proposition}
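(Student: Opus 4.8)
The plan is the following. Since the product $\star$ is not altered by the twisting, the right pre-Lie identity continues to hold for $(A, d^\mu, \star)$ with no assumption whatsoever; hence $(A, d^\mu, \star)$ is a dg pre-Lie algebra if and only if the twisted differential $d^\mu = d + \ad_\mu$ is a degree $-1$ derivation for $\star$ and squares to zero. I will verify these two requirements separately. Note first that a Maurer--Cartan element $\mu$ for $(A,d,\star)$ is automatically concentrated in degree $-1$, so that $\ad_\mu(\nu) = \mu\star\nu - (-1)^{|\nu|}\nu\star\mu$ is precisely the bracket $[\mu,\nu]$ of the Lie algebra obtained by skew-symmetrising $\star$.

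I would first show that $(d^\mu)^2 = 0$ holds for \emph{every} Maurer--Cartan element, so this condition is vacuous. Expanding $(d^\mu)^2 = d^2 + \bigl(d\,\ad_\mu + \ad_\mu\, d\bigr) + \ad_\mu^2$, the first term vanishes. Using that $d$ is a derivation for $\star$ and that $d^2 = 0$, a direct computation gives $d\,\ad_\mu + \ad_\mu\, d = \ad_{d(\mu)}$. For the last term, the graded Jacobi identity for $[-,-]$, with its first two slots equal to the odd element $\mu$, yields $\ad_\mu^2 = \frac12 \ad_{[\mu,\mu]} = \ad_{\mu\star\mu}$, since $[\mu,\mu] = 2\,\mu\star\mu$. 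Summing, $(d^\mu)^2 = \ad_{d(\mu) + \mu\star\mu} = \ad_0 = 0$ by the Maurer--Cartan equation.

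Next I would determine when $d^\mu$ is a derivation for $\star$. Since $d$ already is, this reduces to the statement that $\ad_\mu$ is a derivation. Writing the defect $\ad_\mu(\nu\star\omega) - \ad_\mu(\nu)\star\omega - (-1)^{|\nu|}\,\nu\star\ad_\mu(\omega)$ in terms of the associator $\mathrm{as}(x,y,z) \coloneqq (x\star y)\star z - x\star(y\star z)$, one obtains the combination $-\mathrm{as}(\mu,\nu,\omega) + (-1)^{|\nu|}\mathrm{as}(\nu,\mu,\omega) - (-1)^{|\nu|+|\omega|}\mathrm{as}(\nu,\omega,\mu)$; the right-symmetry of the associator, i.e.\ the defining pre-Lie relation, makes the last two terms cancel, so the defect equals $-\mathrm{as}(\mu,\nu,\omega)$. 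Therefore $d^\mu$ is a derivation for $\star$ if and only if $\mathrm{as}(\mu,\nu,\omega) = 0$ for all $\nu,\omega\in A$, which is exactly the left nucleus condition \eqref{Eq:MCspecial}. Combining this with the previous step gives the equivalence claimed in the proposition.

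Both steps boil down to short manipulations with the pre-Lie relation and the Leibniz rule, so there is no genuine conceptual difficulty; the one point to watch is keeping the Koszul signs straight, in particular in $\ad_\mu^2 = \ad_{\mu\star\mu}$ and $d\,\ad_\mu + \ad_\mu\, d = \ad_{d(\mu)}$, both of which rely on $\mu$ living in the odd degree $-1$.
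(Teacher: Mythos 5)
Your proposal is correct and follows essentially the same route as the paper: the square-zero property of $d^\mu$ is shown to hold for every Maurer--Cartan element (the paper records the identity $(d+\ad_\mu)^2(\nu)=(d(\mu)+\mu\star\mu)\star\nu-\nu\star(d(\mu)+\mu\star\mu)$, which your Jacobi-identity computation reproduces), and the derivation defect is computed to be exactly $\mu\star(\nu\star\omega)-(\mu\star\nu)\star\omega$ via the right-symmetry of the associator. The signs you flag are handled correctly, so nothing further is needed.
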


\begin{proof}
We only have to check that the twisted operator $d^\mu$ is a derivation which squares to zero. 
This latter point is always true since $\mu$ is a Maurer--Cartan element: 
\begin{eqnarray*}
\left(\dd+\ad_\mu\right)^2(\nu)=
\left(\dd(\mu)+\mu\star\mu\right)\star \nu-
 \nu \star
\left(\dd(\mu)+\mu\star\mu\right)=0\ .
\end{eqnarray*}
To prove the former point, one performs the following computation 
\begin{eqnarray*}
d^\mu(\nu\star \omega)-d^\mu(\nu)\star \omega-(-1)^{|\nu|}\nu\star d^\mu(\omega)&=&
\mu\star(\nu\star \omega)-(\mu\star \nu)\star \omega\ ,
\end{eqnarray*}
which concludes the proof. 
\end{proof}

\begin{remark}
The conceptual explanation for this failure of the general twisting procedure to work in general for pre-Lie algebras has been given in \cref{prop:Twistable}. It comes from  the fact that its Koszul dual operad $\PreLie^!\cong \Perm$ for permutative algebras does not admit an extension by a unit, see \cref{prop:Extendable}. 
\end{remark}

Recall that for any ns operad $\calP$, one defines a Lie bracket by anti-sym\-me\-tri\-zing the pre-Lie product 
$$\mu\star \nu \coloneqq \sum_{i=1}^n \mu \circ_i \nu\  , $$
where $\mu$ lives in $\calP(n)$.

\begin{example}
The Maurer--Cartan elements of the endomorphism operad $\End_{(A,d)}$ are the degree $-1$ maps $ m : A \to A$ satisfying the equation $dm+md+m^2=0$, i.e. perturbations of the differential. 
\end{example}

\begin{definition}[Operadic Maurer--Cartan element]
Let $\left(\calP, \dd\right)$ be a dg ns operad. An \emph{operadic Maurer--Cartan} in $\calP$ is a 
 degree $-1$ and arity $1$ element 
$\mu\in \calP(1)_{-1}$ satisfying 
$$ \dd(\mu)+\mu\star \mu=\dd(\mu)+\mu\circ_1 \mu=0\ .$$
\end{definition}
The purpose of the arity $1$ constraint in the aforementioned definition is to ensure that operadic Maurer--Cartan elements satisfy Equation~(\ref{Eq:MCspecial}). Therefore, one can twist the pre-Lie algebra associated to an ns operad by such elements. This result actually lifts to the level of the dg ns operad itself. 

\begin{proposition}[Twisted operad]
Let $\left(\calP, \dd, \left\{ \circ_i \right\}\right)$ be a dg ns operad and let $\mu$ be one of its Maurer--Cartan elements. 
The degree $-1$ operator  
$$\dd^\mu(\nu)\coloneqq\dd(\nu)+\ad_\mu(\nu)\coloneqq\dd(\nu)+\mu\star \nu - (-1)^{|\nu|} \nu \star \mu$$
is a  square-zero derivation.
Therefore, the data $\calP^\mu\coloneqq\left(\calP,   \dd^\mu,  \left\{ \circ_i \right\}\right)$ defines a dg ns operad, called 
the \emph{ns operad twisted  by the Maurer-Cartan element $\mu$}. 
\end{proposition}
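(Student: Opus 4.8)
The plan is to split the statement into the two properties it packages — that $\dd^\mu$ is a derivation for every partial composition $\circ_i$, and that $(\dd^\mu)^2=0$ — and to treat each by reducing, via linearity, to the operator $\ad_\mu\colon \nu\mapsto \mu\star\nu-(-1)^{|\nu|}\nu\star\mu$. Indeed $\dd$ is by hypothesis a square-zero operadic derivation, and $\ad_\mu$ has degree $|\mu|=-1$, so $\dd^\mu=\dd+\ad_\mu$ has degree $-1$ as asserted and it suffices to analyse $\ad_\mu$.

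First I would record what the arity~$1$ constraint gives. Since $\mu\in\calP(1)$, one has $\mu\star\nu=\mu\circ_1\nu$ for every $\nu$, and the sequential composition axiom yields $\mu\circ_1(\nu\circ_j\omega)=(\mu\circ_1\nu)\circ_j\omega$, whence $\mu\star(\nu\star\omega)=(\mu\circ_1\nu)\star\omega=(\mu\star\nu)\star\omega$; thus an operadic Maurer--Cartan element is a left nucleus element in the sense of \eqref{Eq:MCspecial}, and \cref{prop:Twpre-Lie} already tells us that $\dd^\mu$ is a derivation of $\star$. The square-zero property is then obtained by the very computation in the proof of \cref{prop:Twpre-Lie}, which uses only the Maurer--Cartan equation: $(\dd+\ad_\mu)^2(\nu)=(\dd\mu+\mu\star\mu)\star\nu-\nu\star(\dd\mu+\mu\star\mu)=0$.

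The genuinely new point, and the heart of the proof, is that $\ad_\mu$ is a derivation for each $\circ_i$, i.e.\ $\ad_\mu(\nu\circ_i\omega)=\ad_\mu(\nu)\circ_i\omega+(-1)^{|\nu|}\nu\circ_i\ad_\mu(\omega)$. Writing $\ad_\mu(\nu)=\mu\circ_1\nu-(-1)^{|\nu|}\sum_j\nu\circ_j\mu$, with the sum over the inputs of $\nu$, I would expand both sides using the operad axioms. The sequential associativity axiom gives $\mu\circ_1(\nu\circ_i\omega)=(\mu\circ_1\nu)\circ_i\omega$ and, for a leaf $k$ of $\nu\circ_i\omega$ lying over $\omega$, $(\nu\circ_i\omega)\circ_k\mu=\nu\circ_i(\omega\circ_l\mu)$; the parallel composition axiom gives, for a leaf $k$ lying over an input $j\neq i$ of $\nu$, $(\nu\circ_i\omega)\circ_k\mu=(-1)^{|\omega||\mu|}(\nu\circ_j\mu)\circ_i\omega$ — here it is precisely the arity~$1$ hypothesis on $\mu$ that keeps the insertion slot of $\omega$ equal to $i$ with no reindexing. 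Substituting, the two ``mixed'' summands $(-1)^{|\nu|}\nu\circ_i(\mu\star\omega)$ that appear on the right-hand side cancel, while the leading terms $(\mu\circ_1\nu)\circ_i\omega$, the $\nu$-leaf terms, and the $\omega$-leaf terms match on the two sides; the sign bookkeeping is routine given $|\mu|=-1$. Hence $\dd^\mu$ is a square-zero operadic derivation and $\calP^\mu=(\calP,\dd^\mu,\{\circ_i\})$ is a dg ns operad.

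The step I expect to require the most care is the parallel-composition bookkeeping in the derivation identity: this is exactly where the arity~$1$ hypothesis on $\mu$ is used (to prevent any shift of the slot $i$), and where the Koszul sign $(-1)^{|\omega||\mu|}=(-1)^{|\omega|}$ must be tracked; the same hypothesis is what makes $\mu$ a left nucleus element, so it is doing double duty.
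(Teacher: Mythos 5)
Your proposal is correct and follows essentially the same route as the paper: the square-zero property is obtained from the Maurer--Cartan equation exactly as in Proposition~\ref{prop:Twpre-Lie}, and the derivation property for each $\circ_i$ is reduced to the identity $\mu\star(\nu\circ_i\omega)=(\mu\star\nu)\circ_i\omega$, which holds precisely because $\mu$ is supported in arity $1$. You simply carry out the sequential/parallel-axiom bookkeeping that the paper leaves implicit, and your sign tracking is consistent.
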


\begin{proof}
The computations are similar to that of Proposition~\ref{prop:Twpre-Lie}. The fact that the twisted operator 
$\ad_\mu$ is a derivation with respect to  the partial composition products $\circ_i$ is equivalent to the equations 
\begin{eqnarray*}
\mu\star(\nu\circ_i \omega)=(\mu\star \nu)\circ_i \omega\ , 
\end{eqnarray*}
which holds true since the Maurer--Cartan element $\mu$ is supported in arity $1$. 
\end{proof}

\begin{example}
The endomorphism operad $\End_{(A,d)}$ of a chain complex $(A,d)$ twisted by a Maurer--Cartan element $m$ is actually the endomorphism operad 
$$\End_{(A,d)}^m=\End_{(A,d+m)} $$
of the twisted chain complex $(A,d+m)$. 
\end{example}

\begin{example}
One can also twist the ns operad $\mathrm{ncBV}$ of non-commutative Batalin--Vilkovisky algebras \cite{DotsenkoShadrinVallette15} by its square-zero element $\Delta$, under cohomological degree convention. 
\end{example}

The operadic twisting operation satifises the following usual functorial property of the twisting procedure. 

\begin{proposition}\label{prop:MorphMCOperadic}
Let $\rho \ :\ \calP \to \calQ$ be a morphism of dg ns operads. The image $\rho(\mu)$ of a Maurer--Cartan $\mu$ of the dg ns operad $\calP$ is a Maurer--Cartan element in the dg ns operad $\calQ$. The morphism $\rho$ of ns operads commutes with the twisted differential, that is yields a morphism of dg ns operads 
$$ 
\widetilde\rho\  :\ \calP^\mu \to \calQ^{\rho(\mu)}
\ . $$
\end{proposition}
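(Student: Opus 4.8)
The plan is to verify the two assertions directly, using that $\rho$ is a morphism of dg ns operads, i.e. $\rho$ is degree-preserving, commutes with the differentials $\dd_{\calP}$ and $\dd_{\calQ}$, and is compatible with the partial composition products: $\rho(\nu_1\circ_i\nu_2)=\rho(\nu_1)\circ_i\rho(\nu_2)$. An immediate consequence is that $\rho$ is a morphism of the associated dg pre-Lie algebras, hence $\rho(\mu\star\nu)=\rho(\mu)\star\rho(\nu)$.

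First I would check that $\rho(\mu)$ is an operadic Maurer--Cartan element of $\calQ$. Since $\mu\in\calP(1)_{-1}$ and $\rho$ preserves arity and degree, $\rho(\mu)\in\calQ(1)_{-1}$. Applying $\rho$ to the equation $\dd_{\calP}(\mu)+\mu\circ_1\mu=0$ and using that $\rho$ commutes with differentials and partial compositions gives $\dd_{\calQ}(\rho(\mu))+\rho(\mu)\circ_1\rho(\mu)=0$, which is exactly the operadic Maurer--Cartan equation in $\calQ$. (Here we use $\mu\star\mu=\mu\circ_1\mu$ because $\mu$ has arity $1$, and likewise for $\rho(\mu)$.)

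Next I would verify that $\rho$, viewed as a map $\calP^\mu\to\calQ^{\rho(\mu)}$, commutes with the twisted differentials. The underlying map of collections and the partial composition products are unchanged by twisting, so $\rho$ is still a morphism of the underlying ns operads; the only thing to check is compatibility with $\dd^\mu$ and $\dd^{\rho(\mu)}$. For $\nu\in\calP$ we compute
\[
\rho\big(\dd^\mu(\nu)\big)=\rho\big(\dd_{\calP}(\nu)\big)+\rho\big(\mu\star\nu\big)-(-1)^{|\nu|}\rho\big(\nu\star\mu\big)
=\dd_{\calQ}\big(\rho(\nu)\big)+\rho(\mu)\star\rho(\nu)-(-1)^{|\nu|}\rho(\nu)\star\rho(\mu),
\]
using $\rho\circ\dd_{\calP}=\dd_{\calQ}\circ\rho$ and that $\rho$ is a pre-Lie morphism; the last expression is precisely $\dd^{\rho(\mu)}(\rho(\nu))$. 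Hence $\widetilde\rho=\rho$ defines a morphism of dg ns operads $\calP^\mu\to\calQ^{\rho(\mu)}$.

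There is no real obstacle here: the statement is a formal consequence of $\rho$ being a morphism of dg ns operads together with the already established fact (Proposition~\ref{prop:Twpre-Lie} and the twisted operad proposition) that twisting is defined by the same partial compositions and a differential modified by $\ad_\mu$. The only points that require a word of care are the arity bookkeeping (so that $\mu\star\mu$ collapses to $\mu\circ_1\mu$ and the Maurer--Cartan equations match up) and the sign convention in $\ad_\mu$, which is preserved since $\rho$ is degree-preserving.
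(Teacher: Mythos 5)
Your proof is correct and is precisely the ``straightforward computation'' that the paper's proof alludes to without writing out: applying $\rho$ to the Maurer--Cartan equation and to the twisted differential, using that a morphism of dg ns operads is arity- and degree-preserving, commutes with the differentials, and is therefore a morphism of the associated pre-Lie algebras. Nothing is missing.
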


\begin{proof}
This is proved by straightforward  computations. 
\end{proof}

This proposition applied to the endomorphism operad $\calQ=\End_A$ will play a  key role in the sequel. 

\begin{proposition}\label{prop:-MC}
Let $\mu$ be a Maurer--Cartan element of a dg ns operad $\calP$. 
\begin{enumerate}
\item  An element $\alpha$ is a Maurer--Cartan element of the twisted dg ns operad $\calP^\mu$ if and only if 
$\alpha-\mu$ is  a Maurer--Cartan element of the original  dg ns operad $\calP$. 

\item 
The element $-\mu$ is a Maurer--Cartan element in the twisted dg ns operad $\calP^\mu$ and twisting this latter one again by this Maurer--Cartan element produces the original operad:
$$ 
\left(\calP^\mu\right)^{-\mu}=\calP\ .
$$
\end{enumerate}
\end{proposition}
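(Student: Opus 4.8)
The plan is to reduce the statement to two short computations inside $\calP(1)$, using that the twisting procedure modifies only the differential and leaves the underlying graded collection and the partial compositions $\circ_i$ — hence also the induced Lie bracket and the inner derivations $\ad_{(-)}$ — unchanged.

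First I would treat (1). Every element in play, namely $\mu$, $\alpha$, and $\alpha-\mu$, lies in $\calP(1)_{-1}$, where the pre-Lie product is simply $\circ_1$; so the Maurer--Cartan curvature is $\dd(\nu)+\nu\circ_1\nu$ in $\calP$ and $\dd(\nu)+\ad_\mu(\nu)+\nu\circ_1\nu$ in $\calP^\mu$. I would expand the Maurer--Cartan equation of $\calP$ at $\alpha-\mu$ using bilinearity of $\circ_1$, then use the Maurer--Cartan equation $\dd(\mu)+\mu\circ_1\mu=0$ of $\mu$ to absorb the pure-$\mu$ terms and recognise the surviving mixed terms $\mu\circ_1\alpha$ and $\alpha\circ_1\mu$ as coming from $\ad_\mu(\alpha)$; after rearranging, this is exactly the Maurer--Cartan equation of $\calP^\mu$ at $\alpha$, which gives the desired equivalence. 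The only delicate point is the sign bookkeeping for the odd elements $\mu$ and $\alpha$: one must use $\ad_\mu(\alpha)=\mu\circ_1\alpha+\alpha\circ_1\mu$ and $\ad_\mu(\mu)=[\mu,\mu]=2\,\mu\circ_1\mu$, and it is precisely these factors that make the translation come out.

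For (2), the statement that $-\mu$ is a Maurer--Cartan element of $\calP^\mu$ is a three-line check: the twisted curvature at $-\mu$ equals $-\dd^\mu(\mu)+\mu\circ_1\mu=-\dd(\mu)-\ad_\mu(\mu)+\mu\circ_1\mu=-\dd(\mu)-2\,\mu\circ_1\mu+\mu\circ_1\mu=-\bigl(\dd(\mu)+\mu\circ_1\mu\bigr)=0$. For the identity $(\calP^\mu)^{-\mu}=\calP$, I would note that by construction the underlying collection and the partial compositions of $(\calP^\mu)^{-\mu}$ are those of $\calP^\mu$, hence those of $\calP$, so only the differential has to be matched; since the Lie bracket of $\calP^\mu$ is the one of $\calP$ and $\ad_{(-)}$ is linear in its argument, the twice-twisted differential is $\dd^\mu+\ad_{-\mu}=\dd+\ad_\mu-\ad_\mu=\dd$, which finishes the proof. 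One should carry out the verification that $-\mu$ is Maurer--Cartan for $\calP^\mu$ before writing $(\calP^\mu)^{-\mu}$, since the latter twist is only defined relative to a Maurer--Cartan element.

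Everything here is routine; if a single step is to be singled out as the main obstacle, it is keeping the Koszul signs consistent throughout — reinforced by the elementary but essential observation that twisting leaves the operadic composition, and therefore the Lie bracket and $\ad$, intact, which is exactly what lets the second twist by $-\mu$ annihilate the first.
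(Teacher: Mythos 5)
Your treatment of part (2) is correct and complete: the verification that $-\mu$ is Maurer--Cartan in $\calP^\mu$ via $\ad_\mu(\mu)=2\,\mu\circ_1\mu$, and the cancellation $\dd^\mu+\ad_{-\mu}=\dd+\ad_\mu-\ad_\mu=\dd$ using linearity of $\ad$ and the fact that twisting does not touch the compositions, is exactly the right argument (the paper itself only says ``straightforward computations'').

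For part (1), however, the step you wave through is precisely the one that fails. Expanding the Maurer--Cartan expression of $\calP$ at $\alpha-\mu$ gives
$\dd(\alpha)-\dd(\mu)+\alpha\star\alpha-\alpha\star\mu-\mu\star\alpha+\mu\star\mu$.
The pure-$\mu$ terms are $-\dd(\mu)+\mu\star\mu=2\,\mu\star\mu$, which is \emph{not} absorbed by the Maurer--Cartan equation $\dd(\mu)+\mu\star\mu=0$ (it would be absorbed for $+\mu$, not $-\mu$), and the mixed terms are $-\mu\star\alpha-\alpha\star\mu=-\ad_\mu(\alpha)$, i.e.\ they carry the \emph{opposite} sign to the one appearing in $\dd^\mu(\alpha)=\dd(\alpha)+\mu\star\alpha+\alpha\star\mu$. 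What your computation actually proves is the identity
$\dd(\alpha+\mu)+(\alpha+\mu)\star(\alpha+\mu)=\dd^\mu(\alpha)+\alpha\star\alpha$,
that is: $\alpha$ is Maurer--Cartan in $\calP^\mu$ if and only if $\alpha+\mu$ is Maurer--Cartan in $\calP$ (equivalently, $\beta$ is Maurer--Cartan in $\calP$ if and only if $\beta-\mu$ is Maurer--Cartan in $\calP^\mu$). This is the reading consistent with the rest of the proposition: the paper derives (2) from (1) by taking $\alpha=0$, which only produces ``$-\mu$ is Maurer--Cartan in $\calP^\mu$'' under the corrected orientation, whereas the statement taken literally would ask $-2\mu$ to be Maurer--Cartan in $\calP$, which is false in general. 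So you should either prove the corrected equivalence and flag the discrepancy with the printed statement, or at the very least not assert that ``it is precisely these factors that make the translation come out'' --- they do not, and your (correct) direct proof of (2) is what saves the day, since (2) cannot be deduced from (1) as you have stated it.
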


\begin{proof}
The first point is proved by straightforward  computations. The second one is a special case for $\alpha=0$.
\end{proof}

All these results hold as well for complete dg ns operad; we will treat a particular case in the next section. 

\section{Twisted $\calA_\infty$-operad}\label{subsec:TwAInftyOp}

\begin{proposition}\label{prop:MCAi}
The data of a complete $\Ai$-algebra structure together with a Maurer--Cartan element is encoded by the complete dg ns operad 
$$\MC\Ai\coloneqq\left(
\widehat\calT\big(\alpha, \mu_2, \mu_3, \ldots\big), \dd
\right) \ , $$
where $\alpha$ has arity $0$ and degree $-1$ and $\mu_n$ has arity $n$ and degree $n-2$, for $n\ge 2$, where the filtration on the space $M=\big(\k \alpha, 0, \k \mu_2, \k \mu_3, \dots\big)$ of generators is given by 
\[
\alpha \in \F_1 M(0), \ \F_2 M(0)=\{0\} \quad \text{and} \quad \mu_n\in \F_0 M(n),\  \F_1 M(n)=\{0\}\ , \ \text{for} \ n\ge 2
\ , \]
 and where the differential is defined by 
\begin{eqnarray*} 
&&\dd(\mu_n)\coloneqq\sum_{p+q+r=n\atop p+1+r, q\geq 2}(-1)^{pq+r+1} \mu_{p+1+r}\circ_{p+1} \mu_q\ ,
\\
&&\dd(\alpha)\coloneqq-\sum_{n\ge 2}\mu_n(\alpha, \ldots, \alpha) \ .
\end{eqnarray*}
\end{proposition}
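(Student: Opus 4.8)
The plan is to establish the statement in three movements: (i) check that $\dd$ is a well‑defined filtered derivation of the free complete ns operad; (ii) check $\dd^2=0$; (iii) identify morphisms out of $\MC\Ai$ with the claimed data via the universal property of the free complete operad.

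First I would make sense of $\MC\Ai$ as a complete dg ns operad. The free complete ns operad $\widehat{\calT}(M)$ on $M=(\k\alpha,0,\k\mu_2,\k\mu_3,\dots)$ carries the filtration induced by the one on $M$, for which a tree monomial has filtration degree equal to the number of its leaves decorated by $\alpha$ (because $\F_1 M(0)=\k\alpha$, $\F_2 M(0)=0$, while $\F_0 M(n)=M(n)$ and $\F_1 M(n)=0$ for $n\ge 2$). Since the $\mu_n$ all have arity $\ge 2$, for fixed arity and fixed filtration degree there are only finitely many tree monomials (the number of $\alpha$-leaves, hence of vertices, being bounded), so $\widehat{\calT}(M)$ is a genuine complete ns operad and all the series that occur converge by \cref{lem:Conv}. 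The assignments $\dd(\mu_n)$ and $\dd(\alpha)$ define a degree $-1$ filtered map $M\to\widehat{\calT}(M)$: each $\mu_{p+1+r}\circ_{p+1}\mu_q$ has degree $(p+1+r-2)+(q-2)=n-3$ and filtration $0$, while $\mu_n(\alpha,\dots,\alpha)$ has degree $(n-2)-n=-2$ and filtration $n\ge 1$, so the sum defining $\dd(\alpha)$ converges to an element of $\calF_1\widehat{\calT}(M)(0)_{-2}$. By freeness this extends uniquely to a derivation $\dd$ of $\widehat{\calT}(M)$, and this derivation is filtered because on every tree monomial $T$ each summand of $\dd T$ has filtration degree $\ge$ that of $T$.

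Next I would verify $\dd^2=0$. As $\dd^2=\tfrac12[\dd,\dd]$ is again a derivation, of degree $-2$, it suffices to check it vanishes on generators. On $\mu_n$ this is the classical associahedral computation: the terms $\mu\circ(\mu\circ\mu)$ in $\dd^2(\mu_n)$ cancel in pairs after reassociation, exactly as in the proof that the cobar-type differential on the minimal model of $\As$ squares to zero, see \cite[Chapter~9]{LodayVallette12}. On $\alpha$ one computes
\[
\dd^2(\alpha)=-\sum_{n\ge 2}\dd\big(\mu_n(\alpha,\dots,\alpha)\big)
=-\sum_{n\ge 2}\dd(\mu_n)(\alpha^{\otimes n})-\sum_{n\ge 2}\sum_{i=1}^{n}(\pm)\,\mu_n\big(\alpha,\dots,\dd(\alpha),\dots,\alpha\big),
\]
and substituting the defining formulas one gets on the one hand a signed sum of trees $\mu_{p+1+r}(\alpha^{\otimes p},\mu_q(\alpha^{\otimes q}),\alpha^{\otimes r})$ and on the other hand a signed sum of trees $\mu_n(\alpha^{\otimes i-1},\mu_m(\alpha^{\otimes m}),\alpha^{\otimes n-i})$; these run over the same family of trees with opposite signs and cancel. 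This is the formal identity expressing that the Maurer--Cartan equation is compatible with the $\Ai$-relations, and the sign comparison is done as in the check underlying \cref{def:CurvedAinfty}.

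Finally, for the universal property, let $A$ be a complete dg module. By \cref{thm:CompleteAlg} together with \cref{prop:EndComplete} (which identifies the complete and filtered endomorphism operads of $A$), a complete dg $\MC\Ai$-algebra structure on $A$ is the same as a morphism of complete dg ns operads $\MC\Ai\to\eend_A$; since $\widehat{\calT}(M)$ is free and $\dd$ is a derivation, such a morphism is the same as a filtered degree $0$ collection map $\phi\colon M\to\eend_A$ commuting with the differentials, and commutation need only be tested on generators. Writing $a:=\phi(\alpha)\in\calF_1\hom(\k,A)_{-1}=\F_1 A_{-1}$ and $m_n:=\phi(\mu_n)\in\hom(A^{\otimes n},A)_{n-2}$ (no filtration constraint on the $m_n$, as $\F_1 M(n)=0$), the generator conditions read: on $\mu_n$, $\partial(m_n)=\sum_{p+q+r=n}(-1)^{pq+r+1}m_{p+1+r}\circ_{p+1}m_q$, with $\partial$ induced by the internal differential $d:=d_A$ playing the role of $m_1$ --- these are exactly the (uncurved) $\Ai$-relations of \cref{def:CurvedAinfty} with $\theta=0$, so $(A,d,m_2,m_3,\dots)$ is a complete $\Ai$-algebra; and on $\alpha$, $d(a)=-\sum_{n\ge 2}m_n(a,\dots,a)$, i.e. the Maurer--Cartan equation of \cref{thm:TwProcGp}. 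These assignments are mutually inverse and natural in $A$, which is the assertion. The main obstacle is the sign bookkeeping in $\dd^2(\alpha)=0$ and in matching the $\dd(\mu_n)$-relations with the conventions of \cref{def:CurvedAinfty}, plus confirming that $\widehat{\calT}(M)$ with the stated filtration is a bona fide complete ns operad (finiteness in each arity--filtration bidegree), which is what makes all the series converge.
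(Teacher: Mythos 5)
Your proposal is correct, and the universal-property part (identifying morphisms $\MC\Ai\to\eend_A$ with pairs $(a,\{m_n\})$ via \cref{thm:CompleteAlg} and \cref{prop:EndComplete}, testing commutation with differentials on generators) coincides with what the paper does. Where you genuinely diverge is in establishing that $\dd$ is a well-defined square-zero derivation. The paper obtains this in one line by invoking \cref{prop:DGAction} applied to the identity multiplicative structure on $\Ai$: the assignment $\rho\mapsto\D_\rho$ is a morphism of dg Lie algebras from the deformation complex to operadic derivations of $\calP\hat\vee\alpha$, so the Maurer--Cartan element $\mu=(0,\mu_2,\mu_3,\dots)$ is sent to a Maurer--Cartan element $\D_\mu$ in derivations, whence $d_{\Ai}+\D_\mu$ squares to zero; all the sign bookkeeping is packaged once and for all into that lemma (whose proof the paper itself defers). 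You instead verify $\dd^2=0$ directly on generators: the classical associahedral cancellation on $\mu_n$, plus the cancellation between $\dd(\mu_n)(\alpha^{\otimes n})$ and the terms where $\dd$ hits an $\alpha$-leaf. This buys self-containedness and avoids the paper's forward reference to a later section, at the cost of the "straightforward but cumbersome" sign check that you only sketch (it does work out --- e.g.\ in the lowest case the two copies of $\mu_2(\mu_2(\alpha,\alpha),\alpha)-\mu_2(\alpha,\mu_2(\alpha,\alpha))$ coming from $n=2$ and $n=3$ cancel --- but a complete argument would need the explicit signs, essentially reproducing the computation the paper isolates in \cref{AppA}). Your convergence discussion (filtration degree of a tree monomial equals the number of $\alpha$-leaves, hence finiteness in each arity--filtration bidegree and convergence of $\dd(\alpha)$ by \cref{lem:Conv}) is a point the paper leaves implicit and is a welcome addition.
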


\begin{proof}
The fact that the map $\dd$ extends to a unique square-zero  derivation is direct corollary of 
\cref{prop:DGAction} applied to the identity map $\As^{\ac}\to \As^{\ac}$. 
So we get a well-defined complete dg ns operad. 
A complete $\MC\Ai$-algebra structure on a complete dg module $(A, \F , d)$ amounts to a morphism of filtered dg ns operads $\MC\Ai \to \eend_A$ according to \cref{thm:CompleteAlg}. Such an assignment $\alpha \mapsto a$ and $\mu_n\mapsto m_n$ is equivalent to an element $a\in F_1 A_{-1}$ and degree $n-2$ preserving maps $m_n : A^{\otimes n}\to A$ satisfying the relations of a Maurer--Cartan elements in an $\Ai$-algebra by the form of the differential $\dd$. 
\end{proof}

\begin{remark}
Under the convention of \cref{subsec:TwistGrp}, we have $\mu_n=s^{-1}\nu_n$. 
\end{remark}

Inside the dg ns operad $\MC \Ai$, we consider the following elements 
$$\mu_n^\alpha\coloneqq\sum_{r_0, \ldots, r_n\ge 0} (-1)^{\sum_{k=0}^n kr_k} \mu_{n+r_0+\cdots+r_n}\big(\alpha^{r_0}, -, \alpha^{r_1}, -, \ldots,  - , \alpha^{r_{n-1}}, -,\alpha^{r_n}  \big)
\ , $$
for $n\ge 0$, that is for example: 
$$\mu_0^\alpha \coloneqq\sum_{n \ge 2} \mu_n(\alpha, \ldots, \alpha)\quad \text{and} \quad \mu_1^\alpha\coloneqq
\sum_{n\ge 2 \atop 1\leq i \leq n}(-1)^{n-i} \, \mu_n\big(\alpha^{i-1}, -, \alpha^{n-i}\big)\ .
$$

\begin{lemma}\label{Lem:Diffmu}
The elements $\mu_n^\alpha$ satisfy the 
following relations: 
\begin{eqnarray}
\label{eqn4}&&\dd\big(\mu_0^\alpha\big)=0\ ,\\
\label{eqn5}&&\dd\big(\mu_1^\alpha\big)=-\mu_1^\alpha\circ_1\mu_1^\alpha\ ,\\
\label{eqn6}&&\dd\big(\mu_n^\alpha\big)\coloneqq\sum_{p+q+r=n\atop p+1+r, q\geq 1}(-1)^{pq+r+1} \mu^\alpha_{p+1+r}\circ_{p+1} \mu^\alpha_q\ , \ \text{for}\  n\ge 2\ .
\end{eqnarray}
\end{lemma}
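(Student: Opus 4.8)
The plan is to deduce the three identities from the already established fact that twisting a $\MC\Ai$-algebra structure by its arity~$0$ generator produces a genuine (uncurved) $\Ai$-algebra (\cref{prop:TwCurvedGauge} and \cref{thm:TwProcGp}), together with the observation that $\MC\Ai$ is a \emph{completed free} ns operad. Relation~\eqref{eqn4} requires no computation: by the defining formula for the differential one has $\mu_0^\alpha=-\dd(\alpha)$, whence $\dd(\mu_0^\alpha)=-\dd^2(\alpha)=0$.

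For \eqref{eqn5} and \eqref{eqn6} I would first reduce to a verification inside endomorphism operads. Since $\MC\Ai=\widehat{\calT}(\alpha,\mu_2,\mu_3,\ldots)$ is the completion of a free ns operad on a collection of finite type, the family of canonical morphisms $\rho\colon\MC\Ai\to\eend_A$ attached to all complete $\MC\Ai$-algebra structures is jointly faithful (as usual for free operads, one may already detect elements on free complete $\MC\Ai$-algebras over complete modules of sufficiently large rank); so it suffices to check \eqref{eqn5} and \eqref{eqn6} after applying any such $\rho$. Fix such a structure: by \cref{prop:MCAi} it is the data of a complete $\Ai$-algebra $(A,d,m_2,m_3,\dots)$ and of a Maurer--Cartan element $a\in\F_1A_{-1}$. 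Unwinding \cref{prop:TwCurvedGauge}, one identifies $\rho(\mu_n^\alpha)=m_n^a$ for $n\ge 2$, $\rho(\mu_1^\alpha)=m_1^a-d$, and $\rho(\mu_0^\alpha)=\sum_{n\ge 2}m_n(a,\dots,a)$, the latter being $-d(a)$ by the Maurer--Cartan equation. By \cref{thm:TwProcGp} the twisted operations $(m_1^a,m_2^a,m_3^a,\dots)$ form a (shifted) $\Ai$-algebra; feeding its structure relations, together with the internal identity $[\partial_A,f]=\partial_A\circ_1 f-(-1)^{|f|}f\star\partial_A$ in $\eend_A$ and $\partial_A=d=m_1$, through this dictionary should turn the $\rho$-image of the right-hand side of \eqref{eqn5}, resp.\ of \eqref{eqn6}, into $[\partial_A,\rho(\mu_1^\alpha)]$, resp.\ $[\partial_A,m_n^a]$, which is the $\rho$-image of the corresponding left-hand side (since $\rho$ intertwines $\dd$ with $[\partial_A,-]$).

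The one delicate point, and where I expect the bookkeeping to concentrate, is matching the terms of \eqref{eqn6} in which a factor has arity~$1$ (i.e.\ $q=1$ or $p+1+r=1$) with the corrections coming from $\rho(\mu_1^\alpha)=m_1^a-d$ and with the terms of the $\Ai$-relations of $(m_1^a,m_2^a,\dots)$ that involve $m_1^a$: one must check that these cancel against each other, which is a finite but sign-sensitive verification. An alternative, entirely self-contained route avoids the reduction to $\eend_A$ and instead applies the derivation $\dd$ directly to the series defining $\mu_n^\alpha$, substitutes $\dd(\alpha)=-\mu_0^\alpha$ and the $\Ai$-relations $\dd(\mu_m)$ at the surviving vertices, and reorganises the resulting sum of two-vertex trees; there the combinatorics of the signs $(-1)^{\sum_k kr_k}$ and $(-1)^{pq+r+1}$ is again the only real obstacle.
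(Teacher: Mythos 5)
Your outline is essentially correct, but it takes a genuinely different route from the paper: the paper proves the lemma by the second, ``self-contained'' method you mention only in passing, namely a direct application of the derivation $\dd$ to the series defining $\mu_n^\alpha$, substitution of $\dd(\mu_m)$ and $\dd(\alpha)$, and a careful matching of two-vertex trees and Koszul signs (this occupies all of the appendix \cref{AppA}). Your primary route instead pushes everything into $\eend_A$ for free complete $\MC\Ai$-algebras and quotes \cref{prop:TwCurvedGauge} and \cref{thm:TwProcGp}. This is not circular as a proof of the lemma, since those statements are established by the gauge-group argument of the earlier chapters independently of any operadic twisting; and it genuinely buys something, because the infinite-sum sign bookkeeping is replaced by a finite correction computation. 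Indeed the ``delicate point'' you isolate does close: writing $\rho(\mu_1^\alpha)=m_1^a-d$ and $\rho(\mu_k^\alpha)=m_k^a$ for $k\ge 2$, the $q=1$ and $p+1+r=1$ terms of the right-hand side of \eqref{eqn6} contribute $(-1)^n m_n^a\star(m_1^a-d)-(m_1^a-d)\circ_1 m_n^a$, which added to the twisted $\Ai$-relation $\sum_{q,\,p+1+r\ge 2}(-1)^{pq+r+1}m^a_{p+1+r}\circ_{p+1}m^a_q=m_1^a\circ_1 m_n^a-(-1)^n m_n^a\star m_1^a$ collapses to $[d,m_n^a]=\rho\big(\dd(\mu_n^\alpha)\big)$, and the $n=1$ case of \eqref{eqn5} is the identity $-(m_1^a-d)^2=[d,m_1^a]$ granted $(m_1^a)^2=0$.

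Two caveats you should address. First, the joint faithfulness of the representations $\rho\colon\MC\Ai\to\eend_A$ is asserted, not proved; it is true (at fixed arity and degree an element of $\widehat\calT(\alpha,\mu_2,\ldots)$ is a series graded by the number of $\alpha$'s, which coincides with the filtration degree of its evaluation on the free complete algebra on $n$ generators with trivial filtration, so the components can be separated), but in the complete setting this deserves an explicit sentence rather than ``as usual for free operads''. Second, an architectural point: the paper uses \cref{Lem:Diffmu}, via \cref{prop:MorhpAiMCAi}, precisely to give an \emph{independent operadic} re-proof of \cref{prop:TwCurvedGauge}; if the lemma is itself deduced from \cref{prop:TwCurvedGauge} and \cref{thm:TwProcGp}, that re-proof becomes circular. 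So your argument is admissible as a proof of the lemma, but it could not replace the paper's computation without also abandoning the stated purpose of the surrounding section.
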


Notice that the third formula actually includes the second one. The proofs for these relations are straightforward but quite cumbersome; so, in order not to break the flow of exposition, we postpone them to \cref{AppA}.\\

The second formula says that the element $\mu_1^\alpha$ is a Maurer--Cartan element of the complete dg ns operad $\MC \Ai$. So, the operadic twisting procedure produces the following new complete dg ns operad. 

\begin{definition}[Twisted $\Ai$-operad]
The complete dg ns operad obtained by twisting the complete operad $\MC \Ai$ by the Maurer--Cartan $\mu_1^\alpha$ is called the 
 \emph{twisted $\Ai$-operad} and denoted by 
$$\Tw\Ai\coloneqq \left(\MC \Ai\right)^{\mu_1^\alpha}
=
\left(
\widehat\calT(\alpha, \mu_2, \mu_3, \ldots), \dd^{\mu_1^\alpha}
\right) \ . $$
\end{definition}

The twisted differential is actually equal to
\begin{eqnarray*}
\mathrm{d}^{\mu_1^\alpha}(\alpha)&=&\dd(\alpha)+\mu_1^\alpha(\alpha)= -\sum_{n\ge 2}\mu_n(\alpha, \ldots, \alpha)
+\sum_{n\ge 2}n\mu_n(\alpha, \ldots, \alpha)\\&=&\sum_{n\ge 2}(n-1)\mu_n(\alpha, \ldots, \alpha)\ ,\\
\mathrm{d}^{\mu_1^\alpha}(\mu_n)&=&\dd(\mu_n) 
+\mu_1^\alpha \star \mu_n - (-1)^{n}
\mu_n\star \mu_1^\alpha\\
&=&\sum_{p+q+r=n\atop p+1+r, q\geq 2}(-1)^{pq+r+1} \mu_{p+1+r}\circ_{p+1} \mu_q
\\&&+\sum_{k\ge 2\atop 1\leq i \leq k} 
(-1)^{(k-i)(n+1)}
\mu_k\big(\alpha^{i-1}, \mu_n, \alpha^{k-i}\big)\\
&&
-
\sum_{j=1}^n\sum_{k\ge 2 \atop 1\leq i \leq k} (-1)^{n+k-i}\, \mu_n \circ_j \mu_k\big(\alpha^{i-1}, -, \alpha^{k-i}\big)
\ .
\end{eqnarray*}

\begin{proposition}\label{prop:MorhpAiMCAi}
The assignment $\mu_n\mapsto \mu_n^\alpha$ defines a morphism of complete dg ns operads 
$$\Ai \to \Tw \Ai \ . $$
\end{proposition}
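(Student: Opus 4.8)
The plan is to exhibit $\Ai$ as the free complete dg ns operad $\widehat{\calT}(\mu_2,\mu_3,\ldots)$ on generators $\mu_n$ of arity $n$ and degree $n-2$ with the standard $\Ai$ differential, and to use the universal property of the free operad: a morphism of graded ns operads out of it is uniquely determined by where it sends the generators, and it is a morphism of \emph{dg} operads if and only if it commutes with the differentials \emph{on the generators}. So I first define the morphism $\Phi\colon \Ai\to\Tw\Ai$ of graded complete ns operads by $\Phi(\mu_n)\coloneqq\mu_n^\alpha$, where $\mu_n^\alpha\in\Tw\Ai(n)=\MC\Ai(n)$ is the element defined just above the statement; this is well-defined and filtration-preserving because $\alpha$ lives in $\F_1$ and the series defining $\mu_n^\alpha$ converges in the complete operad $\MC\Ai$, and $\mu_n^\alpha$ has the correct arity $n$ and degree $n-2$.

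Next I verify that $\Phi$ is a chain map, i.e. $\Phi\circ d_{\Ai}=d^{\mu_1^\alpha}\circ\Phi$ on each generator $\mu_n$. The right-hand side, $d^{\mu_1^\alpha}(\mu_n^\alpha)$, must be computed; but this is exactly what \cref{Lem:Diffmu} furnishes. Indeed, the twisted differential on $\Tw\Ai=(\MC\Ai)^{\mu_1^\alpha}$ is $d^{\mu_1^\alpha}=d+\ad_{\mu_1^\alpha}$, so for $n\ge 2$
\[
d^{\mu_1^\alpha}(\mu_n^\alpha)=d(\mu_n^\alpha)+\mu_1^\alpha\star\mu_n^\alpha-(-1)^{|\mu_n^\alpha|}\mu_n^\alpha\star\mu_1^\alpha.
\]
Here I would invoke the key identity from \cref{Lem:Diffmu}, namely \eqref{eqn6}, which expresses $d(\mu_n^\alpha)$ as $\sum_{p+q+r=n,\ p+1+r,q\ge 1}(-1)^{pq+r+1}\mu_{p+1+r}^\alpha\circ_{p+1}\mu_q^\alpha$ \emph{already in $\MC\Ai$ with its original differential}. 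I then need to compare this to the left-hand side $\Phi(d_{\Ai}(\mu_n))=\sum_{p+q+r=n,\ p+1+r,q\ge 2}(-1)^{pq+r+1}\mu_{p+1+r}^\alpha\circ_{p+1}\mu_q^\alpha$: the difference between these two sums is precisely the terms where $q=1$ or $p+1+r=1$ (equivalently $p=r=0$, $q=1$), and a short check shows those extra terms reassemble into $\mu_1^\alpha\star\mu_n^\alpha-(-1)^n\mu_n^\alpha\star\mu_1^\alpha$, using that $\mu_1^\alpha$ is the $n=1$ case $\mu_1^\alpha=\sum_{k\ge 2,\,1\le i\le k}(-1)^{k-i}\mu_k(\alpha^{i-1},-,\alpha^{n-i})$ and that $\star$ in an operad is the sum of the $\circ_i$. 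Thus $d(\mu_n^\alpha)-\Phi(d_{\Ai}\mu_n)=\ad_{\mu_1^\alpha}(\mu_n^\alpha)$, which rearranges to exactly $\Phi(d_{\Ai}\mu_n)=d^{\mu_1^\alpha}(\mu_n^\alpha)$, as desired.

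The main obstacle is bookkeeping: matching the index ranges and signs between the three sums (the $\Ai$ relation, the untwisted differential $d(\mu_n^\alpha)$ as given by \eqref{eqn6}, and the adjoint correction $\ad_{\mu_1^\alpha}$), and confirming that the sign convention $(-1)^{pq+r+1}$ is uniform across the boundary cases $q=1$ and $p+1+r=1$. I would organize this by writing $d^{\mu_1^\alpha}=d+\ad_{\mu_1^\alpha}$, substituting \eqref{eqn6} for $d(\mu_n^\alpha)$, and isolating the $q\ge 2$ and $p+1+r\ge 2$ part (which is $\Phi(d_{\Ai}\mu_n)$) from the remainder (which should cancel against $\ad_{\mu_1^\alpha}(\mu_n^\alpha)$). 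Since \cref{Lem:Diffmu} is already stated (with proof deferred to the appendix) and the extra terms in \eqref{eqn6} beyond the $\Ai$ relation are precisely of the form $\mu_1^\alpha\circ_1\mu_n^\alpha$ and $\mu_n^\alpha\circ_j\mu_1^\alpha$, this comparison is mechanical once the signs are pinned down. Finally, the case $n=1$ (if one wishes to include $\mu_1$, though $\Ai$ as presented starts at $\mu_2$) is covered by \eqref{eqn5}, and the compatibility with $\alpha$ need not be checked since $\alpha$ is not in the image of $\Phi$; this completes the verification that $\Phi$ is a morphism of complete dg ns operads.
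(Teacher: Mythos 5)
Your proposal is correct and follows essentially the same route as the paper: reduce to checking commutativity of the differentials on the generators of the quasi-free operad $\Ai$, substitute Formula~\eqref{eqn6} of \cref{Lem:Diffmu} for $\dd(\mu_n^\alpha)$, and observe that the boundary terms $q=1$ and $p=r=0,\ q=n$ cancel against $\ad_{\mu_1^\alpha}(\mu_n^\alpha)$, leaving exactly the image of the $\Ai$-relation. The only blemish is a sign slip in your intermediate identity --- one actually has $\dd(\mu_n^\alpha)-\Phi(\dd_{\Ai}\mu_n)=-\ad_{\mu_1^\alpha}(\mu_n^\alpha)$, which is what makes the twisted differential $\dd+\ad_{\mu_1^\alpha}$ come out right --- but your stated conclusion and overall bookkeeping plan are the ones the paper carries out.
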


\begin{proof}
The only point to check is the commutativity with the differentials on the generators $\mu_n$ of the quasi-free dg ns operad $\Ai$, that is:
\begin{align*}
\dd^{\mu_1^\alpha}\left(\mu_n^\alpha \right)&=
\dd\left(\mu_n^\alpha \right)+\mu_1^\alpha \star \mu^\alpha_n - (-1)^{n}
\mu^\alpha_n\star \mu_1^\alpha\\ &=
\sum_{p+q+r=n\atop p+1+r, q\geq 1}(-1)^{pq+r+1} \mu_{p+1+r}^\alpha\circ_{p+1} \mu_q^\alpha
+\sum_{p=r=0\atop q=n} \mu_{p+1+r}^\alpha\circ_{p+1} \mu_q^\alpha\\&\ \ \ \ \ 
-(-1)^n\sum_{p+1+r=n \atop q= 1} \mu_{p+1+r}^\alpha\circ_{p+1} \mu_q^\alpha\\&=
\sum_{p+q+r=n\atop p+1+r, q\geq 2}(-1)^{pq+r+1} \mu_{p+1+r}^\alpha\circ_{p+1} \mu_q^\alpha \ ,
\end{align*}
thanks to Formula~(\ref{eqn5}) of Lemma~\ref{Lem:Diffmu}. 
\end{proof}

These operadic results actually
provide us with an alternative proof of  Proposition~\ref{prop:TwCurvedGauge} as follows. The data of a complete $\Ai$-algebra structure  together with a Maurer--Cartan element $a$ on a complete dg module  $(A, \F, d)$ is equivalent to a morphism of complete dg ns operads 
$\rho\ : \ \MC\Ai \to \End_{(A,d)}$, where $\rho(\mu_n)=m_n$ and $\rho(\alpha)=a$ under the previous notations. 
By Proposition~\ref{prop:MorphMCOperadic}, the image of the Maurer--Cartan element $\mu_1^\alpha$ of $\MC\Ai$ under the morphism $\rho$ gives a Maurer--Cartan element on the complete endomorphism ns operad $\eend_A$.
As emphasised above, the complete endomorphism operad twisted by this Maurer--Cartan element is equal to the complete endomorphism operad of the twisted chain complex $(A, d^a)$. 
 Therefore, the second point of Proposition~\ref{prop:MorphMCOperadic} shows that 
$$\widetilde\rho\ : \  \Tw\Ai \to \eend_{(A, d^a)}$$
is a morphism of complete dg ns operads. Pulling back with the aforementioned morphism of complete dg ns operads $\Ai \to \Tw \Ai$, one gets  that the twisted operations $m_n^a$ do form an $\Ai$-algebra structure. 

\begin{remark}
Notice however that the morphism of complete ns operads from $\Ai$ to $\MC \Ai$ which sends the generators $\mu_n$ to $\mu_n^\alpha$ \emph{does not} commute with the differentials. There is a conceptual  reason for this: the twisted operations form an $\Ai$-algebra only with the twisted differential and not with the underlying differential. 
\end{remark}

There is a morphism of complete dg ns operads 
$\Tw \Ai \to \Ai$ which sends $\alpha$ to $0$ and $\mu_n$ to $\mu_n$. On the algebra level, this corresponds to twisting an $\Ai$-algebra with the trivial Maurer--Cartan element. 

\section{Twisting multiplicative ns operads}\label{subsec=TwMultiOp}
The following notion arose from the study of the Deligne conjecture on the Hochschild cochain complex, see \cite{McClureSmith02} for instance. 

\begin{definition}[Multiplicative ns operad]
A \emph{multiplicative ns operad} is a complete dg ns operad $\calP$ equip\-ped with a morphism of complete dg ns operads $\calA_\infty \to \calP$. We still  denote by $\mu_2$, $\mu_3$, etc. the images in $\calP$ of the generating operations of $\calA_\infty$. 
\end{definition}

Therefore any complete dg algebra over a multiplicative ns operad acquires a natural complete $\calA_\infty$-algebra structure. \\

The categorical coproduct of two ns operads is denoted by $\calP\vee \calQ$ (respectively by $\calP\hat{\vee} \calQ$ in the complete case). It is given by the free ns operad on the underlying $\mathbb{N}$-modules of $\calP$ and $\calQ$ modulo the ideal generated by planar trees with two vertices labelled both by elements of $\calP$ or  by elements of $\calQ$. Thus planar trees with vertices labelled alternatively by elements from $\calP$ and $\calQ$ are representatives for this coproduct. Its operadic composition is given by the grafting of planar trees, followed possibly by the composite of two adjacent vertices labelled by elements from the same ns operad.

By a slight abuse of notation, we simply denote by $\alpha$ the (trivial) complete ns operad $(\k \alpha, 0,\ldots)$ spanned by the arity $0$ element $\alpha$. Thus  $\calP\hat{\vee} \alpha$ denotes  the complete coproduct of a complete ns operad $\calP$ with it. As a consequence of the above-mentioned description, this coproduct is made up of series indexed, by $n\in \mathbb{N}$, of linear combinations of operations from $\calP$ with $n$ copies of $\alpha$  plugged at their inputs. 

\begin{proposition}\label{prop:MCP}
Let $\calA_\infty \to \calP$ be a multiplicative ns operad. 
The data of a complete $\calP$-algebra structure together with a Maurer--Cartan element is encoded by the complete dg ns operad 
$$\MC\calP\coloneqq\left(
\calP\hat{\vee} \alpha, \dd
\right) \ , $$
where $\alpha$ is a degree $-1$ element of arity $0$ placed in $F_1$ and where $\hat{\vee}$ stands for the coproduct of complete ns operads, and where the differential $\mathrm{d}$ is characterized by 
\begin{eqnarray*} 
&&\dd(\alpha)\coloneqq-\sum_{n\ge 2}\mu_n(\alpha, \ldots, \alpha) \ ,\\
&&\dd(\nu)\coloneqq d_\calP(\nu)\ , \ \text{for} \ \nu\in \calP\ .
\end{eqnarray*}
\end{proposition}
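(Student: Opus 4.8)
The plan is to mimic the structure of the proof of Proposition \ref{prop:MCAi}, replacing the free ns operad $\widehat{\calT}(\alpha,\mu_2,\mu_3,\ldots)$ with the complete coproduct $\calP\,\hat{\vee}\,\alpha$. First I would verify that the prescribed formulas for $\dd$ actually determine a well-defined degree $-1$ derivation of the complete ns operad $\calP\,\hat{\vee}\,\alpha$. By the universal property of the coproduct, a derivation of $\calP\,\hat{\vee}\,\alpha$ is freely determined by its restriction to the two factors $\calP$ and $\alpha$: on $\calP$ we take the given differential $d_\calP$, and on the one-dimensional generator $\alpha$ we take $\dd(\alpha)\coloneqq -\sum_{n\ge 2}\mu_n(\alpha,\ldots,\alpha)$, which is a convergent series in $\calP\,\hat{\vee}\,\alpha$ because each summand $\mu_n(\alpha,\ldots,\alpha)$ lies in $\F_n$ (the element $\alpha$ being placed in $\F_1$), so Lemma \ref{lem:Conv} applies. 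This produces a unique derivation, and it is filtered by construction.

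Next I would check $\dd^2=0$. On $\calP$ this is immediate since $d_\calP^2=0$ and $d_\calP$ is a derivation of $\calP$. The only real computation is $\dd^2(\alpha)=0$: expanding $\dd\bigl(-\sum_{n\ge 2}\mu_n(\alpha,\ldots,\alpha)\bigr)$ using that $\dd$ is a derivation gives two kinds of terms, those where $\dd$ hits one of the $\mu_n$ (producing $d_\calP(\mu_n)$, which by the $\calA_\infty$-relations equals $\sum_{p+q+r=n}(-1)^{pq+r+1}\mu_{p+1+r}\circ_{p+1}\mu_q$ since $\calA_\infty\to\calP$ is a morphism of dg ns operads) and those where $\dd$ hits one of the inserted $\alpha$'s (producing $-\mu_n(\alpha,\ldots,\dd(\alpha),\ldots,\alpha)=\mu_n(\alpha,\ldots,\sum_k\mu_k(\alpha,\ldots,\alpha),\ldots,\alpha)$). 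These two families cancel by exactly the same bookkeeping as in the arity $0$ component of the $\calA_\infty$-relations; in fact this is formally the computation already carried out for $\dd^2(\alpha)=0$ in Proposition \ref{prop:MCAi}, pushed forward along $\calA_\infty\to\calP$, so I would simply invoke that. (Alternatively one can cite Proposition \ref{prop:DGAction} applied to $\id\colon\As^{\ac}\to\As^{\ac}$, as was done for $\MC\calA_\infty$.)

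Finally I would identify complete $\MC\calP$-algebra structures. By Theorem \ref{thm:CompleteAlg}, a complete $\MC\calP$-algebra structure on a complete dg module $(A,\F,d)$ is a morphism of filtered dg ns operads $\MC\calP\to\eend_A$. By the universal property of $\hat{\vee}$, such a morphism is the same as a pair consisting of a morphism of complete dg ns operads $\rho\colon\calP\to\eend_A$ (i.e.\ a complete $\calP$-algebra structure, which in particular endows $A$ with a complete $\calA_\infty$-algebra structure via $\calA_\infty\to\calP\xrightarrow{\rho}\eend_A$) together with the image $a\coloneqq\rho(\alpha)\in\F_1 A_{-1}$ of the generator $\alpha$, subject to the single compatibility condition that $\rho$ commute with the differentials on $\alpha$. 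Writing out $\rho(\dd\alpha)=d(\rho(\alpha))$ gives precisely $d(a)+\sum_{n\ge2}m_n(a,\ldots,a)=0$, the Maurer--Cartan equation for $a$ in the $\calA_\infty$-algebra $(A,d,m_2,m_3,\ldots)$. Hence $\MC\calP$-algebra structures correspond bijectively and naturally to complete $\calP$-algebra structures on $A$ together with a Maurer--Cartan element, which is the claim. The main obstacle is the sign and indexing bookkeeping in the verification $\dd^2(\alpha)=0$, but since this is the arity $0$ part of the $\calA_\infty$-axioms transported along a fixed operad morphism, it can be deferred to (or read off from) the corresponding step in Proposition \ref{prop:MCAi}.
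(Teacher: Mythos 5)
Your proposal is correct and follows essentially the same route as the paper: the paper's proof obtains the square-zero derivation by citing Proposition \ref{prop:DGAction} applied to the map $\As^{\ac}\to\calP$ induced by the multiplicative structure (note that your parenthetical alternative cites it for the identity map, which is the version used for $\MC\Ai$; here one applies it to $\As^{\ac}\to\calP$), and treats the identification of $\MC\calP$-algebras as immediate from the universal property of the complete coproduct. Your explicit verification of $\dd^2(\alpha)=0$ by transporting the $\MC\Ai$ computation along $\Ai\to\calP$ is a valid unpacking of that citation.
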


\begin{proof}The fact that the map $\dd$ extends to a unique square-zero  derivation is direct corollary of 
\cref{prop:DGAction} applied to the map $\As^{\ac}\to \calP$ induced by the multiplicative ns operad structure.
The rest of the proof is straightforward from the definition of the coproduct of complete ns operads. 
\end{proof}

\begin{lemma}
The element $\mu_1^\alpha$ is a Maurer--Cartan element of the complete dg ns operad $\MC\calP$. 
\end{lemma}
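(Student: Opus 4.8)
The plan is to obtain this statement by transporting, along a morphism of operads, the analogous fact already established for the ``universal'' operad $\MC\Ai$ in \cref{Lem:Diffmu}. Concretely, the multiplicative structure $\calA_\infty \to \calP$ extends to a morphism of complete graded ns operads
\[
\Phi\ :\ \MC\Ai = \big(\widehat\calT(\alpha, \mu_2, \mu_3,\ldots),\, \dd\big) \longrightarrow \MC\calP = \big(\calP\,\hat{\vee}\,\alpha,\, \dd\big)\ ,
\]
characterised on the generators of the quasi-free source operad by $\Phi(\alpha)\coloneqq\alpha$ and by sending each $\mu_n$, for $n\ge 2$, to its image in $\calP$; this assignment is filtered, since $\alpha$ sits in $F_1$ on both sides and each $\mu_n$ sits in $F_0$, so it does extend to $\widehat\calT$.

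The first step is then the routine verification that $\Phi$ commutes with the two differentials. Since $\MC\Ai$ is quasi-free, it suffices to check this on generators. On $\alpha$ both sides give $-\sum_{n\ge 2}\mu_n(\alpha,\ldots,\alpha)$ by the defining formulas of \cref{prop:MCAi} and \cref{prop:MCP}; on $\mu_n$ one uses that $\calA_\infty\to\calP$ is a morphism of dg ns operads, so that $d_\calP(\mu_n)$ equals the same sum $\sum_{p+q+r=n,\ p+1+r,q\ge 2}(-1)^{pq+r+1}\mu_{p+1+r}\circ_{p+1}\mu_q$ that defines $\dd$ on the corresponding generator of $\MC\Ai$. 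Hence $\Phi$ is a morphism of complete dg ns operads, and, being a morphism of operads, it commutes with the partial composition products; therefore it sends the element $\mu_1^\alpha\in\MC\Ai$ to the identically-named element $\mu_1^\alpha\in\MC\calP$ --- which, in both cases, is an arity $1$, degree $-1$ element whose defining series converges because $\alpha\in F_1$.

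It then remains to conclude: by \cref{Lem:Diffmu}, Equation~\eqref{eqn5}, the element $\mu_1^\alpha$ is a Maurer--Cartan element of $\MC\Ai$, and applying \cref{prop:MorphMCOperadic} (which holds equally for complete dg ns operads) to the morphism $\Phi$ shows that $\mu_1^\alpha=\Phi(\mu_1^\alpha)$ is a Maurer--Cartan element of $\MC\calP$. No genuine obstacle arises here; the only point that requires care is the well-definedness and the dg-compatibility of $\Phi$, and even that is immediate from the two presentations once one recalls that $\calA_\infty\to\calP$ is a dg operad morphism. Alternatively, one could reprove the identity $\dd(\mu_1^\alpha)=-\mu_1^\alpha\circ_1\mu_1^\alpha$ directly inside $\calP\,\hat{\vee}\,\alpha$ by the same bookkeeping as for $\MC\Ai$, but the functorial argument spares us repeating it.
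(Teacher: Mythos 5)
Your proposal is correct and follows essentially the same route as the paper: the paper likewise observes that the multiplicative structure $\Ai\to\calP$ induces a morphism $\MC\Ai\to\MC\calP$ and then invokes Proposition~\ref{prop:MorphMCOperadic} together with the fact that $\mu_1^\alpha$ is a Maurer--Cartan element of $\MC\Ai$. You merely spell out in more detail the compatibility of this induced morphism with the differentials, which the paper leaves implicit.
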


\begin{proof}
The morphism of complete dg ns operads $\calA_\infty \to \calP$ induces a morphism of complete dg ns operads $\MC\calA_\infty \to \MC\calP$ and we know, by Proposition~\ref{prop:MorphMCOperadic}, that the image of an operadic Maurer--Cartan element is again an operadic Maurer--Cartan element. 
\end{proof}

\begin{definition}[Twisted multiplicative ns operad]
Let $\calA_\infty \to \calP$ be a multiplicative ns operad. 
The complete dg ns operad obtained by twisting the operad $\MC \calP$ by the Maurer--Cartan $\mu_1^\alpha$ is called the 
 \emph{twisted complete ns operad} and denoted by 
$$\Tw\calP\coloneqq \left(\MC \calP\right)^{\mu_1^\alpha}=
\left(
\calP\hat{\vee} \alpha, \dd^{\mu_1^\alpha}
\right) \ . $$
\end{definition}

The twisted differential is actually equal to
\begin{eqnarray*}
\mathrm{d}^{\mu_1^\alpha}(\alpha)&=&\dd(\alpha)+\mu_1^\alpha(\alpha)= -\sum_{n\ge 2}\mu_n(\alpha, \ldots, \alpha)
+\sum_{n\ge 2}n\mu_n(\alpha, \ldots, \alpha)\\&=&\sum_{n\ge 2}(n-1)\mu_n(\alpha, \ldots, \alpha)\ ,\\ \mathrm{d}^{\mu_1^\alpha}(\nu)&=&d_\calP(\nu) 
+\mu_1^\alpha \star \nu - (-1)^{|\nu|}
 \nu\star \mu_1^\alpha\\
 &=&d_\calP(\nu) 
+
\sum_{n\ge 2\atop 1\leq i \leq n} 
(-1)^{(n-i)(|\nu|+1)}
\mu_n\big(\alpha^{i-1}, \nu, \alpha^{n-i}\big)\\
&&
-
\sum_{j=1}^k\sum_{n\ge 2 \atop 1\leq i \leq n} (-1)^{|\nu|+n-i}\, \nu \circ_j \mu_n\big(\alpha^{i-1}, -, \alpha^{n-i}\big)
\ ,
\end{eqnarray*}
for $\nu \in \calP(k)$. 

\begin{proposition}
Any complete dg $\calP$-algebra $(A, d)$ with a given Maurer--Cartan element $a$ gives a complete $\Tw \calP$-algebra with underlying twisted differential $(A, d^a)$.This assignment 
defines a functor $\MC \calP\emph{\textsf{-alg}} \to \Tw \calP\emph{\textsf{-alg}}$, 
which is an isomorphism of categories.
\end{proposition}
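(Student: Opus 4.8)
The plan is to exploit that $\Tw\calP=(\MC\calP)^{\mu_1^\alpha}$ and $\MC\calP$ have the \emph{same} underlying complete ns operad $\calP\hat\vee\alpha$, their differentials differing only by the inner derivation $[\mu_1^\alpha,-]$, and to transport this identity through the endomorphism operad by means of \cref{prop:MorphMCOperadic} and \cref{prop:-MC}, which hold for complete dg ns operads as well.

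First I would describe the functor on objects. By \cref{thm:CompleteAlg} and \cref{prop:MCP}, a complete $\MC\calP$-algebra structure on a complete dg module $(A,\F,d)$ is the same as a morphism of complete dg ns operads $\rho\colon\MC\calP\to\eend_{(A,d)}$, equivalently a complete $\calP$-algebra structure on $A$ together with a Maurer--Cartan element $a=\rho(\alpha)\in\F_1 A_{-1}$. By the lemma above, $\mu_1^\alpha$ is an operadic Maurer--Cartan element of $\MC\calP$; hence \cref{prop:MorphMCOperadic} gives that $m\coloneqq\rho(\mu_1^\alpha)\in\eend_{(A,d)}(1)_{-1}$ is an operadic Maurer--Cartan element and that $\rho$ induces a morphism of complete dg ns operads
\[
\widetilde\rho\colon\ \Tw\calP=(\MC\calP)^{\mu_1^\alpha}\ \longrightarrow\ \eend_{(A,d)}^{\,m}=\eend_{(A,d+m)}\ ,
\]
the last equality being the example $\End_{(A,d)}^{m}=\End_{(A,d+m)}$ read in the complete setting. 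Unwinding the definition of $\mu_1^\alpha$ and comparing with the arity-$1$ twisted operation of \cref{prop:TwCurvedGauge} shows $d+m=d^a=m_1^a$, so that $\widetilde\rho$ is precisely a complete $\Tw\calP$-algebra structure on $(A,d^a)$. This is the assignment of the statement, and it is functorial: the underlying map of $\widetilde\rho$ coincides with that of $\rho$, and any morphism $f$ of $\MC\calP$-algebras commutes with $d$ and with the $\calP\hat\vee\alpha$-action, hence with $d^a=d+m$, so $f$ is also a morphism of $\Tw\calP$-algebras.

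Next I would construct the inverse functor by running the same argument backwards with \cref{prop:-MC}: $-\mu_1^\alpha$ is an operadic Maurer--Cartan element of $\Tw\calP$ and $(\Tw\calP)^{-\mu_1^\alpha}=\MC\calP$. Thus a complete $\Tw\calP$-algebra on $(A,\F,d')$, i.e.\ a morphism $\sigma\colon\Tw\calP\to\eend_{(A,d')}$, produces via \cref{prop:MorphMCOperadic} a morphism $\widetilde\sigma\colon\MC\calP\to\eend_{(A,d')}^{-\sigma(\mu_1^\alpha)}=\eend_{(A,d)}$ with $d\coloneqq d'-\sigma(\mu_1^\alpha)$, that is a complete $\MC\calP$-algebra on $(A,d)$. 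Since every induced morphism has the same underlying map as the morphism it comes from, and since $(d+m)-m=d$ while $(d'-\sigma(\mu_1^\alpha))+\sigma(\mu_1^\alpha)=d'$, these two assignments are mutually inverse on objects, and the argument on morphisms is symmetric; hence the functor is an isomorphism of categories.

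The main point that is not purely formal is the identification $d+\rho(\mu_1^\alpha)=d^a$, i.e.\ matching the operadic twisting by $\mu_1^\alpha$ with the algebraic twisting formulas of \cref{prop:TwCurvedGauge} (Koszul signs included). This is exactly what is spelled out for $\calP=\Ai$ in the discussion following \cref{prop:MorhpAiMCAi}, and it carries over verbatim for an arbitrary multiplicative ns operad $\calA_\infty\to\calP$. Once \cref{prop:MorphMCOperadic} and \cref{prop:-MC} are granted in the complete dg ns setting, the rest of the proof is formal.
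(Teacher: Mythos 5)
Your proposal is correct and follows essentially the same route as the paper: encode the $\MC\calP$-algebra as a morphism $\MC\calP\to\eend_{(A,d)}$, push the operadic Maurer--Cartan element $\mu_1^\alpha$ through via \cref{prop:MorphMCOperadic} to land in $\eend_{(A,d^a)}$, and invert by twisting back with $-\mu_1^\alpha$ using \cref{prop:-MC}. Your write-up is somewhat more explicit than the paper's (notably on functoriality for morphisms and on matching $d+\rho(\mu_1^\alpha)$ with $d^a$), but the argument is the same.
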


\begin{proof}
Let us reproduce here the argument given above in the $\Ai$ case. The data of a complete $\calP$-algebra structure on  $(A,\F, d)$ with a Maurer--Cartan element $a$ is equivalent to a morphism of complete dg ns operads $\MC \calP\to \eend_{(A,d)}$. Since the element $\mu_1^\alpha$ is a Maurer--Cartan element in the complete ns operad $\calP$, we get a morphism between the twisted complete dg ns operads $\Tw \calP \to \End_{(A,d+m_1^a)}$. In the other way round, one uses the exact same arguments but starting with the Maurer--Cartan element $-\mu_1^\alpha$ of the complete dg ns operad $\Tw \calP$, see Proposition~\ref{prop:-MC}. This shows that a complete algebra structure over the complete dg ns operad $\Tw \calP \to \End_{(A,d)}$
induces a complete algebra structure over the complete dg ns operad 
$$\MC \calP=\left(\Tw\calP\right)^{-\mu_1^\alpha} \to \End_{(A,d-m_1^a )}\ , $$
by Proposition~\ref{prop:-MC}. These two functors $\MC \calP\textsf{-alg} \to \Tw \calP\textsf{-alg}$ and 
$\Tw \calP\textsf{-alg} \to \MC \calP\textsf{-alg}$ are  inverse to each other.
\end{proof}

\begin{proposition}\label{prop:MorhpAiMCP}
The assignment $\mu_n\mapsto \mu_n^\alpha$ defines a morphism of complete dg ns operads 
$$\Ai \to \Tw \calP \ . $$
\end{proposition}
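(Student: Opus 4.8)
The plan is to reduce the statement to the already-established case of the twisted $\Ai$-operad, \cref{prop:MorhpAiMCAi}, using functoriality of both the $\MC$-construction and of operadic twisting. First I would use that the multiplicative structure on $\calP$ is by definition a morphism of complete dg ns operads $\Ai \to \calP$. Applying to it the construction of \cref{prop:MCP} — forming the coproduct with the free arity-$0$ generator $\alpha$ and twisting the differential by $\dd(\alpha)=-\sum_{n\ge 2}\mu_n(\alpha,\ldots,\alpha)$ — produces a morphism of complete dg ns operads $\MC\Ai \to \MC\calP$ that is the identity on $\alpha$ and sends each generator $\mu_n$ to its image in $\calP$. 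One has to check compatibility with the differentials, but this is immediate: $\Ai\to\calP$ is already a dg morphism, and the defining formula for $\dd$ on $\alpha$ is literally the same on both sides. All of this is legitimate in the complete setting thanks to the monoidal framework of \cref{sec:OptheoyFilMod}.

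Next I would observe that this morphism $\MC\Ai \to \MC\calP$ carries the operadic Maurer--Cartan element $\mu_1^\alpha$ of $\MC\Ai$ to the operadic Maurer--Cartan element $\mu_1^\alpha$ of $\MC\calP$ — either directly, since $\mu_1^\alpha$ is given by the same universal formula in the $\mu_n$ and $\alpha$ on both sides, or as an instance of \cref{prop:MorphMCOperadic}. Hence, by \cref{prop:MorphMCOperadic} again, it descends to a morphism of the twisted operads $\Tw\Ai=(\MC\Ai)^{\mu_1^\alpha} \to (\MC\calP)^{\mu_1^\alpha}=\Tw\calP$, still acting as the identity on $\alpha$ and on the operations $\mu_k$. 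Composing with the morphism $\Ai \to \Tw\Ai$, $\mu_n\mapsto\mu_n^\alpha$, of \cref{prop:MorhpAiMCAi} gives a morphism $\Ai \to \Tw\calP$; tracking the generators, $\mu_n$ is sent to the image of $\mu_n^\alpha\in\Tw\Ai$, which is exactly $\mu_n^\alpha\in\Tw\calP$ since the formula defining $\mu_n^\alpha$ only involves the $\mu_k$ and $\alpha$, all of which are preserved. This is the desired morphism.

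As a more hands-on alternative worth recording, one can argue directly: since $\Ai$ is quasi-free on the $\mu_n$, it suffices to check that $\mu_n\mapsto\mu_n^\alpha$ intertwines differentials on the generators, i.e. that $\dd^{\mu_1^\alpha}(\mu_n^\alpha)=\sum_{p+q+r=n,\ p+1+r,\,q\ge2}(-1)^{pq+r+1}\mu_{p+1+r}^\alpha\circ_{p+1}\mu_q^\alpha$ holds in $\Tw\calP$. This is the identical computation to the one in the proof of \cref{prop:MorhpAiMCAi}: it uses only the relations of \cref{Lem:Diffmu}, which hold verbatim in $\MC\calP$ because they are the images there of the corresponding identities in $\MC\Ai$ under the morphism built in the first step. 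I do not expect a genuine obstacle here; the whole argument is bookkeeping with functoriality, and the only point needing a moment's care is that the coproduct-with-$\alpha$ and the twisting construction are honestly functorial in the complete dg ns operad setting, which the earlier sections provide.
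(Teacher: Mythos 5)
Your proposal is correct and follows essentially the same route as the paper: the morphism $\MC\Ai\to\MC\calP$ induced by the multiplicative structure yields, via \cref{prop:MorphMCOperadic}, a morphism $\Tw\Ai\to\Tw\calP$, which is then precomposed with the map $\Ai\to\Tw\Ai$ of \cref{prop:MorhpAiMCAi}. The additional direct verification you sketch is not needed but is consistent with the functorial argument.
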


\begin{proof}
The aforementioned morphism $\MC\calA_\infty \to \MC\calP$ of complete dg ns operads induces a morphism between the twisted complete dg ns operads $\Tw\calA_\infty \allowbreak \to \Tw\calP$ by Proposition~\ref{prop:MorphMCOperadic}. It just remains to pull back by the map 
$\Ai \to \Tw\Ai$ of Proposition~\ref{prop:MorhpAiMCAi}.
\end{proof}

This proposition shows that the upshot of the operadic twisting procedure gives again a multiplicative ns operad. From now on, we will call the category of complete
dg ns operads under $\Ai$ the \emph{category of multiplicative ns operads}.

\begin{lemma}
The twisting procedure defines an endofunctor on the category of multiplicative ns operads.
\end{lemma}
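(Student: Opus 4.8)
The plan is to verify that the assignment $\calP \mapsto \Tw\calP$ respects the structure of an object under $\Ai$ and that it is functorial in a way compatible with these structure maps. First I would recall that a multiplicative ns operad is, by the definition just adopted, precisely an object $\Ai \to \calP$ of the undercategory $\Ai / (\text{complete dg ns operads})$. So I must produce, for each such object, a canonical object $\Ai \to \Tw\calP$, and for each morphism of multiplicative ns operads a morphism of the twisted ones commuting with these structure maps.

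The object part is already in hand: \cref{prop:MorhpAiMCP} exhibits the morphism $\Ai \to \Tw\calP$, $\mu_n \mapsto \mu_n^\alpha$, so $\Tw\calP$ is canonically a multiplicative ns operad. For the morphism part, let $\varphi : \calP \to \calQ$ be a morphism of multiplicative ns operads, i.e. a morphism of complete dg ns operads such that the triangle with the two maps from $\Ai$ commutes. Then $\varphi$ induces a morphism $\MC\varphi : \MC\calP = \calP \,\hat\vee\, \alpha \to \calQ \,\hat\vee\, \alpha = \MC\calQ$ by the universal property of the coproduct (send $\alpha$ to $\alpha$ and use $\varphi$ on $\calP$); one checks it commutes with the differentials $\dd$ of \cref{prop:MCP} because $\varphi$ preserves the $\mu_n$, hence preserves the defining formula $\dd(\alpha) = -\sum_{n\ge 2}\mu_n(\alpha,\dots,\alpha)$, and is a chain map on $\calP$. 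Since $\MC\varphi$ sends the operadic Maurer--Cartan element $\mu_1^\alpha$ of $\MC\calP$ to that of $\MC\calQ$ (both being built from the $\mu_n$, which are matched by $\varphi$), \cref{prop:MorphMCOperadic} applies and gives a morphism of twisted complete dg ns operads
\[
\Tw\varphi \ : \ \Tw\calP = (\MC\calP)^{\mu_1^\alpha} \ \longrightarrow\ (\MC\calQ)^{\mu_1^\alpha} = \Tw\calQ\ .
\]
Functoriality, $\Tw(\psi\varphi) = \Tw\psi\,\Tw\varphi$ and $\Tw(\id) = \id$, is immediate from the functoriality of the coproduct $-\,\hat\vee\,\alpha$ and of the twisting construction $(-)^\mu$ on the underlying operad, both of which only change the differential, not the composition maps.

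It then remains to check that $\Tw\varphi$ is a morphism of multiplicative ns operads, i.e. that the square with the structure maps $\Ai \to \Tw\calP$ and $\Ai \to \Tw\calQ$ of \cref{prop:MorhpAiMCP} commutes. On the generator $\mu_n$ of $\Ai$ one side gives $\Tw\varphi(\mu_n^\alpha)$ computed in $\Tw\calP$; since $\Tw\varphi$ acts as $\varphi$ on $\calP$ and fixes $\alpha$, and $\mu_n^\alpha$ is a universal expression in the $\mu_n$ and $\alpha$, this equals the corresponding expression $\mu_n^\alpha$ in $\Tw\calQ$ built from $\varphi(\mu_n)$; but $\varphi(\mu_n)$ is exactly the $\mu_n$ of $\calQ$ because $\varphi$ is a morphism of multiplicative ns operads. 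Hence the square commutes. The only point requiring a little care — and the one I expect to be the main (mild) obstacle — is the bookkeeping that $\MC\varphi$ genuinely intertwines the two differentials $\dd$ and then, after twisting, the two differentials $\dd^{\mu_1^\alpha}$; but this is precisely the content of \cref{prop:MorphMCOperadic} once one observes that the Maurer--Cartan elements $\mu_1^\alpha$ are natural in $\calP$, so no new computation beyond that lemma is needed. I would therefore present the proof as a short assembly of \cref{prop:MCP}, \cref{prop:MorphMCOperadic}, and \cref{prop:MorhpAiMCP}, with the functoriality statements reduced to the evident functoriality of $-\,\hat\vee\,\alpha$ and of operadic twisting.
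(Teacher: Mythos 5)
Your proposal is correct and follows essentially the same route as the paper: the object part is \cref{prop:MorhpAiMCP}, and $\Tw\varphi$ is defined by fixing $\alpha$ and applying $\varphi$ on $\calP$, with functoriality being automatic. The paper states the compatibility with the (twisted) differentials without comment, whereas you justify it by factoring through $\MC\varphi$ and \cref{prop:MorphMCOperadic} and you additionally verify commutation with the $\Ai$-structure maps — both are welcome elaborations of the same argument rather than a different proof.
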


\begin{proof}
Proposition~\ref{prop:MorhpAiMCP} shows that the result of the operadic twisting lives in the category of multiplicative ns operads. 
Given a morphism$$\xymatrix{&\Ai\ar[dl]\ar[dr]&\\
\calP\ar[rr]^f &&\calQ}$$
of multiplicative ns operads, we define a morphism of complete dg ns operads $\Tw f :  \Tw\calP \to \Tw\calQ$ by $
\alpha \mapsto \alpha$ and 
$\nu \mapsto f(\nu)$, for $\nu \in \calP$.
Then, the compatibility relations $\Tw \id_\calP=  \id_{\Tw\calP}$ and  $\Tw (f\circ g)=\Tw f \circ \Tw g$ are automatic. 
\end{proof}

\begin{lemma}\label{Lemma=TwTw}
Let $\calP$ be a multiplicative ns operad. The complete dg ns operad $\Tw (\Tw \calP)$ is isomorphic to 
$$\Tw \big(\Tw \calP\big)\cong  \big(\calP\hat{\vee} \, \alpha\, \hat{\vee}\,  \beta,  \dd+\ad_{\mu_1^{\alpha+\beta}},  \left\{ \circ_i \right\}\big)\ .$$
\end{lemma}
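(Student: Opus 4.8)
The plan is to realise the claimed isomorphism as the canonical identification of underlying operads, and then to check it intertwines the two differentials — the point being that the ``non-$\ad$'' part $\dd$ is the obvious residual differential, while the ``$\ad$'' part combines, by additivity of twisting, into $\ad_{\mu_1^{\alpha+\beta}}$. First I would pin down the underlying complete ns operad of $\Tw(\Tw\calP)$. By definition $\Tw(\Tw\calP)=\bigl(\MC(\Tw\calP)\bigr)^{\mu_1^\beta}$, where $\beta$ is the arity-zero generator adjoined by the $\MC$-construction of \cref{prop:MCP} applied to the multiplicative ns operad $\Tw\calP$; its underlying operad is that of $\MC(\Tw\calP)=\Tw\calP\,\hat\vee\,\beta$, and since the underlying operad of $\Tw\calP$ is $\calP\,\hat\vee\,\alpha$ and the coproduct $\hat\vee$ of complete ns operads is associative, this is $\calP\,\hat\vee\,\alpha\,\hat\vee\,\beta$, as asserted. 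A degree $-1$ derivation of $\calP\,\hat\vee\,\alpha\,\hat\vee\,\beta$ being determined by its values on $\calP$, on $\alpha$ and on $\beta$, the proof is reduced to comparing the two differentials on these generators.

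Next I would unwind the differential of $\Tw(\Tw\calP)$ by peeling off the two twists. Write $\mu_n^\alpha\in\Tw\calP$ for the multiplicative generators supplied by \cref{prop:MorhpAiMCP}. The construction of \cref{prop:MCP} attaches $\beta$ with $\dd(\beta)=-\sum_{n\ge2}\mu_n^\alpha(\beta,\dots,\beta)$ while restricting to $d_{\Tw\calP}$ on $\calP\,\hat\vee\,\alpha$, and then one twists by the operadic Maurer--Cartan element $\nu_1^\beta:=\sum_{n\ge2}\sum_{i=1}^n(-1)^{n-i}\mu_n^\alpha\bigl(\beta^{i-1},-,\beta^{n-i}\bigr)$, the ``$\mu_1$-for-$\beta$'' of $\MC(\Tw\calP)$. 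Since $\ad$ is additive and $d_{\Tw\calP}=\dd+\ad_{\mu_1^\alpha}$ on $\calP\,\hat\vee\,\alpha$ with $\dd$ the differential of $\MC\calP$, this yields, on all of $\calP\,\hat\vee\,\alpha\,\hat\vee\,\beta$,
\[\partial_{\Tw(\Tw\calP)}=\dd+\ad_{\mu_1^\alpha}+\ad_{\nu_1^\beta}=\dd+\ad_{\mu_1^\alpha+\nu_1^\beta},\]
where now $\dd:=\partial_{\MC(\Tw\calP)}-\ad_{\mu_1^\alpha}$ is the residual derivation: it restricts to $d_\calP$ on $\calP$, sends $\alpha$ to $-\sum_{n\ge2}\mu_n(\alpha,\dots,\alpha)$, and sends $\beta$ to $-\sum_{n\ge2}\mu_n\bigl((\alpha+\beta)^{\otimes n}\bigr)+\sum_{n\ge2}\mu_n(\alpha^{\otimes n})$, a computation entirely parallel to the displayed formula for $\dd^{\mu_1^\alpha}$ in \cref{subsec=TwMultiOp}.

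The crucial input is then the operadic incarnation of \cref{cor:twa+b}, namely the single identity
\[\mu_1^\alpha+\nu_1^\beta=\mu_1^{\alpha+\beta}:=\sum_{n\ge2}\sum_{i=1}^n(-1)^{n-i}\mu_n\bigl((\alpha+\beta)^{i-1},-,(\alpha+\beta)^{n-i}\bigr)\]
in $(\calP\,\hat\vee\,\alpha\,\hat\vee\,\beta)(1)_{-1}$, together with its higher-arity companion: the $\beta$-twist of $\mu_n^\alpha$ equals $\mu_n^{\alpha+\beta}$. Granting this, $\partial_{\Tw(\Tw\calP)}=\dd+\ad_{\mu_1^{\alpha+\beta}}$, which is exactly the asserted presentation, so the identity map on $\calP\,\hat\vee\,\alpha\,\hat\vee\,\beta$ is an isomorphism of complete dg ns operads, automatically compatible with the multiplicative structures (it sends $\mu_n^\alpha\in\Ai\to\Tw(\Tw\calP)$ to $\mu_n^{\alpha+\beta}$). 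Morally $\mu_1^\alpha+\nu_1^\beta=\mu_1^{\alpha+\beta}$ simply says that twisting by $\alpha$ and then by $\beta$ is twisting by $\alpha+\beta$, and it ultimately rests on $\mathrm{BCH}(\alpha,\beta)=\alpha+\beta$ for arity-zero elements (\cref{lem:subGaugeGroup}); a direct verification is also short, by expanding $\mu_n^\alpha$ inside $\nu_1^\beta$ into a sum over words $w$ in $\alpha$ and $\beta$ carrying one distinguished insertion slot and checking that each monomial $\mu_N(w)$ is produced with multiplicity equal to the number of $\alpha$-letters of $w$ and with the correct sign. This last step — tracking how blocks of $\alpha$'s and $\beta$'s interleave under the iterated twisting formula, together with the Koszul signs incurred when inserting the odd-degree elements $\alpha,\beta$ into the generating operations — is the only genuinely delicate point; everything else is formal manipulation of the $\MC$- and $\Tw$-constructions and of the universal property of $\hat\vee$.
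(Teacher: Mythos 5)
Your proposal follows essentially the same route as the paper: the proof there likewise reduces everything to the single identity $\mu_1^{\alpha+\beta}=\mu_1^\alpha+\widetilde{\mu}_1^\beta$ (your $\nu_1^\beta$ is the paper's $\widetilde{\mu}_1^\beta$, the element $\mu_1$ of $\MC(\Tw\calP)$ built from the twisted generators $\mu_n^\alpha$) and then concludes by additivity of $\ad$, so your identification of the underlying operad, of the residual derivation $\dd$, and of the crux of the argument all match. One correction to your sketched verification of that identity: when you expand $\mu_n^\alpha$ inside $\nu_1^\beta$, each monomial $\mu_N(w)$ --- for $w$ a word in $\alpha,\beta$ carrying one distinguished slot and at least one letter $\beta$ --- occurs with coefficient $\pm 1$, \emph{not} with multiplicity equal to the number of $\alpha$-letters of $w$: the producing data $(n,i,r_0,\dots,r_n)$ is uniquely recovered from the positions of the $\beta$'s and of the slot, since $n-1$ must equal the number of $\beta$'s. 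The words with no $\beta$ are exactly those contributed by $\mu_1^\alpha$, so the two sums together reproduce each monomial of $\mu_1^{\alpha+\beta}$ exactly once; what remains, and what the paper does explicitly, is the Koszul sign bookkeeping showing that the coefficient is $(-1)^{N-i}$ in both expansions.
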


\begin{proof}
The only point to check is the  twisted differential.
First, one shows that the 
$\mu_1^{\alpha+\beta}$ 
is equal to  $\mu_1^\alpha+\widetilde{\mu}_1^\beta$, where we denote here the twisted operations by $\widetilde{\mu}_n\coloneqq\mu_n^\alpha$:
\begin{eqnarray*}
\mu_1^\alpha+\widetilde{\mu}_1^\beta&=&
\sum_{k\ge 2 \atop 1\leq j \leq k}(-1)^{k-j} \, \mu_k\big(\alpha^{j-1}, -, \alpha^{k-j}\big)+
\sum_{n\ge 2 \atop 1\leq i \leq n}(-1)^{n-i} \, \mu_n^\alpha\big(\beta^{i-1}, -, \beta^{n-i}\big)\\
&=&
\sum_{k\ge 2 \atop 1\leq j \leq k}(-1)^{k-j} \, \mu_k\big(\alpha^{j-1}, -, \alpha^{k-j}\big)+\\
&& \sum_{n\ge 2 \atop 1\leq i \leq n}(-1)^{n-i} \,\sum_{r_0, \ldots, r_n\ge 0} (-1)^{r_i+\cdots +r_n} \mu_{n+r_0+\cdots+r_n}\big(\alpha^{r_0}, \beta, \alpha^{r_1}, \beta, \ldots, \alpha^{r_{i-1}}, - , \\
&&\qquad\qquad\qquad\qquad\qquad\qquad\qquad\qquad\qquad\qquad\quad  \alpha^{r_{i_1}}, \beta, \ldots, \alpha^{r_{n-1}}, \beta,\alpha^{r_n}  \big)\\
&&=
\sum_{k\ge 2 \atop 1\leq j \leq k}(-1)^{k-j} \, \mu_k\big(\alpha^{j-1}, -, \alpha^{k-j}\big)+\\
&&\quad \sum_{n\ge 2 \atop 1\leq i \leq n}\,\sum_{r_0, \ldots, r_n\ge 0} (-1)^{n-i+r_i+\cdots +r_n} \mu_{n+r_0+\cdots+r_n}\big(\alpha^{r_0}, \beta, \alpha^{r_1}, \beta, \ldots, \alpha^{r_{i-1}}, - , \\
&&\qquad\qquad\qquad\qquad\qquad\qquad\qquad\qquad\qquad\qquad\quad 
\alpha^{r_{i_1}}, \beta, \ldots, \alpha^{r_{n-1}}, \beta,\alpha^{r_n}  \big)\\
&&= \sum_{k\ge 2 \atop 1\leq j \leq k}(-1)^{k-j} \, \mu_k\big((\alpha+\beta)^{j-1}, -, (\alpha+\beta)^{k-j}\big)=\mu_1^{\alpha+\beta}\ .
\end{eqnarray*}
Then, one concludes the proof with 
$$\ad_{\mu_1^\alpha}+\ad_{\widetilde{\mu}_1^\beta}=\ad_{\mu_1^\alpha+\widetilde{\mu}_1^\beta}=\ad_{\mu_1^{\alpha+\beta}} \ .$$
\end{proof}

\begin{remark}
This result is an operadic version  of the fact that $\alpha+\beta$ is a Maurer--Cartan in the initial (dg Lie) algebra if and only if $\beta$ is a Maurer--Cartan element in the  (dg Lie) algebra twisted by $\alpha$. Then the (dg Lie) algebra twisted first by $\alpha$ and then by $\beta$ is equal to the (dg Lie) algebra twisted by $\alpha+\beta$, see Proposition~\ref{lem:subGaugeGroup}. 
\end{remark}

\begin{corollary}\label{cor:ComStr}
Given any multiplicative ns operad $\calP$, the assignment 
\begin{eqnarray*}
\begin{array}{rrcl}
\Delta(\calP)\ \ \  : &\Tw \calP\cong \calP\hat{\vee} \alpha&\to &\Tw\, \big(\Tw \calP\big)\cong \calP\hat{\vee} \alpha \hat{\vee} \beta\\
&\alpha &\mapsto & \alpha+\beta\\
&\nu&\mapsto&\nu\ ,
\end{array}
\end{eqnarray*}
for $\nu\in\calP$, 
defines a morphism of multiplicative ns operads. 
\end{corollary}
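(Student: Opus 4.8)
The plan is to verify directly that the assignment $\Delta(\calP)$ is a well-defined morphism of complete dg ns operads which, moreover, commutes with the structure maps from $\Ai$. First I would check that $\Delta(\calP)$ respects the operadic composition: since $\Tw\calP \cong \calP\hat\vee\alpha$ and $\Tw(\Tw\calP)\cong\calP\hat\vee\alpha\hat\vee\beta$ are both coproducts of complete ns operads, a morphism out of $\Tw\calP$ is determined by its restriction to $\calP$ and to $\alpha$, by the universal property of the coproduct. On $\calP$ we send $\nu\mapsto\nu$ via the canonical inclusion $\calP\hookrightarrow\calP\hat\vee\alpha\hat\vee\beta$, and on $\alpha$ we send $\alpha\mapsto\alpha+\beta\in\F_1(\calP\hat\vee\alpha\hat\vee\beta)(0)$. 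Both target elements lie in the appropriate filtration pieces ($\alpha+\beta$ in $\F_1$, arity $0$, degree $-1$), so the universal property indeed produces a unique morphism of complete ns operads; compatibility with the partial compositions $\circ_i$ is then automatic.

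The substantive point is compatibility with the differentials, i.e.\ that $\Delta(\calP)\circ\dd^{\mu_1^\alpha} = \dd^{\mu_1^{\alpha+\beta}}\circ\Delta(\calP)$. Since both sides are derivations of the free complete ns operad $\calP\hat\vee\alpha$ over the base $\calP$, it suffices to check the identity on the generator $\alpha$ and on elements $\nu\in\calP$. Here I would invoke \cref{Lemma=TwTw}, which identifies $\Tw(\Tw\calP)$ with $\bigl(\calP\hat\vee\alpha\hat\vee\beta, \dd + \ad_{\mu_1^{\alpha+\beta}}, \{\circ_i\}\bigr)$ and, crucially, establishes that $\mu_1^{\alpha+\beta} = \mu_1^\alpha + \widetilde\mu_1^\beta$. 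The formula for $\dd^{\mu_1^\alpha}$ displayed just after the definition of $\Tw\calP$ reads $\dd^{\mu_1^\alpha}(\alpha) = \sum_{n\ge 2}(n-1)\mu_n(\alpha,\ldots,\alpha)$ and $\dd^{\mu_1^\alpha}(\nu) = d_\calP(\nu) + \ad_{\mu_1^\alpha}(\nu)$. Applying $\Delta(\calP)$ to $\dd^{\mu_1^\alpha}(\alpha)$ replaces every $\alpha$ by $\alpha+\beta$, giving $\sum_{n\ge 2}(n-1)\mu_n(\alpha+\beta,\ldots,\alpha+\beta)$, which is precisely $\dd^{\mu_1^{\alpha+\beta}}(\alpha+\beta) = \dd^{\mu_1^{\alpha+\beta}}\bigl(\Delta(\calP)(\alpha)\bigr)$ by the same displayed formula applied to $\Tw(\Tw\calP)$. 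On $\nu\in\calP$, $\Delta(\calP)$ fixes $d_\calP(\nu)$ and sends $\ad_{\mu_1^\alpha}(\nu)$ to $\ad_{\mu_1^{\alpha+\beta}}(\nu)$ exactly because $\Delta(\calP)(\mu_1^\alpha) = \mu_1^{\alpha+\beta}$ — and this last equality is, once more, the content of the computation in the proof of \cref{Lemma=TwTw} (interpreting the variable there called $\alpha$ as the present $\alpha+\beta$, or equivalently reading off that $\mu_1^\alpha$, as a function of the arity-$0$ generator, is natural in that generator).

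Finally I would record that $\Delta(\calP)$ is a morphism of multiplicative ns operads, i.e.\ that the triangle with the maps $\Ai\to\Tw\calP$ and $\Ai\to\Tw(\Tw\calP)$ from \cref{prop:MorhpAiMCP} commutes: this amounts to checking $\Delta(\calP)(\mu_n^\alpha) = \mu_n^{\alpha+\beta}$ for all $n$, which follows from the same naturality observation — the element $\mu_n^\alpha$ is built from the $\mu_n$'s and copies of the generator, and $\Delta(\calP)$ fixes the former and substitutes $\alpha+\beta$ for the latter. I expect the main obstacle to be purely bookkeeping: tracking the signs $(-1)^{\sum kr_k}$ and the combinatorics of how a single $\alpha+\beta$ inserted in a slot expands into the alternating $\alpha/\beta$ pattern appearing in the proof of \cref{Lemma=TwTw}. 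Since that combinatorial identity has already been carried out there, the present corollary reduces to citing it, and no genuinely new computation is required.
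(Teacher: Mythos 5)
Your proposal is correct and follows essentially the same route as the paper: the paper's proof likewise reduces everything to checking compatibility of the differentials on the generators $\alpha$ and $\nu\in\calP$ (the $\alpha$ computation giving $\sum_{n\ge 2}(n-1)\mu_n(\alpha+\beta,\ldots,\alpha+\beta)$) and then concludes by citing \cref{Lemma=TwTw} to identify the differential on $\Tw(\Tw\calP)$ with $\dd+\ad_{\mu_1^{\alpha+\beta}}$. Your additional remarks --- the universal property of the coproduct for well-definedness, and the explicit verification that $\Delta(\calP)(\mu_n^\alpha)=\mu_n^{\alpha+\beta}$ so that the triangle under $\Ai$ commutes --- are correct elaborations of points the paper leaves implicit.
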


\begin{proof}
Again, the only point to check is the compatibility with the differential. Let us denote the above given morphism by $f : \Tw \calP \to \Tw \big(\Tw \calP\big)$. For any element $\nu \in \calP$, we have
\begin{eqnarray*}
f\left(
d_\calP(\nu)+\ad_{\mu_1^\alpha}(\nu)
\right)
=d_\calP(f(\nu))+\ad_{\mu_1^{\alpha+\beta}}(f(\nu))\ ,
\end{eqnarray*}
and
\begin{eqnarray*}
f\circ\big(
\dd+\ad_{\mu_1^\alpha}
\big)(\alpha)&=&
f\left(\sum_{n\ge 2}(n-1)\mu_n(\alpha, \ldots, \alpha)\right)=
\sum_{n\ge 2}(n-1)\mu_n(\alpha+\beta, \ldots, \alpha+\beta)\\
&=&\left(\dd +\ad_{\mu_1^{\alpha+\beta}}\right)\circ (\alpha+\beta)
= \left(\dd +\ad_{\mu_1^{\alpha+\beta}}\right)\circ f(\alpha)\ . 
\end{eqnarray*}
We conclude with Lemma~\ref{Lemma=TwTw}.
\end{proof}

Similarly to the case of the twisted $\Ai$ operad, there is a morphism 
of multiplicative ns operads 
$\varepsilon(\calP)\ : \ \Tw \calP \to \calP$ defined by sending $\alpha$ to $0$ and $\nu\in \calP$ to $\nu$. 

\begin{theorem}
The two aforementioned morphisms $\Delta(\calP) \ :  \Tw \calP \to \Tw(\Tw\calP)$ and $\varepsilon(\calP)\ : \ \Tw \calP \to \calP$ of multiplicative ns operads provide the endofunctor $\Tw$ with a comonad structure. 
\end{theorem}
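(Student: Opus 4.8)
The plan is to verify the three defining identities of a comonad --- the two counit axioms
$\big(\varepsilon(\Tw\calP)\big)\circ\Delta(\calP)=\id_{\Tw\calP}=\big(\Tw\varepsilon(\calP)\big)\circ\Delta(\calP)$
and the coassociativity axiom
$\big(\Delta(\Tw\calP)\big)\circ\Delta(\calP)=\big(\Tw\Delta(\calP)\big)\circ\Delta(\calP)$
--- together with the naturality of $\Delta$ and of $\varepsilon$, all of which I expect to reduce to straightforward checks on generators. The key reduction is that every complete dg ns operad occurring here has the form $\calP\, \hat{\vee}\, \alpha\, \hat{\vee}\, \beta\, \hat{\vee}\, \cdots$, a coproduct of $\calP$ with trivial complete ns operads on arity-zero generators, so that by the universal property of the operadic coproduct a morphism out of it is determined by its restriction to $\calP$ together with its values on the generators $\alpha,\beta,\ldots$; moreover all the maps in play are morphisms of complete dg ns operads --- by the definition of $\varepsilon(\calP)$, by Corollary~\ref{cor:ComStr} for $\Delta(\calP)$, because $\Tw$ sends morphisms of multiplicative ns operads to morphisms of multiplicative ns operads, and because the components of $\Delta$ and $\varepsilon$ are such --- so two such composites that agree on generators coincide, and no compatibility with the twisted differentials has to be checked beyond what is already recorded in Lemma~\ref{Lemma=TwTw} and Corollary~\ref{cor:ComStr}.

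First I would fix the bookkeeping by iterating Lemma~\ref{Lemma=TwTw}: write $\Tw(\Tw\calP)\cong\calP\, \hat{\vee}\, \alpha\, \hat{\vee}\, \beta$, with $\alpha$ the generator created by the inner $\Tw$ and $\beta$ the one created by the outer $\Tw$ and with twisted differential $\dd+\ad_{\mu_1^{\alpha+\beta}}$, and likewise $\Tw\big(\Tw(\Tw\calP)\big)\cong\calP\, \hat{\vee}\, \alpha\, \hat{\vee}\, \beta\, \hat{\vee}\, \gamma$ with twisted differential $\dd+\ad_{\mu_1^{\alpha+\beta+\gamma}}$. Unwinding the definitions with this labelling, $\varepsilon(\Tw\calP)$ sends $\beta\mapsto 0$, $\alpha\mapsto\alpha$ and $\nu\mapsto\nu$; functoriality of $\Tw$ makes $\Tw\varepsilon(\calP)$ send $\beta\mapsto\alpha$, $\alpha\mapsto 0$ and $\nu\mapsto\nu$; and $\Delta(\calP)$ sends $\alpha\mapsto\alpha+\beta$ and $\nu\mapsto\nu$, where $\nu$ ranges over $\calP$. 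Both counit axioms then follow from $\alpha\mapsto\alpha+\beta\mapsto\alpha$ (in the first, since $\beta\mapsto 0$; in the second, since $\beta\mapsto\alpha$ and $\alpha\mapsto 0$) together with the tautology on $\nu\in\calP$.

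For coassociativity the same unwinding gives that $\Delta(\Tw\calP)$ sends $\beta\mapsto\beta+\gamma$, $\alpha\mapsto\alpha$ and $\nu\mapsto\nu$, whereas $\Tw\Delta(\calP)$ sends $\beta\mapsto\gamma$, $\alpha\mapsto\alpha+\beta$ and $\nu\mapsto\nu$; precomposing either one with $\Delta(\calP)$ sends $\alpha\mapsto\alpha+\beta+\gamma$ and fixes $\calP$, so the two composites agree. Naturality is treated identically: for a morphism $f\colon\calP\to\calQ$ of multiplicative ns operads, both $\Delta(\calQ)\circ\Tw f$ and $\Tw(\Tw f)\circ\Delta(\calP)$ send $\alpha\mapsto\alpha+\beta$ and $\nu\mapsto f(\nu)$, while both $\varepsilon(\calQ)\circ\Tw f$ and $f\circ\varepsilon(\calP)$ send $\alpha\mapsto 0$ and $\nu\mapsto f(\nu)$.

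The only genuinely delicate point --- and the place where the written-out details will require care --- is exactly this bookkeeping: one has to make the identifications $\Tw^{k}\calP\cong\calP\, \hat{\vee}\, \alpha_1\, \hat{\vee}\, \cdots\, \hat{\vee}\, \alpha_k$ coherent with the recursive definition of $\Tw$ on morphisms and with the $k$-fold analogue $\mu_1^{\alpha_1}+\widetilde{\mu}_1^{\alpha_2}+\cdots=\mu_1^{\alpha_1+\cdots+\alpha_k}$ of the computation carried out in the proof of Lemma~\ref{Lemma=TwTw}, so that ``freshly introduced generator'' is unambiguous at each stage. Once that labelling is pinned down, all the comonad identities become the statement that adjoining freshly introduced arity-zero generators is an associative operation and that summing such a generator with the others commutes with forgetting it.
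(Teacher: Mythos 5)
Your proposal is correct and follows essentially the same route as the paper: both reduce the counit and coassociativity axioms to tracking the arity-zero generators through $\Tw\calP\cong\calP\,\hat{\vee}\,\alpha$, $\Tw(\Tw\calP)\cong\calP\,\hat{\vee}\,\alpha\,\hat{\vee}\,\beta$, etc., with the differential compatibility already supplied by Lemma~\ref{Lemma=TwTw} and Corollary~\ref{cor:ComStr}, and the generator chases $\alpha\mapsto\alpha+\beta\mapsto\alpha$ and $\alpha\mapsto\alpha+\beta\mapsto\alpha+\beta+\gamma$ are exactly those in the paper's proof. Your additional explicit checks (naturality of $\Delta$ and $\varepsilon$, and the coherence of the iterated identifications) are sound and slightly more complete than what the paper records.
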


\begin{proof}
Let  $\Ai\to\calP$ be a  multiplicative ns operad, we have to check the counit relations 
\begin{eqnarray*}
\big(\varepsilon(\calP)\circ(\id_{\Tw \calP})\big)
(\Delta(\calP))=\id_{\Tw \calP}= 
\big((\id_{\Tw \calP})\circ \varepsilon(\calP)\big)
\end{eqnarray*}
and the coassociativity relation 
\begin{eqnarray*}
\Delta_{\Tw \calP}(\Delta(\calP))
=\Tw(\Delta_{\calP})(\Delta(\calP))\ .
\end{eqnarray*}
In each cases, the image of any element $\nu \in \calP$ is send to itself. The left counit relation is given by 
$$\alpha \mapsto \alpha +\beta \mapsto \alpha$$
since the second morphism sends $\alpha$ to $\alpha$ and $\beta$ to $0$. The right counit relation is proved similarly since the second morphism sends $\alpha$ to $0$ and $\beta$ to $\alpha$.
Both sides for the coassociativity relation give 
$$\alpha \mapsto \alpha +\beta \mapsto \alpha+\beta+\gamma \ , $$
which concludes the proof. 
\end{proof}

\begin{definition}[Twistable operad]
We call \emph{twistable operad} a multiplicative ns operad which admits a $\Tw$-coalgebra structure. 
\end{definition}

The aforementioned definition means that, given a multiplicative ns operad $\Ai\to \calP$, there exists a morphism $\Delta_\calP$ of multiplicative ns operads 
$$\xymatrix@C=20pt@R=20pt{&\Ai\ar[dl]\ar[dr]&\\
\calP\ar[rr]_{\Delta_\calP} &&\Tw \, \calP}$$
 satisfying 
\begin{eqnarray}
&&\xymatrix@C=30pt{\calP \ar[r]^(0.42){\Delta_\calP} \ar@/_1pc/[rr]_{\id_\calP}& \Tw\, \calP \ar[r]^(0.56){\varepsilon(\calP)}& \calP \ ,} \label{Rel1}\\
&&\xymatrix@C=30pt@R=30pt{\calP \ar[r]^{\Delta_\calP} \ar[d]^{\Delta_\calP} &   \Tw\, \calP \ar[d]^{\Tw(\Delta_\calP)} \\ 
\Tw\, \calP \ar[r]^(0.45){\Delta(\calP)}& \Tw(\Tw\, \calP)\ .}\label{Rel2}
\end{eqnarray}

In terms of type of algebras, the map $\Delta_\calP$ gives a concrete way to produce functorial complete $\calP$-algebra structures on any complete $\calP$-algebra endowed with a Maurer--Cartan element coming from the internal $\Ai$-algebra structure but with twisted differential.
Indeed, as explained above, the data of a complete $\calP$-algebra structure with a Maurer--Cartan element $a$ is faithfully encoded in 
a morphism of dg ns operads $\MC\calP \to \eend_{(A,d)}$, which gives rise to a morphism of twisted complete dg ns operads 
$\Tw\calP \to \End_{(A,d^a)}$ by Proposition~\ref{prop:MorphMCOperadic}. Pulling back with the morphism of complete dg ns operads $\Delta(\calP) : \calP \to \Tw\calP$ produces the twisted $\calP$-algebra structure. \\

The fact that the structure map $\Delta_\calP$ is a morphism of multiplicative ns operads says that one has to twist the $\Ai$-operations in $\calP$ as usual, that is according to the formulas given in Proposition~\ref{prop:TwCurvedGauge}.
Relation~(\ref{Rel1}) expresses the fact that the twisted operation associated to any  $\nu\in \calP$ is the sum of two terms: the first one being equal to $\nu$ itself and the second one begin the sum of perturbation terms which all contain at least one Maurer--Cartan element. Relation~(\ref{Rel2}) amounts to say that the operations twisted twice under the same formulas, first by a Maurer--Cartan element $a$ and then by a second Maurer--Cartan element $b$, are equal to the operations twisted once by the Maurer--Cartan element $a+b$, thanks to Lemma~\ref{Lemma=TwTw} and Corollary~\ref{cor:ComStr}. These are the constrains of twistable multiplicative ns operad. 

\begin{example}
The ns operad $\Ai$ is the paradigm of twistable ns operad. Its $\Tw$-coalgebra structure map $\Ai \to \Tw\, \Ai$ is given by Proposition~\ref{prop:MorhpAiMCAi}.  
\end{example}

\begin{proposition}\label{prop:Twistable}
Let $\calP$ be a multiplicative ns operad with trivial differential $d_\calP=0$. 
If $\calP$ is twistable, then any element $\nu\in \calP$ satisfies $\ad_{\mu_2(\alpha, -)-\mu_2(-, \alpha)}(\nu)=0$ in $\Tw\calP$.  
When the multiplicative structure of $\calP$ factors through the canonical resolution 
$\Ai \twoheadrightarrow \As \to \calP$, the  reverse statement  holds true.
\end{proposition}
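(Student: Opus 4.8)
The plan is to analyze the structure maps $\Delta_\calP : \calP \to \Tw\calP$ and $\varepsilon(\calP) : \Tw\calP \to \calP$ degree by degree in the number of insertions of $\alpha$. Recall that an element of $\Tw\calP \cong \calP\,\hat\vee\,\alpha$ is a series of terms obtained by plugging copies of $\alpha$ into inputs of elements of $\calP$; write the ``$\alpha$-weight'' of such a term for the number of $\alpha$'s it contains. Since $\Delta_\calP$ is a morphism of multiplicative ns operads, it is determined by the images of the generators $\mu_n$ (which must be $\mu_n^\alpha$, by the compatibility with the $\Ai$-structure) together with the images of elements of $\calP$; and by Relation~\eqref{Rel1}, for $\nu \in \calP$ we may write $\Delta_\calP(\nu) = \nu + (\text{terms of }\alpha\text{-weight}\ge 1)$. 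The key constraint comes from the requirement that $\Delta_\calP$ commute with the differentials: on the source, $d_\calP = 0$; on the target, the twisted differential is $\dd^{\mu_1^\alpha} = \ad_{\mu_1^\alpha}$ (since $d_\calP=0$ kills the internal part), so we must have $\ad_{\mu_1^\alpha}\big(\Delta_\calP(\nu)\big) = 0$ in $\Tw\calP$ for every $\nu\in\calP$.

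The main step is then to extract the lowest-$\alpha$-weight consequence of this equation. Write $\mu_1^\alpha = \sum_{n\ge 2}\sum_{1\le i\le n}(-1)^{n-i}\mu_n(\alpha^{i-1},-,\alpha^{n-i})$, whose leading ($\alpha$-weight $1$) term is $\mu_2(\alpha,-) - \mu_2(-,\alpha)$. Applying $\ad_{\mu_1^\alpha}$ to $\Delta_\calP(\nu) = \nu + (\alpha\text{-weight}\ge 1)$ and collecting terms of $\alpha$-weight exactly $1$, the only contribution that is \emph{not} automatically higher weight is $\ad_{\mu_2(\alpha,-)-\mu_2(-,\alpha)}(\nu)$ (the bracket of the higher-weight part of $\Delta_\calP(\nu)$ with $\mu_1^\alpha$, and the bracket of $\nu$ with the higher-weight part of $\mu_1^\alpha$, both have $\alpha$-weight $\ge 2$). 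Hence the weight-$1$ component of the relation $\ad_{\mu_1^\alpha}(\Delta_\calP(\nu))=0$ reads exactly $\ad_{\mu_2(\alpha,-)-\mu_2(-,\alpha)}(\nu)=0$, proving the first assertion. Here one must be slightly careful that $\nu$ itself could have arity contributing nontrivially, but since $\ad$ is a derivation and $d_\calP\nu = 0$, no cancellation across weights can occur; this bookkeeping is routine.

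For the converse, suppose the multiplicative structure factors as $\Ai \twoheadrightarrow \As \to \calP$, so that $\calP$ is generated over its image of $\As$ — in particular $\mu_n = 0$ in $\calP$ for $n\ge 3$ and $\mu_1^\alpha = \mu_2(\alpha,-)-\mu_2(-,\alpha)$ has $\alpha$-weight exactly $1$. The plan is to define $\Delta_\calP(\nu) \coloneqq e^{\mathrm{ad}}$-type correction, or more precisely to build $\Delta_\calP$ on generators of $\calP$ and check it extends: since $\mu_1^\alpha$ now squares to zero as a derivation with $\ad_{\mu_1^\alpha}(\mu_2)$ controllable, the hypothesis $\ad_{\mu_1^\alpha}(\nu) = 0$ on generators $\nu$ lets us simply set $\Delta_\calP(\nu) = \nu$ and check directly that this is a chain map into $\Tw\calP$, that Relation~\eqref{Rel1} holds (compose with $\varepsilon(\calP)$, which sends $\alpha\mapsto 0$), and that Relation~\eqref{Rel2} holds (both composites send $\nu\mapsto\nu$ and $\alpha\mapsto\alpha+\beta$, using Lemma~\ref{Lemma=TwTw} and Corollary~\ref{cor:ComStr}). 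One also needs to handle the $\Ai$-generators $\mu_n$: these must go to $\mu_n^\alpha$, and the coalgebra axioms for them are exactly the content of Example following Proposition~\ref{prop:MorhpAiMCAi}, so compatibility with $\Ai \twoheadrightarrow \As \to \calP$ is automatic.

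I expect the main obstacle to be the converse direction: verifying that the assignment $\Delta_\calP$ defined on generators really extends to a well-defined morphism of complete dg ns operads — i.e. that it respects the defining relations of $\calP$ (not just the associativity relations inherited from $\As$, but any extra relations), and that the resulting map genuinely lands in the completed coproduct with convergent series. The factorization hypothesis $\Ai\twoheadrightarrow\As\to\calP$ is what makes this tractable, because it forces $\mu_1^\alpha$ to have $\alpha$-weight $1$, so that $\ad_{\mu_1^\alpha}$ strictly raises $\alpha$-weight and all the relevant series are automatically convergent in the complete topology on $\calP\,\hat\vee\,\alpha$; without it, higher $\mu_n$ would obstruct both the closedness of $\Delta_\calP(\nu)$ and the convergence, which is precisely why the reverse implication is only claimed in the factored case.
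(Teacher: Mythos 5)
Your proof follows the paper's argument essentially verbatim: for the forward direction you extract the $\alpha$-weight-one component of $\dd^{\mu_1^\alpha}(\Delta_\calP(\nu))=0$, which is exactly $\ad_{\mu_2(\alpha,-)-\mu_2(-,\alpha)}(\nu)$, and for the converse you observe that when $\mu_3,\mu_4,\ldots$ vanish the twisted differential reduces to $\ad_{\mu_2(\alpha,-)-\mu_2(-,\alpha)}$, so the hypothesis says precisely that the canonical inclusion $\calP\hookrightarrow\Tw\calP$ (which is automatically a morphism of operads, so your worry about extending from generators is moot) is a chain map, after which Relations~\eqref{Rel1} and~\eqref{Rel2} are immediate. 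This is the same proof as in the paper.
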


\begin{proof}
If the operad $\calP$ is twistable, it  admits a morphism of multiplicative ns operads 
$\Delta_\calP: \calP \to \Tw\calP$ satisfying the commutative diagrams \eqref{Rel1} and \eqref{Rel2}. 
So the image of any element $\nu\in \calP(n)$ has the form 
\[\Delta_\calP(\nu)=\nu+\sum_{k\ge 1} \omega_k\ ,\]
where $\omega_k$ is an element of $\Tw\calP(n)$ made up of a finite sum of elements of $\calP(n+k)$ composed with $k$ elements $\alpha$. The compatibility with respect to the differentials shows that 
\[
\dd^{\mu_1^\alpha}\big(\Delta_\calP(\nu)\big)=
\ad_{\mu_2(\alpha, -)-\mu_2(-, \alpha)}(\nu)+\sum_{k\ge 2} \widetilde{\omega}_k=0\ , 
\]
where $\widetilde{\omega}_k$ is made up of a finite sum of elements of $\calP(n+k)$ composed with $k$ elements $\alpha$. Therefore, $\ad_{\mu_2(\alpha, -)-\mu_2(-, \alpha)}(\nu)$ vanishes since it is made up of only one element $\alpha$.

In the other way round, the multiplicative structure of $\calP$ factors through the canonical resolution 
$\Ai \twoheadrightarrow \As \to \calP$ if and only if the elements $\mu_3, \mu_4, \ldots$ vanish in $\calP$. In this case, the twisted differential is equal to $\dd^{\mu_1^\alpha}\allowbreak=\allowbreak\ad_{\mu_2(\alpha, -)-\mu_2(-, \alpha)}$.
The condition $\ad_{\mu_2(\alpha, -)-\mu_2(-, \alpha)}(\nu)=0$,  for all $\nu\in \calP$, is equivalent to the fact  that the canonical morphism of ns operads $\calP \hookrightarrow \Tw\calP$ is a chain map. The commutative diagrams \eqref{Rel1} and \eqref{Rel2} are then straightforward to check. 
\end{proof}

\begin{example}
The ns operads $\ncGerst$ and $\ncBV$, introduced in \cite[Section~3]{DotsenkoShadrinVallette15} as noncommutative analogues of the classical operads, are not twistable, see \cref{prop:ncGerstnotTwistable} in \cref{subsec:ncGerst} and \cref{prop:ncBVnotTwistable} in \cref{subsec:ncBV} respectively.
\end{example}

\begin{example}
The operad $\Gerst$ is twistable but the operad $\BV$ is not,  
see respectively 
\cref{prop:GerstTwistable} in \cref{subsec:TwGerst} and \cref{prop:BVnotTwistable} in  \cref{subsec:TwBV} respectively.
\end{example}

Among the homological properties, let us recall the following stability principle. 

\begin{proposition}\label{prop:TwQI}
The endofunctor $\Tw$ preserves quasi-isomorphisms. 
\end{proposition}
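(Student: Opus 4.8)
The plan is to argue one arity at a time and exploit the canonical filtration of $\Tw\calP = \big(\calP\,\hat{\vee}\,\alpha, \dd^{\mu_1^\alpha}\big)$ by the number of copies of the arity-zero generator $\alpha$. Since a morphism of complete dg ns operads is a quasi-isomorphism exactly when it is one in each arity, I would fix $n\in\mathbb{N}$ and study the chain map $(\Tw f)(n)\colon \Tw\calP(n)\to\Tw\calQ(n)$. Using the description of the coproduct recalled above, together with the fact that $\alpha$ has arity $0$ — so a representative of $\calP\,\hat{\vee}\,\alpha$ consists of a single $\calP$-labelled vertex with some of its inputs plugged by $\alpha$-stumps — the underlying graded module decomposes as $\Tw\calP(n)\cong\prod_{p\ge 0}\mathrm{gr}_p$, where $\mathrm{gr}_p$ is the \emph{finite} direct sum $\calP(n+p)^{\oplus\binom{n+p}{p}}$ of the summands involving exactly $p$ copies of $\alpha$. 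The associated descending filtration $\F_p\Tw\calP(n)\coloneqq\prod_{q\ge p}\mathrm{gr}_q$ is exhaustive, Hausdorff and complete: it is precisely the canonical filtration of the completed coproduct, for which $\alpha\in\F_1$.

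Next I would check that $\dd^{\mu_1^\alpha}$ is filtered and read off its symbol. On generators $\dd^{\mu_1^\alpha}(\nu) = d_\calP(\nu)+\ad_{\mu_1^\alpha}(\nu)$, where $d_\calP(\nu)$ has $\alpha$-weight $0$ and $\ad_{\mu_1^\alpha}(\nu)$ has $\alpha$-weight $\ge 1$ (since $\mu_1^\alpha=\sum_{m\ge 2}\sum_{i}(-1)^{m-i}\mu_m(\alpha^{i-1},-,\alpha^{m-i})$ has $\alpha$-weight $\ge 1$), while $\dd^{\mu_1^\alpha}(\alpha)=\sum_{m\ge 2}(m-1)\mu_m(\alpha,\dots,\alpha)$ has $\alpha$-weight $\ge 2$. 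Being a derivation, $\dd^{\mu_1^\alpha}$ never lowers the $\alpha$-weight, hence preserves $\F_\bullet$, and the induced differential on $\mathrm{gr}_\bullet$ is the one coming from the weight-preserving part, namely $d_\calP$ acting on the $\calP(n+p)$-factors. Therefore $\mathrm{gr}_p\big((\Tw f)(n)\big)$ is a finite direct sum of copies of $f(n+p)\colon\calP(n+p)\to\calQ(n+p)$, hence a quasi-isomorphism; equivalently $\mathrm{gr}\big(\mathrm{cone}((\Tw f)(n))\big)=\mathrm{cone}\big(\mathrm{gr}((\Tw f)(n))\big)$ is acyclic.

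Finally I would conclude by a standard completeness argument. Set $C\coloneqq\mathrm{cone}\big((\Tw f)(n)\big)$ with the induced filtration: it is Hausdorff, $\F_0C=C$, and $C=\lim_p C/\F_pC$ with surjective transition maps. Each quotient $C/\F_pC$ carries a \emph{finite} filtration whose associated graded $\bigoplus_{q<p}\mathrm{gr}_qC$ is acyclic, so iterating the homology long exact sequences of the finitely many short exact sequences $0\to\F_qC/\F_pC\to\F_{q-1}C/\F_pC\to\mathrm{gr}_{q-1}C\to 0$ shows that $C/\F_pC$ is acyclic for every $p$. The Milnor exact sequence for the tower of surjections $\{C/\F_pC\}_p$, namely $0\to\lim^1_p H_{\bullet+1}(C/\F_pC)\to H_\bullet(C)\to\lim_p H_\bullet(C/\F_pC)\to 0$, then forces $H_\bullet(C)=0$, since both outer terms are the $\lim$, resp.\ $\lim^1$, of the zero tower. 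Hence $(\Tw f)(n)$ is a quasi-isomorphism for every $n$, i.e.\ $\Tw f$ is a quasi-isomorphism.

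The step requiring the most care is the last one: passing from a quasi-isomorphism on associated graded objects to a quasi-isomorphism on the total complexes is not formal for an infinite filtration, and it works here only because the $\alpha$-weight filtration is simultaneously \emph{bounded above} (so each $C/\F_pC$ carries a finite filtration, allowing the inductive argument) and \emph{complete} ($C=\lim_p C/\F_pC$, with surjective transition maps) — precisely the two properties built into the definition of $\Tw\calP$ as a completed coproduct. A minor but genuine point to record en route is that each $\mathrm{gr}_p$ is a \emph{finite} direct sum of copies of $\calP(n+p)$, so that ``a finite direct sum of quasi-isomorphisms is a quasi-isomorphism'' applies — which is exactly what the arity-zero constraint on $\alpha$ guarantees.
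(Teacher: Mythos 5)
Your argument is correct. The paper does not actually prove this statement itself: its ``proof'' is a citation of \cite[Theorem~5.1]{DolgushevWillwacher15}, and the argument given there is essentially the one you reconstruct --- filter $\Tw\calP(n)$ by the number of copies of $\alpha$, observe that the twisted part $\ad_{\mu_1^\alpha}$ of the differential strictly increases that number (since $\mu_1^\alpha$ carries at least one $\alpha$, and $\dd^{\mu_1^\alpha}(\alpha)$ at least two) so that the associated graded differential is just $d_\calP$, identify $\mathrm{gr}_p\big((\Tw f)(n)\big)$ with a finite direct sum of copies of $f(n+p)$, and conclude by completeness of the filtration. Your care at the convergence step (each quotient $C/\F_pC$ carries a finite filtration with acyclic graded pieces, and the tower of surjections is handled by the Milnor sequence) is exactly what legitimises the spectral-sequence argument for this unbounded but complete filtration. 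Two small inaccuracies, neither of which affects the proof: in arity $0$ the weight-one piece also contains the one-dimensional summand $\k\alpha$ itself, on which $\Tw f$ acts as the identity; and the $\alpha$-weight filtration coincides with the canonical filtration of the completed coproduct only when $\calP$ carries the discrete filtration --- in general the canonical filtration also records the internal filtration of $\calP$ --- but your argument only ever uses the $\alpha$-weight filtration, which is exhaustive and complete regardless, so nothing is lost.
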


\begin{proof}
A straightforward proof  is given in \cite[Theorem~$5.1$]{DolgushevWillwacher15}. 
\end{proof}

\section{Action of the deformation complex}
Let $\calP$ be a complete dg ns operad. We consider the  total space 
$$\hom\left(\As^{\ac}, \calP\right)\coloneqq\prod_{n\ge 1} \hom\left(\As^{\ac}(n), \calP(n)\right)\cong \prod_{n\ge 1} s^{1-n}\calP(n)\ , $$ 
where we identify any map $\rho(n) : \As^{\ac}(n)\cong \End_{\k s}(n)^* \to \calP(n)$ with its images $\rho_n \coloneqq \rho\left(\nu_n\right)$. 
Let us recall from \cref{subsec:CompConvAlg} that this forms a  complete left-unital dg pre-Lie algebra and thus a complete dg Lie algebra by anti-symmetrization. 

Any  element $\rho=(\rho_1, \rho_2, \ldots)\in \hom\left(\As^{\ac}, \calP\right)$ induces the following derivation $\D_\rho$ on the complete ns operad $\calP\hat{\vee} \{\alpha\}$ by sending its generators to 
\begin{eqnarray*}
&&\alpha \mapsto -\sum_{n\ge 1 } \rho_n\big(\alpha^n\big)\ ,\\
&&\nu \mapsto 0, \quad \text{for} \ \nu \in \calP \ .
\end{eqnarray*}
We denote by $\Der\big(\calP\hat{\vee} \alpha \big)$ the set of operadic derivations and,  
by a slight abuse of notation, we still denote by $d_\calP$ the differential on $\calP\hat{\vee} \alpha$ induced by that of $\calP$.

\begin{lemma}\label{lem:MorphdgLie}
The assignment 
\begin{eqnarray*}
\left(\hom\left(\As^{\ac}, \calP\right), \partial, [\; , \,] \right)&\to& 
\left(\Der\big(\calP\hat{\vee} \alpha\big), [d_\calP, -], [\; ,\,]\right)\\
\rho &\mapsto& \D_\rho
\end{eqnarray*}
is a morphism of dg Lie algebras.
\end{lemma}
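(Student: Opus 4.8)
The plan is to exploit that an operadic derivation of the complete coproduct $\calP\,\hat{\vee}\,\alpha$ is uniquely determined by its restriction to the generators, namely by a derivation of $\calP$ together with an arbitrary value on the single arity-$0$ generator $\alpha$; this is the extension principle already used to define $\dd$ via \cref{prop:DGAction}, valid because $\calP\,\hat{\vee}\,\alpha$ has defining relations only among the elements of $\calP$. In particular the derivations $\D_\rho$, all of which vanish on $\calP$, form a sub-dg-Lie-algebra $\Der_\calP\big(\calP\,\hat{\vee}\,\alpha\big)$ of $\big(\Der(\calP\,\hat{\vee}\,\alpha),[d_\calP,-],[\,,\,]\big)$: it is stable under $[d_\calP,-]$ because the extended differential $d_\calP$ preserves $\calP$, and under $[\,,\,]$ because a composite of two derivations vanishing on $\calP$ still vanishes on $\calP$. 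Since $\rho\mapsto\D_\rho$ is plainly $\k$-linear and injective (its value $\D_\rho(\alpha)=-\sum_n\rho_n(\alpha,\ldots,\alpha)$ recovers each $\rho_n$ from the arity-$0$ part $\prod_n\calP(n)$ of $\calP\,\hat{\vee}\,\alpha$), it remains to verify the two identities $\D_{\partial\rho}=[d_\calP,\D_\rho]$ and $\D_{[\rho,\sigma]}=[\D_\rho,\D_\sigma]$, and by the extension principle it suffices to check each one after evaluation on $\alpha$ --- on any $\nu\in\calP$ every term vanishes for trivial reasons, since $\D_{(-)}$ kills $\calP$ and $d_\calP$ preserves it.

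For the first identity I would begin by noting that, since the cooperad $\As^{\ac}$ carries no internal differential, the differential of the convolution pre-Lie algebra $\hom(\As^{\ac},\calP)$ is simply $(\partial\rho)_n=d_\calP(\rho_n)$. Evaluating the right-hand side on $\alpha$ and using that the extended $d_\calP$ annihilates $\alpha$, one obtains
\[
[d_\calP,\D_\rho](\alpha)=d_\calP\big(\D_\rho(\alpha)\big)=-\sum_n d_\calP\big(\rho_n(\alpha,\ldots,\alpha)\big)=-\sum_n\big(d_\calP\rho_n\big)(\alpha,\ldots,\alpha)\ ,
\]
the last step because $d_\calP$ is a derivation killing every copy of $\alpha$; this is exactly $\D_{\partial\rho}(\alpha)$.

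For the second identity I would expand both sides on $\alpha$. Writing out the pre-Lie product on $\hom(\As^{\ac},\calP)$ via the infinitesimal decomposition $\Delta_{(1)}(\nu_n)=\sum_{p+q+r=n}(-1)^{p(q+1)}\nu_{p+1+r}\circ_{p+1}\nu_q$, the element $[\rho,\sigma]_n$ unfolds into a signed sum of two-vertex trees of the shapes $\rho_{p+1+r}\circ_{p+1}\sigma_q$ and $\sigma_{p+1+r}\circ_{p+1}\rho_q$, so that $\D_{[\rho,\sigma]}(\alpha)=-\sum_n[\rho,\sigma]_n(\alpha,\ldots,\alpha)$ is a signed sum of such trees with every leaf decorated by $\alpha$. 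On the other side, $[\D_\rho,\D_\sigma](\alpha)=\D_\rho\D_\sigma(\alpha)-(-1)^{|\rho||\sigma|}\D_\sigma\D_\rho(\alpha)$; since $\D_\sigma(\alpha)=-\sum_q\sigma_q(\alpha,\ldots,\alpha)$, applying the derivation $\D_\rho$ --- which vanishes on $\calP$ --- replaces, one slot at a time, a copy of $\alpha$ by $\D_\rho(\alpha)=-\sum_p\rho_p(\alpha,\ldots,\alpha)$, producing precisely the same family of two-vertex trees with $\alpha$-decorated leaves. Identifying the two expressions then comes down to the one genuinely delicate point of the argument: checking that the Koszul signs $(-1)^{p(q+1)}$ supplied by the decomposition coproduct of $\As^{\ac}$, together with the antisymmetrization sign $(-1)^{|\rho||\sigma|}$ in the bracket, are reconciled with the Koszul signs generated by the Leibniz rule when the derivations $\D_\rho$ and $\D_\sigma$, of degrees $|\rho|$ and $|\sigma|$, are commuted past copies of $\alpha$ of degree $-1$. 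The overall minus sign in the defining formula $\D_\rho(\alpha)=-\sum_n\rho_n(\alpha,\ldots,\alpha)$ is calibrated precisely so that this matching works out; every other step in the proof is formal.
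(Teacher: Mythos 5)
Your argument is correct and follows essentially the same route as the paper: reduce to the generators of $\calP\,\hat{\vee}\,\alpha$, note that everything vanishes trivially on $\calP$, and match the signed two-vertex trees produced on each side after evaluation on $\alpha$. The one thing the paper does that you defer is the explicit sign verification: it proves the stronger product-level identity $\D_{\rho\star\xi}=-(-1)^{|\rho||\xi|}\,\D_\xi\circ\D_\rho$ in a single displayed computation, so that compatibility with the Lie brackets follows formally by skew-symmetrizing both the pre-Lie product $\star$ and the (opposite) composition of derivations, rather than being asserted as a calibration of the overall minus sign in $\D_\rho(\alpha)$.
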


\begin{proof}
Notice first that the Lie bracket on the right-hand side is given by the skew-symmetrization of the following binary product $\D\circ^{\mathrm{op}}\D'\coloneqq -(-1)^{|\D||\D'|} \allowbreak \D'\circ \D$ (which individually does not produce a derivation). Since the Lie bracket on the left-hand side is given by the skew-symmetrization of the pre-Lie product $\star$, we prove that the assignment $\rho\mapsto\D_\rho$ preserves these two products. Let $\rho, \xi \in \hom\left(\As^{\ac}, \calP\right)$. It is enough to check the relation 
$\D_{\rho\star\xi} = \D_\rho\circ^{\mathrm{op}} \D_\xi$ 
on the generators of $\calP\hat{\vee} \alpha$: this is trivial for $\nu \in \calP$ and for $\alpha$ this is given by 
\begin{align*}
\D_{\rho\star \xi}(\alpha)&=-
\sum_{n\ge 1 } (\rho\star \xi)_n\big(\alpha^n\big)=-\sum_{n\ge 1 \atop p+q+r=n}  
(-1)^{p(q+1)+|\xi|(p+r)}
\rho_{p+1+r}\circ_{p+1} \xi_{q} \big(\alpha^n\big)=
\\&=
-(-1)^{|\rho||\xi|}\D_\xi\circ \D_\rho(\alpha) 
=
\D_\rho\circ^{\mathrm{op}}\D_\xi(\alpha)\ .
\end{align*}
We also check the commutativity of the differentials $\D_{\partial(\rho)}=[d_\calP, \D_\rho]$ on the generators of $\calP\hat{\vee} \alpha$: this is again trivial for $\nu \in \calP$ and for $\alpha$ this is given by 
\begin{align*}
\D_{\partial(\rho)}(\alpha)&=\D_{d_\calP \circ \rho}(\alpha)=-\sum_{n\ge 1} d_\calP(\rho_n)(\alpha^n)= d_\calP\left(
\D_\rho(\alpha)\right)-(-1)^{|\rho|}\D_\rho(d_\calP(\alpha))\\&=[d_\calP, \D_\rho](\alpha)\ .
\end{align*}
\end{proof}

A morphism $\Ai \to \calP$  of  complete dg ns operads is equivalent to a degree $-1$ element 
$\mu
 \coloneqq(0, \mu_2, \mu_3, \ldots)$, notation which agrees with that of \cref{subsec=TwMultiOp}, 
satisfying the Maurer--Cartan equation 
$$ \partial (\mu)+\mu \star \mu=0\ ,$$ 
 in this dg pre-Lie algebra. Therefore one can twist the associated dg Lie algebra  with this Maurer--Cartan element, that is consider the twisted differential 
 $$\partial^\mu\coloneqq\partial + \ad_\mu\ . $$
 (One cannot twist the dg pre-Lie algebra, unless $\mu$ satisfies Equation~\eqref{Eq:MCspecial}, which happens only when it vanishes completely).
 
\begin{definition}[Deformation complex of morphisms of complete dg ns operads \cite{MerkulovVallette09I, MerkulovVallette09II}]
The \emph{deformation complex} of the morphism $ \Ai\to \calP$ of complete dg ns operads is the complete  twisted dg Lie algebra 
$$\Def\big(\Ai\to \calP\big)\coloneqq\left(\hom\left(\As^{\ac}, \calP\right), \partial^\mu, [\; , \,] \right)
\ . $$
\end{definition}

\begin{proposition}\label{prop:DGAction}
The assignment 
\begin{eqnarray*}
\left(\hom\left(\As^{\ac}, \calP\right), \partial^\mu, [\; , \,] \right)&\to&
\left(\Der\big(\calP\hat{\vee} \alpha\big), [d_\calP+\D_\mu, -], [\; ,\,]
\right)\\
\rho &\mapsto& \D_\rho
\end{eqnarray*}
is a morphism of dg Lie algebras.
 In plain words, this 
defines a dg Lie action by derivation of the deformation complex $\Def\big(\Ai\to \calP\big)$ on the Maurer--Cartan operad 
$\MC\calP$. 
\end{proposition}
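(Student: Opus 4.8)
\emph{Proof (plan).} The strategy is to reduce everything to Lemma~\ref{lem:MorphdgLie}, which already gives that $\rho\mapsto\D_\rho$ is a morphism of dg Lie algebras $\big(\hom(\As^{\ac},\calP),\partial,[\,,\,]\big)\to\big(\Der(\calP\hat{\vee}\alpha),[d_\calP,-],[\,,\,]\big)$; passing to the twisted structures is then a formal consequence of the general principle that a morphism of dg Lie algebras carries Maurer--Cartan elements to Maurer--Cartan elements and intertwines the associated twisted differentials.

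First I would observe that the degree $-1$ element $\mu=(0,\mu_2,\mu_3,\ldots)$ encoding the multiplicative structure $\Ai\to\calP$ is a Maurer--Cartan element of the source dg Lie algebra, so by Lemma~\ref{lem:MorphdgLie} its image $\D_\mu$ is a Maurer--Cartan element of $\big(\Der(\calP\hat{\vee}\alpha),[d_\calP,-],[\,,\,]\big)$. Unwinding the definition of $\D_\rho$, the derivation $\D_\mu$ sends $\alpha\mapsto-\sum_{n\ge 2}\mu_n(\alpha,\ldots,\alpha)$ and $\nu\mapsto 0$ for $\nu\in\calP$, so that $d_\calP+\D_\mu$ is exactly the differential $\dd$ of the Maurer--Cartan operad $\MC\calP$ of Proposition~\ref{prop:MCP}; in particular the Maurer--Cartan equation $[d_\calP,\D_\mu]+\tfrac12[\D_\mu,\D_\mu]=0$ is just a restatement of $\dd^2=0$ on $\MC\calP$, so $d_\calP+\D_\mu$ is a genuine differential and $[d_\calP+\D_\mu,-]$ is a square-zero derivation of the Lie bracket on $\Der(\calP\hat{\vee}\alpha)$.

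Next I would check the two defining properties of a dg Lie morphism. Bracket-compatibility is untouched by twisting: skew-symmetrising the relation $\D_{\rho\star\xi}=\D_\rho\circ^{\mathrm{op}}\D_\xi$ of Lemma~\ref{lem:MorphdgLie} yields $\D_{[\rho,\xi]}=[\D_\rho,\D_\xi]$. For the differentials I would compute, for any $\rho\in\hom(\As^{\ac},\calP)$,
\[
\D_{\partial^\mu(\rho)}=\D_{\partial(\rho)}+\D_{[\mu,\rho]}=[d_\calP,\D_\rho]+[\D_\mu,\D_\rho]=[\,d_\calP+\D_\mu,\ \D_\rho\,],
\]
using linearity of $\rho\mapsto\D_\rho$, the identity $\D_{\partial(\rho)}=[d_\calP,\D_\rho]$, and the bracket-compatibility just recalled. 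This is precisely the statement that $\rho\mapsto\D_\rho$ intertwines $\partial^\mu$ with $[d_\calP+\D_\mu,-]$, so it is a morphism of dg Lie algebras. Since $d_\calP+\D_\mu$ is the differential of $\MC\calP$ and the image of this morphism lies in $\Der(\MC\calP)$, the construction is exactly a dg Lie action of $\Def(\Ai\to\calP)$ on $\MC\calP$ by operadic derivations, which is the "plain words" reformulation.

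I expect the only point requiring care to be bookkeeping rather than substance: one must keep the sign in $\D\circ^{\mathrm{op}}\D'\coloneqq-(-1)^{|\D||\D'|}\D'\circ\D$ consistent throughout, so that the Lie bracket on $\Der(\calP\hat{\vee}\alpha)$ used above is the very one for which Lemma~\ref{lem:MorphdgLie} was proved, and to note that the degree-preserving identification $\hom(\As^{\ac},\calP)\cong\prod_{n\ge1}s^{1-n}\calP(n)$ makes $|\D_\rho|=|\rho|$, so that $\D_\mu$ indeed has degree $-1$. No new operadic computation is needed beyond what was done for Lemma~\ref{lem:MorphdgLie}. \qed
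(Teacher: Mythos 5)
Your proposal is correct and follows essentially the same route as the paper: the paper's proof is precisely the observation that the dg Lie morphism of Lemma~\ref{lem:MorphdgLie} sends the Maurer--Cartan element $\mu$ to the Maurer--Cartan element $\D_\mu$, so that $d_\calP+\D_\mu$ squares to zero and one obtains a morphism between the correspondingly twisted dg Lie algebras. Your explicit verification $\D_{\partial^\mu(\rho)}=[d_\calP+\D_\mu,\D_\rho]$ just spells out the last step that the paper leaves as a formal consequence.
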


\begin{proof}
This is a direct corollary of the morphism of dg Lie algebras established in \cref{lem:MorphdgLie}: the Maurer--Cartan element $\mu$ on the left-hand side is sent to the Maurer--Cartan $\D_\mu$ on the right-hand side. This proves that $d_\calP+\D_\mu$ is a square-zero derivation on the complete ns operad $\calP\hat{\vee} \alpha$. In the end, we get a morphism between the respectively twisted dg Lie algebras. 
\end{proof}

\begin{remark}\label{rem:}
\cref{prop:DGAction} provides us with an alternative proof of \cref{prop:MCP} defining the complete dg ns operad 
\[
\MC\calP\coloneqq\left(
\calP\hat{\vee} \alpha, \dd\coloneqq d_\calP+\D_\mu\right)\ .
\]
\end{remark}

\begin{remark}
It is also worth noticing that, due to the twisting, this does  
 not define a pre-Lie action, but just a Lie action. 
\end{remark}

In order to reach the same kind of results for the complete dg ns operad $\Tw \calP$, whose differential contains one more term then that of  $\MC \calP$, 
we need to consider the following extension of  the deformation complex. 
Notice first that the dg Lie algebra action of \cref{prop:DGAction} on the complete dg ns operad $\calP \hat{\vee} \alpha$ reduces to a dg Lie algebra action on the dg Lie algebra made up of the arity $1$ elements 
$\big(\calP \hat{\vee} \alpha\big)(1)$. This gives rise to the following semi-direct product of dg Lie algebras: 
\begin{eqnarray*}
\left(\hom\left(\calA s^{\ac}, \calP\right), \partial, [\; , \,] \right) \ltimes 
\left(\big(\calP\hat{\vee} \alpha\big)(1), d_\calP, [\; ,\,]\right)\ .
\end{eqnarray*}

\begin{lemma}\label{lem:semidirectPreLie}
The above-mentioned semi-direct product dg Lie algebra 
 comes from the skew-sym\-me\-tri\-za\-tion of the following semi-direct product of dg pre-Lie algebras
\begin{eqnarray*}
\left(\hom\left(\As^{\ac}, \calP\right), \partial, \star \right) \ltimes 
\left(\big(\calP\hat{\vee} \alpha\big)(1), d_\calP, \circ_1\right)
\coloneqq
\left(\hom\left(\As^{\ac}, \calP \right)\oplus \left(\calP\hat{\vee} \alpha\right)(1), 
\partial+d_\calP, \bigstar
\right)\ ,
\end{eqnarray*}
where 
\[
(\rho, \nu)\bigstar (\xi, \omega)\coloneqq 
\left(\rho\star \xi, \nu \circ_1 \omega - (-1)^{|\xi||\nu|}\D_\xi(\nu)
\right)\ .\]
\end{lemma}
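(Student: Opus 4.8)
The plan is to regard the displayed formula as the \emph{definition} of the operation $\bigstar$ on the graded vector space $\hom(\As^{\ac},\calP)\oplus(\calP\hat{\vee}\alpha)(1)$ — it is the ``semi-direct product'' pre-Lie structure built from the $\star$-product on $\hom(\As^{\ac},\calP)$, the $\circ_1$-product on the arity~$1$ part $(\calP\hat{\vee}\alpha)(1)$, and the action $\xi\mapsto\D_\xi$ of the former on the latter by operadic derivations. I would then verify directly that $\big(\hom(\As^{\ac},\calP)\oplus(\calP\hat{\vee}\alpha)(1),\partial+d_\calP,\bigstar\big)$ is a complete dg pre-Lie algebra, and finally expand its skew-symmetrisation to recover the bracket of the semi-direct product dg Lie algebra considered just above. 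In every identity the first component reduces at once to the corresponding identity in the complete dg pre-Lie algebra $\big(\hom(\As^{\ac},\calP),\partial,\star\big)$ of \cref{subsec:CompConvAlg}, so all the work is in the second component.

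First I would record three facts that make that second component tractable. (i)~The product $\circ_1$ makes $(\calP\hat{\vee}\alpha)(1)$ into an associative — hence pre-Lie — algebra, on which $d_\calP$ is a derivation. (ii)~Each operadic derivation $\D_\xi$ of $\calP\hat{\vee}\alpha$ preserves the arity~$1$ component: by the Leibniz rule it successively replaces each occurrence of the arity~$0$ generator $\alpha$ by the arity~$0$ element $\D_\xi(\alpha)$, which does not change the total arity, and there $\D_\xi$ is again a derivation for $\circ_1$. (iii)~The two relations established in \cref{lem:MorphdgLie} are at our disposal, namely $\D_{\rho\star\xi}=\D_\rho\circ^{\mathrm{op}}\D_\xi$, equivalently $\D_\sigma\circ\D_\xi=-(-1)^{|\sigma||\xi|}\D_{\xi\star\sigma}$, together with $\D_{\partial\xi}=[d_\calP,\D_\xi]$.

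With these in hand, proving right-symmetry of the associator of $\bigstar$ is a bookkeeping computation: expanding $(X\bigstar Y)\bigstar Z$ in the second component, using associativity of $\circ_1$, the derivation property of $\D_\sigma$ on $\nu\circ_1\omega$, and the relation $\D_\sigma\circ\D_\xi=-(-1)^{|\sigma||\xi|}\D_{\xi\star\sigma}$ to rewrite the iterated $\D_\sigma\D_\xi\nu$, all the triple $\circ_1$-products and all the terms carrying $\D_{\xi\star\sigma}$ or $\D_\sigma\omega$ cancel against $X\bigstar(Y\bigstar Z)$, and the remainder is $-(-1)^{|X||Y|}(\D_\xi\nu)\circ_1\pi-(-1)^{|Z|(|X|+|Y|)}(\D_\sigma\nu)\circ_1\omega$, visibly symmetric in $Y$ and $Z$ up to the Koszul sign $(-1)^{|Y||Z|}$. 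The Leibniz rule for $\partial+d_\calP$ with respect to $\bigstar$ is checked the same way and, in the second component, amounts to $d_\calP$ being a derivation of $\circ_1$ together with $d_\calP\big(\D_\xi(\nu)\big)=\D_{\partial\xi}(\nu)+(-1)^{|\xi|}\D_\xi\big(d_\calP(\nu)\big)$, which is exactly $\D_{\partial\xi}=[d_\calP,\D_\xi]$. It then remains to compute
\[
(\rho,\nu)\bigstar(\xi,\omega)-(-1)^{|\rho||\xi|}(\xi,\omega)\bigstar(\rho,\nu)=\big([\rho,\xi],\ [\nu,\omega]+\D_\rho(\omega)-(-1)^{|\rho||\xi|}\D_\xi(\nu)\big),
\]
whose right-hand side is precisely the bracket of the semi-direct product of $\big(\hom(\As^{\ac},\calP),\partial,[\,,\,]\big)$ by $\big((\calP\hat{\vee}\alpha)(1),d_\calP,[\,,\,]\big)$ along the action $\rho\mapsto\D_\rho$. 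I expect the only genuinely delicate point to be the sign bookkeeping in the associator identity; once facts (i)--(iii) are in place, the rest is routine.
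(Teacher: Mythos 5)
Your proposal is correct and follows essentially the same route as the paper: both rely on \cref{lem:MorphdgLie} to get the right pre-Lie action by derivations, compute the associator of $\bigstar$ explicitly (your two leftover terms $-(-1)^{|\xi||\nu|}\D_\xi(\nu)\circ_1\lambda$ and $-(-1)^{|\theta|(|\nu|+|\omega|)}\D_\theta(\nu)\circ_1\omega$ are exactly the ones appearing in the paper's associator formula), observe right-symmetry, and then skew-symmetrise. The only cosmetic difference is that the paper shortcuts some of the verification by invoking the general derivation-action construction of Manchon--Sa\"idi rather than expanding everything by hand.
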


\begin{proof}
The proof of \cref{lem:MorphdgLie} shows that the assignement $\rho\mapsto \D_\rho$ defines a right dg pre-Lie action of $\left(\hom\left(\As^{\ac}, \calP\right), \partial, \star \right)$ on $\left(\big(\calP\hat{\vee} \alpha\big)(1), d_\calP, \circ_1\right)$. It is however not always true that dg pre-Lie actions give rise to semi-direct product dg pre-Lie algebras under  formulas like that of $\bigstar$. It is the case here since the action is by derivation, see  \cite{ManchonSaidi08} for another occurrence of this construction. If we denote the associator of a binary product $\star$ by $\mathrm{Assoc_\star}$, we have  
\begin{align*}
&\mathrm{Assoc_\bigstar\big( (\rho, \nu), (\xi, \omega), (\theta, \lambda) \big)}\\
=&
\left(
\mathrm{Assoc}_\star( \rho, \xi, \theta), \mathrm{Assoc}_{\circ_1}( \nu, \omega, \lambda)-(-1)^{|\xi||\nu|}\D_\xi(\nu)\circ_1 \lambda -
(-1)^{|\theta|(|\nu|+|\omega|)} \D_\theta(\nu)\circ_1 \omega
\right)\ , 
\end{align*}
which is right symmetric, see also \cite{ManchonSaidi08}. The compatibility of the differentials follows from \cref{lem:MorphdgLie}.

Finally, the skew-symmetrization of this semi-direct product pre-Lie algebra gives
\[
\left[(\rho, \nu),  (\xi, \omega)\right]=
\left(\left[\rho, \xi\right], \left[\nu,  \omega\right] +\D_\rho(\omega)- (-1)^{|\xi||\nu|}\D_\xi(\nu)
\right)\ ,\]
which is  the formula for the Lie bracket of the semi-direct product Lie algebra. 
\end{proof}

\begin{lemma}\label{lem:MorphdgLieBIS}
 The assignment 
\begin{eqnarray*}
 \left(\hom\left(\As^{\ac}, \calP\right), \partial, [\; , \,] \right) &\to& \left(\hom\left(\As^{\ac}, \calP\right), \partial, [\; , \,] \right)
\ltimes 
\left(\big(\calP\hat{\vee} \alpha\big)(1), d_\calP, [\; ,\,]\right)
\\
\rho &\mapsto& \left(\rho, \rho_1^\alpha\right)\ ,
\end{eqnarray*}
with $\rho_1^\alpha\coloneqq
\displaystyle  \sum_{n\ge 1 \atop 1\leq i \leq n}(-1)^{n-i} \, \rho_n(\alpha^{i-1}, -, \alpha^{n-i})$, defines a morphism of dg Lie algebras.
\end{lemma}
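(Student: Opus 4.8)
The plan is to promote the statement, as was done for \cref{lem:MorphdgLie}, to the level of pre-Lie algebras and then skew-symmetrise. Precisely, I will show that
\[
\rho \longmapsto \left(\rho, \rho_1^\alpha\right)
\]
is a morphism of dg pre-Lie algebras from $\left(\hom\left(\As^{\ac},\calP\right),\partial,\star\right)$ to the semi-direct product pre-Lie algebra $\left(\hom\left(\As^{\ac},\calP\right)\oplus\left(\calP\hat{\vee}\alpha\right)(1),\ \partial+d_\calP,\ \bigstar\right)$ of \cref{lem:semidirectPreLie}; anti-symmetrising the pre-Lie products then gives the asserted morphism of dg Lie algebras. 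Two of the three verifications are immediate: the first component of $\rho\mapsto(\rho,\rho_1^\alpha)$ is the identity, so it trivially intertwines $\star$ with the first component of $\bigstar$; and compatibility with the differentials amounts to $\left(\partial\rho\right)_1^\alpha=d_\calP\!\left(\rho_1^\alpha\right)$, which holds because $\As^{\ac}$ carries no internal differential (hence $\partial\rho=d_\calP\circ\rho$, i.e. $(\partial\rho)_n=d_\calP(\rho_n)$) and because $d_\calP(\alpha)=0$, so the derivation $d_\calP$ acts on $\rho_1^\alpha=\sum_{n,i}(-1)^{n-i}\rho_n\!\left(\alpha^{i-1},-,\alpha^{n-i}\right)$ only through the factors $\rho_n$.

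The substance of the proof is the single identity
\[
\left(\rho\star\xi\right)_1^\alpha \;=\; \rho_1^\alpha\circ_1 \xi_1^\alpha \;-\;(-1)^{|\rho||\xi|}\,\D_\xi\!\left(\rho_1^\alpha\right)\ ,
\]
which is the second-component compatibility with $\bigstar$; here one uses the degree count $|\rho_1^\alpha|=|\rho|$, as $\rho_n$ has degree $|\rho|+n-1$ in $\calP(n)$ and the $n-1$ inserted copies of $\alpha$ each have degree $-1$. To prove the identity I would expand the left-hand side through the convolution formula $\left(\rho\star\xi\right)_n=\sum_{p+q+r=n}(-1)^{p(q+1)+|\xi|(p+r)}\,\rho_{p+1+r}\circ_{p+1}\xi_q$ recalled in the proof of \cref{lem:MorphdgLie}, then insert the element $\alpha$ into all but one of the $n$ inputs. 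Splitting according to the position of the surviving free input ``$-$'' yields exactly two families of terms. When ``$-$'' lies inside the factor $\xi_q$, the operation $\rho_{p+1+r}$ receives $\alpha$ in every input except the composition slot $p+1$, which receives $\xi_q\!\left(\alpha^{i-1},-,\alpha^{q-i}\right)$; summing over $p,q,r$ and the position $i$ reassembles precisely $\rho_1^\alpha\circ_1\xi_1^\alpha$. When ``$-$'' is one of the inputs of $\rho_{p+1+r}$ other than the composition slot, that slot carries $\xi_q(\alpha^{\otimes q})$ and all remaining inputs carry $\alpha$; since $\D_\xi$ is by construction the unique derivation of $\calP\hat{\vee}\alpha$ vanishing on $\calP$ with $\D_\xi(\alpha)=-\sum_q\xi_q(\alpha^{\otimes q})$, summing these terms reassembles $-(-1)^{|\rho||\xi|}\D_\xi\!\left(\rho_1^\alpha\right)$.

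The one delicate point is checking that the Koszul signs agree termwise in the two reassemblies: one must combine the structural sign $(-1)^{p(q+1)+|\xi|(p+r)}$ of $\star$, the signs $(-1)^{n-i}$ occurring in $\rho_1^\alpha$ and $\xi_1^\alpha$, and the Koszul signs produced when the degree $-1$ generators $\alpha$ are commuted past the operations $\rho_n,\xi_q$ and past one another. This is entirely analogous to, and no harder than, the sign bookkeeping carried out for \cref{Lem:Diffmu} in \cref{AppA}; it is the part where essentially all of the effort goes, and I expect no conceptual obstacle beyond it.
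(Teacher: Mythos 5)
Your proposal follows the paper's proof essentially verbatim: reduce to a morphism of dg pre-Lie algebras via \cref{lem:semidirectPreLie}, establish the identity $(\rho\star\xi)_1^\alpha=\rho_1^\alpha\circ_1\xi_1^\alpha-(-1)^{|\rho||\xi|}\D_\xi\left(\rho_1^\alpha\right)$ by splitting according to whether the surviving free input sits inside the $\xi_q$ factor or among the other inputs of $\rho_{p+1+r}$, and handle the differentials through $(\partial\rho)_1^\alpha=d_\calP\left(\rho_1^\alpha\right)$. The only difference is that you defer the termwise Koszul-sign verification, which the paper carries out explicitly (further splitting the $\rho_{p+1+r}$ case into inputs to the left and to the right of the composition slot, matching components (i) and (iii)); your identification of which families of terms reassemble into which expression is exactly the paper's.
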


\begin{proof}
By \cref{lem:semidirectPreLie}, 
it is enough to prove that such an assignment 
 defines a morphism of dg pre-Lie algebras. 
To this extend, we  first prove 
\begin{equation}\label{eqn:PreLieAction}
(\rho \star \xi)^\alpha_1=\rho_1^\alpha \circ_1 \xi_1^\alpha -(-1)^{|\rho||\xi|}\D_\xi \left(\rho_1^\alpha\right)\ ,
\end{equation}
for any $\rho, \xi\in \hom\left(\calA s^{\ac}, \calP\right)$. The left-hand side is equal to 
\[
(\rho \star \xi)^\alpha_1=\sum_{ p+q+r=n\geq 1 \atop 1\leq i \leq n}
(-1)^{p(q-1)+|\xi|(p+r)+n-i}\rho_{p+1+r}\circ_{p+1}\xi_q\left(\alpha^{i-1}, -, \alpha^{n-i}\right)\ ,
\]
which splits into three components according to the value of $i$: (i) when $1\leq i\leq p$, (ii) when $p+1\leq i\leq p+q$, and (iii) when $p+q+1\leq i\leq n=p+q+r$\ . The first term on the right-hand side of \eqref{eqn:PreLieAction} corresponds to the component (ii) and the second term on the right-hand side of \eqref{eqn:PreLieAction} corresponds to the sum of the  two components (i) and (iii). Explicitly, we first have 
\begin{align*}
\rho_1^\alpha \circ_1\xi_1^\alpha&=
\sum_{p+q+r=n\geq 1\atop 1\leq j \leq q} (-1)^{r+q-j} \rho_{p+1+r}(\alpha^p, -, \alpha^r)\circ_1\xi_q\left(\alpha^{j-1}, -, \alpha^{q-j}\right)\\
&=\sum_{p+q+r=n\geq 1\atop 1\leq j \leq q} 
(-1)^{p(q-1)+|\xi|(p+r)+r+q-j} 
\rho_{p+1+r}\circ_{p+1} \xi_q \left(\alpha^{p+j-1}, -, \alpha^{r+q-j}\right)
\end{align*}
This gives (ii) with $i=j+p$, since then $n-i=r+q-j$. Regarding the second term on the right-hand side of \cref{eqn:PreLieAction}, since 
$\rho_1^\alpha=
\displaystyle  \sum_{n\ge 2 \atop 1\leq i \leq n}(-1)^{n-i} \, \rho_n(\alpha^{i-1}, -, \alpha^{n-i})$ and since $\D_\xi$ vanishes on $\rho_n$, for $n\geq 1$, we get two terms: the first one when $\D_\xi$ applies to the $\alpha$'s on the left-hand side of the input slot and the second one when $\D_\xi$ applies to the $\alpha$'s on the right-hand side of the input slot. The former term gives component (iii) and the latter term gives component (i). The last point of the proof  amounts to check the various signs.  Under the notation 
$\rho_1^\alpha=
\sum_{p+1+r=n\geq 1\atop 1\leq j\leq r} 
(-1)^{r-j}
\rho_{p+1+r}\allowbreak\left(\alpha^{p+1+j-1}, \allowbreak -, \allowbreak \alpha^{r-j}\right)$, the former term becomes 
\begin{align*}
&\sum_{p+1+r=n\geq 1\atop 1\leq j\leq r} 
(-1)^{r-j+|\xi|r}
\rho_{p+1+r}\left(\alpha^{p},\xi_q\left(\alpha^q\right),\alpha^{j-1}, -, \alpha^{r-j}\right)\\
=&
\sum_{p+1+r=n\geq 1\atop 1\leq j\leq r} 
(-1)^{p(q-1)+|\xi|(p+r)+r-j}
\rho_{p+1+r}\circ_{p+1}\xi_q\left(\alpha^{p+q+j-1}, -, \alpha^{r-j}\right)\ , 
\end{align*}
which  is equal to (iii) with $i=j+p+q$.
Under the notation 
$\rho_1^\alpha \allowbreak = \allowbreak 
\sum_{p+1+r=n\geq 1\atop 1\leq j\leq p}\allowbreak 
(-1)^{p+1+r-j}\allowbreak
\rho_{p+1+r}\allowbreak \left(\alpha^{j-1},-, \alpha^{p+1+r-j}\right)$, 
the latter term becomes 
\begin{align*}
&\sum_{p+1+r=n\geq 1\atop 1\leq j\leq r} 
(-1)^{p+1+r-j+|\xi|(r-1)}
\rho_{p+1+r} \left(\alpha^{j-1},-, \alpha^{p}, \xi_q\left(\alpha^q\right),\alpha^{r-j}\right)\\
=&
\sum_{p+1+r=n\geq 1\atop 1\leq j\leq r} 
(-1)^{p(q-1)+|\xi|(p+r)+p+q+r-j}
\rho_{p+1+r}\circ_{p+1}\xi_q\left(\alpha^{j-1},-, \alpha^{p+q+r-j}\right)\ , 
\end{align*}
which  is equal to (i) with $i=j$.

The commutativity of the differentials comes from the relation 
$\left(d_\calP(\rho)\right)_1^\alpha=d_\calP\left(\rho_1^\alpha\right)$, which is straightforward. 
\end{proof}

\begin{lemma}\label{lem:MorphdgLieTER}
The assignment 
\begin{eqnarray*}
\left(\hom\left(\As^{\ac}, \calP\right), \partial, [\; , \,] \right)
\ltimes 
\left(\big(\calP\hat{\vee} \alpha\big)(1), d_\calP, [\; ,\,]\right)
&\to& 
\left(\Der\big(\calP\hat{\vee} \alpha\big), [d_\calP, -], [\; ,\,]\right)\\
(\rho, \nu) &\mapsto& \D_\rho+\ad_\nu
\end{eqnarray*}
defines a morphism of dg Lie algebras.
\end{lemma}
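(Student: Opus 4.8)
The plan is to verify that the map $(\rho,\nu)\mapsto \D_\rho+\ad_\nu$ intertwines the three pieces of structure: the brackets, the differentials, and (implicitly) that the target really lands in $\Der\big(\calP\hat{\vee}\alpha\big)$. For the last point I would first recall that $\D_\rho$ is a derivation of $\calP\hat{\vee}\alpha$ by construction, and that for any arity $1$ element $\nu\in\big(\calP\hat{\vee}\alpha\big)(1)$ the operator $\ad_\nu$ is a derivation of the partial compositions precisely because $\nu$ is supported in arity $1$ — this is exactly the computation behind the ``Twisted operad'' proposition, namely that $\nu\star(\omega\circ_i\lambda)=(\nu\star\omega)\circ_i\lambda$ when $\nu\in\calP(1)$. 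Hence $\D_\rho+\ad_\nu$ is again a derivation, so the assignment is well-defined.

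Next I would check compatibility with the brackets. Recall that the Lie bracket on the semi-direct product is
\[
\left[(\rho, \nu),  (\xi, \omega)\right]=
\left(\left[\rho, \xi\right], \left[\nu,  \omega\right] +\D_\rho(\omega)- (-1)^{|\xi||\nu|}\D_\xi(\nu)
\right)\ ,
\]
as computed in \cref{lem:semidirectPreLie}, while the bracket on $\Der\big(\calP\hat{\vee}\alpha\big)$ is the ordinary (graded) commutator. So I must show
\[
\D_{[\rho,\xi]}+\ad_{[\nu,\omega]+\D_\rho(\omega)-(-1)^{|\xi||\nu|}\D_\xi(\nu)}
=\big[\D_\rho+\ad_\nu,\, \D_\xi+\ad_\omega\big]\ .
\]
Expanding the right-hand side gives four terms: $[\D_\rho,\D_\xi]$, $[\ad_\nu,\ad_\omega]$, $[\D_\rho,\ad_\omega]$, and $[\ad_\nu,\D_\xi]$. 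The first equals $\D_{[\rho,\xi]}$ by \cref{lem:MorphdgLie} (the assignment $\rho\mapsto\D_\rho$ is a morphism of dg Lie algebras). The second equals $\ad_{[\nu,\omega]}$ since $\ad$ on the arity $1$ subalgebra $\big(\calP\hat{\vee}\alpha\big)(1)$ is a Lie algebra morphism. For the cross terms, the key identity is that for a derivation $\D$ of the operad and an arity $1$ element $\nu$ one has $[\D,\ad_\nu]=\ad_{\D(\nu)}$: indeed $\D\circ\ad_\nu-(-1)^{|\D||\nu|}\ad_\nu\circ\D$ applied to any $\omega$ yields $\D(\nu\star\omega-\cdots)-\cdots=\D(\nu)\star\omega-(-1)^{|\D(\nu)||\omega|}\omega\star\D(\nu)$ after the derivation property cancels the inner terms. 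Applying this with $\D=\D_\rho$ gives $[\D_\rho,\ad_\omega]=\ad_{\D_\rho(\omega)}$ and with $\D=\D_\xi$, after reorganising signs, $[\ad_\nu,\D_\xi]=-(-1)^{|\xi||\nu|}\ad_{\D_\xi(\nu)}$; note $\ad_\omega$ here is built from the \emph{operadic} $\nu\in\big(\calP\hat{\vee}\alpha\big)(1)$, not from a $\hom$-space element, so the derivation $\D_\rho$ of the operad acts on it correctly. Summing the four pieces and using additivity of $\ad$ gives exactly the left-hand side.

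Finally I would check compatibility with the differentials: the source carries $\partial+d_\calP$ (its piece on $\hom(\As^{\ac},\calP)$ is $\partial$ and its piece on $\big(\calP\hat{\vee}\alpha\big)(1)$ is $d_\calP$), while the target carries $[d_\calP,-]$. So I must show $\D_{\partial\rho}+\ad_{d_\calP(\nu)}=[d_\calP,\,\D_\rho+\ad_\nu]$. The first summand gives $[d_\calP,\D_\rho]=\D_{\partial\rho}$ by \cref{lem:MorphdgLie}, and the second gives $[d_\calP,\ad_\nu]=\ad_{d_\calP(\nu)}$ by the same $[\D,\ad_\nu]=\ad_{\D(\nu)}$ identity applied to $\D=d_\calP$ (here $d_\calP$ extended to $\calP\hat{\vee}\alpha$ kills $\alpha$, so it restricts to $\big(\calP\hat{\vee}\alpha\big)(1)$ as claimed). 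The main obstacle, such as it is, is purely bookkeeping: getting the Koszul signs right in the identity $[\D,\ad_\nu]=\ad_{\D(\nu)}$ and in the $(-1)^{|\xi||\nu|}$ factor of the cross term, so that the semi-direct product bracket formula from \cref{lem:semidirectPreLie} matches the commutator on the nose; once that sign lemma is isolated and proved, everything else is a direct substitution using the two previously established morphisms of dg Lie algebras.
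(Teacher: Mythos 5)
Your proposal is correct and follows essentially the same route as the paper's proof: expand the graded commutator of $\D_\rho+\ad_\nu$ and $\D_\xi+\ad_\omega$ into four terms, match $[\D_\rho,\D_\xi]=\D_{[\rho,\xi]}$ via Lemma~\ref{lem:MorphdgLie}, $[\ad_\nu,\ad_\omega]=\ad_{[\nu,\omega]}$ since $\ad$ is a Lie morphism, handle the cross terms by $[\D,\ad_\nu]=\ad_{\D(\nu)}$, and reduce the differential compatibility to $[d_\calP,\ad_\nu]=\ad_{d_\calP(\nu)}$ using that $d_\calP$ is an operadic derivation. Your additional check that $\D_\rho+\ad_\nu$ actually lands in $\Der\big(\calP\hat{\vee}\alpha\big)$ and your explicit sign bookkeeping for $[\ad_\nu,\D_\xi]=-(-1)^{|\xi||\nu|}\ad_{\D_\xi(\nu)}$ are welcome refinements of details the paper leaves implicit.
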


\begin{proof}
The compatibility with respect to the Lie brackets amounts to proving that 
\begin{align*}
\D_{[\rho,\xi]}+\ad_{[\nu, \omega]+\D_\rho(\omega)-(-1)^{|\xi||\nu|}\D_\xi(\nu)}
=
[\D_\rho, \D_\xi]+[\ad_\nu, \ad_\omega]+[\D_\rho, \ad_\omega]+[\ad_\nu, \D_\xi]\ .
\end{align*}
The first two terms are equal by \cref{lem:MorphdgLie}. The second two terms are equal since the adjonction is always a morphism of Lie algebras. The relation $[\D_\rho, \ad_\omega]=\ad_{\D_\rho(\omega)}$ is also a general fact  in Lie representation theory. 

After \cref{lem:MorphdgLie}, in order to prove the compatibility with respect to the differentials, it remains to show that 
$\ad_{d_\calP(\nu)}=[d_\calP, \ad_\nu]$, which come from the fact that $d_\calP$ is an operadic derivation. 
\end{proof}

\begin{theorem}\label{thm:DGActionOnTw}
The assignment 
\begin{eqnarray*}
\left(\hom\left(\As^{\ac}, \calP\right), \partial^\mu, [\; , \,] \right)&\to&
\left(\Der\big(\calP\hat{\vee} \alpha\big), [d_\calP+\D_\mu+\ad_{\mu_1^\alpha}, -], [\; ,\,]
\right)\\
\rho &\mapsto& \D_\rho+\ad_{\rho_1^\alpha}
\end{eqnarray*}
is a morphism of dg Lie algebras.
 In plain words, this 
defines a dg Lie action by derivation of the deformation complex $\Def\big(\Ai\to \calP\big)$ on the twisted operad 
$\Tw\calP$. 
\end{theorem}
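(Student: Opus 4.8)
The plan is to deduce this from the two previous lemmas together with the functoriality of the twisting procedure for dg Lie algebras. First I would compose the morphism of dg Lie algebras of \cref{lem:MorphdgLieBIS}, namely $\rho \mapsto (\rho, \rho_1^\alpha)$, with the morphism of dg Lie algebras of \cref{lem:MorphdgLieTER}, namely $(\rho,\nu)\mapsto \D_\rho+\ad_\nu$. Since the composite sends $\rho$ to $\D_\rho + \ad_{\rho_1^\alpha}$, this already yields that the assignment
\[
\rho \ \longmapsto\ \D_\rho + \ad_{\rho_1^\alpha}
\]
is a morphism of dg Lie algebras from $\left(\hom\left(\As^{\ac}, \calP\right), \partial, [\; , \,] \right)$ to $\left(\Der\big(\calP\hat{\vee} \alpha\big), [d_\calP, -], [\; ,\,]\right)$. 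This is the \emph{untwisted} version of the statement, and it extends \cref{prop:DGAction} (to which it reduces when the $\ad_{\rho_1^\alpha}$ term is dropped).

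Next I would twist both sides by the Maurer--Cartan element $\mu = (0,\mu_2,\mu_3,\ldots)$ which encodes the multiplicative structure $\Ai\to\calP$. Since morphisms of dg Lie algebras preserve Maurer--Cartan elements, the image $\D_\mu + \ad_{\mu_1^\alpha}$ of $\mu$ is a Maurer--Cartan element of $\left(\Der(\calP\hat\vee\alpha), [d_\calP,-], [\,,\,]\right)$; equivalently, one already knows from \cref{prop:DGAction} that $d_\calP+\D_\mu$ is a square-zero derivation and that $\mu_1^\alpha$ is a Maurer--Cartan element of $\MC\calP$, so that $d_\calP+\D_\mu+\ad_{\mu_1^\alpha}=\dd^{\mu_1^\alpha}$ squares to zero, which is precisely the Maurer--Cartan condition. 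Twisting a morphism of dg Lie algebras at a Maurer--Cartan element of the source produces a morphism between the source twisted at that element and the target twisted at its image (this is the Lie-theoretic analogue of \cref{prop:MorphMCOperadic}, and follows from the naturality of $\ad$). Twisting the source at $\mu$ replaces $\partial$ by $\partial^\mu = \partial+\ad_\mu$, giving the deformation complex $\Def\big(\Ai\to \calP\big)$; twisting the target at $\D_\mu+\ad_{\mu_1^\alpha}$ replaces $[d_\calP,-]$ by $[d_\calP,-]+[\D_\mu+\ad_{\mu_1^\alpha},-]=[d_\calP+\D_\mu+\ad_{\mu_1^\alpha},-]$. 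This is exactly the morphism asserted in the statement, and since $d_\calP+\D_\mu+\ad_{\mu_1^\alpha}$ is the differential $\dd^{\mu_1^\alpha}$ of $\Tw\calP=(\MC\calP)^{\mu_1^\alpha}$, the target dg Lie algebra is that of operadic derivations of $\Tw\calP$; hence the morphism is the announced dg Lie action by derivations of $\Def\big(\Ai\to \calP\big)$ on $\Tw\calP$.

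The main obstacle is purely a matter of bookkeeping: checking that the commutator $[d_\calP,-]+[\D_\mu+\ad_{\mu_1^\alpha},-]$ of derivations of $\calP\hat\vee\alpha$ coincides, with all signs, with the adjoint of $\dd^{\mu_1^\alpha}$, and that the composite of the two lemmas produces precisely $\D_\rho+\ad_{\rho_1^\alpha}$ with no spurious factors. All of this has effectively been carried out already in \cref{lem:MorphdgLie}, \cref{lem:semidirectPreLie}, \cref{lem:MorphdgLieBIS}, \cref{lem:MorphdgLieTER}, and in the definition of $\Tw\calP$, so the remaining verification is routine.
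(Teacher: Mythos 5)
Your proposal is correct and follows essentially the same route as the paper: the paper's proof likewise composes the dg Lie algebra morphisms of \cref{lem:MorphdgLieBIS} and \cref{lem:MorphdgLieTER} to obtain the untwisted morphism $\rho\mapsto \D_\rho+\ad_{\rho_1^\alpha}$, and then twists both sides by the Maurer--Cartan element $\mu$, exactly as in the proof of \cref{prop:DGAction}. Your additional remarks on why the twist is legitimate (morphisms of dg Lie algebras send Maurer--Cartan elements to Maurer--Cartan elements, and twisting at corresponding elements yields a morphism of the twisted algebras) make explicit what the paper leaves implicit.
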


\begin{proof}
The arguments are the same as in the proof of \cref{prop:DGAction}, using 
the composite of dg Lie algebra morphisms given respectively  in 
\cref{lem:MorphdgLieBIS} and in \cref{lem:MorphdgLieTER}, and twisting in the end by the Maurer--Cartan element $\mu$.
\end{proof}

\begin{remark}\label{rem:}
\cref{thm:DGActionOnTw} gives another way, actually the original one from \cite[Appendix~I]{Willwacher15}, to define the twisted complete dg ns operad 
\[
\Tw\calP\coloneqq\left(
\calP\hat{\vee} \alpha, \dd^{\mu_1^\alpha}\coloneqq d_\calP+\D_\mu+\ad_{\mu_1^\alpha}\right)\ .
\]
\end{remark}

The relationship between the deformation complex and the twisted operad is actually a bit more rich. 

\begin{proposition}\label{prop:DefTwP0}
Let $\Ai\to \calP$ be a multiplicative ns operad satisfying  $\calP(0)=0$. Up to a degree shift, the deformation complex is isomorphic to the chain complex made up of the arity $0$ component of the twisted operad: 
$$ 
\left(\left(\Tw\, \calP\right) (0), \dd^{\mu_1^\alpha}\right)\cong \left(s^{-1}\hom\left(\As^{\ac}, \calP\right), \partial^\mu\right)
\ .$$
\end{proposition}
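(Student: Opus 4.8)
The plan is to unwind both sides explicitly and exhibit the isomorphism as the obvious identification of underlying graded modules, then check it intertwines the differentials. First I would recall that since $\calP(0)=0$, the coproduct $\calP\,\widehat\vee\,\alpha$ in arity $0$ consists precisely of series of operations $\nu\in\calP(n)$ with all $n$ inputs filled by copies of $\alpha$; writing $\nu(\alpha,\dots,\alpha)$ for the arity $0$ element obtained from $\nu\in\calP(n)$, the assignment $\nu\mapsto\nu(\alpha^n)$ identifies $\prod_{n\ge 1}\calP(n)$ (with the appropriate sign twist coming from the degree $-1$ generator $\alpha$) with $\big(\Tw\calP\big)(0)$. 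On the other side, by definition $\hom(\As^{\ac},\calP)\cong\prod_{n\ge 1} s^{1-n}\calP(n)$. So the sign/degree bookkeeping is exactly a global shift by $s^{-1}$: an element $\rho=(\rho_1,\rho_2,\dots)$ of degree $k$ in the deformation complex corresponds to the arity $0$ element $-\sum_{n\ge 1}\rho_n(\alpha^n)$ of degree $k-1$ in $\Tw\calP$, which is (up to sign) the element $\D_\rho(\alpha)$ from \cref{prop:DGAction} and the surrounding discussion. I would take this as the definition of the candidate isomorphism $\Phi: s^{-1}\hom(\As^{\ac},\calP)\xrightarrow{\cong}\big(\Tw\calP\big)(0)$.

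Next I would verify that $\Phi$ is an isomorphism of graded modules: it is bijective because every element of $\big(\Tw\calP\big)(0)$ is uniquely a series $\sum_n \nu_n(\alpha^n)$ with $\nu_n\in\calP(n)$ (this uses $\calP(0)=0$, so there is no ambiguity from composing $\alpha$ with an arity $0$ operation, and the coproduct description recalled in \cref{subsec=TwMultiOp} gives the normal form), and it is filtered/continuous since $\alpha\in\F_1$ forces $\nu_n(\alpha^n)\in\F_n$, matching the product topology on both sides. The main content is then compatibility of differentials: I must show $\Phi\circ\partial^\mu = \dd^{\mu_1^\alpha}\circ\Phi$, i.e. that the differential on the deformation complex, which is $\partial^\mu=\partial+\ad_\mu$ in the twisted dg Lie algebra, matches the restriction to arity $0$ of $\dd^{\mu_1^\alpha}=d_\calP+\D_\mu+\ad_{\mu_1^\alpha}$ given in \cref{rem:} (the Willwacher description). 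Concretely, applying $\dd^{\mu_1^\alpha}$ to $\sum_n\nu_n(\alpha^n)$ produces three contributions: $d_\calP$ acting on the $\nu_n$ (which matches $\partial$), the bracket $\ad_{\mu_1^\alpha}$ which, when the $\alpha$'s are plugged in, reorganizes into the $\mu\star(-)$ and $(-)\star\mu$ terms of $\ad_\mu$, and the $\D_\mu$ term which accounts for the effect of $\dd(\alpha)=-\sum\mu_k(\alpha^k)$ being substituted into each slot — and this last piece is exactly the remaining half of $\ad_\mu$ together with the derivation property. This is a sign-chasing computation very much parallel to the proof of \cref{prop:MorhpAiMCAi} and to the identity $\dd^{\mu_1^\alpha}=d_\calP+\D_\mu+\ad_{\mu_1^\alpha}$ itself.

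The hard part will be the sign reconciliation: the deformation complex uses the convolution pre-Lie product $\star$ on $\hom(\As^{\ac},\calP)$ with its operadic (Koszul) signs, whereas $\Tw\calP$ carries the operadic partial-composition signs after inserting the degree $-1$ element $\alpha$ — and the correspondence $\rho\leftrightarrow\D_\rho$ already absorbs one layer of sign via $\D_{\rho\star\xi}=\D_\rho\circ^{\mathrm{op}}\D_\xi$ from \cref{lem:MorphdgLie}. I would organize the check by using \cref{lem:MorphdgLieBIS}, which gives precisely the map $\rho\mapsto(\rho,\rho_1^\alpha)$ and the formula $\rho_1^\alpha=\sum_{n,i}(-1)^{n-i}\rho_n(\alpha^{i-1},-,\alpha^{n-i})$, together with \cref{thm:DGActionOnTw}: applying the derivation $\D_\rho+\ad_{\rho_1^\alpha}$ to $\alpha$ and comparing with $\partial^\mu(\rho)$ traced through $\Phi$ essentially is the statement, since $\D_\rho(\alpha)=-\sum_n\rho_n(\alpha^n)=\Phi(\rho)$ up to the sign in $\Phi$, and $[\mu,-]$ on arity $0$ unwinds to the arity-$0$ value of $\dd^{\mu_1^\alpha}$. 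So I expect the proof to reduce to: (1) exhibit $\Phi$, (2) cite the normal form for $(\calP\,\widehat\vee\,\alpha)(0)$ when $\calP(0)=0$ for bijectivity, (3) quote \cref{rem:} for the form of $\dd^{\mu_1^\alpha}$ and \cref{thm:DGActionOnTw}/\cref{lem:MorphdgLieBIS} to match it term-by-term with $\partial^\mu$, with the $s^{-1}$ shift accounting for the degree discrepancy. The only genuinely new computation is confirming the signs align globally, which I would do once on generators and invoke the derivation property for the rest.
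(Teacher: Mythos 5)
Your proposal follows essentially the same route as the paper's proof: the same identification $\rho\mapsto \pm\,\D_\rho(\alpha)$ of the underlying spaces (using $\calP(0)=0$ for the normal form), followed by matching $\partial^\mu=\partial+\ad_\mu$ against the three summands $d_\calP+\D_\mu+\ad_{\mu_1^\alpha}$ of the twisted differential via \cref{lem:MorphdgLie} and one final explicit sign computation. One small bookkeeping correction: in the paper's term-by-term match the two halves of $\ad_\mu$ are split between $\D_\mu$ (which yields $\D_\mu(\D_\rho(\alpha))$, the $\mu\star\rho$-type half) and $\ad_{\mu_1^\alpha}$ (which yields $-(-1)^{|\rho|}\D_\rho(\D_\mu(\alpha))$, the $\rho\star\mu$-type half, since an arity $0$ element has no inputs and so only composes on one side), rather than $\ad_{\mu_1^\alpha}$ carrying both halves with $\D_\mu$ as a ``remaining half'' as your sketch suggests.
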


\begin{proof}
On the level of the  underlying spaces, these two chain complexes satisfy 
\begin{eqnarray*}
\left(\Tw\, \calP\right) (0)=
\prod_{n\ge 0} \calP(n)\otimes \alpha^{\otimes n}\cong 
\prod_{n\ge 0} s^{-n}\calP(n)
\cong
 s^{-1}\hom\left(\As^{\ac}, \calP\right)\bigoplus \calP(0)\ .
\end{eqnarray*}
When $\calP(0)=0$, one way to realise the  isomorphism $\hom\left(\As^{\ac}, \calP\right) \stackrel{\cong}{\to} s\left(\Tw\, \calP\right) (0)$ is given   by 
$$\rho \mapsto (-1)^{|\rho|}s \D_\rho(\alpha)=-\sum_{n\ge 1} (-1)^{|\rho|}\rho_n\big(\alpha^{\otimes n}\big)\ .$$ 
It remains to show that it commutes with the respective differentials, that is to prove the following relation:
\[\D_{\partial(\rho)}(\alpha)+\D_{[\mu, \rho]}(\alpha)=d_\calP\left( \D_\rho(\alpha) \right)+\D_\mu\left(\D_\rho(\alpha)\right)+\ad_{\mu_1^\alpha}\left( \D_\rho(\alpha)\right)\ .\]
We have already seen in \cref{lem:MorphdgLie} that $\D_{\partial(\rho)}(\alpha)=d_\calP\left( \D_\rho(\alpha) \right)$ and that $\D_{[\mu, \rho]}(\alpha)=\D_\mu\left(\D_\rho(\alpha)\right)-(-1)^{|\rho|}\D_\rho\left(\D_\mu(\alpha)\right)$\ . So it remains to show that $\ad_{\mu_1^\alpha}\left( \D_\rho(\alpha)\right)=\allowbreak-(-1)^{|\rho|}\allowbreak\D_\rho\left(\D_\mu(\alpha)\right)$, which comes from the fact that both are explicitly equal to 
\[-\sum_{p, r\geq 0\atop q\leq 1} (-1)^{|\rho|r}\mu_{p+1+r}\left(\alpha^p, \rho_q\left(\alpha^q\right), \alpha^r\right)\ .\]
\end{proof}

\section{Generalisations}\label{Sec:Gen}
\cref{sec:TwNsOp} deals with the twisting procedure for ns operads where we used in a crucial way the dg ns operad 
$\Ai$. The  entire same theory holds as well with the dg ns operad $\sAi\coloneqq {\End}_{\k s}\otimes \Ai$ encoding shifted $\Ai$-algebras; in this case, the signs are nearly all  trivial. In order to get the twisting procedure for (symmetric) operads, one has to start with the dg operad $\Li$ encoding homotopy Lie algebras or the dg operad $\sLi\coloneqq {\End}_{\k s}\otimes\Li$ encoding shifted homotopy Lie algebras. The various proofs are performed with similar computations, and thus are left to the reader. In this way, one gets the theory developed by T. Willwacher but with a presentation different from \cite{Willwacher15, DolgushevRogers12, DolgushevWillwacher15}. Let us now give a more detailed presentation. 

\begin{proposition}
The complete dg operad encoding the data of a shifted homotopy Lie algebra together with a Maurer--Cartan element is 
$$\MC\sLi\coloneqq\left(
\widehat\calT\big(\alpha, \lambda_2, \lambda_3, \ldots\big), \dd
\right) \ , $$
where the generator $\alpha$ has arity $0$ and degree $0$ and 
where the generator $\lambda_n$ has arity $n$, degree $-1$, and trivial $\Sy_n$-action, for $n\ge 2$.
\end{proposition}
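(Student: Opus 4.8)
The plan is to establish this proposition in complete parallel with \cref{prop:MCAi}, which handles the $\Ai$-case. First I would recall that by the Koszul duality story recalled in \cref{subsec:OpInt}, the operad encoding shifted $\Li$-algebras is the cobar construction $\sLi=\Omega\big((\uCom)^*\big)$ restricted to the non-counital part, or more directly the quasi-free operad $\calT(\lambda_2,\lambda_3,\ldots)$ with $\lambda_n$ of arity $n$, degree $-1$, trivial $\Sy_n$-action, equipped with the standard differential
\[
\dd(\lambda_n)=-\sum_{p+q=n+1\atop 2\le p,q\le n}\sum_{\sigma\in\mathrm{Sh}_{p,q}^{-1}}(\lambda_{p+1}\circ_1\lambda_q)^\sigma\ .
\]
The claim is that adjoining one arity-$0$ degree-$0$ generator $\alpha$, with the prescribed filtration ($\alpha\in\F_1$, $\F_2=0$ on that piece; $\lambda_n\in\F_0$, $\F_1=0$), and extending $\dd$ by $\dd(\alpha)\coloneqq-\sum_{n\ge2}\lambda_n(\alpha,\ldots,\alpha)$, yields a well-defined complete dg operad whose category of complete algebras is exactly the category of pairs (complete shifted $\Li$-algebra, Maurer--Cartan element).

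The key steps, in order: (1) Verify that $\dd$ extends to a derivation on $\widehat\calT(\alpha,\lambda_2,\lambda_3,\ldots)$ and squares to zero. Rather than computing by hand, I would invoke \cref{prop:DGAction} (equivalently the mechanism of \cref{rem:} after it, in the symmetric setting) applied to the identity morphism $(\uCom)^{\ac}\to(\uCom)^{\ac}$: the general machinery of the deformation-complex action produces exactly the square-zero derivation $d_\calP+\D_\mu$, which on generators reads $\dd(\lambda_n)$ as above and $\dd(\alpha)=-\sum_{n\ge2}\lambda_n(\alpha,\ldots,\alpha)$. This is the step the excerpt itself signals (``direct corollary of \cref{prop:DGAction}''), so I would spell out that $\mu=(0,\lambda_2,\lambda_3,\ldots)$ is the relevant Maurer--Cartan element and that $\D_\mu(\alpha)=-\sum_{n\ge2}\lambda_n(\alpha^n)$ by definition of the derivation attached to $\mu$. (2) Check convergence: the infinite series $\sum_{n\ge2}\lambda_n(\alpha,\ldots,\alpha)$ lives in $\widehat{\calT}$ and converges because $\alpha\in\F_1$, so the $n$-th term lies in $\F_n$; this uses \cref{lem:Conv} and the completeness of $\widehat\calT$ of \cref{sec:OptheoyFilMod}. (3) Identify the algebras: by \cref{thm:CompleteAlg}, a complete $\MC\sLi$-algebra on $(A,\F,d)$ is a filtered dg operad morphism $\MC\sLi\to\eend_A$; such a morphism is determined by $\alpha\mapsto a\in\F_1A_0$ and $\lambda_n\mapsto\ell_n$, and compatibility with $\dd$ on the $\lambda_n$ is precisely the shifted curved $\Li$-relations of \cref{def:csLiAlg} with $\theta$ replaced by the single constraint, while compatibility on $\alpha$ gives $d(a)+\sum_{n\ge2}\ell_n(a,\ldots,a)=0$, the Maurer--Cartan equation. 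Hence $\MC\sLi$-algebras are exactly shifted $\Li$-algebras with a Maurer--Cartan element.

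The main obstacle I anticipate is purely bookkeeping rather than conceptual: getting the signs and the shuffle-sum conventions in $\dd(\lambda_n)$ to match \cref{def:csLiAlg} exactly, and checking that the degree-$0$/arity-$0$ placement of $\alpha$ (as opposed to degree $-1$, arity $0$ in the $\Ai$-case --- the excerpt's statement here says degree $0$, reflecting the unshifted-vs-shifted normalization for $\Li$) is the one forced by requiring $\dd(\alpha)$ to have degree $-1$: since each $\lambda_n$ has degree $-1$ and is applied to $n$ copies of the degree-$0$ element $\alpha$, the output $\lambda_n(\alpha^n)$ has degree $-1$, consistent with $\dd$ lowering degree by one and $\alpha$ sitting in degree $0$. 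I would present this as a one-line degree count. Everything else --- that the extended $\dd$ is a coderivation-type object squaring to zero --- is delegated wholesale to \cref{prop:DGAction}, so the proof is short: a sentence invoking that proposition for $d_\calP+\D_\mu$, a sentence on convergence via \cref{lem:Conv}, and a sentence translating via \cref{thm:CompleteAlg} into the stated equivalence of structures.
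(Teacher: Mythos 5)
Your proposal is correct and follows essentially the same route as the paper: the paper leaves this proof to the reader as a verbatim transcription of the nonsymmetric case, namely the proof of \cref{prop:MCAi}, which consists of exactly your three steps (square-zero derivation via the symmetric analogue of \cref{prop:DGAction}, convergence from $\alpha\in\F_1$ via \cref{lem:Conv}, and identification of the algebras via \cref{thm:CompleteAlg}). Two convention points to adjust. First, the relevant cooperad here is $\Com^*$ (equivalently $(\calS\Lie)^{\ac}$), not $(\uCom)^*$, whose arity-$0$ part would already encode a curvature generator. Second, in the symmetric setting the differential must carry the symmetrization factors, $\dd(\alpha)=-\sum_{n\ge 2}\frac{1}{n!}\,\lambda_n(\alpha,\ldots,\alpha)$, so that the arity-$0$ constraint on a $\MC\sLi$-algebra is precisely the Maurer--Cartan equation of \cref{def:csLiAlg} and so that it is compatible with the Maurer--Cartan element $\lambda_1^\alpha=\sum_{n\ge 2}\frac{1}{(n-1)!}\,\lambda_n\big(\alpha^{n-1},-\big)$ used immediately afterwards; your formula omits the $\frac{1}{n!}$, which over $\mathbb{Q}$ only changes the normalization but would break the match with the rest of the symmetric-case formulas.
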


It admits the following Maurer--Cartan element 
\[
\lambda_1^\alpha\coloneqq\sum_{n\ge 2} {\textstyle  \frac{1}{(n-1)!}}\lambda_n\left(\alpha^{n-1}, -\right)\ .
\]

\begin{definition}[Twisted $\sLi$-operad]
The twisted $\sLi$-operad is 
\[
\Tw\sLi\coloneqq \left(\MC\sLi\right)^{\lambda_1^\alpha}\cong 
\left(
\widehat\calT\big(\alpha, \lambda_2, \lambda_3, \ldots\big), \dd^{\lambda_1^\alpha}
\right)\ ,
\]
with 
$\dd^{\lambda_1^\alpha}(\alpha)=\sum_{n\ge 2} {\textstyle  \frac{n-1}{n!}}\lambda_n\left(\alpha^{n}\right)$.
\end{definition}

\begin{proposition}
The assignment $\lambda_n\mapsto \lambda_n^\alpha$, where 
\[
\lambda_n^\alpha\coloneqq\sum_{r\ge 0} {\textstyle  \frac{1}{r!}}\lambda_{n+r}\left(\alpha^{r}, -, \ldots, -\right)\ .
\]
 defines a morphism of complete dg  operads 
$$\sLi \to \Tw \sLi \ . $$
\end{proposition}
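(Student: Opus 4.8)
The plan is to mimic, \emph{mutatis mutandis}, the proof of Proposition~\ref{prop:MorhpAiMCAi}, replacing the (shifted, non-symmetric) associative data by the shifted symmetric Lie data. First I would recall that $\sLi$ is a complete quasi-free dg operad, $\sLi=\big(\widehat{\calT}(\lambda_2,\lambda_3,\ldots),\dd\big)$, the generator $\lambda_n$ having arity $n$, degree $-1$ and trivial $\Sy_n$-action, and the differential being the usual one, $\dd(\lambda_n)=\sum\pm(\lambda_p\circ_1\lambda_q)^\sigma$, the sum ranging over $p+q=n+1$ with $p,q\ge 2$ and over the relevant (inverses of) shuffle permutations, cf. Definition~\ref{def:csLiAlg}. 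Since a morphism of complete dg operads out of a quasi-free operad amounts exactly to an assignment on generators that commutes with the differentials, and since $\lambda_n^\alpha$ has arity $n$, degree $-1$ and trivial $\Sy_n$-action (the generators $\lambda_m$ and the arity-$0$ element $\alpha$ being symmetric, $\alpha$ being of degree $0$), the assignment $\lambda_n\mapsto\lambda_n^\alpha$ already defines a morphism of the underlying complete graded operads; the only thing left to verify is
\[
\dd^{\lambda_1^\alpha}\big(\lambda_n^\alpha\big)=\sum_{p+q=n+1,\ p,q\ge 2}\ \sum_\sigma\pm\,\big(\lambda_p^\alpha\circ_1\lambda_q^\alpha\big)^\sigma,
\]
the right-hand side being the image of $\dd(\lambda_n)$ under $\lambda_k\mapsto\lambda_k^\alpha$.

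The key step is the $\sLi$-analogue of Lemma~\ref{Lem:Diffmu}: inside the complete dg operad $\MC\sLi$ one has $\dd(\lambda_0^\alpha)=0$, $\dd(\lambda_1^\alpha)=-\lambda_1^\alpha\circ_1\lambda_1^\alpha$, and, for $n\ge 2$,
\[
\dd\big(\lambda_n^\alpha\big)=\sum_{p+q=n+1,\ p,q\ge 1}\ \sum_\sigma\pm\,\big(\lambda_p^\alpha\circ_1\lambda_q^\alpha\big)^\sigma,
\]
that is, the same expression as $\dd(\lambda_n)$ but with the range of summation enlarged so as to allow $p=1$ or $q=1$. I would prove this exactly as Lemma~\ref{Lem:Diffmu} (whose proof is deferred, in the $\Ai$ case, to \cref{AppA}): one substitutes the series $\lambda_n^\alpha=\sum_{r\ge 0}\frac{1}{r!}\lambda_{n+r}(\alpha^r,-,\ldots,-)$ into $\dd$, uses $\dd(\alpha)=-\lambda_0^\alpha$ together with the defining relations $\dd(\lambda_m)$, and re-indexes the resulting sums of shuffle trees; the $\alpha$-slots that get differentiated regroup precisely into the new $p=1$ and $q=1$ terms and the combinatorial factors $1/r!$ conspire correctly. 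Note that the middle identity is exactly the statement, already recorded just before Definition~\ref{prop:MorhpAiMCP}'s analogue in the excerpt, that $\lambda_1^\alpha$ is a Maurer--Cartan element of $\MC\sLi$, which is what makes $\Tw\sLi$ well-defined; the other two are the arity-$0$ and arity-$\ge 2$ parts of one uniform identity.

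Finally I would conclude as in Proposition~\ref{prop:MorhpAiMCAi}. By the definition of the twisted operad, $\dd^{\lambda_1^\alpha}=\dd+\ad_{\lambda_1^\alpha}$, and since $\lambda_n^\alpha$ has degree $-1$ we get $\ad_{\lambda_1^\alpha}(\lambda_n^\alpha)=\lambda_1^\alpha\star\lambda_n^\alpha+\lambda_n^\alpha\star\lambda_1^\alpha$. Because $\lambda_1^\alpha$ has arity $1$, the term $\lambda_1^\alpha\star\lambda_n^\alpha=\lambda_1^\alpha\circ_1\lambda_n^\alpha$ is, up to the expected sign, the $p=1$ summand of the enlarged formula for $\dd(\lambda_n^\alpha)$, while $\lambda_n^\alpha\star\lambda_1^\alpha=\sum_{i=1}^n\lambda_n^\alpha\circ_i\lambda_1^\alpha$ accounts, after reorganising the $\circ_i$ into $\circ_1$-compositions with shuffles, for the $q=1$ summands. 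Hence adding $\ad_{\lambda_1^\alpha}(\lambda_n^\alpha)$ to $\dd(\lambda_n^\alpha)$ cancels precisely the $p=1$ and $q=1$ terms, leaving the sum over $p,q\ge 2$, which is the required identity; thus $\lambda_n\mapsto\lambda_n^\alpha$ is a morphism of complete dg operads $\sLi\to\Tw\sLi$. The main obstacle is the bookkeeping in the second paragraph: getting the signs and the shuffle-permutation indexing in the symmetric setting to match, exactly as in the appendix computation in the $\Ai$ case. Everything else is formal, relying only on the quasi-freeness of $\sLi$, on the identity $\dd^\mu=\dd+\ad_\mu$ for twisted operads, and on the elementary properties of operadic twisting established around Proposition~\ref{prop:MorphMCOperadic}.
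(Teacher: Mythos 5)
Your proposal is correct and follows exactly the route the paper intends: the paper gives no written proof here, stating only that the symmetric/shifted-Lie statements are "performed with similar computations" to the $\Ai$ case, and your argument is precisely that adaptation — quasi-freeness reduces everything to checking compatibility on generators, the $\sLi$-analogue of Lemma~\ref{Lem:Diffmu} supplies the enlarged sum with $p=1$ and $q=1$ terms, and $\ad_{\lambda_1^\alpha}$ cancels exactly those terms as in Proposition~\ref{prop:MorhpAiMCAi}. The sign bookkeeping you flag as the main obstacle is indeed where the work lies (mirroring \cref{AppA}), but your outline of how the cancellation occurs is accurate.
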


\begin{proposition}\label{prop:MCPBIS}
Let $\sLi \to \calP$ be a morphism of complete dg operads. 
The data of a complete $\calP$-algebra structure together with a Maurer--Cartan element is encoded by the complete dg ns operad 
$$\MC\calP\coloneqq\left(
\calP\hat{\vee} \alpha, \dd
\right) \ , $$
where $\alpha$ is a degree $0$ element of arity $0$ placed in $F_1$ and where $\hat{\vee}$ stands for the coproduct of complete operads, and where the differential $\mathrm{d}$ is characterized by 
\begin{eqnarray*} 
&&\dd(\alpha)\coloneqq-\sum_{n\ge 2}{\textstyle  \frac{1}{n!}}\lambda_n(\alpha, \ldots, \alpha) \ ,\\
&&\dd(\nu)\coloneqq d_\calP(\nu)\ , \ \text{for} \ \nu\in \calP\ .
\end{eqnarray*}
\end{proposition}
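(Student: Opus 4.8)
The plan is to transcribe, \emph{mutatis mutandis}, the argument used for the non-symmetric counterpart \cref{prop:MCP} (which itself parallels \cref{prop:MCAi}), replacing the multiplicative datum $\Ai \to \calP$ by the shifted-Lie datum $\sLi \to \calP$ and the associative cooperad by the one dual to $\uCom$; the only genuinely new bookkeeping is the appearance of the divided powers $1/n!$ and of $\Sy_n$-equivariance (here $\alpha$ carries the trivial action), which is harmless since $\mathbb{Q} \subseteq \k$. First I would recall that the complete coproduct $\calP \hat{\vee} \alpha$ of complete dg operads exists — the symmetric version of the description reviewed in \cref{subsec=TwMultiOp} — and that, by its universal property, an operadic derivation of $\calP \hat{\vee} \alpha$ which restricts to $d_\calP$ on the sub-operad $\calP$ is uniquely determined by the element of $(\calP \hat{\vee} \alpha)(0)$ it assigns to the single generator $\alpha$. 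Since $\alpha$ has arity $0$, degree $0$ and is placed in $\F_1$, the prescribed value $-\sum_{n\ge 2}\tfrac{1}{n!}\lambda_n(\alpha,\ldots,\alpha)$ is a well-defined element of $(\calP \hat{\vee} \alpha)(0)_{-1}$: the series converges because its summand built from an arity-$n$ operation of $\calP$ carries $n$ copies of $\alpha$ and hence lies in $\F_n$. Thus $\dd$ is a well-defined degree $-1$ derivation.

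The substantive point is $\dd^2 = 0$. On the sub-operad $\calP$ this is $d_\calP^2 = 0$; on the generator $\alpha$ it has to be checked, and I would deduce it from the symmetric analogue of \cref{prop:DGAction} (equivalently of \cref{lem:MorphdgLie}). Namely, a morphism of complete dg operads $\sLi \to \calP$ is the same as a Maurer--Cartan element $\lambda = (0,\lambda_2,\lambda_3,\ldots)$ of the deformation complex $\Def(\sLi \to \calP)$, a complete dg Lie algebra, and the assignment $\rho \mapsto \D_\rho$ — where $\D_\rho$ is the derivation of $\calP \hat{\vee} \alpha$ sending $\alpha \mapsto -\sum_{n\ge 1}\tfrac{1}{n!}\rho_n(\alpha,\ldots,\alpha)$ and $\nu \mapsto 0$ for $\nu \in \calP$ — is a morphism of dg Lie algebras into $\Der(\calP \hat{\vee} \alpha)$. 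Hence $\D_\lambda$ is again a Maurer--Cartan element there, so $d_\calP + \D_\lambda$ is a square-zero derivation; but $d_\calP + \D_\lambda = \dd$ by construction, whence $\dd^2 = 0$. Alternatively, one may simply unwind $\dd^2(\alpha)$ by the Leibniz rule and observe that its vanishing is exactly the system of relations expressing that $\sLi \to \calP$ is a morphism of complete dg operads — the relations of \cref{def:csLiAlg} with trivial curvature — the divided powers matching on both sides.

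It remains to identify the algebras, which is straightforward from the universal property of the coproduct, as in \cref{prop:MCP}. By \cref{thm:CompleteAlg}, a complete $\MC\calP$-algebra structure on a complete dg module $(A,\F,d)$ is a morphism of filtered dg operads $\MC\calP \to \eend_A$; by the universal property this amounts to a pair consisting of a morphism of complete dg operads $\calP \to \eend_A$ (i.e. a complete $\calP$-algebra structure, $\lambda_n \mapsto \ell_n$) together with the image $a$ of $\alpha$, which lies in $\F_1 A_0$ because $\alpha$ sits in $\F_1$. Compatibility with differentials is automatic on $\calP$, and on $\alpha$ it reads $d(a) = -\sum_{n\ge 2}\tfrac{1}{n!}\ell_n(a,\ldots,a)$, i.e. precisely the Maurer--Cartan equation for $a$ in the complete shifted $\Li$-algebra $A$; conversely any such pair assembles into a morphism $\MC\calP \to \eend_A$. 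The main obstacle is the square-zero verification, which rests entirely on the symmetric, divided-power form of \cref{prop:DGAction}/\cref{lem:MorphdgLie}: once one carries those proofs over to $\sLi$ keeping careful track of $\Sy_n$-equivariance and of the $1/n!$ coefficients so that the key identity $\D_{\rho\star\xi} = \D_\rho \circ^{\mathrm{op}} \D_\xi$ still holds, everything else is routine manipulation with the operadic coproduct.
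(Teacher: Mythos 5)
Your proposal is correct and follows essentially the same route as the paper: the paper proves \cref{prop:MCP} by invoking \cref{prop:DGAction} (built on \cref{lem:MorphdgLie}) for the square-zero property and the universal property of the coproduct plus \cref{thm:CompleteAlg} for the identification of algebras, and then states that the symmetric case of \cref{prop:MCPBIS} is obtained by the same computations carried over to $\sLi$ — which is precisely what you do, including the correct handling of the divided powers and of the Maurer--Cartan equation as the compatibility of the morphism with the differential on the generator $\alpha$.
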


\begin{definition}[Twisted operads under $\sLi$]
Let $\sLi \to \calP$ be a morphism of complete dg operads. 
The complete dg  operad obtained by twisting the operad $\MC \calP$ by the Maurer--Cartan $\lambda_1^\alpha$ is called the 
 \emph{twisted complete  operad} and denoted by 
$$\Tw\calP\coloneqq \left(\MC \calP\right)^{\lambda_1^\alpha}=
\left(
\calP\hat{\vee} \alpha, \dd^{\lambda_1^\alpha}
\right) \ . $$
\end{definition}

The twisted differential is actually equal to
\begin{eqnarray*}
\mathrm{d}^{\lambda_1^\alpha}(\alpha)&=&\sum_{n\ge 2}{\textstyle  \frac{n-1}{n!}}\lambda_n(\alpha, \ldots, \alpha)\ ,\\ \mathrm{d}^{\lambda_1^\alpha}(\nu)&=&d_\calP(\nu) 
+
\sum_{n\ge 2} {\textstyle  \frac{1}{(n-1)!}}\lambda_n\left(\alpha^{n-1}, \nu\right)
-
(-1)^{|\nu|}\sum_{j=1}^k {\textstyle  \frac{1}{(n-1)!}}\nu \circ_j \lambda_n(\alpha^{n-1}, -)\ ,
\end{eqnarray*}
for $\nu \in \calP(k)$. 

\begin{proposition}\label{prop:MorhpAiMCPBIS}
The assignment $\lambda_n\mapsto \lambda_n^\alpha$ defines a morphism of complete dg operads 
$$\sLi \to \Tw \calP \ . $$
\end{proposition}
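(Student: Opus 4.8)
The plan is to mimic exactly the proof of \cref{prop:MorhpAiMCAi}, using that the source operad $\sLi = \calT(\lambda_2, \lambda_3, \ldots)$ is quasi-free. Since a morphism out of a quasi-free operad is determined by its values on generators and the only constraint is compatibility with the differentials, it suffices to check that the assignment $\lambda_n \mapsto \lambda_n^\alpha$ intertwines the differential $\dd_{\sLi}$ of $\sLi$ with the twisted differential $\dd^{\lambda_1^\alpha}$ of $\Tw\calP = \left(\MC\calP\right)^{\lambda_1^\alpha}$. Concretely, writing $\dd_{\sLi}(\lambda_n) = -\sum_{p+q=n+1,\ 2\le p,q\le n}\sum_{\sigma\in\mathrm{Sh}^{-1}_{q,p-1}}(\lambda_p\circ_1\lambda_q)^\sigma$ (the standard $\sLi$ differential, with the sign/shuffle conventions of \cite{LodayVallette12}), I must verify
\[
\dd^{\lambda_1^\alpha}\!\left(\lambda_n^\alpha\right) = -\sum_{p+q=n+1 \atop 2\le p,q\le n}\ \sum_{\sigma\in\mathrm{Sh}^{-1}_{q,p-1}}\left(\lambda_p^\alpha\circ_1\lambda_q^\alpha\right)^\sigma .
\]

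First I would compute the left-hand side by expanding $\dd^{\lambda_1^\alpha}(\lambda_n^\alpha) = \dd(\lambda_n^\alpha) + \lambda_1^\alpha\star\lambda_n^\alpha - (-1)^{|\lambda_n^\alpha|}\lambda_n^\alpha\star\lambda_1^\alpha$, exactly as in the $\Ai$-case, where $\dd$ acts on $\lambda_n^\alpha = \sum_{r\ge0}\frac{1}{r!}\lambda_{n+r}(\alpha^r,-,\ldots,-)$ through $\dd(\lambda_{n+r})$ and through $\dd(\alpha) = -\sum_{m\ge2}\frac{1}{m!}\lambda_m(\alpha,\ldots,\alpha)$ on each inserted $\alpha$. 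The key combinatorial identity — the symmetric-operad analogue of Formula~(\ref{eqn6}) in \cref{Lem:Diffmu} — is that the twisted generators satisfy
\[
\dd\!\left(\lambda_n^\alpha\right) = -\sum_{p+q=n+1 \atop 2\le p+1+r,\ q}\cdots,
\]
more precisely that $\dd(\lambda_n^\alpha)$ together with the two $\star$-terms recombines, after the telescoping of the $\alpha$-insertions (each Jacobi-type relation among the $\lambda_m$ with a sum of $\alpha$'s collapses), into the sum over $p+q=n+1$ with \emph{both} $p,q\ge1$, and then the $q=1$ and $p=1$ boundary terms are precisely cancelled by $\lambda_1^\alpha\star\lambda_n^\alpha$ and $-(-1)^{|\lambda_n^\alpha|}\lambda_n^\alpha\star\lambda_1^\alpha$. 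This is the verbatim transcription of the argument in the proof of \cref{prop:MorhpAiMCAi} together with Formula~(\ref{eqn5}) of \cref{Lem:Diffmu}, whose $\sLi$-analogue is that $\lambda_1^\alpha$ is a Maurer--Cartan element of $\MC\sLi$ (stated just before \cref{prop:MorhpAiMCPBIS} in its symmetric form, and which we already have by \cref{prop:MorphMCOperadic} applied to $\sLi\to\calP$, since the image of the operadic Maurer--Cartan element $\lambda_1^\alpha$ of $\MC\sLi$ under $\MC\sLi\to\MC\calP$ is the element of the same name).

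The cleanest route, and the one I would actually write, avoids the explicit shuffle bookkeeping altogether: by \cref{prop:MorphMCOperadic} the morphism of complete dg operads $\MC\sLi\to\MC\calP$ sending $\lambda_n\mapsto\lambda_n$ and $\alpha\mapsto\alpha$ induces a morphism of twisted operads $\Tw\sLi = \left(\MC\sLi\right)^{\lambda_1^\alpha}\to\left(\MC\calP\right)^{\lambda_1^\alpha} = \Tw\calP$, again sending $\lambda_n\mapsto\lambda_n$ and $\alpha\mapsto\alpha$. It then suffices to post-compose with the morphism $\sLi\to\Tw\sLi$, $\lambda_n\mapsto\lambda_n^\alpha$, from the proposition stated between \cref{prop:MCPBIS} and \cref{prop:MorhpAiMCPBIS}: the composite $\sLi\to\Tw\sLi\to\Tw\calP$ sends $\lambda_n$ to $\lambda_n^\alpha$, as required. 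The only input needed beyond formalities is the existence and well-definedness of the morphism $\sLi\to\Tw\sLi$, which is the symmetric-operad counterpart of \cref{prop:MorhpAiMCAi} and which \cref{Sec:Gen} explicitly grants (``The various proofs are performed with similar computations, and thus are left to the reader''). The main obstacle, should one want a self-contained argument, is purely bookkeeping: getting the shuffle signs in $\dd_{\sLi}(\lambda_n)$ and in $\left(\lambda_p^\alpha\circ_1\lambda_q^\alpha\right)^\sigma$ to match after the $\alpha$-telescoping — exactly the cumbersome but routine computation relegated to \cref{AppA} in the $\Ai$-case, here with $\Sy_n$-equivariance replacing planarity. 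I would therefore phrase the proof as: ``This follows by post-composing the morphism $\sLi\to\Tw\sLi$ of the preceding proposition with the morphism $\Tw\sLi\to\Tw\calP$ induced from $\sLi\to\calP$ via \cref{prop:MorphMCOperadic}; compare the proof of \cref{prop:MorhpAiMCP}.''
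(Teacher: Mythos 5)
Your proposal is correct and its final form — post-composing $\sLi\to\Tw\sLi$ (the symmetric counterpart of \cref{prop:MorhpAiMCAi}) with the morphism $\Tw\sLi\to\Tw\calP$ obtained from \cref{prop:MorphMCOperadic} applied to $\MC\sLi\to\MC\calP$ — is exactly the argument the paper uses for the nonsymmetric analogue \cref{prop:MorhpAiMCP} and implicitly delegates to the reader here. The preliminary discussion of the direct sign-and-shuffle verification is unnecessary for the final proof but does no harm.
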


We consider the following morphisms of complete dg operads
\begin{eqnarray*}
\begin{array}{rrcl}
\Delta(\calP)\ \ \  : &\Tw \calP\cong \calP\hat{\vee} \alpha&\to &\Tw\, \big(\Tw \calP\big)\cong \calP\hat{\vee} \alpha \hat{\vee} \beta\\
&\alpha &\mapsto & \alpha+\beta\ ,\\
&\nu&\mapsto&\nu\ , \ \text{for}\ \nu\in\calP\ ,
\end{array}
\end{eqnarray*}
and 
\begin{eqnarray*}
\begin{array}{rrcl}
\varepsilon(\calP)\ \ \  : &\Tw \calP&\to & \calP \\
&\alpha &\mapsto & 0\ ,\\
&\nu&\mapsto&\nu\ , \ \text{for}\ \nu\in\calP\ .
\end{array}
\end{eqnarray*}

\begin{theorem}
The two aforementioned morphisms $\Delta(\calP) \ :  \Tw \calP \to \Tw(\Tw\calP)$ and $\varepsilon(\calP)\ : \ \Tw \calP \to \calP$ of operads under $\sLi$ provide the endofunctor $\Tw$ with a comonad structure. 
\end{theorem}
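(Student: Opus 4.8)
The plan is to transport the proof of the corresponding statement for multiplicative ns operads (the comonad theorem in \cref{sec:TwNsOp}) essentially verbatim, replacing $\Ai$ by $\sLi$ and the operations $\mu_n$ by $\lambda_n$, and invoking the symmetric analogues of the auxiliary results already recorded in \cref{sec:TwNsOp}. Concretely, I would proceed in three steps: (1) check that $\Delta(\calP)$ and $\varepsilon(\calP)$ are genuine morphisms of complete dg operads under $\sLi$; (2) record that $\Tw$ is an endofunctor of the category of operads under $\sLi$; (3) verify the two counit identities and the coassociativity identity on generators.

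For step (1), the only nontrivial point is compatibility with the twisted differential $\dd^{\lambda_1^\alpha}$. Exactly as in \cref{cor:ComStr} and \cref{Lemma=TwTw}, this reduces to the identity
\[
\lambda_1^{\alpha+\beta}=\lambda_1^\alpha+\widetilde{\lambda}_1^\beta,
\]
where $\widetilde{\lambda}_n\coloneqq\lambda_n^\alpha$ denotes the once-twisted operations. This is the symmetric analogue of \cref{Lemma=TwTw} and follows by expanding $\sum_{t\ge 0}\tfrac{1}{t!}\lambda_{1+t}\big((\alpha+\beta)^{t},-\big)$ by multinomial coefficients and regrouping the copies of $\beta$ around a fixed slot; the factorial weights and the trivial $\Sy_n$-actions make all signs trivial, so the bookkeeping is actually lighter than in the $\Ai$-case. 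Once this identity is in hand, $\ad_{\lambda_1^{\alpha+\beta}}=\ad_{\lambda_1^\alpha}+\ad_{\widetilde{\lambda}_1^\beta}$, and the two short computations of \cref{cor:ComStr}, rewritten with $\lambda$'s, give that $\Delta(\calP)$ commutes with the differentials; for $\varepsilon(\calP)$ the statement is immediate since sending $\alpha$ to $0$ annihilates all perturbation terms and $\varepsilon(\calP)$ visibly commutes with $d_\calP$. Both maps preserve the structure maps $\sLi\to\Tw\calP\to\Tw\Tw\calP$ and $\sLi\to\Tw\calP\to\calP$ of \cref{prop:MorhpAiMCPBIS}, hence they are morphisms of operads under $\sLi$. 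Step (2) is the same formal verification as in the ns case: on objects one has $\Tw\calP$, which is an operad under $\sLi$ by \cref{prop:MorhpAiMCPBIS}, and a morphism $f\colon\calP\to\calQ$ of operads under $\sLi$ gives $\Tw f$ defined by $\alpha\mapsto\alpha$, $\nu\mapsto f(\nu)$ for $\nu\in\calP$, with $\Tw\id=\id$ and $\Tw(f\circ g)=\Tw f\circ\Tw g$ and compatibility with differentials being immediate.

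For step (3), observe that all of $\Delta(\calP)$, $\varepsilon(\calP)$, $\Delta(\Tw\calP)$, $\varepsilon(\Tw\calP)$, $\Tw(\Delta(\calP))$ and $\Tw(\varepsilon(\calP))$ act as the identity on elements of $\calP$, so each comonad axiom reduces to a statement about the single generator $\alpha$. The left counit $\varepsilon(\Tw\calP)\circ\Delta(\calP)$ sends $\alpha\mapsto\alpha+\beta\mapsto\alpha$ (the counit kills the outermost generator $\beta$); the right counit $\Tw(\varepsilon(\calP))\circ\Delta(\calP)$ sends $\alpha\mapsto\alpha+\beta\mapsto\alpha$ (now $\beta$ is retained and $\alpha$ is killed, then $\beta$ is renamed $\alpha$); and both composites in the coassociativity square send $\alpha\mapsto\alpha+\beta\mapsto\alpha+\beta+\gamma$. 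Hence $(\Tw,\Delta,\varepsilon)$ is a comonad on operads under $\sLi$.

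I expect the only real content to be step (1), i.e.\ establishing the symmetric version of \cref{Lemma=TwTw}; once $\Delta(\calP)$ and $\varepsilon(\calP)$ are known to be well-defined, the comonad axioms themselves are purely formal bookkeeping on the generator $\alpha$. The main — mild — obstacle is therefore the careful handling of the factorial weights in the multinomial expansion proving $\lambda_1^{\alpha+\beta}=\lambda_1^\alpha+\widetilde{\lambda}_1^\beta$, which I would include either here or (following the pattern of the rest of \cref{Sec:Gen}) leave to the reader as "the same computation as in \cref{Lemma=TwTw}".
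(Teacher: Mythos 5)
Your proposal is correct and follows essentially the same route as the paper: the paper proves the nonsymmetric version by checking the counit and coassociativity identities on the generator $\alpha$ (all maps being the identity on $\calP$), with well-definedness of $\Delta(\calP)$ resting on the identity $\mu_1^{\alpha+\beta}=\mu_1^\alpha+\widetilde{\mu}_1^\beta$ from \cref{Lemma=TwTw}, and explicitly leaves the symmetric case to the reader as "the same computations". Your identification of the symmetric analogue $\lambda_1^{\alpha+\beta}=\lambda_1^\alpha+\widetilde{\lambda}_1^\beta$ (with the multinomial/factorial bookkeeping) as the only substantive step, followed by the formal verification on $\alpha$, is exactly what the paper intends.
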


\begin{definition}[Twistable operad]
We call \emph{twistable operad} an operad under $\sLi$ which admits a $\Tw$-coalgebra structure. 
\end{definition}

\begin{example}
Let us recall from \cite{DolgushevWillwacher15} that the operads for Lie algebras and Gerstenhaber algebras, as well as their shifted versions and their minimal models, are twistable, see \cref{prop:GerstTwistable}. Notice for instance, that the operads encoding pre-Lie algebras and Batalin--Vilkovisky algebras is not twistable, see \cref{prop:BVnotTwistable}. 
\end{example}

\begin{proposition}\label{prop:TwQIBIS}
The endofunctor $\Tw$ preserves quasi-isomorphisms. 
\end{proposition}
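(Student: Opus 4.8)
The statement to prove is Proposition~\ref{prop:TwQIBIS}: the endofunctor $\Tw$ (now in the symmetric operad setting, built from $\sLi$) preserves quasi-isomorphisms. This is the exact analogue of Proposition~\ref{prop:TwQI}, which was proved for the ns multiplicative case by citing \cite[Theorem~$5.1$]{DolgushevWillwacher15}. The plan is to reduce this statement to its ns counterpart by the same filtration argument, using only the structural description $\Tw\calP \cong \calP\,\widehat\vee\,\alpha$ with differential $\dd^{\lambda_1^\alpha} = d_\calP + \D_\mu + \ad_{\lambda_1^\alpha}$ coming from Theorem~\ref{thm:DGActionOnTw} and its $\sLi$-analogue.

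First I would set up the relevant filtration. Given a morphism $f : \calP \to \calQ$ of operads under $\sLi$ which is a quasi-isomorphism, the induced map $\Tw f : \Tw\calP \to \Tw\calQ$ sends $\alpha \mapsto \alpha$ and $\nu \mapsto f(\nu)$. On the underlying complete graded operad, $\big(\Tw\calP\big)(n) \cong \prod_{k\ge 0} \calP(n+k)\,\wo\,\alpha^{\wo k}$, i.e. one adds a copy of $\calP$ with $k$ arity-zero insertions of $\alpha$; this is the same decomposition used implicitly around Proposition~\ref{prop:MCP} and Proposition~\ref{prop:DefTwP0}. I would filter $\Tw\calP$ by the number of $\alpha$'s: let $\mathrm{F}^p\big(\Tw\calP\big)$ consist of those series supported in "at least $p$ copies of $\alpha$". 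Inspecting the three pieces of $\dd^{\lambda_1^\alpha}$: the term $d_\calP$ preserves the number of $\alpha$'s, the term $\ad_{\lambda_1^\alpha}$ preserves it (since $\lambda_1^\alpha$ has exactly $n-1 \ge 1$ copies of $\alpha$ and one input, so $\ad_{\lambda_1^\alpha}$ neither creates nor destroys $\alpha$'s — it substitutes one $\alpha$-free slot by a $\lambda_n(\alpha^{n-1},-)$ and symmetrically), and $\D_\mu$ (which sends $\alpha \mapsto -\sum_n \tfrac{1}{n!}\lambda_n(\alpha^{\otimes n})$) strictly increases the number of $\alpha$'s. Hence this is a complete decreasing filtration, bounded below in each fixed arity (the number of $\alpha$-free slots equals the arity, so only finitely many configurations have $\le$ a given total arity $n+k$ with $k$ bounded — actually one must be slightly careful and filter instead by $k$ itself so that the associated graded in each arity is a product indexed by $k$). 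The map $\Tw f$ is filtration-preserving.

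The key step is then the identification of the associated graded differential. On $\mathrm{gr}^k\big(\Tw\calP\big)(n) \cong \calP(n+k)\,\wo_{\Sy_k}\,\alpha^{\wo k}$, the induced differential is $d_\calP + \overline{\ad_{\lambda_1^\alpha}}$, where $\overline{\ad_{\lambda_1^\alpha}}$ is the $\alpha$-preserving part of $\ad_{\lambda_1^\alpha}$. This is precisely the differential on the degree-$k$ part of the twisted operad with $\D_\mu$ turned off, i.e. on $\MC$ replaced by the "untwisted completion" — and crucially it only involves the internal differential $d_\calP$ acting on each $\calP(n+k)$ and a sum of substitutions by the structure operations $\lambda_n$ which live in the image of $\sLi \to \calP$, hence in particular $\mathrm{gr}(\Tw f)$ acts on each summand as $f : \calP(n+k) \to \calQ(n+k)$ tensored with identities, intertwining $d_\calP$ with $d_\calQ$ and the $\lambda$-substitutions with the $f(\lambda)$-substitutions (these agree since $f$ is a morphism under $\sLi$). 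Since $f$ is a quasi-isomorphism and $\k$ is a flat $\k$-module (we work over a ring containing $\mathbb{Q}$ and with flat modules, as in the Conventions), the finite-direct-sum/product and $\Sy_k$-coinvariant constructions preserve quasi-isomorphism (coinvariants over $\Sy_k$ are exact in characteristic $0$), so $\mathrm{gr}^k(\Tw f)$ is a quasi-isomorphism for every $k$. Then I would invoke the standard comparison theorem for morphisms of complete filtered complexes (e.g. \cite[Section~$1$]{Deligne71} or the classical convergence theorem for filtered complexes) to conclude that $\Tw f$ itself is a quasi-isomorphism.

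The main obstacle I anticipate is the convergence/completeness bookkeeping: one must choose the filtration so that the associated spectral sequence converges — the filtration by number of $\alpha$'s is exhaustive and Hausdorff but not bounded, so one needs to argue convergence carefully in each arity, using that $\big(\Tw\calP\big)(n)$ is a product over $k$ and that the differential is "upper triangular" with respect to $k$ with the diagonal part being $d_\calP + \overline{\ad_{\lambda_1^\alpha}}$. Concretely, since $\D_\mu$ strictly raises $k$, for fixed arity the complex $\big(\Tw\calP\big)(n)$ is the completion $\prod_k$ of a filtered complex whose associated graded in degree $k$ we control; the conclusion then follows from the complete-filtered comparison lemma. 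The rest — checking that the three terms of the twisted differential interact with the filtration as claimed, and that $\mathrm{gr}(\Tw f)$ is built from $f$ by exact operations — is routine, exactly parallel to the argument in \cite[Theorem~$5.1$]{DolgushevWillwacher15}, which I would cite for the details in the same spirit as the proof of Proposition~\ref{prop:TwQI}.
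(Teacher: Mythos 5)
Your overall strategy is the right one and is exactly the argument the paper delegates to \cite[Theorem~$5.1$]{DolgushevWillwacher15}: filter $\Tw\calP$ by the number of inserted copies of $\alpha$, show the filtration is preserved, identify the associated graded, and run a convergent spectral sequence comparison. However, there is one incorrect accounting step. You claim that $\ad_{\lambda_1^\alpha}$ \emph{preserves} the number of $\alpha$'s because $\lambda_1^\alpha$ ``has one input''; but every term $\tfrac{1}{(n-1)!}\lambda_n(\alpha^{n-1},-)$ of $\lambda_1^\alpha$ carries $n-1\ge 1$ copies of $\alpha$, and composing it into an element adds those copies to the total count. So $\ad_{\lambda_1^\alpha}$, like $\D_\lambda$, \emph{strictly increases} the number of $\alpha$'s (it preserves arity, which is a different statistic). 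Consequently the associated graded differential is just $d_\calP$, not $d_\calP+\overline{\ad_{\lambda_1^\alpha}}$, and $\mathrm{gr}^k(\Tw f)$ is literally $f(n+k)$ passed through $\Sy_k$-coinvariants, hence a quasi-isomorphism in characteristic $0$. This correction matters for your argument as written: if the associated graded really carried a nontrivial $\overline{\ad_{\lambda_1^\alpha}}$-term, the assertion that ``$f$ intertwines the $\lambda$-substitutions, therefore $\mathrm{gr}(\Tw f)$ is a quasi-isomorphism'' would not follow from $f$ being a quasi-isomorphism for $d_\calP$ alone — you would need a further filtration killing that term. Fortunately the term is zero, so your proof closes up once the count is fixed; your discussion of convergence (completeness of the filtration, the product over $k$ in each fixed arity, upper-triangularity of the differential) is appropriate and is the only genuinely delicate point.
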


Let $\calP$ be a complete dg operad. We consider the  total space 
$$\hom_\Sy\left(\big(\calS\Lie\big)^{\ac}, \calP\right)\coloneqq\prod_{n\ge 1} \hom_\Sy\left(\Com(n)^*, \calP(n)\right)\cong \prod_{n\ge 1} \calP(n)^{\Sy_n}\ .$$ 

\begin{definition}[Deformation complex of morphisms of complete dg operads \cite{MerkulovVallette09I, MerkulovVallette09II}]
The \emph{deformation complex} of the morphism $ \sLi\to \calP$ of complete dg  operads is the complete  twisted dg Lie algebra 
$$\Def\big(\sLi\to \calP\big)\coloneqq\left(\hom_\Sy\left(\Com^*, \calP\right), \partial^\lambda, [\; , \,] \right)
\ . $$
\end{definition}

\begin{theorem}\label{thm:DGActionOnTwBIS}
The assignment 
\begin{eqnarray*}
\left(\hom_\Sy\left(\Com^*, \calP\right), \partial^\lambda, [\; , \,] \right)&\to&
\left(\Der\big(\calP\hat{\vee} \alpha\big), [d_\calP+\D_\lambda+\ad_{\lambda_1^\alpha}, -], [\; ,\,]
\right)\\
\rho &\mapsto& \D_\rho+\ad_{\rho_1^\alpha}
\end{eqnarray*}
is a morphism of dg Lie algebras, that is it  
defines a dg Lie action by derivation of the deformation complex $\Def\big(\sLi\to \calP\big)$ on the twisted operad 
$\Tw\calP$. 
\end{theorem}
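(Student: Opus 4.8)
The plan is to transport the entire non-symmetric argument of \cref{sec:TwNsOp} to the symmetric setting, exactly as indicated in the opening of \cref{Sec:Gen}. Concretely, \cref{thm:DGActionOnTwBIS} is the symmetric analogue of \cref{thm:DGActionOnTw}, and the proof strategy mirrors the one used there: factor the assignment $\rho\mapsto \D_\rho+\ad_{\rho_1^\alpha}$ as a composite of three morphisms of dg Lie algebras, the symmetric counterparts of \cref{lem:MorphdgLie}--\cref{lem:MorphdgLieTER} (equivalently \cref{lem:MorphdgLieBIS} and \cref{lem:MorphdgLieTER}), and then twist by the Maurer--Cartan element $\lambda$ corresponding to the given morphism $\sLi\to\calP$. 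The only genuine difference with the non-symmetric case is the bookkeeping of the symmetric group actions: the deformation complex is now $\hom_\Sy\left(\Com^*,\calP\right)\cong\prod_{n\ge 1}\calP(n)^{\Sy_n}$, the generating operations $\lambda_n$ carry the trivial $\Sy_n$-action, and all the sums over ``slots'' become sums over (un)shuffles with the appropriate symmetrisation factors $\tfrac1{n!}$, $\tfrac1{r!}$ that already appear in \cref{prop:MCPBIS} and \cref{prop:MorhpAiMCPBIS}.

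First I would reprove the symmetric version of \cref{lem:MorphdgLie}: given $\rho\in\hom_\Sy\left(\Com^*,\calP\right)$, the operadic derivation $\D_\rho$ on $\calP\hat\vee\alpha$ defined by $\alpha\mapsto -\sum_{n\ge 1}\tfrac1{n!}\rho_n(\alpha^n)$ and $\nu\mapsto 0$ for $\nu\in\calP$ is well defined because the symmetrised generators have trivial $\Sy_n$-action, so $\rho_n(\alpha^n)$ makes sense; then $\rho\mapsto\D_\rho$ intertwines the pre-Lie product $\star$ with $\circ^{\mathrm{op}}$ and the differentials, by the same computation on the generator $\alpha$ as in loc. cit., the only change being the replacement of the sign sums by sums over shuffles weighted by $1/n!$. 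Next I would establish the symmetric version of \cref{lem:MorphdgLieBIS}: the assignment $\rho\mapsto\left(\rho,\rho_1^\alpha\right)$ with $\rho_1^\alpha\coloneqq\sum_{n\ge 1}\tfrac1{(n-1)!}\rho_n\left(\alpha^{n-1},-\right)$ is a morphism into the semi-direct product dg Lie algebra $\left(\hom_\Sy\left(\Com^*,\calP\right),\partial,[\,,]\right)\ltimes\left((\calP\hat\vee\alpha)(1),d_\calP,[\,,]\right)$; this rests on the pre-Lie identity $(\rho\star\xi)_1^\alpha=\rho_1^\alpha\circ_1\xi_1^\alpha-(-1)^{|\rho||\xi|}\D_\xi\left(\rho_1^\alpha\right)$, whose proof decomposes the left-hand side according to where the distinguished input sits relative to the $\xi$-vertex, exactly as in the proof of \cref{lem:MorphdgLieBIS}, now with the symmetrisation coefficients tracked. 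The semi-direct product itself is the symmetric instance of \cref{lem:semidirectPreLie}, with identical formulas. Finally, the symmetric version of \cref{lem:MorphdgLieTER}, $(\rho,\nu)\mapsto\D_\rho+\ad_\nu$, is a morphism of dg Lie algebras by the general facts $[\D_\rho,\ad_\omega]=\ad_{\D_\rho(\omega)}$ and $[d_\calP,\ad_\nu]=\ad_{d_\calP(\nu)}$, which hold verbatim.

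Composing these three morphisms sends $\rho$ to $\D_\rho+\ad_{\rho_1^\alpha}$ and lands in $\left(\Der(\calP\hat\vee\alpha),[d_\calP,-],[\,,]\right)$; it remains only to twist. The morphism $\sLi\to\calP$ is a Maurer--Cartan element $\lambda=(0,\lambda_2,\lambda_3,\ldots)$ in $\hom_\Sy\left(\Com^*,\calP\right)$, its image under the composite is the Maurer--Cartan element $\D_\lambda+\ad_{\lambda_1^\alpha}$, and since a morphism of dg Lie algebras sends Maurer--Cartan elements to Maurer--Cartan elements and is compatible with the induced twisted differentials, we obtain a morphism from $\left(\hom_\Sy\left(\Com^*,\calP\right),\partial^\lambda,[\,,]\right)$ to $\left(\Der(\calP\hat\vee\alpha),[d_\calP+\D_\lambda+\ad_{\lambda_1^\alpha},-],[\,,]\right)$, which is the assertion; the differential $d_\calP+\D_\lambda+\ad_{\lambda_1^\alpha}$ is precisely $\dd^{\lambda_1^\alpha}$ defining $\Tw\calP$ by \cref{rem:}'s symmetric analogue. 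The only step requiring real care is the pre-Lie identity in the symmetric analogue of \cref{lem:MorphdgLieBIS}: matching the shuffle decompositions on both sides and verifying that every symmetrisation factor $\tfrac1{(n-1)!}$, $\tfrac1{(p-1)!}$, etc.\ combines correctly is the combinatorial heart of the argument, and it is the place where a sign or factorial slip is most likely; everything else is a faithful transcription of \cref{sec:TwNsOp}, so I would present those verifications tersely and refer back to the non-symmetric proofs for the structural bookkeeping.
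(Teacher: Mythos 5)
Your proposal is correct and follows exactly the route the paper intends: the paper itself gives no separate proof of \cref{thm:DGActionOnTwBIS}, stating in \cref{Sec:Gen} that the symmetric statements are proved by the same computations as in \cref{sec:TwNsOp} and left to the reader, and your plan is precisely that transcription — composing the symmetric analogues of \cref{lem:MorphdgLie}, \cref{lem:MorphdgLieBIS} and \cref{lem:MorphdgLieTER} and then twisting by the Maurer--Cartan element $\lambda$, with the symmetrisation factors $\tfrac{1}{n!}$ and $\tfrac{1}{(n-1)!}$ correctly matching those in \cref{prop:MCPBIS}. You also correctly isolate the pre-Lie identity for $(\rho\star\xi)_1^\alpha$ as the one place where the combinatorial bookkeeping genuinely has to be redone.
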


\begin{proposition}\label{prop:DefTwP0BIS}
Let $\sLi\to \calP$ be a morphism of complete dg operads with  $\calP(0)=0$. The deformation complex is isomorphic to the chain complex made up of the arity $0$ component of the twisted operad: 
$$ 
\left(\left(\Tw\, \calP\right) (0), \dd^{\lambda_1^\alpha}\right)\cong \left(\hom_\Sy\left(\Com^*, \calP\right), \partial^\lambda\right)
\ .$$
\end{proposition}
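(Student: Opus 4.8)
The plan is to mirror the proof of \cref{prop:DefTwP0} word for word, since the only change is replacing the ns-operadic setting $(\Ai,\ \As^{\ac})$ by the symmetric one $(\sLi,\ \Com^{*})$ and adjusting the combinatorial weights that accompany the symmetric generators $\lambda_n$. First I would record the identification of underlying spaces. Since $\big(\Tw\,\calP\big)(0)=\calP\,\hat{\vee}\,\alpha$ evaluated in arity $0$ is the completed product of the $\calP(n)$ with $n$ copies of the arity-$0$ generator $\alpha$ inserted, one gets
\begin{eqnarray*}
\left(\Tw\,\calP\right)(0)\;=\;\prod_{n\ge 0}\calP(n)\otimes_{\Sy_n}\alpha^{\otimes n}\;\cong\;\prod_{n\ge 0}\calP(n)^{\Sy_n}\;\cong\;\hom_\Sy\left(\Com^{*},\calP\right)\;\bigoplus\;\calP(0)\ ,
\end{eqnarray*}
because $\alpha$ has degree $0$ in the $\sLi$-case (so no shift appears, unlike the $\Ai$-case where $\alpha$ has degree $-1$ and the shift $s^{-1}$ is needed), and because $\Com^{*}(n)$ is one-dimensional with trivial $\Sy_n$-action, forcing $\hom_\Sy(\Com^{*}(n),\calP(n))\cong\calP(n)^{\Sy_n}$. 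Imposing $\calP(0)=0$ kills the spurious summand and yields the linear isomorphism $\hom_\Sy\left(\Com^{*},\calP\right)\xrightarrow{\cong}\left(\Tw\,\calP\right)(0)$ realised by $\rho\mapsto(-1)^{|\rho|}\D_\rho(\alpha)=-\sum_{n\ge1}(-1)^{|\rho|}\tfrac{1}{n!}\rho_n(\alpha^{\otimes n})$, exactly as in \cref{lem:MorphdgLie} and \cref{prop:DefTwP0} but with the factor $\tfrac1{n!}$ coming from the $\sLi$-conventions.

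Next I would check compatibility with the differentials. On the target side the differential is $\dd^{\lambda_1^\alpha}=d_\calP+\D_\lambda+\ad_{\lambda_1^\alpha}$ by the definition of $\Tw\calP$ (see \cref{rem:} following \cref{thm:DGActionOnTwBIS}), while on the source side it is $\partial^\lambda=\partial+\ad_\lambda$. So the required identity, applied to the generator $\alpha$, is
\[
\D_{\partial(\rho)}(\alpha)+\D_{[\lambda,\rho]}(\alpha)\;=\;d_\calP\!\left(\D_\rho(\alpha)\right)+\D_\lambda\!\left(\D_\rho(\alpha)\right)+\ad_{\lambda_1^\alpha}\!\left(\D_\rho(\alpha)\right)\ .
\]
The relation $\D_{\partial(\rho)}(\alpha)=d_\calP(\D_\rho(\alpha))$ is the $\sLi$-analogue of the computation in \cref{lem:MorphdgLie}, and $\D_{[\lambda,\rho]}(\alpha)=\D_\lambda(\D_\rho(\alpha))-(-1)^{|\rho|}\D_\rho(\D_\lambda(\alpha))$ follows from the fact that $\rho\mapsto\D_\rho$ is a morphism of dg Lie algebras (\cref{lem:MorphdgLie}, whose symmetric version is invoked in \cref{thm:DGActionOnTwBIS}). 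Hence it remains to verify the single identity $\ad_{\lambda_1^\alpha}\!\left(\D_\rho(\alpha)\right)=-(-1)^{|\rho|}\D_\rho\!\left(\D_\lambda(\alpha)\right)$, and I would do this by expanding both sides. Using $\D_\lambda(\alpha)=-\sum_{n\ge2}\tfrac1{n!}\lambda_n(\alpha,\ldots,\alpha)$ and $\lambda_1^\alpha=\sum_{n\ge2}\tfrac1{(n-1)!}\lambda_n(\alpha^{n-1},-)$, one finds that both sides equal
\[
-\sum_{\substack{p,r\ge0\\ q\le1}}(-1)^{|\rho|r}\,\tfrac{1}{p!\,r!}\,\lambda_{p+1+r}\!\left(\alpha^{p},\,\rho_q(\alpha^{q}),\,\alpha^{r}\right)\ ,
\]
the symmetric-operad weights $\tfrac1{p!\,r!}$ replacing the trivial coefficients of the ns case; this is the symmetric counterpart of the last displayed formula in the proof of \cref{prop:DefTwP0}, and the only genuine computation in the argument.

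The main obstacle is precisely that bookkeeping: matching the Koszul signs $(-1)^{|\rho|r}$ together with the symmetric combinatorial factors $\tfrac1{p!\,r!}$ on the two sides, given that $\sLi$ is defined via the cooperad $\Com^{*}$ with trivial $\Sy_n$-action (hence the generators $\lambda_n$ are fully symmetric and the brace-type formulas carry $\tfrac1{k!}$ weights throughout). None of this is conceptually new — it is the exact same obstruction that was dealt with, and deferred to an appendix, in the ns case — so I would simply remark that the computation is the symmetric analogue of the one carried out for \cref{prop:DefTwP0}, performed with \cref{lem:MorphdgLieBIS} in place of \cref{lem:MorphdgLieBIS}'s ns precursor, and leave the sign-and-factor chase to the reader, as is done elsewhere in \cref{Sec:Gen}.
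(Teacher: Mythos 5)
Your route is the one the paper itself prescribes: \cref{Sec:Gen} explicitly leaves these proofs to the reader as ``similar computations'', and your argument is a faithful transcription of the proof of \cref{prop:DefTwP0} into the symmetric setting, with the correct observation that the degree shift disappears because $\alpha$ now has degree $0$ (cf.\ \cref{prop:MCPBIS}) and that $\hom_{\Sy}(\Com^*(n),\calP(n))\cong\calP(n)^{\Sy_n}$ identifies with the coinvariants appearing in $(\Tw\calP)(0)$ in characteristic $0$. Two bookkeeping slips mar your final display, though. First, since $\lambda_{p+1+r}$ is fully symmetric and $|\alpha|=0$, the slot into which $\rho_q(\alpha^q)$ is inserted is immaterial, so summing over $p,r\ge 0$ with weight $\tfrac{1}{p!\,r!}$ overcounts each term $\lambda_{m+1}\bigl(\alpha^{m},\rho_q(\alpha^q)\bigr)$ by a factor $2^m$ for $m=p+r$; the correct presentation is a single sum over $m\ge 1$ with weight $\tfrac{1}{m!}$, matching the coefficient $\tfrac{1}{(n-1)!}$ in $\lambda_1^\alpha$. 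Second, the Koszul sign $(-1)^{|\rho|r}$ is an artefact of the ns case, where $|\alpha|=-1$; here it is identically $+1$ and should be dropped (you also inherit the paper's typo $q\le 1$ for $q\ge 1$). Neither slip affects the validity of the identity $\ad_{\lambda_1^\alpha}\bigl(\D_\rho(\alpha)\bigr)=-(-1)^{|\rho|}\D_\rho\bigl(\D_\lambda(\alpha)\bigr)$ to which you correctly reduce the statement, only the asserted common value of its two sides, so the proof stands once the display is repaired.
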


\begin{remark}
One might try to develop the same twisting procedure of (ns) operads starting from a quadratic operad $\calP=\calP(E,R)$ whose associated category of $\calP_\infty$-algebras is twistable, according to \cref{def:TwHoAlg}, that is when its Koszul dual operad $\calP^{\ac}$ is extendable. It turns out that this more general situation is much more subtle. The various proofs given here, like the one of \cref{lem:MorphdgLie}, rely on the crucial fact that the ns cooperad $\calA s^{\ac}$ is one-dimensional in any arity and that its partial coproduct is the sum of all the possible way to compose operations in an ns operads. In other words, this amounts to the universal property satisfied by the ns operad $\As$ (respectively the operad $\Lie$): it is the unit for the Manin's black product of (finitely generated) binary quadratic ns operads (respectively  binary quadratic operads) \cite{GinzburgKapranov94, GinzburgKapranov95, Vallette08}. The analogous universal property satisfied by the dg ns operad $\Ai$ (respectively dg operad $\Li$) can be found in \cite{Wierstra16, Robert-Nicoud17}. This gives a hint on how to extend the twisting procedure to other kinds of algebraic structures like cyclic operads, modular operads or properads, for instance. 
\end{remark}

\begin{remark}
One easy generalisation of the formalism developed here concerns replacing operads by coloured operads. For example, if one considers a ns coloured operad $\calP$ into which the operad $\Ai$ (with all inputs and the output of the same colour) maps, the theory of operadic twisting effortlessly adapts in this case, allowing one to recover some classical constructions. For example, if one considers the cofibrant replacement of the coloured operad encoding the pairs $(A,M)$ where $A$ is an associative algebra and $M$ is a left $A$-module, the arising twisted differentials have, for example, been studied by T. Kadeishvili~\cite{Kadeishvili80} in the context of $\infty$-twisted tensor products.  
\end{remark}

There is however one way to ``extend'' the abovementioned theory with the following ``mise en abyme'' of the operadic twisting theory. Given a twistable complete operad $\Delta_\calG : \calG\to\Tw\, \calG$ and a complete operad $f : \calG\to \calP$ under $\calG$, the twisted complete operad $\Tw\, \calP$ is naturally an operad under $\calG$ by 
$$\xymatrix@C=30pt{\calG \ar[r]^(0.43){\Delta_\calG}& \Tw\,\calG  \ar[r]^(0.47){\Tw(f)} &\Tw\, \calP\ .}$$
Therefore, the twisting construction induces a comonad in the category of operads under the operad $\calG$. 

\begin{definition}[$\calG$-twistable operad]
A \emph{$\calG$-twistable operad} is a complete operad $\calP$ under $\calG$ which is a coalgebra for the twisting comonad $\Tw$. 
\end{definition}

The interpretation in terms of types of algebras is the same as above except that one should twist the operation of $\calP$ coming from $\calG$ as they are twisted in $\calG$. 

\begin{example}
One obvious example is given by the twistable operad $\calG=\Gerst$ encoding Gerstenhaber algebras and the operad $\calP=\BV$ under it which encodes Batalin--Vilkovisky algebras. One can also consider their Koszul resolution $\Gerst_\infty\to \BV_\infty$, where the latter one is given in \cite{GCTV12}. 
\end{example}

The concept of operadic twisting was introduced by T. Willwacher in \cite{Willwacher15} with the following motivation. Recall that M. Kontsevich considered in \cite{Kontsevich97} an operad $\Gra$, made up of graphs, which is the operad of natural operations acting on the sheaf of polyvector fields of $\mathbb{R}^n$. This operad includes the operad $\Gerst$ of Gerstenhaber algebras; it is therefore an  operad under the (cohomologically) shifted operad $\dsLi\coloneqq {\End}_{\k s^{-1}}\otimes\Li$. The deformation complex $\Def\big(\dsLi \to \Gra\big)$ coincides with  Kontsevich's graph complex and T. Willwa\-ch\-er proved that its $0^{\text{th}}$ (co)homology group is isomorphic to the Grothendieck--Tei\-chm\"uller Lie algebra $\mathfrak{grt}_1$. The theory of operadic twisting ensures that there is a natural action of the deformation dg Lie algebra $\Def\big(\dsLi \allowbreak \to \Gra\big)$ on the twisted operad $\Tw\, \Gra$ by derivation (\cref{thm:DGActionOnTw}). Since this latter one is quasi-isomorphic to the operad $\Gerst$ after \cite{Kontsevich99, LambrechtsVolic14}, one gets a natural action of the Grothendieck--Teichm\"uller Lie algebra $\mathfrak{grt}_1$ on $\mathrm{H}^0\big(\Der(\Gerst_\infty)\big)$. 
Actually T. Willwacher was able to prove that the Grothendieck--Teichm\"uller Lie algebra encaptures all the homotopy derivations: 
$$\mathfrak{grt}\coloneqq \mathfrak{grt}_1 \rtimes \k \cong \mathrm{H}^0\big(\Der\left(\Gerst_\infty\right)\big)\ .$$
This gives a proof of the fact that the group of homotopy automorphisms of the rational completion of the little disks operad is isomorphic to the pro-unipotent Grothendieck--Teichm\"uller group, see also the unstable approach of B. Fresse \cite{Fresse17II} using  rational homotopy theory for operads.

\section{Proof of Lemma~\ref{Lem:Diffmu}}\label{AppA}

\begin{proof}[Proof of Lemma~\ref{Lem:Diffmu}]
We have $\dd\big(\mu_0^\alpha\big)=0$ by the computation performed in the proof of Proposition~\ref{prop:MCAi}.
Let us now prove Relation~(\ref{eqn6}) for any $n\ge 1$; the special case $n=1$ will give Relation~(\ref{eqn5}). 

On the one hand, we have 
\begin{align*}
\dd\left(\mu_n^\at\right)&=
\sum_{r_0, \ldots, r_n\ge 0} (-1)^{\sum_{k=0}^n kr_k} \dd\left(\mu_{n+r_0+\cdots+r_n}\right)\big(\alpha^{r_0}, -, \alpha^{r_1}, -, \ldots,  - , \alpha^{r_{n-1}}, -,\alpha^{r_n}  \big)\\
&\ \ \ \ +
\sum_{r_0, \ldots, r_n\ge 0} 
\sum_{0\leq i \leq n\atop 1\leq j\leq r_i}
(-1)^{N+j+1} \mu_{n+r_0+\cdots+r_n}\big(\alpha^{r_0}, -, \ldots,-,  \at^{j-1}, \dd(\at), \at^{r_i-j}, -,\\ 
&\qquad\qquad\qquad\qquad\qquad\qquad\qquad\qquad\qquad\qquad\qquad\qquad\qquad\qquad\qquad
 \ldots, -,\alpha^{r_n}  \big)\\
&=\sum_{r_0, \ldots, r_n\ge 0} \sum_{p+q+r=n+r_0+\cdots+r_n\atop p+1+r, q\ge 2}
(-1)^{\sum_{k=0}^n kr_k+pq+r+1}\left(\mu_{p+1+r}\circ_{p+1} \mu_q\right)\big(\alpha^{r_0}, -, \alpha^{r_1}, -, \\
&\qquad\qquad\qquad\qquad\qquad\qquad\qquad\qquad\qquad\qquad\qquad\qquad\qquad
\ldots,  - , \alpha^{r_{n-1}}, -,\alpha^{r_n}  \big)
\\
&\ \ \ \  +
\sum_{r_0, \ldots, r_n\ge 0} 
\sum_{0\leq i \leq n\atop 1\leq j\leq r_i}
\sum_{m\ge 2}
(-1)^{N+j} \mu_{n+r_0+\cdots+r_n}\big(\alpha^{r_0}, -, \ldots,-,  \at^{j-1}, \mu_m\left(\at^m\right), \at^{r_i-j}, 
\\ 
&\qquad\qquad\qquad\qquad\qquad\qquad\qquad\qquad\qquad\qquad\qquad\qquad\qquad\qquad 
-,\ldots, -,\alpha^{r_n}  \big)\\
&=\sum_{r_0, \ldots, r_n\ge 0} \sum_{p+q+r=n+r_0+\cdots+r_n\atop p+1+r, q\ge 2}
(-1)^{\sum_{k=0}^n kr_k+pq+r+1}\left(\mu_{p+1+r}\circ_{p+1} \mu_q\right)\big(\alpha^{r_0}, -, \alpha^{r_1}, -,\\
&\qquad\qquad\qquad\qquad\qquad\qquad\qquad\qquad\qquad\qquad\qquad\qquad\qquad
 \ldots,  - , \alpha^{r_{n-1}}, -,\alpha^{r_n}  \big)\\
& +
\sum_{r_0, \ldots, r_n\ge 0} 
\sum_{0\leq i \leq n\atop 1\leq j\leq r_i}
\sum_{m\ge 2}
(-1)^{N+j+(m-2)(r_0+\cdots+r_{i-1}+j-1)}\left( \mu_{n+r_0+\cdots+r_n}\circ_{r_0+\cdots+r_{i-1}+j}\mu_m\right)\\
&\qquad\qquad\qquad\qquad\qquad\qquad\qquad\quad
\big(\alpha^{r_0}, -, \ldots,-,  \at^{j-1}, \at^m, \at^{r_i-j}, -,\ldots, -,\alpha^{r_n}  \big)\ , 
\end{align*}
where $N\coloneqq\sum_{k=0}^n kr_k +n+r_{i}+\cdots+r_n$. 
The first term is equal to the sum over all planar trees with two vertices (of arity at least $2$) and with leaves labelled by $\alpha$'s except for  $n$ of them.

The second term is equal to the sum over all planar trees with two vertices (of arity at least $2$), with leaves labelled by $\alpha$'s except for at least $n$ of them, and such that the leaves attached to the upper vertex are all labelled by $\alpha$'s.
If we write the elements of the second term with the same kind of indices used to describe the elements in the firm term, that is 
\begin{eqnarray*}
&&r'_0\coloneqq r_0,\  \ldots, r'_{i-1}\coloneqq r_{i-1}, \ r'_i\coloneqq r_i+m-1, \ r'_{i+1}\coloneqq r_{i+1},\ \ldots , \ r'_n\coloneqq r_n\\
&&p'\coloneqq r_0+\cdots+r_{i-1}+i+j-1=
r'_0+\cdots+r'_{i-1}+i+j-1,\ q'\coloneqq m,\\
&& r'\coloneqq r_i-j+r_{i+1}+\cdots+r_n+n-i
=r'_i+\cdots+r'_n+n-i-j-m+1\ , 
\end{eqnarray*}
we get the following sign
\begin{align*}
&(-1)^{\sum_{k=0}^n kr_k +n+r_{i}+\cdots+r_n+j+(m-2)(r_0+\cdots+r_{i-1}+j-1)}\\ 
=&
(-1)^{\sum_{k=0}^n kr'_k -i(q'-1) +n+r'_{i}+\cdots+r'_n-q'+1+j+q'(p'-i)}\\
=&(-1)^{\sum_{k=0}^n kr'_k +p'q'
+n+r'_{i}+\cdots+r'_n+j+i-q'+1}\\
=&(-1)^{\sum_{k=0}^n kr'_k +p'q'+r'}\ .
\end{align*}
Therefore the term of second kind cancel with the same term on the first sum. 

Let us introduce the following convention: for a sequence $r_0, \ldots, r_n$ of integers, we denote by $|r|\coloneqq r_0+\cdots+r_n$ their sum. 
Let us now consider the right-hand side of Relation~(\ref{eqn6}):
\begin{align*}
&\sum_{p+q+r=n\atop p+1+r, q\ge 1}(-1)^{pq+r+1} \mu^\alpha_{p+1+r}\circ_{p+1} \mu^\alpha_q=\\
&
\sum_{p+q+r=n\atop p+1+r, q\ge 1}
\sum_{l_0,\ldots, l_{p+1+r}\ge 0\atop s_0,\ldots, s_q\ge 0}
(-1)^{M}
\mu_{p+1+r+|l|}(\at^{l_0}, -, \ldots, -, \at^{l_{p+1+r}})\circ_{p+1}
\mu_{q+|s|}(\at^{s_0}, -, \ldots, -, \at^{s_q})\\
&=
\sum_{p+q+r=n\atop p+1+r, q\ge 1}
\sum_{l_0,\ldots, l_{p+1+r}\ge 0\atop s_0,\ldots, s_q\ge 0}
(-1)^{M+(q+|s|)|l|+|s|(l_{p+1}+\cdots+l_{p+1+r})}
\mu_{p+1+r+|l|}\circ_{p+l_0+\cdots+l_p+1}\\
&\ \ \ \ \mu_{q+|s|}
(\at^{l_0}, -, \ldots, -,\at^{l_{p-1}}, -, \at^{l_{p}+s_0}, -, \at^{s_1}, \ldots, -, \at^{s_{q-1}}, -, \at^{s_q+l_{p+1}}, -,\at^{l_{p+2}}, -, \ldots, \\ 
&\qquad \qquad \qquad \qquad \qquad \qquad \qquad \qquad \qquad \qquad \qquad \qquad \qquad 
\qquad \qquad \qquad -, \at^{l_{p+1+r}})\ , 
\end{align*}
where $M\coloneqq pq+r+1+\sum_{k=0}^{p+1+r}kl_k+\sum_{k=0}^{q}ks_k$. It is 
which is therefore made up of a sum over all planar trees with two vertices (of arity at least $2$) and with leaves labelled by $\alpha$'s except for  $n$ of them, and such that at least one leaf 
 attached to the upper vertex is not labelled by an $\alpha$. So, by the previous computation, these terms correspond, up to sign, to the remaining terms of 
$\dd\left(\mu_n^\at\right)$. Let us again write this sign in term of the indices convention of the first term of $\dd\left(\mu_n^\at\right)$: 
\begin{eqnarray*}
&&r'_0\coloneqq l_0\ ,\  \ldots, r'_{p-1}\coloneqq l_{p-1}\ , \ r'_p\coloneqq l_p+s_0\ , \ r'_{p+1}\coloneqq s_1\ ,\ \ldots , \ r'_{p+q-1}\coloneqq s_{q-1}\ , \\&&
r'_{p+q}\coloneqq s_{q}+l_{p+1}\ , \ r'_{p+q+1}\coloneqq l_{p+2}\ ,\  \ldots, \ r'_n\coloneqq l_{p+1+r}\ , 
\\
&&p'\coloneqq l_0+\cdots+l_{p}+p=\ ,\ 
q'\coloneqq q+|s|\ ,\ 
 r'\coloneqq l_{p+1}+\cdots+l_{p+1+r}+r
\ .
\end{eqnarray*}
With this convention, the respective two signs agree:
\begin{align*}
&(-1)^{\sum_{k=0}^n kr'_k+p'q'+r'+1}\\
=&
(-1)^{\sum_{k=0}^{p+1+r} kl_k+\sum_{k=0}^{q} ks_k+(q-1)\sum_{k=p+1}^{p+1+r} l_k+p|s|+
\left(\sum_{k=0}^{p} l_k + p
\right)
\left(
q+|s|
\right)
+\sum_{k=p+1}^{p+1+r} l_k+r+1}\\
=&(-1)^{pq+r+1+\sum_{k=0}^{p+1+r} kl_k+\sum_{k=0}^{q} ks_k+
q\sum_{k=p+1}^{p+1+r} l_k+
\left(\sum_{k=0}^{p} l_k
\right)
\left(
q+|s|
\right)}
\\
=&(-1)^{pq+r+1+\sum_{k=0}^{p+1+r} kl_k+
\sum_{k=0}^{q} ks_k+
\left(
q+|s|
\right)|l|+
|s|(l_{p+1}+\cdots +l_{p+1+r})}
\ ,
\end{align*}
which concludes the proof. 
\end{proof}

\newpage
\chapter{Examples}\label{sec:Computations}

In this section, we develop some examples of operadic twisting, both for nonsymmetric and symmetric operads. 
In each case, we study whether the (nonsymmetric) operad is twistable and we compute the (co)homology of its twisted version. 
This will show that the (co)homology of twisted operads is a rather unpredictable functor. 

We will deal first with the ns operads encoding respectively  noncommutative Gerstenhaber and noncommutative Batalin--Vilkovisky algebras, notions introduced recently in \cite[Section~3]{DotsenkoShadrinVallette15}. Then, we will consider the cases of the operads encoding respectively Gerstenhaber and Batalin--Vilkovisky algebras. The results related to the former operad are not new, but we show them using a different method which is ad hoc and thus shorter. This also allows us to fix the notations and methods to address the latter case. 
In the end, this shows that the classical operads and their ns analogues behave in a different way with respect to the twisting procedure. 

In this section, we work exceptionally with cohomological degree convention. 

\section{Twisting the nonsymmetric operad \texorpdfstring{$\ncGerst$}{ncGerst}} \label{subsec:ncGerst}
In this section, we work over a  ring  $\k$.

\smallskip

Recall after \cite[Section~3.1]{DotsenkoShadrinVallette15} that the nonsymmetric operad $\ncGerst$ can be defined as follows. As a graded $\k$-module the space $\ncGerst(n)$, for $n\ge 1$, is freely spanned by possibly disconnected linear graphs, called \emph{bamboos}, where the vertices are ordered from left to right, for instance,
\begin{equation*}
\vcenter{\xymatrix@M=5pt@R=10pt@C=10pt{
		*+[o][F-]{1}\ar@{-}[r] &*+[o][F-]{2}\ar@{-}[r]&  *+[o][F-]{3} & *+[o][F-]{4} &*+[o][F-]{5}\ar@{-}[r]&  *+[o][F-]{6} 
}
}
\quad \in \ncGerst(6)
\end{equation*}
The arity $0$ space $\ncGerst(n)=0$ is trivial.
Each edge carries cohomological degree $1$ and the total cohomological degree is thus equal to the number of edges. 
(These elements corresponds to right-combs of binary generators with the presentation given in \cite[Section~3.1]{DotsenkoShadrinVallette15}.)
It is convenient to think that the edges are ordered and reordering generates the sign; for a given bamboo, we assume,  by default, the ordering of the edges from left to right. The operadic composition $\circ_i$ of two bamboos, $\Gamma_1\in \ncGerst(n)$ and $\Gamma_2\in \ncGerst(k)$ amounts to placing the bamboo $\Gamma_2$ at the place of the vertex $i$ of the bamboo $\Gamma_1$, globally relabelling the vertices from left to right by $1,\dots,n+k-1$. 
The edge $(i-1,i)$ (resp. $(i+k-1,i+k)$) belongs to the resulting graph if and only if the edge $(i-1,i)$ (resp., $(i,i+1)$) belongs to the graph $\Gamma_1$. A Koszul sign is generated by reordering the edges; it is given by the parity of the product between the number of edges of $\Gamma_1$ on the right of vertex $i$ and the
 number of edges of $\Gamma_2$. For instance,
\begin{align*}
& \vcenter{\xymatrix@M=5pt@R=10pt@C=10pt{
		*+[o][F-]{1}\ar@{-}[r] &*+[o][F-]{2}\ar@{-}[r]&  *+[o][F-]{3} & *+[o][F-]{4} &*+[o][F-]{5}\ar@{-}[r]&  *+[o][F-]{6} 
	}
}
\quad \circ_5 \quad
\vcenter{\xymatrix@M=5pt@R=10pt@C=10pt{
		*+[o][F-]{1}\ar@{-}[r] &*+[o][F-]{2}&  *+[o][F-]{3} 
	}
}
\\
& = (-1)\cdot\ \vcenter{\xymatrix@M=5pt@R=10pt@C=10pt{
		*+[o][F-]{1}\ar@{-}[r] &*+[o][F-]{2}\ar@{-}[r]&  *+[o][F-]{3} & *+[o][F-]{4} &*+[o][F-]{5}\ar@{-}[r] &*+[o][F-]{6}&  *+[o][F-]{7} \ar@{-}[r]&  *+[o][F-]{8} 
	}
}
\
.
\end{align*}\\

A natural ns multiplicative structure $\Ai\twoheadrightarrow \As \to \ncGerst$ is given by the assignment 
\[
\mu_2\mapsto \quad
\vcenter{\xymatrix@M=5pt@R=10pt@C=10pt{
		*+[o][F-]{1} &*+[o][F-]{2} 
	}
}
\
,
\] 
or, alternatively, a natural shifted ns multiplicative structure $\calS^{-1}\Ai \twoheadrightarrow \calS^{-1}\As\to \ncGerst$ is given by the assignment  
\[
\mu_2\mapsto \quad
\vcenter{\xymatrix@M=5pt@R=10pt@C=10pt{
		*+[o][F-]{1}\ar@{-}[r] &*+[o][F-]{2} 
	}
}
\
.
\]
So, one can twist the ns operad $\ncGerst$ in the way described above or by using a cohomologically shifted version of the above-mentioned twisting procedure. In this case, we work with the dg ns operad  $\calS^{-1}\Ai$ of cohomologically shifted $\Ai$-algebras, that is with opposite signs than the one of shifted $\Ai$-algebras.
The subsequent computations are equivalent for these two structures, but the shifted one is slightly more convenient for the presentation. So, we perform all computations for the shifted one, and we denote by $\Tw\ncGerst$ the ns operad twisted with respect to the shifted ns multiplicative structure.\\

The underlying  $\k$-module of  $\Tw\ncGerst(n)_d$ is made up of  
by series indexed by $k\geq 0$ of finite sums of 
the bamboos with $n$ white vertices (labelled by $1,\dots,n$ from left to right), $k$ black vertices, and $d$ edges.
The twisted differential $\dd^{\mu_1^\alpha}$ is equal to the sum of the following five types of summands:
\begin{enumerate}
\item we attach a black vertex from the left to the leftmost vertex of the bamboo: 
$\vcenter{\xymatrix@M=5pt@R=10pt@C=10pt{
		*+[o][F**]{} \ar@{-}[r]& *+[o][F.]{} \ar@{..}[r] &   }}$\ , 
\item we attach a black vertex from the right to the rightmost vertex of the bamboo: 
$\vcenter{\xymatrix@M=5pt@R=10pt@C=10pt{
		 \ar@{..}[r] & *+[o][F.]{} \ar@{-}[r] &*+[o][F**]{}  }}$\ , 

\item we replace a white vertex by a black vertex connected by an edge to the white vertex on the left: 
$\vcenter{\xymatrix@M=5pt@R=10pt@C=10pt{
	\ar@{..}[r]&	*+[o][F**]{} \ar@{-}[r]& *+[o][F-]{i} \ar@{..}[r] &   }}$\ , 

\item we replace a white vertex by a black vertex connected by an edge to the white vertex on the right: 
$\vcenter{\xymatrix@M=5pt@R=10pt@C=10pt{
	\ar@{..}[r]&*+[o][F-]{i}  \ar@{-}[r]&	*+[o][F**]{}  \ar@{..}[r] &   }}$\ ,

\item we replace a black vertex by two black vertices connected by an edge: 
$\vcenter{\xymatrix@M=5pt@R=10pt@C=10pt{
		 \ar@{..}[r] & *+[o][F**]{} \ar@{-}[r] &*+[o][F**]{}  \ar@{..}[r]& }}$\ . 
\end{enumerate}
The Koszul type sign is given by counting how many edges from left to right that the new edge has to jump over, with an extra $-1$ sign for the terms $(3)$, $(4)$, and $(5)$. 

For instance,
\begin{align*}
& \mathrm{d}^{\mu_1^\alpha}\left( 
\vcenter{\xymatrix@M=5pt@R=10pt@C=10pt{
		*+[o][F-]{1}\ar@{-}[r] &*+[o][F-]{2}&  *+[o][F**]{} 
	}
}\right)
\ =
\vcenter{\xymatrix@M=5pt@R=10pt@C=10pt{
		*+[o][F-]{1}\ar@{-}[r] &*+[o][F-]{2}&*+[o][F**]{}\ar@{-}[l]&  *+[o][F**]{}}}
\ 
,
\end{align*}
and the terms 
\begin{align*}
&
 \vcenter{\xymatrix@M=5pt@R=10pt@C=10pt{
		*+[o][F**]{}\ar@{-}[r]&*+[o][F-]{1}\ar@{-}[r] &*+[o][F-]{2}&  *+[o][F**]{} 
	}}\ , \quad 
	\vcenter{\xymatrix@M=5pt@R=10pt@C=10pt{
		*+[o][F-]{1}\ar@{-}[r] &  *+[o][F-]{2}&  *+[o][F**]{}  &*+[o][F**]{}\ar@{-}[l] }}
		\ 
\quad \text{and}\quad
\vcenter{\xymatrix@M=5pt@R=10pt@C=10pt{
		*+[o][F-]{1}\ar@{-}[r] & *+[o][F**]{}\ar@{-}[r]& *+[o][F-]{2}&  *+[o][F**]{} }}
\end{align*}
appear in the expression for the differential twice, but with the opposite signs. 

\begin{proposition} \label{prop:ncGerstnotTwistable}
The ns operad $\ncGerst$ is not twistable. 
\end{proposition}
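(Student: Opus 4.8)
The plan is to apply the criterion of Proposition~\ref{prop:Twistable}. Since the ns operad $\ncGerst$ has trivial differential ($d_{\ncGerst}=0$), it is twistable only if every element $\nu\in\ncGerst$ satisfies $\ad_{\mu_2(\alpha,-)-\mu_2(-,\alpha)}(\nu)=0$ in $\Tw\ncGerst$. Here $\mu_2$ is the image of the shifted multiplicative generator, namely the connected two-vertex bamboo $\vcenter{\xymatrix@M=3pt@R=5pt@C=5pt{*+[o][F-]{1}\ar@{-}[r]&*+[o][F-]{2}}}$; inserting a black vertex $\alpha$ into the first or second slot yields the bamboos $\vcenter{\xymatrix@M=3pt@R=5pt@C=5pt{*+[o][F**]{}\ar@{-}[r]&*+[o][F-]{1}}}$ and $\vcenter{\xymatrix@M=3pt@R=5pt@C=5pt{*+[o][F-]{1}\ar@{-}[r]&*+[o][F**]{}}}$ respectively. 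So the obstruction operator $\ad_{\mu_2(\alpha,-)-\mu_2(-,\alpha)}$ is precisely the sum of the four ``single black vertex'' summands $(1),(2),(3),(4)$ of the twisted differential $\dd^{\mu_1^\alpha}$ described just above the statement. The strategy is therefore to exhibit a single bamboo $\nu\in\ncGerst$ on which this operator does not vanish.

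The first concrete step I would take is to test $\nu$ equal to the single white vertex $\vcenter{\xymatrix@M=3pt@R=5pt@C=5pt{*+[o][F-]{1}}}$ (the operadic unit is not in $\ncGerst$, but $\ncGerst(1)$ is one-dimensional spanned by this vertex, which is the identity operation). Applying the obstruction operator: summand $(1)$ attaches a black vertex on the left via an edge, giving $\vcenter{\xymatrix@M=3pt@R=5pt@C=5pt{*+[o][F**]{}\ar@{-}[r]&*+[o][F-]{1}}}$; summand $(2)$ attaches a black vertex on the right via an edge, giving $\vcenter{\xymatrix@M=3pt@R=5pt@C=5pt{*+[o][F-]{1}\ar@{-}[r]&*+[o][F**]{}}}$; summands $(3)$ and $(4)$ replace the white vertex by a black vertex joined to a \emph{white} neighbour, but there is no white neighbour, so they contribute nothing. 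Thus the obstruction evaluated on the identity is the sum of two distinct basis bamboos (with signs $\pm1$), which is visibly nonzero in $\Tw\ncGerst(1)$. I would double-check the sign bookkeeping — both new edges jump over zero existing edges, so the Koszul signs are trivial and the two terms cannot cancel — to be sure the result is genuinely nonzero rather than an accidental cancellation.

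Having produced this nonzero value, I would invoke Proposition~\ref{prop:Twistable}: the failure of $\ad_{\mu_2(\alpha,-)-\mu_2(-,\alpha)}$ to annihilate some element of $\ncGerst$ forces $\ncGerst$ to be non-twistable. (Equivalently, one can phrase this as: the canonical inclusion $\ncGerst\hookrightarrow\Tw\ncGerst$ fails to be a chain map already in arity $1$, since the twisted differential of the image of the identity is nonzero while $d_{\ncGerst}=0$; by Proposition~\ref{prop:Twistable} — using that $\ncGerst$ receives its multiplicative structure from $\calS^{-1}\As$, so $\mu_3=\mu_4=\cdots=0$ and the twisted differential really is just $\ad_{\mu_2(\alpha,-)-\mu_2(-,\alpha)}$ — this obstructs the existence of the $\Tw$-coalgebra structure map $\Delta_{\ncGerst}$.)

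The only subtlety — and the step most worth being careful about — is confirming that the two surviving summands do not cancel each other or combine with further terms: one must check there is no symmetry of $\ncGerst(1)$ identifying the two bamboos $\vcenter{\xymatrix@M=3pt@R=5pt@C=5pt{*+[o][F**]{}\ar@{-}[r]&*+[o][F-]{1}}}$ and $\vcenter{\xymatrix@M=3pt@R=5pt@C=5pt{*+[o][F-]{1}\ar@{-}[r]&*+[o][F**]{}}}$, which is clear since the white vertex carries a fixed label and black vertices sit in determined (leftmost vs.\ rightmost) positions, so these are independent basis elements of $\Tw\ncGerst(1)$. With that verified the proof is complete; no further computation is needed.
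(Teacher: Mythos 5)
Your overall strategy --- invoke Proposition~\ref{prop:Twistable} and exhibit a single element of $\ncGerst$ not annihilated by $\ad_{\mu_1^\alpha}$ --- is exactly the paper's, but your chosen witness does not work, and the computation contains a genuine error. The operator you are evaluating is an \emph{adjoint} action, $\ad_{\mu_1^\alpha}(\nu)=\mu_1^\alpha\star\nu-(-1)^{|\nu|}\nu\star\mu_1^\alpha$, and for any arity-$1$ element $x$ one has $\ad_x(\id)=x\circ_1\id-\id\circ_1 x=0$; so the identity operation (your single white vertex) can never serve as a witness. Concretely, you misread summands $(3)$ and $(4)$ of the twisted differential: they do not require the white vertex to have a white neighbour. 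Summand $(3)$ replaces white vertex $i$ by the two-vertex configuration ``black---white$_i$'' (this is $\nu\circ_i b(\alpha,-)$, coming from $\nu\star\mu_1^\alpha$), and summand $(4)$ replaces it by ``white$_i$---black''. Applied to the single white vertex, these produce exactly the same two bamboos as summands $(1)$ and $(2)$, and they carry the extra $-1$ sign, so the four terms cancel in pairs and the obstruction evaluates to $0$ on the identity --- consistent with the general fact above.

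The fix is to test an element of arity at least $2$, where the \emph{internal} insertions of summands $(3)$ and $(4)$ produce bamboos with a black vertex strictly between two white vertices; these cannot be matched by the boundary attachments $(1)$ and $(2)$ and hence survive. This is what the paper does: for the disconnected two-vertex bamboo $\nu\in\ncGerst(2)$ one finds
$\ad_{\mu_1^\alpha}(\nu)=-\bigl[\,1\text{---}\bullet\ \ 2\,\bigr]-\bigl[\,1\ \ \bullet\text{---}2\,\bigr]\neq 0$,
where $\bullet$ denotes the black vertex, after the two boundary terms from $\mu_1^\alpha\star\nu$ cancel against the corresponding terms of $\nu\star\mu_1^\alpha$. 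With this corrected witness your appeal to Proposition~\ref{prop:Twistable} (using that the multiplicative structure factors through $\calS^{-1}\As$, so the twisted differential is exactly $\ad_{\mu_1^\alpha}$) goes through verbatim.
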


\begin{proof}
We apply \textit{mutatis mutandis} \cref{prop:Twistable} with the following computation 
\begin{align*}
\ad_{\mu_2(\alpha, -)+\mu_2(-, \alpha)}\left(
\vcenter{\xymatrix@M=5pt@R=10pt@C=10pt{
		*+[o][F-]{1} &*+[o][F-]{2}}}
		\right)=
(-1)\cdot\  \vcenter{\xymatrix@M=5pt@R=10pt@C=10pt{
		*+[o][F-]{1}\ar@{-}[r] & *+[o][F**]{} & 
		 *+[o][F-]{2}}}
+ (-1)\cdot\  \vcenter{\xymatrix@M=5pt@R=10pt@C=10pt{
		*+[o][F-]{1} & *+[o][F**]{}\ar@{-}[r] & 
		 *+[o][F-]{2}}}
		\neq 0\ .
\end{align*}
\end{proof}

The ns operad $\calS^{-1}\As$, denoted $\As_1$ in \cite{DotsenkoShadrinVallette15}, of cohomologically shifted associative algebras, is isomorphic to the ns suboperad of $\ncGerst$ generated by the element $b\coloneqq 
\vcenter{\xymatrix@M=5pt@R=10pt@C=10pt{
		*+[o][F-]{1}\ar@{-}[r] &*+[o][F-]{2} }}$\ .
We consider the complete ns operad 
\[\calS^{-1}\As^+\coloneqq \frac{\calS^{-1}\As\hat{\vee} \, \gamma}{\big( b(\gamma, -), \ b(-,\gamma) \big)}\ ,
\]
where $\gamma$ is an arity $0$ degree $0$ element placed in $\F_1$. 

\begin{theorem}  \label{thm:HTwncGerst} 
The map of complete dg ns operads 
\[\calS^{-1}\As\hat{\vee}\, \gamma \to \Tw\ncGerst \quad \text{defined by} \quad b\mapsto \vcenter{\xymatrix@M=5pt@R=10pt@C=10pt{
		*+[o][F-]{1}\ar@{-}[r] &*+[o][F-]{2} }}\ , \ \ 
\gamma \mapsto \vcenter{\xymatrix@M=5pt@R=10pt@C=10pt{
			 *+[o][F**]{}  &*+[o][F**]{} }}\]
induces the isomorphism of complete ns operads 
\[  \mathrm{H}\left(\Tw\ncGerst\right)\cong \calS^{-1}\As^+\ .\]
\end{theorem}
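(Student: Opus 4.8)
The plan is to compute the homology of $\Tw\ncGerst$ by a filtration argument, following the spirit of Willwacher's computation of the homology of twisted graph operads. First I would set up the combinatorial model precisely: an element of $\Tw\ncGerst(n)$ is (a series of finite sums of) bamboos with $n$ white vertices labelled $1,\dots,n$ from left to right, some number of unlabelled black vertices interspersed, and a grading by the number of edges. The differential $\dd^{\mu_1^\alpha}$ is the sum of the five local moves listed in the excerpt. The first task is to check that the proposed map $\calS^{-1}\As\,\hat\vee\,\gamma\to\Tw\ncGerst$ is a well-defined morphism of complete dg ns operads: the only nontrivial point is compatibility with the differentials, i.e. that $\vcenter{\xymatrix@M=3pt@R=6pt@C=6pt{*+[o][F-]{1}\ar@{-}[r]&*+[o][F-]{2}}}$ is a cycle (clear: the twisted differential of a graph with only white vertices produces terms with one black vertex that cancel in pairs exactly as the excerpt illustrates for the length-$2$ bamboo) and that $\vcenter{\xymatrix@M=3pt@R=6pt@C=6pt{*+[o][F**]{}&*+[o][F**]{}}}$ maps to the differential of the single black vertex $\alpha$ under the correct sign — this is essentially the computation $\dd^{\mu_1^\alpha}(\alpha)=\sum_{n\ge2}(n-1)\mu_n(\alpha,\dots,\alpha)$ specialised to $\ncGerst$, where only $\mu_2$ survives, giving the length-$2$ black bamboo. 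One also checks the two relations $b(\gamma,-)$ and $b(-,\gamma)$ are killed, so the map factors through $\calS^{-1}\As^+$ after passing to homology (these composites are exact in $\Tw\ncGerst$, not zero, so they only vanish on homology — this needs to be stated carefully).

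Next I would compute $\mathrm{H}(\Tw\ncGerst)$ directly. The key idea is to filter the complex $\Tw\ncGerst(n)$ by something like the number of white vertices that are endpoints of edges, or better, to decompose a bamboo into its maximal connected ``segments'' and analyse each segment. The crucial sub-lemma is a local acyclicity statement: consider the subcomplex spanned by bamboo-configurations and observe that moves (1),(2) attach a pendant black vertex on the far left/right, moves (3),(4) ``sprout'' a black vertex off a white vertex, and move (5) splits a black vertex. One shows that, away from the image of the map above, the complex is acyclic by exhibiting an explicit contracting homotopy — the natural candidate is the operation that removes the leftmost (or rightmost) black vertex when it is a pendant leaf, i.e. the homotopy dual to move (1) or move (2). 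I would make this precise by filtering on the number of black vertices not lying in a maximal all-black segment, or on the ``distance'' of black vertices from the ends, so that the associated graded differential is precisely move (1)+(2) (pendant-black attachment/removal), which has trivial homology except on configurations where no such move is possible — namely bamboos whose black part is exactly one isolated all-black connected component sitting adjacent to the white part in the constrained way, which is exactly the description of $\calS^{-1}\As^+$.

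The main obstacle, as usual in these graph-complex computations, is the \emph{bookkeeping of signs and the convergence of the spectral sequence}. Since $\Tw\ncGerst(n)$ consists of \emph{series} (infinite in the number of black vertices), one must check that the filtration is complete and exhaustive and that the associated spectral sequence converges — this is where the completeness of the ns operad and Lemma~\ref{lem:Conv} get used, and one should phrase the argument so that at each fixed edge-degree $d$ the complex is finite-dimensional in a suitable sense (the number of edges bounds the number of black vertices that can be ``connected'', but isolated black vertices are unbounded, so the homotopy must genuinely handle the infinite tail). The sign computation for the contracting homotopy — verifying $h\dd+\dd h=\id$ on the acyclic part — is routine but error-prone because of the Koszul signs from edge-reordering and the extra $-1$ on moves (3),(4),(5); I would organise it by fixing once and for all the left-to-right edge ordering convention and checking the homotopy identity on a generic bamboo with its black vertices in general position. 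Once the associated graded homology is identified with the underlying collection of $\calS^{-1}\As^+$, the multiplicative (operad) structure on homology is forced: it is generated by the classes of $b$ and $\gamma$ with the stated relations, because the map of the theorem is a morphism of operads and is surjective on homology by the computation and injective because $\calS^{-1}\As^+$ has exactly the right dimensions in each arity and degree. I would close by remarking that the unshifted statement follows by tensoring with $\End_{\k s}$, as in Section~\ref{Sec:Gen}.
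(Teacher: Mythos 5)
Your setup, the well-definedness check for the map, and the final identification of the operad structure on cohomology are all fine, and your instinct to reduce to a fixed ``white skeleton'' and analyse each maximal black chain locally is the right one — this is indeed how the paper proceeds. But the core of the computation is missing from your plan, and the mechanism you propose in its place would give the wrong answer. The paper's proof fixes the number $K$ of connected components and the distribution $N_1,\dots,N_K$ of white vertices, observes that the resulting subcomplex is a \emph{tensor product} of $K+\sum_i N_i$ elementary complexes, one for each possible slot where a maximal all-black chain can sit, and then computes each factor by hand: the differential sends the length-$n$ black chain in a given slot to the length-$(n+1)$ chain with a coefficient $c_n$ that is alternately $0$ and $\pm 1$, with the parity pattern depending on the \emph{position} of the slot (interior between two whites, adjacent to a white on one side only, free-floating, at the extreme ends, or constituting the whole bamboo — six cases in total). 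The surviving classes are then read off factor by factor. This position- and parity-dependent analysis is the entire content of the theorem, and it is not a formal consequence of any of the filtrations you name. Concretely, your claim that the associated graded differential can be arranged to be ``precisely moves (1)+(2)'' and that the survivors are ``configurations where no such move is possible'' fails: the bamboo consisting of white vertex $1$, an edge, a black vertex, an edge, white vertex $2$ admits no pendant-black removal at the extreme ends, yet it is not a cocycle (move (5) splits its black vertex and nothing cancels that term); likewise a single isolated black vertex is not a cocycle. Your proposed contracting homotopy (removing a pendant black vertex at the far left or right of the whole bamboo) says nothing about interior black chains, which is where most of the cancellation happens.

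Two smaller points. First, the convergence issue you flag is real but is resolved much more cheaply than you anticipate: since the twisted differential raises the number of black vertices by exactly one, the complex splits as a product of subcomplexes on which the black-vertex count minus the cohomological degree is constant, each consisting of finite sums; cohomology commutes with products, so no completeness-of-filtration argument is needed. Second, your description of the answer is garbled in places — the cohomology is spanned by the connected all-white bamboos together with the single class of two disjoint black vertices, not by ``bamboos whose black part is one all-black component adjacent to the white part''; and the relations $b(\gamma,-)$, $b(-,\gamma)$ hold on cohomology because those composites are coboundaries (this is the acyclicity of the slot adjacent to a white vertex on one side in a component with at least two white vertices), which you correctly anticipate but cannot verify without the case analysis you have omitted.
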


\begin{proof} Since the elements of $\Tw \ncGerst$ are series indexed by $k\geq 0$ of finite sums of 
the bamboos with  $k$ black vertices and since the  differential $\dd^{\mu_1^\alpha}$ increases the number of black vertices by one, it is enough to consider the case of finite series, i.e. sums. 

Note that the differential preserves the number of the connected components $K\geq 1$ and the number of the white vertices $N_1,\dots,N_K\geq 0$ on these components. 
The subgraph that consists of all black vertices and edges connecting them has $K+\sum_{i=1}^K N_i$
 disjoint connected components (some of them can be empty in a particular graph). The chain complex of all graphs with fixed $K$ and fixed $N_1,\dots,N_K$ is isomorphic to the tensor product of the $K+\sum_{i=1}^K N_i$ chain complexes disjoint black components described below. 

Consider a connected black component of length $n$, for $n\ge 0$. Under the  action of the differential, it is replaced with a connected component of length $n+1$, with a coefficient $c_n$ that depends on its position within the ambient graph. Here is a full list of the possible cases:
\begin{enumerate}
	\item The black component is connected to white vertices both on the left and on the right: $c_n=0$ for even $n$ and $c_n=\pm 1$ for odd $n$. 
	\item The black component is connected to a white vertex only on the left and it is not the rightmost component of the ambient bamboo: $c_n=\pm 1$, for even $n$, and $c_n=0$, for odd $n$. The same for the interchanged left and right. 
	\item The black component is connected to a white vertex only on the left and it is the rightmost component of the ambient bamboo: $c_n=0$, for even $n$, and $c_n=\pm 1$, for odd $n$. The same for the interchanged left and right. 
	\item The black component is not connected to white vertices and is neither the leftmost nor the rightmost component of the ambient bamboo: $c_n=0$, for even $n$, and $c_n=\pm 1$ for odd $n$. Note that in this case $n\geq 1$. 
	\item The black component is not connected to white vertices and it is the rightmost but not the leftmost component of the ambient bamboo: $c_n=\pm 1$, for even $n$, and $c_n=0$ for odd $n$. The same for the interchanged left and right. Note that in this case $n\geq 1$. 
	\item The black component is not connected to white vertices and it is simultaneously the rightmost and the leftmost component of the ambient bamboo, that is this black component is the whole ambient bamboo graph satisfying $K=1$, $N_1=0$: $c_n=0$, for even $n$, and $c_n=\pm 1$, for odd $n$. Note that in this case $n\geq 1$. 
\end{enumerate}
In the cases (2), (4), and (6), the corresponding chain complex is acyclic, thus the cohomology is equal to zero. In the cases (1) and (3), the cohomology is one-dimensional represented by a black component of length $0$. In the case (5), the cohomology is one-dimensional represented by a black component of length $1$. 

Thus, for the ambient graph, we either have the one-dimensional cohomology group for $K=1$, $N_1\geq 1$, represented by the connected bamboos  with only white vertices or 
we have the one-dimensional cohomology group for $K=2$, $N_1=N_2=0$, represented by two disjoint black vertices. These graphs generate the complete ns operad $\calS^{-1}\As \hat{\vee} \, \gamma
$, and it is clear that the substitution of the latter graph into a connected bamboo of white vertices of length at least two gives a boundary of the differential, cf. the acyclic case (2) above.
\end{proof}

\begin{remark} Note that in this case the cohomology of the deformation complex $\Def\big(\calS^{-1}\Ai\to \allowbreak\ncGerst\big)$ is the one-di\-men\-sion\-al Lie algebra with the trivial Lie bracket, since it is isomorphic to the cohomology of the arity $0$ component of $\Tw\ncGerst$ by \cref{prop:DefTwP0}. The above computations show that its action on $\mathrm{H}\left(\Tw\ncGerst\right)$ is trivial. 

As usual in deformation theory, this result can be interpreted, see for instance \cite[Section~12.2]{LodayVallette12},  as  a strong rigidity statement about the class of ns operad morphisms $\calS^{-1}\Ai\to \ncGerst$: there is no non-trivial  infinitesimal or formal deformation of the map given here.
\end{remark}

\section{Twisting the nonsymmetric operad $\mathrm{ncBV}$} \label{subsec:ncBV}
In this section, we work over a  ring  $\k$.

\smallskip

The nonsymmetric operad $\ncBV$ from \cite[Section~3.1]{DotsenkoShadrinVallette15} can be defined as follows. As a graded $\k$-module the space $\ncBV(n)$, for $n\ge 1$, is freely spanned by possibly disconnected linear graphs (bamboos) with at most one tadpole edge at each vertex, where the vertices are ordered from left to right. For instance, we have 
\begin{align*}
\rule{0pt}{22pt}
\ncBV(1) = \left\langle
\vcenter{
	\xymatrix @M=5pt@R=5pt@C=10pt{
		*+[o][F-]{1}
	}
}\ \ ,\ \ 
\vcenter{
	\xymatrix @M=5pt@R=5pt@C=10pt{
		*+[o][F-]{1} \ar@{-}@(ru,u) 
	}
}\ \ \
\right\rangle ; \qquad
\vcenter{
	\xymatrix @M=5pt@R=5pt@C=10pt{
		*+[o][F-]{1}\ar@{-}[r] &*+[o][F-]{2}\ar@{-}[r] \ar@{-}@(ru,u) &  *+[o][F-]{3} & *+[o][F-]{4} \ar@{-}@(ru,u)&*+[o][F-]{5}\ar@{-}[r] \ar@{-}@(ru,u)&  *+[o][F-]{6} 
	}
}
\quad \in \ncBV(6)\ .
\end{align*}
Each edge, including the tadpoles, has cohomological degree equal to $1$ and the total cohomological degree is equal to the number of edges. 
(To match with the presentation given in \cite[Section~3.2]{DotsenkoShadrinVallette15}, these elements are in one-to-one correspondance with right-combs of binary generators labelled, at the very top of them, with nothing or one copy of the generator $\Delta$ at each leaf.)
 It is convenient to think that the edges are ordered and reordering generates the sign, like in the above $\ncGerst$ case. For a given bamboo with tadpoles, by default, we order first the edges from left to right first and then the tadpoles from left to right. \\

 The operadic composition $\circ_i$ of two  bamboos with tadpoles, $\Gamma_1\in \ncBV(n)$ and $\Gamma_2\in \ncBV(k)$, amounts to placing the graph $\Gamma_2$ at the place of the vertex $i$ of the graph $\Gamma_1$, globally relabelling the vertices from left to right by $1,\dots,n+k-1$. The edge $(i-1,i)$ (resp. $(i+k-1,i+k)$) belongs to the resulting graph if and only if the edge $(i-1,i)$ (resp., $(i,i+1)$) belongs to the graph $\Gamma_1$. If there is a tadpole at the vertex $i$ of $\Gamma_1$, then it becomes either a new tadpole at one of the vertices $i,i+1,\dots,i+k-1$ of $\Gamma_1\circ_i\Gamma_2$, when no tadpole was yet present, or a new edge connecting two consecutive vertices from this set. A Koszul sign is generated by reordering the edges and tadpoles. For instance,
\begin{align*}
\rule{0pt}{22pt}
 \vcenter{\xymatrix@M=5pt@R=10pt@C=10pt{
		*+[o][F-]{1}\ar@{-}[r] &*+[o][F-]{2}\ar@{-}@(ru,u)
	}
}
\ \ \circ_2 \ \
\vcenter{\xymatrix@M=5pt@R=10pt@C=10pt{
		*+[o][F-]{1}\ar@{-}[r] &*+[o][F-]{2} \ar@{-}@(ru,u) &  *+[o][F-]{3} \ar@{-}@(ru,u) 
	}
}\ \
 = \ \ & (-1)\cdot\ \vcenter{\xymatrix@M=5pt@R=10pt@C=10pt{
 		*+[o][F-]{1}\ar@{-}[r] & *+[o][F-]{2}\ar@{-}[r] \ar@{-}@(ru,u) &*+[o][F-]{3} \ar@{-}@(ru,u) &  *+[o][F-]{4} \ar@{-}@(ru,u)
 	}
 }
\\ & +\rule{0pt}{30pt}
(-1)\cdot\ \vcenter{\xymatrix@M=5pt@R=10pt@C=10pt{
		*+[o][F-]{1}\ar@{-}[r] & *+[o][F-]{2}\ar@{-}[r]  &*+[o][F-]{3} \ar@{-}@(ru,u) \ar@{-}[r]&  *+[o][F-]{4} \ar@{-}@(ru,u)
	}
}
\
.
\end{align*}

As is the previous example, we consider a cohomologically shifted ns multiplicative structure $s^{-1}\Ai \twoheadrightarrow s^{-1}\As\to \ncBV$ given by the assignment 
\[
\mu_2\mapsto \quad
\vcenter{\xymatrix@M=5pt@R=10pt@C=10pt{
		*+[o][F-]{1}\ar@{-}[r] &*+[o][F-]{2} 
	}
}
\
.
\]
The underlying  $\k$-module of  $\Tw\ncBV(n)_d$ is made up of  
by series indexed by $k\geq 0$ of finite sums of 
the bamboos with tadpoles, with $n$ white vertices (labelled by $1,\dots,n$ from left to right), $k$ black vertices, and a total of $d$ edges and tadpoles.
The components of the twisted differential $\mathrm{d}^{\mu_1^\alpha}$ are the same as in the previous case of the dg ns operad $\Tw \ncGerst$. The only new case comes with vertices (white or black) having tadpoles: they are again replaced by two vertices connected by an edge (black-white plus white-black, or black-black) where the tadpole distributes over the two vertices. The signs remain the same as in the $\Tw \ncGerst$ case: 
the Koszul type sign is given by counting how many edges from left to right that the new edge has to jump over. With the  order considered on edges and tadpoles, this means that tadpoles will never be taken into account when computing this sign. There is an extra $-1$ sign for the terms which replace a vertex by two vertices. 
 For instance,
\begin{align}\label{Eqn:RelationHTwncBV}
\rule{0pt}{22pt} 
\mathrm{d}^{\mu_1^\alpha}\left(\ 
\vcenter{\xymatrix@M=5pt@R=10pt@C=10pt{
		*+[o][F-]{1} \ar@{-}@(ru,u) 
	}
}\ \right)
\ \
=
\ \ (-1)\cdot \
\vcenter{\xymatrix@M=5pt@R=10pt@C=10pt{
		*+[o][F-]{1} \ar@{-}[r] &*+[o][F**]{} \ar@{-}@(ru,u) 
	}
}
\ \
+(-1)\cdot\ 
\vcenter{\xymatrix@M=5pt@R=10pt@C=10pt{
		*+[o][F**]{} \ar@{-}@(ru,u) & *+[o][F-]{1} \ar@{-}[l] 
	}
}
\ 
,
\end{align}
and the graphs 
\begin{align*}
\rule{0pt}{22pt}
\vcenter{\xymatrix@M=5pt@R=10pt@C=10pt{
		*+[o][F-]{1} \ar@{-}[r] \ar@{-}@(ru,u) &*+[o][F**]{}  
	}
}
\ \ \
\text{ and }
\ \ \
\vcenter{\xymatrix@M=5pt@R=10pt@C=10pt{
		*+[o][F**]{} & *+[o][F-]{1} \ar@{-}[l] \ar@{-}@(ru,u) 
	}
}
\end{align*}
appear in the expression for the differential twice, with the opposite signs. 

\begin{proposition} \label{prop:ncBVnotTwistable}
The ns operad $\ncBV$ is not twistable. 
\end{proposition}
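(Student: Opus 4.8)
The plan is to apply the criterion of \cref{prop:Twistable} exactly as in the proof of \cref{prop:ncGerstnotTwistable}: since $\ncBV$ is a multiplicative ns operad with trivial differential $d_\calP=0$ whose multiplicative structure factors through the canonical resolution $\calS^{-1}\Ai \twoheadrightarrow \calS^{-1}\As \to \ncBV$, it is twistable if and only if $\ad_{\mu_2(\alpha,-)+\mu_2(-,\alpha)}$ vanishes on every generator. So it suffices to exhibit one generator on which this operator is nonzero. First I would compute this operator on the binary generator $b=\vcenter{\xymatrix@M=5pt@R=8pt@C=8pt{*+[o][F-]{1}\ar@{-}[r]&*+[o][F-]{2}}}$, or even more simply on the tadpole generator $\Delta=\vcenter{\xymatrix@M=5pt@R=8pt@C=8pt{*+[o][F-]{1}\ar@{-}@(ru,u)}}$.

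Concretely, for the binary generator the computation is word-for-word the one already displayed for $\ncGerst$, since the relevant terms involve no tadpoles:
\[
\ad_{\mu_2(\alpha, -)+\mu_2(-, \alpha)}\left(
\vcenter{\xymatrix@M=5pt@R=8pt@C=8pt{
		*+[o][F-]{1} &*+[o][F-]{2}}}
		\right)=
(-1)\cdot\  \vcenter{\xymatrix@M=5pt@R=8pt@C=8pt{
		*+[o][F-]{1}\ar@{-}[r] & *+[o][F**]{} &
		 *+[o][F-]{2}}}
+ (-1)\cdot\  \vcenter{\xymatrix@M=5pt@R=8pt@C=8pt{
		*+[o][F-]{1} & *+[o][F**]{}\ar@{-}[r] &
		 *+[o][F-]{2}}}\ ,
\]
which is a nonzero element of $\Tw\ncBV(2)$ because these two bamboos with one black vertex are distinct basis elements. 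This already proves the proposition. Alternatively, from the formula for the twisted differential recalled just above (equation~\eqref{Eqn:RelationHTwncBV}), one sees directly that $\dd^{\mu_1^\alpha}(\Delta)=\ad_{\mu_2(\alpha,-)+\mu_2(-,\alpha)}(\Delta)$ is a sum of two distinct basis graphs, hence nonzero; either computation works.

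There is no real obstacle here: the only thing to be careful about is that the two resulting graphs are genuinely linearly independent in the free module spanned by bamboos with tadpoles, i.e. that they are not identified by the defining relations of $\Tw\ncBV$ (they are not, since $\Tw\ncBV$ is spanned by \emph{all} such graphs with no relations among distinct ones). Given that, the one-line computation combined with the ``only if'' direction of \cref{prop:Twistable} finishes the proof, and I would simply write ``This is proved exactly as \cref{prop:ncGerstnotTwistable}, using the computation above'' followed by the displayed identity.
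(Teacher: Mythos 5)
Your proposal is correct and follows exactly the paper's route: the paper's proof of this proposition literally consists of the remark that ``the same argument and computation as in the case of the ns operad $\ncGerst$'' apply, i.e.\ invoking the criterion of \cref{prop:Twistable} and observing that $\ad_{\mu_2(\alpha,-)+\mu_2(-,\alpha)}$ is nonzero on the disconnected binary generator, which is precisely the computation you display. The only cosmetic slip is that you initially name $b$ the connected bamboo while the computation is carried out on the disconnected one, but this does not affect the argument.
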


\begin{proof}
The same argument and computation as in the case of the ns operad, see $\ncGerst$ \cref{prop:Twistable}, hold here and prove the result. 
\end{proof}

In order to describe the cohomology ns operad $\mathrm{H}(\Tw\ncBV)$, we consider the following extension of the complete ns operad introduced in the previous $\ncGerst$ case:
\[\calS^{-1}\As^{++}\coloneqq \frac{\calS^{-1}\As\hat{\vee}\, \gamma\, \hat{\vee}\, \zeta}
{\big( b(\gamma, -),\ b(-,\gamma), b(\zeta, -)+b(-,\zeta)\big)}\ ,
\]
where $\zeta$ is an arity $0$ degree $+1$ element placed in $\F_1$. 

\begin{theorem} \label{thm:HTwncBV} 

The map of complete dg ns operads 
\[
\rule{0pt}{22pt}
\calS^{-1}\As\hat{\vee}\, \gamma\, \hat{\vee}\, \zeta \to \Tw\ncBV \quad \text{defined by} \quad b\mapsto \vcenter{\xymatrix@M=5pt@R=10pt@C=10pt{
		*+[o][F-]{1}\ar@{-}[r] &*+[o][F-]{2} }}\ , \ \ 
\gamma \mapsto \vcenter{\xymatrix@M=5pt@R=10pt@C=10pt{
			 *+[o][F**]{}  &*+[o][F**]{} }}
			 \ , \ \ 
\zeta \mapsto 
\vcenter{\xymatrix@M=5pt@R=10pt@C=10pt{
		*+[o][F**]{} \ar@{-}@(ru,u)  }}
			 \]
induces the isomorphism of complete ns operads 
\[  \mathrm{H}\left(\Tw\ncBV\right)\cong \calS^{-1}\As^{++}\ .\]
\end{theorem}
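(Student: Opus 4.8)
The plan is to adapt the argument of the proof of \cref{thm:HTwncGerst}, using one extra structural observation: each summand of the twisted differential $\dd^{\mu_1^\alpha}$ creates a new \emph{edge} (never a tadpole), and tadpoles can only migrate between vertices or be distributed when a tadpole-bearing vertex is split into two vertices joined by an edge. Hence $\dd^{\mu_1^\alpha}$ preserves the total number $T$ of tadpoles, and $\Tw\ncBV$ splits as a direct sum $\bigoplus_{T\ge 0}\big(\Tw\ncBV\big)^{(T)}$ of subcomplexes. As in loc.\ cit., since $\dd^{\mu_1^\alpha}$ raises the number of black vertices by one it is enough to work with finite sums of graphs in each summand. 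The summand $\big(\Tw\ncBV\big)^{(0)}$ of tadpole-free graphs is exactly the complex $\Tw\ncGerst$ (the tadpole-specific component of the differential acts trivially there), so its cohomology is $\calS^{-1}\As^+$ by \cref{thm:HTwncGerst}; this accounts for the $\zeta$-free part of $\calS^{-1}\As^{++}$.

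For $T\ge 1$, the plan is to repeat the local analysis of loc.\ cit. with tadpoles present. After fixing the number $K$ of connected components, the tuples $(N_1,\dots,N_K)$ of white vertices with their labels and the edge-adjacency pattern among consecutive white vertices, and the distribution of the $T$ tadpoles, the complex decomposes as a tensor product of local complexes of ``decorated black chains'', one per gap into which black vertices are inserted. The new subtlety compared with the $\ncGerst$ case is that a white vertex carrying a tadpole is not $\dd^{\mu_1^\alpha}$-closed: by \eqref{Eqn:RelationHTwncBV} it is at once replaced by a white vertex joined by an edge to a new black vertex carrying the tadpole, so tadpoles flow between a gap and its bordering white vertices and the local complexes must record this. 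I expect the decorated local complex to be acyclic in all positional cases except the tadpole-decorated analogues of cases~(1), (3) and~(5) of loc.\ cit., where it is one-dimensional, generated by the shortest black chain able to bear the prescribed tadpoles. Assembling tensor factors, the cohomology of each sector will be at most one-dimensional, generated by a disjoint union of connected white bamboos and isolated black vertices (a word in $b=\mu_2^\alpha$ and $\gamma$) with the tadpoles placed; their positions are normalised using the relation $b(\zeta,-)+b(-,\zeta)=0$, which is a consequence of \eqref{Eqn:RelationHTwncBV} together with the shifted associativity of $\mu_2^\alpha$.

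With the cohomology computed, the plan is to check that the stated assignment is a well-defined morphism and to identify it with the above. The image of $b=\mu_2^\alpha$ is a cocycle by \cref{prop:MorhpAiMCP}; the images of $\gamma$ (two disjoint black vertices) and $\zeta$ (one black vertex with a tadpole) are cocycles by a direct computation of $\dd^{\mu_1^\alpha}$ on these two graphs, the required cancellations being those noted in \cref{subsec:ncBV}. This gives a morphism of complete dg ns operads $\calS^{-1}\As\,\hat{\vee}\,\gamma\,\hat{\vee}\,\zeta\to\Tw\ncBV$ which sends each of the relators $b(\gamma,-)$ and $b(-,\gamma)$ to a boundary — substituting two disjoint black vertices into a connected bamboo of length at least two gives a boundary, as in the acyclic case of \cref{thm:HTwncGerst} — and sends $b(\zeta,-)+b(-,\zeta)$ to a boundary by \eqref{Eqn:RelationHTwncBV}. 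Hence it descends to $\calS^{-1}\As^{++}$, and comparing the monomials of $\calS^{-1}\As^{++}$ (organised by their number of $\zeta$'s, with each edge and each tadpole contributing cohomological degree~$1$) with the sector-by-sector generators found above shows that $\calS^{-1}\As^{++}\to\Ho(\Tw\ncBV)$ is an isomorphism.

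The hard part is the homology of the tadpole-decorated local complexes of the second paragraph: extending the six-case computation of \cref{thm:HTwncGerst} to black chains carrying tadpoles, proving acyclicity of every ``wrong'' sector (in particular those in which a tadpole sits in an interior gap, which by \eqref{Eqn:RelationHTwncBV} must be cohomologous to one at an end of a bamboo, and those with two or more tadpoles), and pinning down precisely the surviving generator and its degree in each remaining sector so that the count matches $\calS^{-1}\As^{++}$. As in loc.\ cit., since $\k$ is only assumed to be a ring these vanishing statements must be produced by explicit contracting homotopies rather than by a rank argument, and one should expect $2$-torsion to appear in the part with at least two $\zeta$'s (from $2\,b(\zeta,\zeta)=0$), so the identification has to be carried out integrally.
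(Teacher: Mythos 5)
Your overall strategy is sound in outline (reduction to finite sums, splitting off the tadpole-free part as $\Tw\ncGerst$, identifying the generators $b$, $\gamma$, $\zeta$ and checking the relations), and the observation that $\dd^{\mu_1^\alpha}$ preserves the total number of tadpoles is correct and consistent with the paper. But the core of your computation for the sectors with $T\ge 1$ tadpoles rests on a claim that fails: the complex does \emph{not} decompose as a tensor product of local complexes indexed by the gaps between white vertices. The obstruction is exactly the phenomenon you flag as ``the new subtlety'': by \eqref{Eqn:RelationHTwncBV} a white vertex carrying a tadpole can emit a black tadpole-bearing vertex into \emph{either} adjacent gap, and once it has emitted to the left it can no longer emit to the right. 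The state ``does this white vertex still carry its tadpole'' is therefore shared between the two neighbouring gaps, so the differential is not a sum of operators each acting on a single tensor factor. Saying the local complexes ``must record this'' does not repair the decomposition; no assignment of independent local complexes to the gaps can reproduce this coupling, so the six-case analysis of \cref{thm:HTwncGerst} cannot be extended gap-by-gap as you propose.

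The paper's proof resolves precisely this point with a two-stage spectral sequence. It first filters $\Tw\ncBV(n)$ by the number of black vertices carrying tadpoles; on the associated graded, the tadpole-emission terms of \eqref{Eqn:RelationHTwncBV} are killed, every tadpole-bearing vertex (black or white) behaves like an inert white vertex, and the $\ncGerst$ computation applies verbatim to give the page $E^1$. Only then does the emission differential $d^1$ appear, and a \emph{second} filtration (by the number of sub-bamboos of the form a black tadpole-vertex attached on the left of a white vertex) separates the left- and right-emission terms so that the relevant subcomplexes become genuine tensor products of acyclic pieces. If you want to salvage your plan you need some such device — either these filtrations or an explicit global contracting homotopy that handles the left/right choice coherently. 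As a minor remark, your worry about $2$-torsion from $b(\zeta,\zeta)$ is unfounded: composing $\zeta$ into the relation $b(\zeta,-)+b(-,\zeta)=0$ produces a Koszul sign $(-1)^{|\zeta|^2}=-1$ on the second term, so the two contributions cancel rather than add, and $\calS^{-1}\As^{++}$ is free over $\k$ with the expected monomial basis.
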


\begin{proof} As in the previous case, since the elements of $\Tw \ncBV$ are series indexed by $k\geq 0$ of finite sums of 
the bamboos with tadpoles with  $k$ black vertices and since the  differential $\dd^{\mu_1^\alpha}$ increases the number of black vertices by one, it is enough to consider the case of finite series, i.e. sums. 

At fixed arity $n\ge 0$, we consider the increasing filtration $F_p \Tw\ncBV(n)$ span\-ned by bamboos containing  at least $-p$ black vertices with tadpoles. 
The twisted differential $\dd^{\mu_1^\alpha}$ preserves this filtration, which is exhaustive and bounded below since a black vertex with a tadpole carries cohomological degree $+1$. So its associated spectral sequence converges to the cohomology of $\Tw\ncBV(n)$. The differential $d^0$ of the first page is made up of the components of 
 $d^{\mu_1^\alpha}$ which do not increase the number of black vertices with tadpoles, that is the ones which increase only the number of black vertices without tadpoles. In order to compute its cohomology groups, we apply the same arguments as in the proof of \cref{thm:HTwncGerst}: here the black and white vertices with tadpoles play the same role as the white vertices without tadpoles. Therefore, the second page $E^1$ is spanned by the cohomology class $\gamma$ represented by $\vcenter{\xymatrix@M=5pt@R=10pt@C=10pt{
		*+[o][F**]{}  &*+[o][F**]{}}}$ and by the connected bamboos made up of white vertices with or without tadpoles and black vertices with tadpoles. The differential $d^1$ creates a black vertex with tadpole and an edge on the left and on the right of any white vertex with tadpole, that is 
\begin{align*}
\rule{0pt}{22pt} d^1\left(\ 
\vcenter{\xymatrix@M=5pt@R=10pt@C=10pt{
		*+[o][F-]{\phantom{1}} \ar@{-}@(ru,u) }}
\ \right)
=
\ 
(-1)\cdot \
\vcenter{\xymatrix@M=5pt@R=10pt@C=10pt{
		*+[o][F**]{} \ar@{-}@(ru,u) & *+[o][F-]{\phantom{1}} \ar@{-}[l] 
	}
}
\ + \ 
(-1)\cdot \
\vcenter{\xymatrix@M=5pt@R=10pt@C=10pt{
		*+[o][F-]{\phantom{1}} \ar@{-}[r] &*+[o][F**]{} \ar@{-}@(ru,u) 
	}}
\
, \ \ 
d^1\left( \ 
\vcenter{\xymatrix@M=5pt@R=10pt@C=10pt{
		*+[o][F**]{} \ar@{-}@(ru,u) 
	}
}
\ \right)
= 
d^1\left( \ 
\vcenter{\xymatrix@M=5pt@R=10pt@C=10pt{
		*+[o][F-]{\phantom{1}} 
	}
}
\ \right)
= 
d^1\left( \ 
\vcenter{\xymatrix@M=5pt@R=10pt@C=10pt{
		*+[o][F**]{} 
	}
}
\ \right)
= 0 \ .
\end{align*} 
We consider the filtration of this cochain complex $(E^1,d^1)$ where $\F_p E^1$ is spanned by the above mentioned bamboos with at least $-p$ sub-bamboos of the form 
\begin{align*}
\rule{0pt}{22pt} \vcenter{\xymatrix@M=5pt@R=10pt@C=10pt{
		*+[o][F**]{} \ar@{-}@(ru,u) & *+[o][F-]{\phantom{1}} \ar@{-}[l] }}\ .
\end{align*}
This filtration is again exhaustive and bounded below, so 
its associated spectral sequence converges to the cohomology $E^2=\mathrm{H}(E^1, d^1)$. 
The differential $\dd^0$ of the first page $\mathrm{E}^0$ is made up of the second above component of $d^1$, that is the one which produces a black vertex with a tadpole on the right-hand side of a white vertex with tadpole. The cochain subcomplexes of $\mathrm{E}^0$ made up of bamboos containing $k$ sub-bamboos of the form 
\begin{align*}
\rule{0pt}{22pt} \vcenter{\xymatrix@M=5pt@R=10pt@C=10pt{
		*+[o][F-]{\phantom{1}} \ar@{-}@(ru,u) }}
\qquad \text{or}\qquad 
\vcenter{\xymatrix@M=5pt@R=10pt@C=10pt{
		*+[o][F-]{\phantom{1}} \ar@{-}[r] &*+[o][F**]{} \ar@{-}@(ru,u) 
	}
}
\end{align*}
are acyclic, for $k\ge 1$, since they are isomorphic to the tensor product of $k$ acyclic cochain complexes. 
The cohomology groups $\mathrm{E}^1$ is thus spanned by the following bamboos: 
\begin{align*}
\rule{0pt}{22pt}
\vcenter{\xymatrix@M=5pt@R=10pt@C=10pt{
		*+[o][F**]{}  &*+[o][F**]{}}} 
		\qquad \text{and} \qquad	
		\vcenter{\xymatrix@M=5pt@R=10pt@C=10pt{
		*+[o][F**]{} \ar@{-}@(ru,u)  \ar@{-}[r] &
		*+[o][F**]{} \ar@{-}@(ru,u)  \ar@{--}[rr] & &
		*+[o][F**]{} \ar@{-}@(ru,u)  \ar@{-}[r] & *+[o][F-]{1} \ar@{-}[r] & *+[o][F-]{2} \ar@{--}[rr] & & *+[o][F-]{n}
	}
}
\ ,
\end{align*} 
with $j\ge 0$ black vertices with tadpoles and $n\ge 0$ white vertices without tadpoles. On such bamboos, the differential $\dd^1$ vanishes, since it is given by the 
first above component of $d^1$,  the one which produces a black vertex with a tadpole on the left-hand side from a white vertex with tadpole. So the second spectral sequence collapses at $\mathrm{E}^1$ and the first spectral sequence collapses at $E^2$ with basis given by these latter bamboos. 

The three elements 
\[
\rule{0pt}{22pt}
b\longleftrightarrow \vcenter{\xymatrix@M=5pt@R=10pt@C=10pt{
		*+[o][F-]{1}\ar@{-}[r] &*+[o][F-]{2} }}\ , \ \ 
\gamma \longleftrightarrow \vcenter{\xymatrix@M=5pt@R=10pt@C=10pt{
			 *+[o][F**]{}  &*+[o][F**]{} }}
			 \ , \ \quad \text{and} \quad 
\zeta \longleftrightarrow
\vcenter{\xymatrix@M=5pt@R=10pt@C=10pt{
		*+[o][F**]{} \ar@{-}@(ru,u)  }}\]
		are clearly generators of the cohomology ns operad $\mathrm{H}(\Tw\ncBV)$. The computation performed at \cref{Eqn:RelationHTwncBV} gives the relation between $b$ and $\zeta$ introduced in the definition of 
$\calS^{-1}\As^{++}$. So the above assignement induces a morphism 
$\calS^{-1}\As^{++} \to \mathrm{H}\left(\Tw\ncBV\right)$
of complete ns operads, which turns out to be an isomorphism since the dimensions of the underlying graded $\mathbb{N}$-modules of $\calS^{-1}\As^{++}$ coincide with the number of bamboos spanning $\mathrm{H}(\Tw\ncBV)$. 
\end{proof}

\begin{remark} Note that in this case the cohomology of  $\Def\big(\calS^{-1}\Ai\to \ncBV\big)$ is isomorphic to $\k \gamma \oplus \k[\hbar]\zeta$, where the degrees are given by $|\gamma|=0$, $|\zeta|=1$, $|\hbar|=2$, with the trivial Lie bracket. The  previous computations  show that its action on $\mathrm{H}\left(\Tw\ncBV\right)$ is trivial. 

This result can be 
 interpreted as  a rigidity statement: 
 since the dimension of $\mathrm{H}^1\big(\Def\big(\calS^{-1} \Ai\to \ncBV\big)\big)$ is equal to $1$, 
 there exists just one class of non-trivial infinitesimal deformations of the morphism 
 of dg ns operads   $\calS^{-1}\Ai\to \ncBV$ given here.  
\end{remark}

\section{Twisting the operad \texorpdfstring{$\Gerst$}{Gerst}} \label{subsec:TwGerst}
In this section, we work over a field $\k$ of characteristic $0$. In this section, we work with the twisting procedure of complete dg (symmetric) operads explained at the beginning of \cref{Sec:Gen} of \cref{sec:TwNsOp}. 

\smallskip

A Gerstenhaber algebra is made up of a commutative  product and a degree $1$ Lie bracket  satisfying together the Leibniz relation, see \cite[Section~13.3.10]{LodayVallette12} for more details. The associated operad $\Gerst$ is thus generated by an arity $2$ degree $0$ element $\mu$ with trivial $\Sy_2$ action and by an arity $2$ degree $1$ element $\lambda$ also with trivial $\Sy_2$ action. This latter one induces a  multiplicative structure 
$\dsLi \twoheadrightarrow \calS^{-1}\Lie \to \Gerst$. So, one can twist the operad $\Gerst$ using a cohomological version of the above-mentioned twisting procedure, that is working with the dg operad  $\dsLi$ of cohomologically shifted $\Li$-algebras. This amounts simply to considering the same operads as above but with opposite degree convention. 
The various properties of the corresponding twisted operad $\Tw\Gerst\coloneqq \left(
 \Gerst\hat{\vee} \alpha , \dd^{\lambda_1^\alpha}
 \right)$ follow from the general statements developed in~\cite[Section 5]{DolgushevWillwacher15} for operad under distributive law.  We give below others but ad hoc and thus shorter proofs for these properties. 
 
\begin{proposition}[{\cite[Corollary~5.12]{DolgushevWillwacher15}}] \label{prop:GerstTwistable}
The canonical morphism of operads $\Gerst \hookrightarrow \Tw\Gerst$ defines a $\Tw$-coalgebra structure. 
\end{proposition}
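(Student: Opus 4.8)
The plan is to exhibit the stated inclusion as the structure map $\Delta_{\Gerst}$ of a $\Tw$-coalgebra, where throughout $\Tw\Gerst = \Gerst\,\hat\vee\,\alpha$ is taken with respect to the shifted multiplicative structure $\dsLi\twoheadrightarrow\calS^{-1}\Lie\to\Gerst$ of \cref{subsec:TwGerst}. Writing $\iota\colon\Gerst\hookrightarrow\Gerst\,\hat\vee\,\alpha$ for the canonical coproduct inclusion, two things must be checked: (i) that $\iota$ is a morphism of complete dg operads under $\dsLi$, and (ii) that $\iota$ satisfies the comonad relations \eqref{Rel1} and \eqref{Rel2} together with $\varepsilon(\Gerst)$ and $\Delta(\Gerst)$. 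Now $\iota$ is automatically a morphism of complete (non-differential) operads, and it is compatible with the maps from $\dsLi$: in $\Gerst$ the generator $\lambda_n$ of $\dsLi$ goes to $\lambda$ for $n=2$ and to $0$ for $n\geq 3$ (the structure factors through $\calS^{-1}\Lie$), while in $\Tw\Gerst$ it goes to $\lambda_n^\alpha$, and for the same reason $\lambda_2^\alpha = \lambda$ and $\lambda_n^\alpha = 0$ for $n\geq 3$. So (i) reduces to the compatibility of $\iota$ with differentials, i.e. to the vanishing of $\dd^{\lambda_1^\alpha}$ on $\iota(\Gerst)\subset\Gerst\,\hat\vee\,\alpha$.

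Since $d_{\Gerst}=0$ and the operadic derivation $\D_\mu$ vanishes on $\Gerst$, on $\iota(\Gerst)$ the twisted differential equals $\ad_{\lambda_1^\alpha}$, where $\lambda_1^\alpha$ reduces to $\lambda_2(\alpha,-)=\lambda(\alpha,-)$, i.e. the operadic operation ``bracket with $\alpha$'' (all higher terms vanishing as $\lambda_{\geq 3}=0$ in $\Gerst$). As $\ad_{\lambda_1^\alpha}$ is an operadic derivation of $\Gerst\,\hat\vee\,\alpha$, it suffices to check it on the two generators $\mu$ (the commutative product) and $\lambda$ (the degree-$1$ bracket) of $\Gerst$. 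Expanding on arity-$2$ elements,
\[
\ad_{\lambda_1^\alpha}(\mu) = \lambda_1^\alpha\circ_1\mu - \mu\circ_1\lambda_1^\alpha - \mu\circ_2\lambda_1^\alpha\ ,
\]
which is the (signed) Leibniz relation $[\alpha,x_1x_2]=[\alpha,x_1]\,x_2\pm x_1\,[\alpha,x_2]$ defining the compatibility of bracket and product in $\Gerst$, hence zero; and
\[
\ad_{\lambda_1^\alpha}(\lambda) = \lambda_1^\alpha\circ_1\lambda + \lambda\circ_1\lambda_1^\alpha + \lambda\circ_2\lambda_1^\alpha\ ,
\]
which is the shifted Jacobi identity $[\alpha,[x_1,x_2]]=[[\alpha,x_1],x_2]\pm[x_1,[\alpha,x_2]]$, hence also zero. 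Thus $\dd^{\lambda_1^\alpha}$ vanishes on $\iota(\Gerst)$, so $\iota$ is a morphism of complete dg operads, and, being compatible with $\dsLi$ as above, a morphism of multiplicative operads; we set $\Delta_{\Gerst}\coloneqq\iota$. (Equivalently, this is the symmetric-operad analogue of \cref{prop:Twistable} applied to $\dsLi\twoheadrightarrow\calS^{-1}\Lie\to\Gerst$.)

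For (ii), the counit $\varepsilon(\Gerst)\colon\Tw\Gerst\to\Gerst$ sends $\alpha\mapsto 0$ and fixes every $\nu\in\Gerst$, so $\varepsilon(\Gerst)\circ\Delta_{\Gerst}$ sends $\nu\mapsto\nu\mapsto\nu$, giving \eqref{Rel1}. For \eqref{Rel2}, both composites $\Gerst\to\Tw(\Tw\Gerst)=\Gerst\,\hat\vee\,\alpha\,\hat\vee\,\beta$ fix each $\nu\in\Gerst$: along $\Delta(\Gerst)\circ\Delta_{\Gerst}$ because $\Delta(\Gerst)$ is the identity on $\Gerst$ (it only sends $\alpha\mapsto\alpha+\beta$), and along $\Tw(\Delta_{\Gerst})\circ\Delta_{\Gerst}$ because $\Tw(\Delta_{\Gerst})$ fixes $\alpha$ and sends $\nu\mapsto\Delta_{\Gerst}(\nu)=\nu$; both also respect the differentials by the previous paragraph. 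Hence $\Delta_{\Gerst}$ endows $\Gerst$ with a $\Tw$-coalgebra structure.

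The only genuinely delicate point is the sign bookkeeping in the two displayed computations: one must confirm that, with the cohomological shifted-$\Li$ conventions of \cref{subsec:TwGerst}, the expansions of $\ad_{\lambda_1^\alpha}(\mu)$ and $\ad_{\lambda_1^\alpha}(\lambda)$ reproduce the defining relations of $\Gerst$ exactly, with no residual sign; everything else is a routine unwinding of definitions, and one may instead invoke the operadic twisting criterion directly.
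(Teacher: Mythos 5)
Your proof is correct and follows essentially the same route as the paper: the paper's proof invokes the symmetric analogue of \cref{prop:Twistable} and reduces to the two identities $\ad_{\lambda(\alpha,-)}(\lambda)=0$ (Jacobi) and $\ad_{\lambda(\alpha,-)}(\mu)=0$ (Leibniz), which are exactly your two displayed computations. You merely unpack the "mutatis mutandis" and the verification of \eqref{Rel1}--\eqref{Rel2} that the paper leaves implicit.
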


\begin{proof}
This a direct corollary of the second statement of \cref{prop:Twistable} \textit{mutatis mutandis}. 
The Jacobi relation implies $\dd^{\lambda_1^\alpha}(\lambda)=\ad_{\lambda(\alpha, -)}(\lambda)=0$ and  
the Leibniz relation implies $\dd^{\lambda_1^\alpha}(\mu)=\ad_{\lambda(\alpha, -)}(\mu)=0$; we refer the reader to \cite[Section~13.3.12]{LodayVallette12} for these relations in this ``degree 1'' setting. This completes the proof. 
\end{proof}

\begin{remark}
Such a result says, in an operadic way, that the commutative product and the degree $1$ Lie bracket of any dg Gerstenhaber algebra form again a dg  Gerstenhaber algebra structure with the twisted differential produced by any Maurer--Cartan element. 
\end{remark}

\begin{theorem}[{\cite[Corollary~5.12 and Corollary~5.13]{DolgushevWillwacher15}}] \label{thm:TwGerst} 
The canonical morphisms of complete dg operads 
\[\Tw\Gerst \xrightarrow{\sim} \Gerst\qquad \text{and} \qquad 
\Tw\Gerst_\infty \xrightarrow{\sim} \Gerst_\infty\]
are quasi-isomorphisms. 
\end{theorem}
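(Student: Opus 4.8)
The plan is to prove that $\Tw\Gerst \xrightarrow{\sim}\Gerst$ is a quasi-isomorphism by a direct spectral sequence argument on the underlying chain complex, and then to deduce the statement for $\Tw\Gerst_\infty \xrightarrow{\sim}\Gerst_\infty$ from this using the comonadic/functorial machinery of \cref{sec:TwNsOp}, notably \cref{prop:TwQIBIS} which asserts that $\Tw$ preserves quasi-isomorphisms. Recall that the underlying graded $\Sy$-module of $\Tw\Gerst$ is $\Gerst\hat\vee\,\alpha$, whose arity $n$ component is a product over $k\ge 0$ of (finite sums of) elements of $\Gerst(n+k)$ with $k$ copies of the degree $0$, arity $0$ generator $\alpha$ plugged in. Since $\Gerst$ is a Koszul operad with a distributive law between $\Com$ and $\calS^{-1}\Lie$, an element of $\Gerst(m)$ can be written canonically as a product (via $\mu$) of ``Lie words'' in the $m$ inputs. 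The morphism $\varepsilon(\Gerst)\colon\Tw\Gerst\to\Gerst$ sends $\alpha\mapsto 0$ and is the identity on $\Gerst$; its kernel is the part containing at least one $\alpha$.

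\textbf{Key steps.} First I would set up a filtration on $\Tw\Gerst(n)$ by the total number of inputs $m=n+k$, or equivalently by $k$, the number of $\alpha$'s; the twisted differential $\dd^{\lambda_1^\alpha}$ increases $k$ by one (it either replaces an input $\alpha$-free slot of a generator by $\lambda(\alpha,-)$, or acts on $\alpha$ itself via $\dd(\alpha)=-\sum_{n\ge2}\tfrac1{n!}\lambda_n(\alpha,\dots,\alpha)$, which in $\Gerst$ reduces to $-\tfrac12\lambda(\alpha,\alpha)$). On the associated graded the ``internal'' differential $d_\Gerst=0$, so the first page records only the $\alpha$-creating part. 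Second, using the distributive-law normal form, I would reorganise each element of $\Tw\Gerst(n)$ according to how the $\alpha$'s sit inside Lie words: an $\alpha$ appearing as an argument of a Lie bracket versus an $\alpha$ appearing as an isolated factor under the commutative product. The combinatorial heart is that for each Lie word $w(x_1,\dots,x_r)$, the sub-complex generated by $w$ together with all its $\alpha$-insertions $w(\alpha,\dots)$, $\lambda(\alpha,w(\dots)),\dots$ is acyclic \emph{unless} all $x_i$ are genuine inputs and no $\alpha$ is inserted — this is the exact analogue of the black-component acyclicity computation in the proofs of \cref{thm:HTwncGerst} and \cref{thm:HTwncBV}. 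Third, I would assemble these local computations via a Künneth/tensor-product argument (the complex splits as a tensor product over the Lie words and over the $\alpha$-only components) to conclude that the cohomology of $\Tw\Gerst(n)$ is concentrated on $\alpha$-free elements, i.e. equals $\Gerst(n)$, and that $\varepsilon(\Gerst)$ realises this isomorphism. One should check the spectral sequence converges: the filtration is bounded below and exhaustive in each fixed arity and degree because $\alpha$ has degree $0$ but each genuinely new structure contributes an edge/bracket raising degree — more precisely, at fixed arity and cohomological degree only finitely many $k$ contribute, so convergence is automatic.

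\textbf{Deducing the $\Gerst_\infty$ statement.} Once $\Tw\Gerst\xrightarrow{\sim}\Gerst$ is established, the map $\Tw\Gerst_\infty\to\Gerst_\infty$ is handled as follows: the minimal model $\Gerst_\infty\twoheadrightarrow\Gerst$ is a quasi-isomorphism of (complete, discrete) dg operads, hence by \cref{prop:TwQIBIS} the induced map $\Tw\Gerst_\infty\xrightarrow{\sim}\Tw\Gerst$ is a quasi-isomorphism; composing with $\Tw\Gerst\xrightarrow{\sim}\Gerst$ and with a chosen section/quasi-inverse $\Gerst\xrightarrow{\sim}\Gerst_\infty$, or more cleanly using the naturality square relating $\Tw$ applied to $\Gerst_\infty\to\Gerst$ with the canonical maps $\varepsilon$, one obtains that $\Tw\Gerst_\infty\xrightarrow{\sim}\Gerst_\infty$ is a quasi-isomorphism by two-out-of-three. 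Alternatively, and perhaps most economically, one notes that $\varepsilon(\Gerst_\infty)\colon\Tw\Gerst_\infty\to\Gerst_\infty$ fits into a commuting square with $\varepsilon(\Gerst)$ along the vertical quasi-isomorphisms $\Tw(\text{res})$ and $\text{res}$, and three of the four maps being quasi-isomorphisms forces the fourth.

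\textbf{Main obstacle.} The hard part will be the combinatorial bookkeeping in the acyclicity argument for the $\alpha$-insertions into Lie words: unlike the linear ``bamboo'' graphs of $\ncGerst$, the Lie words here are genuine trees with non-trivial symmetries, and one must carefully track the coefficients $\tfrac1{k!}$ coming from the symmetric structure of $\lambda_1^\alpha=\sum_{n\ge2}\tfrac1{(n-1)!}\lambda_n(\alpha^{n-1},-)$ together with the signs from the degree $1$ bracket $\lambda$. Organising these into a clean tensor-product decomposition — isolating, for each Lie word, a two-term (or longer) acyclic sub-complex ``insert $\alpha$ on the left / on the right / not at all'' with the correct cancelling signs — is where the real work lies; everything else is formal consequence of the comonad structure and \cref{prop:TwQIBIS}. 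A reader may also simply invoke \cite[Section~5]{DolgushevWillwacher15} for the distributive-law input, which is the route we indicate, our contribution being the shorter \emph{ad hoc} presentation.
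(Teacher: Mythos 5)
Your strategy is essentially the paper's: reduce the twisted differential to its action on $\alpha$ alone (where it equals $\tfrac12\lambda(\alpha,\alpha)$, by the Jacobi and Leibniz relations), use the distributive law $\Gerst\cong\Com\circ\calS^{-1}\Lie$ to split $\Tw\Gerst(n)$ into a direct sum of tensor products of complexes indexed by Lie words, show each $\alpha$-containing tensor factor has the expected one-dimensional (or trivial) cohomology, and deduce the $\Gerst_\infty$ statement from the fact that $\Tw$ preserves quasi-isomorphisms. One assertion in your convergence discussion is false, however: it is \emph{not} true that at fixed arity and cohomological degree only finitely many $k$ contribute — the commutative powers $\mu^{k-1}(\alpha^k)$ all sit in arity $0$ and degree $0$, so the arity-$0$ component is a genuinely completed (infinite-product) complex and your filtration by the number of $\alpha$'s is neither bounded there nor useful (the internal differential of $\Gerst$ vanishes and the twisted differential raises $k$ by exactly one, so the associated graded carries the zero differential and the spectral sequence returns the original complex). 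The paper sidesteps this by computing the completed arity-$0$ complex directly: $\dd^{\lambda_1^\alpha}\big(\mu^{k-1}(\alpha^k)\big)=\tfrac{k}{2}\,\mu^{k-1}\big(\alpha^{k-1},\lambda(\alpha,\alpha)\big)$ is a bijection onto the degree-$1$ part precisely because $k/2\neq 0$ — this is where the characteristic-$0$ hypothesis enters, a point your sketch does not isolate. The remaining factors are handled exactly as you describe, via $\dd^{\lambda_1^\alpha}\big(\lambda^k(-,\alpha^k)\big)=-\lambda^{k+1}(-,\alpha^{k+1})$ for odd $k$ and $0$ for even $k$, and your two-out-of-three deduction for $\Gerst_\infty$ is precisely the paper's.
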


\begin{proof} 
Let us begin with the first statement about the operad $\Gerst$. The arguments given in the above proof of \cref{prop:GerstTwistable} show that 
 the only non-trivial part of the twisted differential $\dd^{\lambda_1^\alpha}$ is on $\alpha$, where it is equal to 
$\dd^{\lambda_1^\alpha}(\alpha)=\frac12 \lambda(\alpha, \alpha)$\ .

First, we consider  arity $0$ part of $\Tw\Gerst$. We recall that the operad $\Gerst\cong \Com\circ s^{-1}\Lie$ satisfies the distributive law method, see  \cite[Section~8.6]{LodayVallette12} for instance. The  Jacobi relation ensures that 
$\widehat{s^{-1}\Lie}(\alpha)\cong \k \alpha \oplus \k \lambda(\alpha, \alpha)$. Since this latter term has degree $1$, it can only appear only once in $\widehat{\Gerst}(\alpha)$; therefore we get $\Tw\Gerst(0)\cong \widehat{\Com}(\alpha) \oplus \widehat{\Com}\big(\alpha; \lambda(\alpha, \alpha)\big)$. We denote by $\mu^{k}$ any composite of $k$ times $\mu$ in the operad $\Gerst$. 
Since 
$\dd^{\lambda_1^\alpha}\left(\mu^{k-1}\left(\alpha^k\right)\right)=\frac{k}{2}\mu^{k-1}\left(\alpha^{k-1},\lambda(\alpha, \alpha) \right)$ and since
$\dd^{\lambda_1^\alpha}\big(\lambda(\alpha, \alpha)\big)=0$, this cochain complex is acyclic. 

In higher arity, we use the notation $\lambda^k\coloneqq (\cdots( \lambda\circ_1 \lambda)\cdots )\circ_1 \lambda$ for the composite of $k$ operations $\lambda$ at the first input. Recall that a basis of the operad $s^{-1}\Lie$ is given by the elements $\left(\lambda^{n-1}\right)^\sigma$, for $n\ge 1$, where $\sigma$ runs over the permuations of $\Sy_n$ which fix $1$. 
For any finite set $J$, we denote by 
\[\lambda^{\left(\sigma, \bar{k}\right)}\coloneqq \lambda^{|\bar{k}|+|J|-1}
\left(-, \alpha^{k^1},-, \alpha^{k^2}, -, \cdots, -, \alpha^{k^{|J|}}\right)^\sigma\ ,
\]
where $\sigma$ is a permutation of $J$ fixing its least element and where 
$\bar{k}=\left( 
k^1, \ldots, k^{|J|}
\right)$ 
stands for a $|J|$-tuple of non-negative integers; such elements form a $\k$-linear basis of the operad $s^{-1}\Lie\vee \alpha$\ . 
As a consequence, the $\k$-module $\Tw\Gerst(n)_d$ is generated by the linearly independent elements of the form 
\[
\mu^{m+p+q-1}\left(
\alpha^m, \lambda(\alpha, \alpha)^p, \lambda^{\left(\sigma_1, \overline{k_1}\right)}, \ldots, \lambda^{\left(\sigma_q, \overline{k_q}\right)}
\right)\ ,
\]
where $m\ge 0$, $p=0$ or $p=1$, where the permutations $\sigma_1, \ldots, \sigma_q$ are associated to a partition $\sqcup_{i=1}^q J_i=\{1,\dots,n\}$\ , and where the total number of  $\lambda$'s is equal to $d$.  (By degree reason, the only series that can appear are indexed by $m$; the rest are finite terms).
Since $\frac12 \lambda(-, \lambda(\alpha, \alpha)) =-\lambda(\lambda(-,\alpha), \allowbreak \alpha)$, the differential $\mathrm{d}^{\mu_1^\alpha}$ preserves such basis elements. 

So the cochain complex $\Tw\Gerst(n)$ splits into the direct sum of cochain complexes indexed by the decompositions $\sqcup_{i=1}^q J_i=\{1,\dots,n\}$ and the permutations $\sigma_1, \ldots, \sigma_q$. The form of the differential $\mathrm{d}^{\mu_1^\alpha}$ implies that  each of these direct summand is isomorphic to the tensor product of $1+q$ cochain complexes, where the first one is isomorphic to $\k \oplus \Tw\Gerst(0)$ and where 
the $q$ other ones are spanned by the elements $\lambda^{\left(\sigma_i, \overline{k_i}\right)}$, for any possible $|J_i|$-tuples $\overline{k_i}$.
The above result about the  arity zero case implies that the cohomology of  the first factor is  one-dimensional.
It is straightforward to see that $\mathrm{d}^{\mu_1^\alpha} \left(
\lambda^k\left(-, \alpha^k\right)
\right) = - \lambda^{k+1}\left(-, \alpha^{k+1}\right)$, for odd $k$, and $\mathrm{d}^{\mu_1^\alpha} \left(
\lambda^k\left(-, \alpha^k\right)
\right)  = 0 $, for even $k$. 
Thus each of the other $q$ tensor factors spanned by the elements $\lambda^{\left(\sigma_i, \overline{k_i}\right)}$ has one-dimensional cohomology represented by  $\lambda^{\left(\sigma_i, \bar{0}\right)}$ 
In the end, the only non-trivial class for each of these direct summand is represented by the basis element 
\[
\mu^{q}\left(\lambda^{\left(\sigma_1, \bar{0}\right)}, \ldots, \lambda^{\left(\sigma_q, \bar{0}\right)}
\right)
\]
that has no $\alpha$'s at all. These representatives form a natural basis for the operad $\Gerst$, which concludes the proof. 

The second statement about the operadic resolution  $\Gerst_\infty \xrightarrow{\sim} \Gerst$ follows directly from the result about the operad $\Gerst$ and \cref{prop:TwQI}. 
\end{proof}

\begin{remark} 
This result shows that the cohomology of the deformation complex  $\Def\big(\dsLi\allowbreak \to \allowbreak \Gerst \allowbreak \big)$ is trivial in this case.
So any of its actions on the operads $\MC \Gerst$ and $\Tw \Gerst$ are cohomologically trivial. 
This result can be 
 interpreted as a strong rigidity statement: there is no non-trivial infinitesimal or formal deformation of the morphism of dg operads $\dsLi\allowbreak \to \allowbreak \Gerst$ given here.
\end{remark}

\section{Twisting the operad \texorpdfstring{$\BV$}{BV}} \label{subsec:TwBV}
In this section, we work again over a field $\k$ of characteristic $0$. 

\smallskip

A Batalin--Vilkovisky (BV) algebra is a Gerstenhaber algebra endowed with a compatible degree $1$ square-zero linear operator, see \cite[Section~13.7]{LodayVallette12} for more details. 
The associated operad , denoted $\BV$, is thus generated by the same kind of elements  $\mu$ and $\lambda$ as above plus an arity $1$ and degree $1$ element $\Delta$. 
By the same argument, it acquires a multiplicative structure 
$\dsLi \twoheadrightarrow s^{-1}\Lie \to \BV$. 
 We consider the corresponding twisted operad $\Tw\BV$.

\begin{proposition}\label{prop:BVnotTwistable}
The operad $\BV$ is not twistable. 
\end{proposition}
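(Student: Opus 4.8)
The strategy is to reproduce, \textit{mutatis mutandis}, the argument behind \cref{prop:Twistable}. The operad $\BV$ has zero differential and its multiplicative structure factors as $\dsLi \twoheadrightarrow s^{-1}\Lie \to \BV$, so that all the higher brackets $\lambda_n$, $n\ge 3$, act by zero and the Maurer--Cartan element governing the twisting reduces to $\lambda_1^\alpha = \lambda(\alpha,-)$, an element of $(\Tw\BV)(1)$ of weight one in $\alpha$. Suppose $\BV$ were twistable, i.e. that it carried a $\Tw$-coalgebra structure $\Delta_\BV\colon \BV\to\Tw\BV$, a morphism of operads under $\dsLi$ satisfying the counit and coassociativity relations. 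Applying the counit relation, whose second component $\varepsilon(\BV)$ sends $\alpha$ to $0$, we get $\Delta_\BV(\Delta) = \Delta + \sum_{k\ge 1}\omega_k$, where $\omega_k$ is a finite sum of elements of $\BV(1)$ composed with $k$ copies of $\alpha$. Compatibility with the twisted differential forces $\dd^{\lambda_1^\alpha}\big(\Delta_\BV(\Delta)\big) = \Delta_\BV(d_\BV\Delta) = 0$, and, extracting the part of weight one in $\alpha$ — the only source of which is $\ad_{\lambda_1^\alpha}(\Delta)$, since $d_\BV=0$ and $\ad_{\lambda_1^\alpha}$ raises the $\alpha$-weight by one — we obtain the necessary condition
\[
\ad_{\lambda(\alpha,-)}(\Delta) = 0 \quad \text{in } \Tw\BV\ .
\]

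So the plan is to compute $\ad_{\lambda(\alpha,-)}(\Delta)$ and to show it does not vanish. Since $|\Delta| = |\lambda(\alpha,-)| = 1$ in the cohomological degree convention, one has
\[
\ad_{\lambda(\alpha,-)}(\Delta) = \lambda(\alpha,-)\star\Delta + \Delta\star\lambda(\alpha,-) = \lambda\big(\alpha,\Delta(-)\big) + \Delta\big(\lambda(\alpha,-)\big)\ ,
\]
the two partial compositions at the unique input of $\Delta$, respectively of $\lambda(\alpha,-)$. One then invokes the Batalin--Vilkovisky relations in the ``degree $1$'' form of \cite[Section~13.7]{LodayVallette12}: the operator $\Delta$ is a derivation of the degree $1$ bracket $\lambda$, whence $\Delta\big(\lambda(\alpha,-)\big) = \lambda\big(\Delta(\alpha),-\big) - \lambda\big(\alpha,\Delta(-)\big)$, the sign coming from $|\alpha|=0$. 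Substituting gives $\ad_{\lambda(\alpha,-)}(\Delta) = \lambda\big(\Delta(\alpha),-\big)$.

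It remains to observe that $\lambda\big(\Delta(\alpha),-\big)$ does not vanish in $\Tw\BV = \BV\hat\vee\alpha$: it is the image of the nonzero element $\lambda\circ_1\Delta\in\BV(2)$ after plugging $\alpha$ into its first input, and, $\alpha$ being a free generator of the coproduct $\BV\hat\vee\alpha$, this substitution kills nothing. This contradicts the necessary condition above, so $\BV$ admits no $\Tw$-coalgebra structure. The only delicate point in carrying out the plan is the bookkeeping of Koszul signs in the two BV relations used; but even if a different sign convention turned $\Delta\big(\lambda(\alpha,-)\big)$ into $\lambda\big(\Delta(\alpha),-\big) + \lambda\big(\alpha,\Delta(-)\big)$, the resulting element $\ad_{\lambda(\alpha,-)}(\Delta) = 2\,\lambda\big(\alpha,\Delta(-)\big) + \lambda\big(\Delta(\alpha),-\big)$ is still visibly nonzero, so the conclusion is robust.
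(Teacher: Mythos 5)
Your proposal is correct and follows essentially the same route as the paper: the paper also invokes the criterion of the twistability proposition (\emph{mutatis mutandis} for symmetric operads) and concludes from the single computation $\dd^{\lambda_1^\alpha}(\Delta)=\ad_{\lambda(\alpha,-)}(\Delta)=-\lambda(\Delta(\alpha),-)\neq 0$. The only differences are cosmetic — you re-derive the necessary condition and spell out the use of the derivation relation between $\Delta$ and $\lambda$, where the paper simply states the result, and your sign differs by a convention that you correctly note is immaterial.
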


\begin{proof}
This a direct corollary of the first statement of \cref{prop:Twistable} \textit{mutatis mutandis}. 
In the operad $\Tw\BV$, the differential is given by 
\[\dd^{\lambda_1^\alpha}(\Delta)=\ad_{\lambda(\alpha, -)}(\Delta)=-\lambda(\Delta(\alpha),-)\neq 0\ ,\] which concludes the proof. 
\end{proof}

We consider the complete operad 
\[\Gerst^+\coloneqq \frac{\Gerst\hat{\vee}\, \eta}{\big( \lambda(\eta, -)\big)}\ ,
\]
where $\eta$ is an arity $0$ degree $+1$ element placed in $\F_1$. For  degree reasons 
$\mu(\eta, \eta)=\lambda(\eta, \eta)=0$, so $\Gerst^+(0)\cong \k \eta$ is one-dimensional and $\Gerst^+(1)\cong \k \id \oplus \k \mu(\eta, -)$ is two-dimensional. 

\begin{theorem} \label{thm:HTwBV}
The map of complete dg operads 
\[\Gerst\hat{\vee}\, \eta \to \Tw\BV \quad \text{defined by} \quad \mu\mapsto \mu\ , \ \lambda\mapsto \lambda\ , \ 
\eta \mapsto \Delta(\alpha)\]
induces the isomorphism of complete operads 
\[  \mathrm{H}\left(\Tw\BV\right)\cong \Gerst^+\ .\]
\end{theorem}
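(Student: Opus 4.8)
The plan is to compute the cohomology of $\Tw\BV$ by combining the structure of $\BV$ as a module over $\Gerst$ with the acyclicity result already established for $\Tw\Gerst$ in \cref{thm:TwGerst}. Recall that as a graded $\Sy$-module, $\BV\cong \Gerst\circ \k[\Delta]/(\Delta^2)$, i.e. every element of $\BV$ is uniquely a composite of an element of $\Gerst$ with copies of $\Delta$ plugged at some of its inputs (at most one $\Delta$ per input, by $\Delta^2=0$). Consequently $\Tw\BV\cong \big(\Gerst\,\hat\vee\,\alpha\big)\circ \k[\Delta]/(\Delta^2) = \Tw\Gerst\circ\k[\Delta]/(\Delta^2)$ as complete graded $\Sy$-modules, where now $\Delta$ may also be applied to the generator $\alpha$, producing the arity $0$ degree $+1$ element $\Delta(\alpha)$. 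First I would make this decomposition precise and identify how the twisted differential $\dd^{\lambda_1^\alpha}$ interacts with it: on the $\Gerst\,\hat\vee\,\alpha$ part it acts exactly as the twisted differential of $\Tw\Gerst$ computed in \cref{thm:TwGerst} (nontrivial only on $\alpha$, via $\alpha\mapsto\frac12\lambda(\alpha,\alpha)$), and the genuinely new term is $\dd^{\lambda_1^\alpha}(\Delta)=-\lambda(\Delta(\alpha),-)$ from the proof of \cref{prop:BVnotTwistable}, together with its Leibniz-rule consequences when $\Delta$ sits on an internal input.

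The key computational step is then a filtration/spectral-sequence argument analogous to the ones used for $\Tw\ncBV$ in \cref{thm:HTwncBV}. I would filter $\Tw\BV(n)$ by the number of copies of $\Delta$ applied to \emph{internal} inputs (those carrying a variable $x_i$, as opposed to those carrying an $\alpha$). The associated graded differential is then precisely $\dd^{\lambda_1^\alpha}$ of $\Tw\Gerst$ acting on the $\Gerst\,\hat\vee\,\alpha$-part, tensored with the identity on the $\Delta$-decorations of internal slots; by \cref{thm:TwGerst} its cohomology is $\Gerst(n)\otimes\k[\Delta \text{ on internal inputs}]/(\Delta^2)$ together with the surviving $\Delta(\alpha)=:\eta$ attached at the arity $0$ slots of the commutative part. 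Concretely, the $E_1$-page is spanned by $\mu$-$\lambda$-words in the variables $x_1,\dots,x_n$, in finitely many $\eta$'s, and in (at most one each) $\Delta x_i$'s, with the proviso that $\lambda(\eta,-)$-type composites are killed (the Jacobi identity forces $\widehat{s^{-1}\Lie}(\alpha)$ to contribute only $\alpha$ and $\lambda(\alpha,\alpha)$, and after twisting only the image of $\alpha$ survives, i.e. $\eta$ appears only through the commutative product). The next differential $d_1$ on this page is induced by $\dd^{\lambda_1^\alpha}(\Delta)=-\lambda(\Delta(\alpha),-)=-\lambda(\eta,-)$ and its Leibniz descendants; but $\lambda(\eta,-)=0$ already holds on $E_1$ by the relation just imposed, so I expect the spectral sequence to collapse at $E_1$ once the bookkeeping is done correctly. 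This yields that $\mathrm H(\Tw\BV)$ is, arity by arity, spanned by the classes of $\mu$-$\lambda$-words in $x_1,\dots,x_n,\eta$ with the single relation $\lambda(\eta,-)=0$ — which is exactly the underlying $\Sy$-module of $\Gerst^+=\Gerst\,\hat\vee\,\eta/(\lambda(\eta,-))$.

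Finally I would check that the stated assignment $\mu\mapsto\mu$, $\lambda\mapsto\lambda$, $\eta\mapsto\Delta(\alpha)$ indeed defines a morphism of complete dg operads: this requires only that $\Delta(\alpha)$ is a cocycle in $\Tw\BV$ (immediate, since $\dd^{\lambda_1^\alpha}(\Delta(\alpha))=\Delta(\dd^{\lambda_1^\alpha}\alpha)-\lambda(\Delta(\alpha),\Delta(\alpha))=\frac12\Delta(\lambda(\alpha,\alpha))-\lambda(\Delta(\alpha),\Delta(\alpha))$, which vanishes by the degree-one Leibniz/BV relations — worth spelling out carefully but routine) and that the relation $\lambda(\eta,-)$ maps to a coboundary, namely $\lambda(\Delta(\alpha),-)=-\dd^{\lambda_1^\alpha}(\Delta)$. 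The induced map $\Gerst^+\to\mathrm H(\Tw\BV)$ is then surjective by the spanning statement above, and an isomorphism because a dimension count in each arity and degree matches the two sides — the generators-and-relations description of $\Gerst^+$ gives exactly the same Poincaré series as the basis of representatives produced by the spectral sequence. I expect the main obstacle to be the second step: carefully organizing the filtration so that the contribution of $\Delta$ on internal slots genuinely decouples (the Leibniz rule mixes $\Delta$ across the $\mu$ and $\lambda$ operations, so one must verify that the associated-graded differential really is the tensor product claimed), and confirming that no higher differential resurrects $\lambda(\eta,-)$; the arity $0$ and arity $1$ cases ($\Gerst^+(0)\cong\k\eta$, $\Gerst^+(1)\cong\k\,\id\oplus\k\,\mu(\eta,-)$) should be checked by hand first as a sanity check, exactly as the analogous low-arity computations anchor the $\Tw\ncBV$ argument.
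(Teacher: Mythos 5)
Your overall strategy --- decompose $\BV$ via the distributive law, filter by occurrences of $\Delta$, and reduce the associated graded to the $\Tw\Gerst$ computation of \cref{thm:TwGerst} --- has the same skeleton as the paper's argument, but the proposal breaks down at the decisive step, namely the assertion that ``$\lambda(\eta,-)=0$ already holds on $E_1$'' and hence that the spectral sequence collapses there. The justification you give (Jacobi forces $\widehat{s^{-1}\Lie}(\alpha)$ to be spanned by $\alpha$ and $\lambda(\alpha,\alpha)$) concerns the arity-zero, degree-zero generator $\alpha$, not $\eta=\Delta(\alpha)$, which is a different element of degree $+1$: there is no relation in $\BV$ killing $\lambda(\Delta(\alpha),x_i)$, and this element is a nonzero $d_0$-cocycle that is not a $d_0$-coboundary (nothing can map to it under $\alpha\mapsto\tfrac12\lambda(\alpha,\alpha)$, since that map increases the number of $\alpha$-leaves). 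So $\lambda(\eta,x_i)$, together with all the classes $\Delta(x_i)$ and arbitrarily nested Lie words in the $x_j$, $\Delta(x_j)$ and $\Delta(\alpha)$, survive to your $E_1$-page. Consequently $d_1\neq 0$: it sends $\Delta(x_i)$ to $-\lambda(\eta,x_i)$, and if the sequence really degenerated at $E_1$ the answer would retain all the $\Delta$-decorations on internal inputs and all the $\lambda(\eta,\dots)$-words, which is strictly larger than $\Gerst^+$. The relation $\lambda(\eta,-)=0$ of $\Gerst^+$ is an \emph{output} of the $d_1$-differential, not something already present on $E_1$.

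Computing $\mathrm{H}(E_1,d_1)$ is precisely the hard part, because $\Delta$ and $\Delta(\alpha)$ can occur deep inside iterated brackets and the Leibniz rule propagates $d_1$ through them in a non-obvious way. The paper handles this with a second filtration (by the number of $\Delta$'s at the first input of a Lie word) and by identifying the resulting complex with the cobar construction of the four-dimensional coaugmented coalgebra $C=\k 1\oplus\k x\oplus\k y\oplus\k z$ (with $x\leftrightarrow -$, $y\leftrightarrow\Delta$, $z\leftrightarrow\Delta(\alpha)$), whose Koszulness shows that only the Lie words with no $\Delta$ and no $\Delta(\alpha)$ at any slot survive. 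Some version of this computation --- or another argument killing every occurrence of $\Delta$ and of $\Delta(\alpha)$ inside a Lie word --- must be supplied; without it the proof is incomplete. A smaller slip: your cocycle check reads $\dd^{\lambda_1^\alpha}(\Delta(\alpha))=\Delta\big(\dd^{\lambda_1^\alpha}(\alpha)\big)-\lambda(\Delta(\alpha),\Delta(\alpha))$, but the second term should be $\dd^{\lambda_1^\alpha}(\Delta)\circ_1\alpha=-\lambda(\Delta(\alpha),\alpha)$, which cancels against $\tfrac12\Delta\big(\lambda(\alpha,\alpha)\big)$ by the compatibility relation of $\BV$.
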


\begin{proof}We use the same notations and arguments as in the proof of \cref{thm:TwGerst}. For instance, the distributive law method gives  $\BV\cong \Com\circ s^{-1}\Lie \circ \k[\Delta]$. The differential in the twisted operad $\Tw\BV$ is given by 
\[\dd^{\lambda_1^\alpha}(\lambda)=\allowbreak \dd^{\lambda_1^\alpha}(\mu)=\allowbreak\dd^{\lambda_1^\alpha}\left(\Delta(\alpha)\right)=\allowbreak0\] and by $\dd^{\lambda_1^\alpha}(\Delta)=-\lambda(\Delta(\alpha),-)$\ .

We first consider the part of arity zero of the operad $\Tw\BV$. Let us denote by $\Delta\Lie$ the sub-operad generated by $\Delta$ and $\lambda$ in the operad $\BV$; its underlying graded $\Sy$-module is isomorphic to 
$\Delta\Lie\cong \calS^{-1}\Lie \circ \k[\Delta]$. The complete sub-operad $\Delta\Lie\hat{\vee} \alpha$ of $\Tw \BV$ is stable under the differential $\dd^{\lambda_1^\alpha}$. 
We denote by $M\coloneqq \big(\Delta\Lie\hat{\vee} \alpha(0), 0,  \ldots \big)$ the dg $\Sy$-module concentrated in arity $0$; it satifises the following isomorphism of dg $\Sy$-modules 
\[
\big( 
\Tw\BV(0), 0, \ldots\big)
\cong 
\Com \circ M\ .
\]
The operadic K\"unneth formula \cite[Proposition~6.2.3]{LodayVallette12} implies that the cohomology of $\Tw\BV(0)$
is isomorphic to $\big(\Com\circ \mathrm{H}(M)\big)(0)$. Then, we consider the following isomorphism of dg modules 
$\big(\Delta\Lie\hat{\vee}\alpha\big)(0) \cong 
\big(\calS^{-1}\Lie \hat{\vee}\alpha\circ \k[\Delta(\alpha)]\big)(0)$.
The arguments and computations given in the proof of \cref{thm:TwGerst} show that 
$\mathrm{H}\left(\calS^{-1}\Lie \hat{\vee}\alpha, \dd^{\lambda_1^\alpha}
\right)\cong \calS^{-1}\Lie$. Applying again the operadic K\"unneth formula, we get 
$\mathrm{H}\left(\big(\calS^{-1}\Lie \hat{\vee}\alpha\circ \k[\Delta(\alpha)]\big)(0)\right)\cong 
\big(\calS^{-1}\Lie \circ \k[\Delta(\alpha)]\big)(0)$. So the cohomology of $\Tw\BV(0)$ is isomorphic to 
$\left(\Com\circ\big(\calS^{-1}\Lie \circ \k[\Delta(\alpha)]\big)\right)(0)$. By degree reasons, since $|\Delta(\alpha)|=1$, we get in the end 
$\mathrm{H}(\Tw\BV(0))\cong \k\Delta(\alpha)$\ .

To treat the case of arity $n\geq 1$, we consider, for any finite set $J$, the elements of the form 
\[\lambda^{\left(\sigma, \bar{k}, \bar{*}\right)}\coloneqq \lambda^{|\bar{k}|+|J|+d-1}
\left(*^1, \alpha^{k^1},*^2, \alpha^{k^2}, -, \cdots, *^{|J|+d}, \alpha^{k^{|J|+d}}\right)^\sigma\ ,
\]
where $\sigma$ and $\bar{k}$ are as in the proof of \cref{thm:TwGerst} and where $\bar{*}=\left(*^1, \ldots, *^{|J|+d}\right)$ is  a $|J|+d$-tuple with $*^1$ equals to $\Delta$ or $-$ and with $*^i$ equals to 
$\Delta$, $-$, or $\Delta(\alpha)$, such that the number of $\Delta(\alpha)$'s is equal to $d$. The $\k$-module $\Tw\BV(n)$ is generated as above by the elements of the form 
\[
\mu^{m+p+q+r-1}\left(
\alpha^m, \lambda(\alpha, \alpha)^p, 
\lambda^{\overline{k_1}}, \ldots, \lambda^{\overline{k_q}}, 
\lambda^{\left(\sigma_1, \overline{l_1}, \overline{*_1}\right)}, \ldots, \lambda^{\left(\sigma_r, \overline{l_r}, \overline{*_r} \right)}
\right)\ ,
\]
where the permutations $\sigma_1, \ldots, \sigma_r$ are associated to a partition $\sqcup_{i=1}^r J_i=\{1,\dots,n\}$\ .

Let us compute the cohomology of the cochain complex $\Lambda(J)$ generated by the elements of the form 
$\lambda^{\left(\sigma, \bar{k}, \bar{*}\right)}$ for a fixed finite set $J$. We consider the increasing filtration $\F_p \Lambda(J)$ made up of the elements $\lambda^{\left(\sigma, \bar{k}, \bar{*}\right)}$ containing  at least $-p$ elements $\Delta(\alpha)$. The differential $\dd^{\lambda_1^\alpha}$ preserves this filtration. It is exhaustive and bounded below, so its associated spectral sequence converges to the cohomology of $\Lambda(J)$. The differential of the first page of the associated spectral sequence is equal to the sole term $\dd^0(\alpha)=\frac12 \lambda(\alpha, \alpha)$. 

The same argument as in the proof of \cref{thm:TwGerst} shows that the second page $\mathrm{E}^1$ is generated by the elements of the form 
$\lambda^{\left(\sigma, \bar{0}, \bar{*}\right)}$. The differential of this second page is given by the sole term   
$d^1(\Delta)=-\lambda(\Delta(\alpha),-)$, thus 
$
d^1\big(\lambda(\nu, \Delta)\big)=-\lambda(\lambda(\nu, \Delta(\alpha)),-)-\lambda(\lambda(\nu, -),\Delta(\alpha))-
\lambda\big(d^1(\nu), \Delta\big)$, for $\nu$ made up of at least one $\lambda$,  and 
$d^1\big(\lambda(\Delta, *)\big)= \lambda(\lambda(-, \Delta(\alpha),*)+\lambda\left(\Delta, d^1(*)\right) $ \ .
We consider the filtration of $\mathrm{E}^1$ defined by counting the numbers of $\Delta$'s at the first input: for $p\leq 0$, $F_p \mathrm{E}^1$ is generated by the elements of the form $\lambda^{\left(\sigma, \bar{0}, \bar{*}\right)}$ with $*^1=-$, and $F_p\mathrm{E}^1=\mathrm{E}^1$, for $p\ge 1$. This filtration is exhaustive and bounded below, so it converges to the cohomology of $\mathrm{E}^1$. The differential $d^0$ of the first page $E^0$ of the associated spectral sequence is given by $\dd^1$, except when $\Delta$ labels the first input: in this case $d^0$ vanishes on it. Therefore, the cochain complex $(E^0, d^0)$ is isomorphic to two copies, labeled respectively by the input $\Delta$ or $-$  of the first leaf, of the same cochain complex. 
We consider the coaugmented coalgebra 
$C\coloneqq \k 1\oplus \k x \oplus \k y \oplus \k z$, with $|x|=0$, $|y|=|z|=1$,  where $x$ and $z$ are primitive elements, and where the (reduced) coproduct of $y$ is equal to $x\otimes z -z \otimes x$.
Under the correspondance $x\leftrightarrow -$, $y\leftrightarrow \Delta$, and $z\leftrightarrow \Delta(\alpha)$, the cochain complex $(E^0, d^0)$ is isomorphic to the cobar construction (with "unsual" cohomological degree convention, see \cite[Section~2.2.2]{LodayVallette12}) of $C$. Since the Koszul dual algebra of $C$ is the Koszul algebra 
$C^{\ac}\cong T(X,Z)/(X\otimes Z-Z\otimes X)$, with $|X|=1$ and $|Z|=2$, the second page $E^1$ is isomorphic to $\left(\k x \oplus \k y \right)\otimes C^{\ac}$, with admits for basis $xZ^kX^l$ and $yZ^kX^l$, for $k,l\ge 0$. The differential $d^1$ is given by $d^1\left(xZ^kX^l\right)=0$ and $d^1\left(yZ^kX^l\right)=(-1)^{l+1}xZ^{k+1}X^l$. So this  spectral sequence collapses at $E^2$, where is it spanned by the elements $xX^l$, for $l\ge 0$. In other words, the first spectral sequence collapses at $\mathrm{E}^2\cong \mathrm{H}\left(\Lambda(J)\right)$, which is spanned by the elements of the form $\lambda^{\left(\sigma, \bar{0}, \bar{*}\right)}$ with $\bar{*}=(-, \ldots, -)$. 

In the end, the only non-trivial cohomology class of $\big(\Tw\BV(n), \dd^{\lambda_1^\alpha}\big)$  
is represented by the basis element 
\[
\mu^{r+p-1}\left(\Delta(\alpha)^p, \lambda^{\left(\sigma_1, \bar{0}\right)}, \ldots, \lambda^{\left(\sigma_r, \bar{0}\right)}
\right)\ ,
\]
where $p=0,1$ and where the permutations $\sigma_1, \ldots, \sigma_r$ are associated to a partition $\sqcup_{i=1}^r J_i=\{1,\dots,n\}$. These elements corresponds to a basis of the operad $\Gerst^+$ under the correspondence $\eta\leftrightarrow \Delta(\alpha)$, which concludes the proof. 
\end{proof}

\begin{remark} Note that the cohomology of  $\Def\big(\S1\Lie_\infty\to \BV\big)$ is one-di\-men\-sio\-nal Lie algebra concentrated in degree $1$ and  with the abelian Lie bracket. The computations performed in the above proof show that  its action on $\mathrm{H}\left(\Tw\BV\right)\allowbreak\cong \Gerst^+$ is equal to the degree $1$ differential of $\Gerst^+$ which assigns $\mu \mapsto \lambda$. 

This result provides us with the following 
 rigidity statement: there exists just one class of non-trivial infinitesimal deformations of the morphism 
 of dg  operads   $\calS^{-1}\Ai\to \BV$ given here. 
\end{remark}

\newpage
\bibliographystyle{amsplain}
\bibliography{bib}

\def\cprime{$'$}
\providecommand{\bysame}{\leavevmode\hbox to3em{\hrulefill}\thinspace}
\providecommand{\MR}{\relax\ifhmode\unskip\space\fi MR }
\providecommand{\MRhref}[2]{%
  \href{http://www.ams.org/mathscinet-getitem?mr=#1}{#2}
}
\providecommand{\href}[2]{#2}
\begin{thebibliography}{10}

\bibitem{AgrachevGamkrelidze80}
A.~A. Agra{\v{c}}ev and R.~V. Gamkrelidze, \emph{Chronological algebras and
  nonstationary vector fields}, Problems in geometry, {V}ol. 11 ({R}ussian),
  Akad. Nauk SSSR, Vsesoyuz. Inst. Nauchn. i Tekhn. Informatsii, Moscow, 1980,
  pp.~135--176, 243.

\bibitem{ArmstrongClarke15}
J.~{Armstrong} and P.~{Clarke}, \emph{{Curved A-infinity-categories: adjunction
  and homotopy}}, ArXiv e-prints (2015),
  \href{https://arxiv.org/abs/1506.03711}{arXiv:1506.03711}.

\bibitem{Berglund15}
Alexander Berglund, \emph{Rational homotopy theory of mapping spaces via {L}ie
  theory for {$L_\infty$}-algebras}, Homology Homotopy Appl. \textbf{17}
  (2015), no.~2, 343--369.

\bibitem{Bourbaki61}
N.~Bourbaki, \emph{{\'E}l\'ements de math\'ematique. {F}ascicule {XXVIII}.
  {A}lg\`ebre commutative. {C}hapitre 3: {G}raduations, filtra- tions et
  topologies. {C}hapitre 4: {I}d\'eaux premiers associ\'es et d\'ecomposition
  primaire}, Actualit\'es Scientifiques et Industrielles, No. 1293, Hermann,
  Paris, 1961.

\bibitem{BuijsMurillo11}
Urtzi Buijs, Yves F\'elix, and Aniceto Murillo, \emph{{$L_\infty$} models of
  based mapping spaces}, J. Math. Soc. Japan \textbf{63} (2011), no.~2,
  503--524.

\bibitem{Cartan04}
\'{E}lie Cartan, \emph{Sur la structure des groupes infinis de transformation},
  Ann. Sci. \'{E}cole Norm. Sup. (3) \textbf{21} (1904), 153--206.

\bibitem{Chapoton02}
Frederic {Chapoton}, \emph{{Rooted trees and an exponential-like series}},
  arXiv Mathematics e-prints (2002), math/0209104.

\bibitem{CEF15}
Thomas Church, Jordan~S. Ellenberg, and Benson Farb, \emph{F{I}-modules and
  stability for representations of symmetric groups}, Duke Math. J.
  \textbf{164} (2015), no.~9, 1833--1910.

\bibitem{Deligne71}
Pierre Deligne, \emph{Th\'eorie de {H}odge. {III}}, Inst. Hautes \'Etudes Sci.
  Publ. Math. (1974), no.~44, 5--77.

\bibitem{DolgushevWillwacher15}
Vasily Dolgushev and Thomas Willwacher, \emph{Operadic twisting---with an
  application to {D}eligne's conjecture}, J. Pure Appl. Algebra \textbf{219}
  (2015), no.~5, 1349--1428.

\bibitem{DolgushevHoffnungRogers14}
Vasily~A. Dolgushev, Alexander~E. Hoffnung, and Christopher~L. Rogers,
  \emph{What do homotopy algebras form?}, Adv. Math. \textbf{274} (2015),
  562--605.

\bibitem{DolgushevRogers12}
Vasily~A. Dolgushev and Christopher~L. Rogers, \emph{Notes on algebraic
  operads, graph complexes, and {W}illwacher's construction}, Mathematical
  aspects of quantization, Contemp. Math., vol. 583, Amer. Math. Soc.,
  Providence, RI, 2012, pp.~25--145.

\bibitem{DolgushevRogers15}
\bysame, \emph{A version of the {G}oldman-{M}illson theorem for filtered
  {$L_\infty$}-algebras}, J. Algebra \textbf{430} (2015), 260--302.

\bibitem{DolgushevRogers17}
\bysame, \emph{On an enhancement of the category of shifted
  {$L_\infty$}-algebras}, Appl. Categ. Structures \textbf{25} (2017), no.~4,
  489--503.

\bibitem{DotsenkoShadrinVallette15}
V.~{Dotsenko}, S.~{Shadrin}, and B.~{Vallette}, \emph{{Toric varieties of
  Loday's associahedra and noncommutative cohomological field theories}},
  Journal of Topology \textbf{12} (2019), no.~2, 463--535.

\bibitem{DotsenkoShadrinVallette16}
Vladimir Dotsenko, Sergey Shadrin, and Bruno Vallette, \emph{Pre-{L}ie
  deformation theory}, Mosc. Math. J. \textbf{16} (2016), no.~3, 505--543.

\bibitem{Fresse17I}
Benoit Fresse, \emph{Homotopy of operads and {G}rothendieck-{T}eichm\"uller
  groups. {P}art 1}, Mathematical Surveys and Monographs, vol. 217, American
  Mathematical Society, Providence, RI, 2017, The algebraic theory and its
  topological background.

\bibitem{Fresse17II}
\bysame, \emph{Homotopy of operads and {G}rothendieck-{T}eichm\"uller groups.
  {P}art 2}, Mathematical Surveys and Monographs, vol. 217, American
  Mathematical Society, Providence, RI, 2017, The applications of (rational)
  homotopy theory methods.

\bibitem{FOOO09I}
Kenji Fukaya, Yong-Geun Oh, Hiroshi Ohta, and Kaoru Ono, \emph{Lagrangian
  intersection {F}loer theory: anomaly and obstruction. {P}art {I}}, AMS/IP
  Studies in Advanced Mathematics, vol.~46, American Mathematical Society,
  Providence, RI, 2009.

\bibitem{GCTV12}
Imma Galvez-Carrillo, Andy Tonks, and Bruno Vallette, \emph{Homotopy
  {B}atalin--{V}ilkovisky algebras}, J. Noncommut. Geom. \textbf{6} (2012),
  no.~3, 539--602.

\bibitem{Getzler09}
Ezra Getzler, \emph{Lie theory for nilpotent {$L_\infty$}-algebras}, Ann. of
  Math. (2) \textbf{170} (2009), no.~1, 271--301.

\bibitem{GinzburgKapranov94}
V.~Ginzburg and M.~Kapranov, \emph{Koszul duality for operads}, Duke Math. J.
  \textbf{76} (1994), no.~1, 203--272.

\bibitem{GinzburgKapranov95}
\bysame, \emph{Erratum to: ``{K}oszul duality for operads''}, Duke Math. J.
  \textbf{80} (1995), no.~1, 293.

\bibitem{Henriques08}
Andr\'e Henriques, \emph{Integrating {$L_\infty$}-algebras}, Compos. Math.
  \textbf{144} (2008), no.~4, 1017--1045.

\bibitem{Hinich97}
Vladimir Hinich, \emph{Descent of {D}eligne groupoids}, Internat. Math. Res.
  Notices (1997), no.~5, 223--239.

\bibitem{HirshMilles12}
Joseph Hirsh and Joan Mill{\`e}s, \emph{Curved {K}oszul duality theory}, Math.
  Ann. \textbf{354} (2012), no.~4, 1465--1520.

\bibitem{Kadeishvili80}
T.~V. Kadei{\v{s}}vili, \emph{On the theory of homology of fiber spaces},
  Uspekhi Mat. Nauk \textbf{35} (1980), no.~3(213), 183--188, International
  Topology Conference (Moscow State Univ., Moscow, 1979).

\bibitem{Kontsevich97}
Maxim Kontsevich, \emph{Formality conjecture}, Deformation theory and
  symplectic geometry ({A}scona, 1996), Math. Phys. Stud., vol.~20, Kluwer
  Acad. Publ., Dordrecht, 1997, pp.~139--156.

\bibitem{Kontsevich99}
\bysame, \emph{Operads and motives in deformation quantization}, Lett. Math.
  Phys. \textbf{48} (1999), no.~1, 35--72, Mosh{\'e} Flato (1937--1998).

\bibitem{LadaMarkl95}
T.~Lada and M.~Markl, \emph{Strongly homotopy {L}ie algebras}, Comm. Algebra
  \textbf{23} (1995), no.~6, 2147--2161.

\bibitem{LambrechtsVolic14}
Pascal Lambrechts and Ismar Voli{\'c}, \emph{Formality of the little
  {$N$}-disks operad}, Mem. Amer. Math. Soc. \textbf{230} (2014), no.~1079,
  viii+116.

\bibitem{LawrenceSullivan14}
Ruth Lawrence and Dennis Sullivan, \emph{A formula for topology/deformations
  and its significance}, Fund. Math. \textbf{225} (2014), 229--242.

\bibitem{Lazard50}
Michel Lazard, \emph{Sur les groupes nilpotents et les anneaux de {L}ie}, Ann.
  Sci. Ecole Norm. Sup. (3) \textbf{71} (1954), 101--190.

\bibitem{Lazarev13}
A.~Lazarev, \emph{Maurer-{C}artan moduli and models for function spaces}, Adv.
  Math. \textbf{235} (2013), 296--320.

\bibitem{LodayVallette12}
Jean-Louis Loday and Bruno Vallette, \emph{Algebraic operads}, Grundlehren der
  Mathematischen Wissenschaften [Fundamental Principles of Mathematical
  Sciences], vol. 346, Springer-Verlag, Berlin, 2012.

\bibitem{Lurie10}
Jacob Lurie, \emph{Moduli problems for ring spectra}, Proceedings of the
  {I}nternational {C}ongress of {M}athematicians. {V}olume {II} (New Delhi),
  Hindustan Book Agency, 2010, pp.~1099--1125.

\bibitem{Manchon11}
Dominique Manchon, \emph{A short survey on pre-{L}ie algebras}, Noncommutative
  geometry and physics: renormalisation, motives, index theory, ESI Lect. Math.
  Phys., Eur. Math. Soc., Z\"urich, 2011, pp.~89--102.

\bibitem{ManchonSaidi08}
Dominique {Manchon} and Abdellatif {Saidi}, \emph{{Lois pr\'e-Lie en
  interaction}}, ArXiv e-prints (2008),
  \href{https://arxiv.org/abs/0811.2153}{arXiv:0811.2153}.

\bibitem{Markl12}
Martin Markl, \emph{Deformation theory of algebras and their diagrams}, CBMS
  Regional Conference Series in Mathematics, vol. 116, Published for the
  Conference Board of the Mathematical Sciences, Washington, DC; by the
  American Mathematical Society, Providence, RI, 2012.

\bibitem{Markl15}
\bysame, \emph{On the origin of higher braces and higher-order derivations}, J.
  Homotopy Relat. Struct. \textbf{10} (2015), no.~3, 637--667.

\bibitem{Maurer88}
L.~{Maurer}, \emph{{\"Uber allgemeinere Invarianten-Systeme.}}, {M\"unch. Ber.
  103-150}, 1888.

\bibitem{May72}
J.P. May, \emph{The geometry of iterated loop spaces}, Springer-Verlag, Berlin,
  1972, Lectures Notes in Mathematics, Vol. 271.

\bibitem{McClureSmith02}
James~E. McClure and Jeffrey~H. Smith, \emph{A solution of {D}eligne's
  {H}ochschild cohomology conjecture}, Recent progress in homotopy theory
  (Baltimore, MD, 2000), Contemp. Math., vol. 293, Amer. Math. Soc.,
  Providence, RI, 2002, pp.~153--193.

\bibitem{MerkulovVallette09I}
Sergei Merkulov and Bruno Vallette, \emph{Deformation theory of representations
  of prop(erad)s. {I}}, J. Reine Angew. Math. \textbf{634} (2009), 51--106.

\bibitem{MerkulovVallette09II}
\bysame, \emph{Deformation theory of representations of prop(erad)s. {II}}, J.
  Reine Angew. Math. \textbf{636} (2009), 123--174.

\bibitem{GuinOudom08}
J.-M. Oudom and D.~Guin, \emph{On the {L}ie enveloping algebra of a pre-{L}ie
  algebra}, J. K-Theory \textbf{2} (2008), no.~1, 147--167.

\bibitem{Positselski12}
L.~{Positselski}, \emph{{Weakly curved A-infinity algebras over a topological
  local ring}}, ArXiv e-prints (2012),
  \href{https://arxiv.org/abs/1202.2697}{arXiv:1202.2697}.

\bibitem{Positselski11}
Leonid Positselski, \emph{Two kinds of derived categories, {K}oszul duality,
  and comodule-contramodule correspondence}, Mem. Amer. Math. Soc. \textbf{212}
  (2011), no.~996, vi+133.

\bibitem{Pridham10}
J.~P. Pridham, \emph{Unifying derived deformation theories}, Adv. Math.
  \textbf{224} (2010), no.~3, 772--826.

\bibitem{Quillen69}
D.~Quillen, \emph{Rational homotopy theory}, Ann. of Math. (2) \textbf{90}
  (1969), 205--295.

\bibitem{Robert-Nicoud17}
D.~{Robert-Nicoud}, \emph{{Deformation theory with homotopy algebra structures
  on tensor products}}, Doc. Math. (2018), no.~23, 189--240.

\bibitem{Robert-NicoudVallette18}
Daniel {Robert-Nicoud} and Bruno Vallette, \emph{{Higher Lie Theory}}, in
  preparation.

\bibitem{Toen17}
Bertrand To\"en, \emph{Probl\`emes de modules formels}, Ast\'erisque (2017),
  no.~390, Exp. No. 1111, 199--244, S\'eminaire Bourbaki. Vol. 2015/2016.
  Expos\'es 1104--1119.

\bibitem{Vallette08}
Bruno Vallette, \emph{Manin products, {K}oszul duality, {L}oday algebras and
  {D}eligne conjecture}, J. Reine Angew. Math. \textbf{620} (2008), 105--164.

\bibitem{Wierstra16}
F.~{Wierstra}, \emph{{Algebraic Hopf invariants and rational models for mapping
  spaces}}, ArXiv e-prints (2016),
  \href{https://arxiv.org/abs/1612.07762}{arXiv:1612.07762}.

\bibitem{Willwacher15}
Thomas Willwacher, \emph{M. {K}ontsevich's graph complex and the
  {G}rothendieck-{T}eichm\"uller {L}ie algebra}, Invent. Math. \textbf{200}
  (2015), no.~3, 671--760.

\end{thebibliography}

\end{document}